\title{A new fully justified asymptotic model for the propagation of internal waves in the Camassa-Holm regime}
\author{Vincent Duch\^ene%
\thanks{IRMAR - UMR6625, Univ. Rennes 1, CNRS, Campus de Beaulieu, F-35042 Rennes cedex, France. \url{vincent.duchene@univ-rennes1.fr}}, 
Samer Israwi%
\thanks{Math\'ematiques, Facult\'e des sciences I et Ecole doctorale des sciences et technologie, Universit\'e Libanaise, Beyrouth, Liban and CRAMS: Center for Research in Applied Mathematics and Statistics, AUL, Lebanon.
\url{s_israwi83@hotmail.com}},
Raafat Talhouk%
\thanks{Math\'ematiques, Facult\'e des sciences I et Ecole doctorale des sciences et technologie, Universit\'e Libanaise, Beyrouth, Liban. \url{rtalhouk@ul.edu.lb}}}
\date{\today}
\numberwithin{equation}{section}
\newcommand{\RR}{\mathbb{R}}
\newcommand{\NN}{\mathbb{N}}
\renewcommand{\t}{\tilde}
\renewcommand{\b}{\bar}
\renewcommand{\P}{\mathcal{P}}
\newcommand{\N}{\mathcal{N}}
\newcommand{\Q}{\mathcal{Q}}
\newcommand{\R}{\mathcal{R}}
\renewcommand{\O}{\mathcal{O}}
\newcommand{\mfT}{\mathfrak T}
\newcommand{\p}{\mathfrak{p}}
\newcommand{\mfQ}{\mathfrak{Q}}
\DeclareMathOperator*{\esssup}{ess\,sup}
\DeclareMathOperator{\Bo}{Bo}
\DeclareMathOperator{\bo}{bo}
\newcommand{\dsp}{\displaystyle}
\newcommand{\nn}{\nonumber}
\newcommand{\id}[1]{\left\vert_{\scriptstyle #1}\right.}
\newtheorem{Theorem}{Theorem}[section]
\newtheorem{Definition}[Theorem]{Definition}
\newtheorem{Proposition}[Theorem]{Proposition}
\newtheorem{Lemma}[Theorem]{Lemma}
\newtheorem{Corollary}[Theorem]{Corollary}
\newtheorem{Remark}[Theorem]{Remark}
\begin{document}

\maketitle
\begin{abstract}
This study deals with asymptotic models for the propagation of one-dimensional internal waves at the interface between two layers of immiscible fluids of different densities, under the rigid lid assumption and with a flat bottom. 
We present a new Green-Naghdi type model in the Camassa-Holm (or medium amplitude) regime. This model is fully justified, in the sense that it is consistent, well-posed, and that its solutions remain close to exact solutions of the full Euler system with corresponding initial data. Moreover, our system allows to fully justify any well-posed and consistent lower order model; and in particular the so-called Constantin-Lannes approximation, which extends the classical Korteweg-de Vries equation in the Camassa-Holm regime.
\end{abstract}

\section{Introduction}

 \subsection{Presentation of the problem}
In the present paper, we study the propagation of internal waves in a two-fluid system, which consists in two layers of immiscible fluids of different densities, under the only influence of gravity. The domain of the two layers is infinite in the horizontal space variable (assumed to be of dimension $d=1$) and delimited above by a flat, rigid lid, a below by a flat bottom. Moreover, we assume that the fluids are homogeneous, ideal, incompressible and irrotational. We let the reader refer to~\cite{HelfrichMelville06}, and references therein, for a good overview of the ins and outs concerning density-stratified fluids in oceanography, and the relevance of our setup as a model for such system.

The governing equations describing the evolution of the flow under the aforementioned configuration may be reduced to a system of two evolution equations located at the interface between the two layers (following a strategy initiated in the water-wave configuration in~\cite{Zakharov68,CraigSulem93}, and achieved in the bi-fluidic case in~\cite{BonaLannesSaut08}), named {\em full Euler system}. However, the study of this system is extremely challenging. In particular, the well-posedness of the Cauchy problem has been answered satisfactorily (that is, with an existence of solutions on a time scale consistent with physical observations) only recently; see~\cite{Lannes13}. 
\medskip

Under these circumstances, a great deal of interests has been drawn to {\em asymptotic models}, in order to predict accurately the main behavior of the system, provided some parameters describing the domain and nature of the flow are small. Parameters of interests include
\[ \mu \ = \ \frac{d_1^2}{\lambda^2}, \quad \epsilon \ = \ \frac{a}{d_1}, \quad \delta \ = \ \frac{d_1}{d_2},\quad \gamma \ = \ \frac{\rho_1}{\rho_2},\quad \Bo\ =\ \dfrac{g(\rho_2-\rho_1)\lambda^2}{\sigma},\]
where $a$ is the maximal vertical deformation of the interface with respect to its rest position; $\lambda$ is a characteristic horizontal length; $d_1$ (resp. $d_2$) is the depth of the upper (resp. lower) layer; and $\rho_1$ (resp. $\rho_2$) is the density of the upper (resp. lower) layer, $g$ the gravitational acceleration and $\sigma$ the interfacial tension coefficient. Mathematically speaking, $\mu$ and $\epsilon$ measure respectively the amount of dispersion and nonlinearity which will contribute to the evolution of internal waves, and $\Bo^{-1}$ (the Bond number) expresses the ratio of surface tension forces to gravitational forces.
It would be quite tedious to make an attempt at acknowledging every work on this aspect, but let us introduce some earlier results directly related to the present paper.
\medskip

Shallow water ($\mu\ll1$) asymptotic models for uni-dimensional internal waves have been derived and studied in the pioneer works of~\cite{Miyata85,Malcprimetseva89,Matsuno93}. More recently, Choi and Camassa developed models with weakly ($\epsilon=\O(\mu)$) and strongly ($\epsilon\sim1$) nonlinear terms, respectively in~\cite{ChoiCamassa96,ChoiCamassa99} with horizontal dimension $d=2$. They obtain bi-fluidic extensions of the classical shallow water (or Saint-Venant~\cite{Saint-Venant71}), Boussinesq~\cite{Boussinesq71,Boussinesq72} and Green-Naghdi~\cite{Serre53,GreenNaghdi76} models. Similar systems have been derived in~\cite{OstrovskyGrue03} (with the additional assumption of $\gamma\approx 1$) and in~\cite{CraigGuyenneKalisch05} (using a different approach, {\em i.e.} making use of the Hamiltonian structure of the full Euler equations). However, the aforementioned results are limited to the formal level. Let us mention now the work of Bona, Lannes and Saut~\cite{BonaLannesSaut08} who, following a strategy initiated in~\cite{BonaChenSaut02,BonaChenSaut04} in the water-wave setting  (one layer of fluid, with free surface), derived a large class of models for different regimes, under the rigid-lid assumption, neglecting surface tension effects and with flat bottom (see also~\cite{Anh09} where a topography and surface tension is added to the system, and~\cite{Duchene10} where the rigid-lid assumption is removed). The models derived in these papers are systematically justified by a consistency result: roughly speaking, sufficiently smooth solutions of the full Euler system satisfy the equations of the asymptotic model, up to a small remainder.
\medskip

Yet the consistency is only one of the properties that an asymptotic model shall satisfy, for its validity to be ascertained. Indeed, it leaves two important questions unanswered: for a large class of initial data (typically bounded in suitable Sobolev spaces)
\begin{itemize}
\item[1.] {\em (well-posedness)} do the full Euler system, as well as the asymptotic model, define a unique solution on a relevant time scale?
\item[2.] {\em (convergence)} is the difference between these two solutions small over the relevant time scale?
\end{itemize}
As mentioned earlier, Lannes has recently proved~\cite{Lannes13} that the Cauchy problem for bi-fluidic full Euler system is well-posed in Sobolev spaces, in the presence of a small amount of surface tension. Thus the full justification of a consistent system of equation as an asymptotic model, in the sense described above, follows from its well-posedness and a stability result; see~\cite[Appendix~C]{Lannes} for a detailed discussion and state of the art in the water-wave setting. 
\medskip

A striking discrepancy between the water-wave and the bi-fluidic setting is that in the latter, large amplitude internal waves are known to generate Kelvin-Helmholtz instabilities, so that surface tension is necessary in order to regularize the flow. A crucial contribution of~\cite{Lannes13} consists in asserting that ``the Kelvin-Helmholtz instabilities appear above a frequency threshold for which surface tension is relevant, while the main (observable) part of the wave involves low frequencies located below this frequency threshold''. It is therefore expected that the surface tension does not play an essential role in the dominant evolution of the flow, especially in the shallow water regime.
This intuition is confirmed by the fact that well-posedness and stability results have been proved for the bi-fluidic shallow-water system~\cite{GuyenneLannesSaut10}, and a class of Boussinesq-type systems~\cite{Duchene11a}, without surface tension and under reasonable assumptions on the flow (typically, the shear velocity must be sufficiently small). However, the original bi-fluidic Green-Naghdi model is known to be unconditionally ill-posed~\cite{LiskaMargolinWendroff95}, which has led to various propositions in order to overcome this difficulty; see~\cite{ChoiBarrosJo09,CotterHolmPercival10} and references therein. Let us recall here that Green-Naghdi models consist in higher order extensions of the shallow water equation, thus are consistent with precision $\O(\mu^2)$ instead of $\O(\mu)$, and allow strong nonlinearities (whereas Boussinesq models are limited to the long wave regime: $\epsilon=\O(\mu)$). 
Finally, we mention the work of Xu~\cite{Xu12}, which studies and rigorously justifies the so-called intermediate long wave system, obtained in a regime similar to ours: $\epsilon\sim \sqrt{\mu}$, but $\delta\sim\sqrt{\mu}$.
\medskip

{\em In this work, we present a new Green-Naghdi type model in the Camassa-Holm (or medium amplitude) regime, $\epsilon=\O(\sqrt{\mu})$, for the propagation of internal waves.} More precisely, the regime of validity of our model is, with fixed $\mu_{\max},M,\delta_{\min}^{-1},\delta_{\max}<\infty$:
\[  0\ < \ \mu \ \leq \ \mu_{\max},\quad   0 \ \leq \ \epsilon \ \leq \ \min(M\sqrt\mu,1), \quad \delta_{\min}\ \leq\ \delta\ \leq \ \delta_{\max},\quad  0\ \leq \ \gamma\ <\ 1.
\]
 Our model is fully justified: we prove that the full Euler system is consistent with our model, and that our system is well-posed (in the sense of Hadamard) in Sobolev spaces, and stable with respect to perturbations of the equations. These results hold identically without or with (small) surface tension.
\medskip

Let us emphasize that in addition to its relevance as an asymptotic model, our system offers an important tool for the justification of other models. Indeed, it suffices to check that a given approximate solution solves our system up to a small remainder, to ensure that it is truly close to the solution of our new model, and therefore to the corresponding solution of the full Euler system.

Such strategy has been used in particular in~\cite{BonaColinLannes05} in order to rigorously justify the historical Korteweg-de Vries equation as a model for the propagation of surface wave in the long wave regime (with a Boussinesq model as the intermediary system); and this result has been extended to the bi-fluidic case in~\cite{Duchene11a}.
Higher order models in the Camassa-Holm regime have been introduced and justified in the sense of consistency in~\cite{ConstantinLannes09} in the water-wave case, and in~\cite{Duchene13} in the bi-fluidic case. We are therefore able to fully justify the latter model, in the sense described above.

 \subsection{Organization of the paper} 
The paper is organized as follows.

\begin{itemize}
\item[] {\em Section 2: The full Euler system.} 
\item[] {\em Section 3: Main results.}
\item[] {\em Section 4: Construction of our model.}
\item[] {\em Section 5: Preliminary results.}
\item[] {\em Section 6: Linear analysis.}
\item[] {\em Section 7: Proof of existence, stability and convergence.}
\item[] {\em Section 8: Full justification of asymptotic models.}
\item[] {\em Appendix A: Product and commutator estimates in Sobolev spaces.}
\end{itemize}
 We start by introducing in Section 2 the non-dimensionalized full Euler system, describing the evolution of the two-fluid system we consider.\\
 In Section 3, we present our new model, and we announce the main results of this paper.\\
 This asymptotic model is precisely derived and motivated in Section 4.\\
 Sections 5 and 6 contain the necessary ingredients for the proof of our results, which are completed in Section 7.\\
 In Section 8, we explain how our system allows us to justify any well-posed and consistent lower order model. \\
 Finally the title of Appendix A is self-explanatory.
 \medskip
 
We conclude this section with an inventory of the notations used throughout the present paper.
 \paragraph{Notations}
 In the following, $C_0$ denotes any nonnegative constant whose exact expression is of no importance. The notation $a\lesssim b$ means that 
 $a\leq C_0\ b$.\\
 We denote by $C(\lambda_1, \lambda_2,\dots)$ a nonnegative constant depending on the parameters
 $\lambda_1$, $\lambda_2$,\dots and whose dependence on the $\lambda_j$ is always assumed to be nondecreasing.\\
We use the condensed notation
\[
A_s=B_s+\left\langle C_s\right\rangle_{s>\underline{s}},
\]
to express that $A_s=B_s$ if $s\leq \underline{s}$ and $A_s=B_s+C_s$ if $s> \underline{s}$.\\
 Let $p$ be any constant
 with $1\leq p< \infty$ and denote $L^p=L^p(\RR)$ the space of all Lebesgue-measurable functions
 $f$ with the standard norm \[\vert f \vert_{L^p}=\big(\int_{\RR}\vert f(x)\vert^p dx\big)^{1/p}<\infty.\] The real inner product of any functions $f_1$
 and $f_2$ in the Hilbert space $L^2(\RR)$ is denoted by
\[
 (f_1,f_2)=\int_{\RR}f_1(x)f_2(x) dx.
 \]
 The space $L^\infty=L^\infty(\RR)$ consists of all essentially bounded, Lebesgue-measurable functions
 $f$ with the norm
\[
 \vert f\vert_{L^\infty}= \esssup \vert f(x)\vert<\infty.
\]
 We denote by $W^{1,\infty}(\RR)=\{f, \mbox{ s.t. }f,\partial_x f\in L^{\infty}(\RR)\}$ endowed with its canonical norm.\\
 For any real constant $s\geq0$, $H^s=H^s(\RR)$ denotes the Sobolev space of all tempered
 distributions $f$ with the norm $\vert f\vert_{H^s}=\vert \Lambda^s f\vert_{L^2} < \infty$, where $\Lambda$
 is the pseudo-differential operator $\Lambda=(1-\partial_x^2)^{1/2}$.\\
For a given $\mu>0$, we denote by $H^{s+1}_\mu(\RR)$ the space $H^{s+1}(\RR)$ endowed with the norm
\[\big\vert\ \cdot\ \big\vert_{H^{s+1}_\mu}^2 \ \equiv \ \big\vert\ \cdot\ \big\vert_{H^{s}}^2\ + \ \mu \big\vert\ \cdot\ \big\vert_{H^{s+1}}^2\ .\]
 For any functions $u=u(t,x)$ and $v(t,x)$
 defined on $ [0,T)\times \RR$ with $T>0$, we denote the inner product, the $L^p$-norm and especially
 the $L^2$-norm, as well as the Sobolev norm,
 with respect to the spatial variable $x$, by $(u,v)=(u(t,\cdot),v(t,\cdot))$, $\vert u \vert_{L^p}=\vert u(t,\cdot)\vert_{L^p}$,
 $\vert u \vert_{L^2}=\vert u(t,\cdot)\vert_{L^2}$ , and $ \vert u \vert_{H^s}=\vert u(t,\cdot)\vert_{H^s}$, respectively.\\
 We denote $L^\infty([0,T);H^s(\RR))$ the space of functions such that $u(t,\cdot)$ is controlled in $H^s$, uniformly for $t\in[0,T)$:
 \[\big\Vert u\big\Vert_{L^\infty([0,T);H^s(\RR))} \ = \ \esssup_{t\in[0,T)}\vert u(t,\cdot)\vert_{H^s} \ < \ \infty.\]
 Finally, $C^k(\RR)$ denote the space of
 $k$-times continuously differentiable functions.\\
 For any closed operator $T$ defined on a Banach space $X$ of functions, the commutator $[T,f]$ is defined
  by $[T,f]g=T(fg)-fT(g)$ with $f$, $g$ and $fg$ belonging to the domain of $T$. The same notation is used for $f$ an operator mapping the domain of $T$ into itself.
  \medskip

\section{The full Euler system}
\label{ssec:FullEulerSystem}

We recall that the system we study consists in two layers of immiscible, homogeneous, ideal, incompressible fluids only under the influence of gravity (see Figure~\ref{fig:SketchOfDomain}). We restrict ourselves to the two-dimensional case, {\em i.e.} the horizontal dimension $d=1$. The derivation of the governing equations of such a system is not new. We briefly recall it below, and refer to~\cite{BonaLannesSaut08,Anh09,Duchene13} for more details.

\begin{figure}[!htb]
\centering
 \includegraphics[width=.9\textwidth]{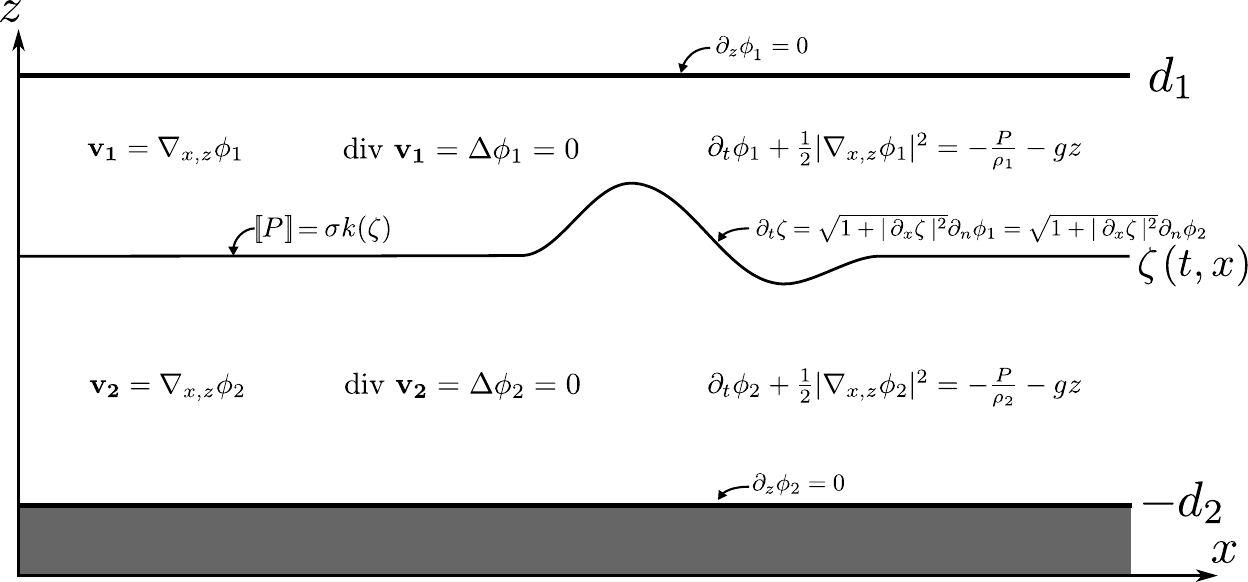}
 \caption{Sketch of the domain and governing equations}
\label{fig:SketchOfDomain}
\end{figure}

We assume that the interface is given as the graph of a function $\zeta(t,x)$ which expresses the deviation from its rest position $(x,0)$ at the spatial coordinate $x$ and at time $t$. The bottom and surface are assumed to be rigid and flat.
Therefore, at each time $t\ge 0$, the domains of the upper and lower fluid (denoted, respectively, $\Omega_1^t$ and $\Omega_2^t$), are given by
\begin{align*}
 \Omega_1^t \ &= \ \{\ (x,z)\in\RR\times\RR, \quad \zeta(t,x)\ \leq\ z\ \leq \ d_1\ \}, \\
 \Omega_2^t \ &= \ \{\ (x,z)\in\RR\times\RR, \quad -d_2 \  \leq\ z\ \leq \ \zeta(t,x)\ \}.
\end{align*}
We assume that the two domains are strictly connected, that is there exists $h>0$ such that
\[ d_1- \zeta(t,x) \geq h>0, \quad \text{and} \quad d_2+\zeta(t,x)\geq h>0.\]

We denote by $(\rho_1,{\mathbf v}_1)$ and $(\rho_2,{\mathbf v}_2)$ the mass density and velocity fields of, respectively, the upper and  the lower fluid. The two fluids are assumed to be homogeneous and incompressible, so that the mass densities $\rho_1,\ \rho_2$ are constant, and the velocity fields ${\mathbf v}_1,\ {\mathbf v}_2$ are divergence free.
As we assume the flows to be irrotational, one can express the velocity field as gradients of a potential: ${\mathbf v}_i=\nabla\phi_i$ $(i=1,2)$, and the velocity potentials satisfy Laplace's equation
\[\partial_x^2 \phi_i \ + \ \partial_z^2 \phi_i \ = \ 0.\]

 The fluids being ideal, they satisfy the Euler equations. Integrating the momentum equation yields the so-called Bernoulli equation, written in terms of the velocity potentials:
 \[   \partial_t \phi_i+\frac{1}{2} |\nabla_{x,z} \phi_i|^2=-\frac{P}{\rho_i}-gz \quad \text{  in }\Omega^t_i \quad (i=1,2),\]
 where $P$ denotes the pressure inside the fluids.

 From the assumption that no fluid particle crosses the surface, the bottom or the interface, one deduces kinematic boundary conditions, and the set of equations is closed by the continuity of the stress tensor at the interface, which reads
\[  \llbracket P(t,x) \rrbracket \ \equiv \ \lim\limits_{\varepsilon\to 0} \Big( \ P(t,x,\zeta(t,x)+\varepsilon) \ - \ P(t,x,\zeta(t,x)-\varepsilon) \ \Big) \ = \ \sigma k\big(\zeta(t,x)\big),\]
where $ k(\zeta)=-\partial_x \Big(\frac1{\sqrt{1+|\partial_x\zeta|^2}}\partial_x\zeta\Big)$ denotes the mean curvature of the interface and $\sigma$ the surface (or interfacial) tension coefficient.

Altogether, the governing equations of our problem are given by the following
  \begin{equation}  \label{eqn:EulerComplet}
\left\{\begin{array}{ll}
         \partial_x^2 \phi_i \ + \ \partial_z^2 \phi_i \ = \ 0 & \mbox{ in }\Omega^t_i, \ i=1,2,\\
         \partial_t \phi_i+\frac{1}{2} |\nabla_{x,z} \phi_i|^2=-\frac{P}{\rho_i}-gz & \mbox{ in }\Omega^t_i, \ i=1,2, \\
         \partial_{z}\phi_1 \ = \ 0  & \mbox{ on } \Gamma_{\rm t}\equiv\{(x,z),z=d_1\}, \\
         \partial_t \zeta  \ = \ \sqrt{1+|\partial_x\zeta|^2}\partial_{n}\phi_1 \ = \ \sqrt{1+|\partial_x\zeta|^2}\partial_{n}\phi_2  & \mbox{ on } \Gamma \equiv\{(x,z),z=\zeta(t,x)\},\\
         \partial_{z}\phi_2  \ = \ 0 &  \mbox{ on } \Gamma_{\rm b}\equiv\{(x,z),z=-d_2\}, \\
         \llbracket P(t,x) \rrbracket  \ = \ \sigma k(\zeta) & \mbox{ on } \Gamma,
         \end{array}
\right.
\end{equation}
where $n$ denotes the unit upward normal vector at the interface.
\bigskip

The next step consists in {\em nondimensionalizing the system}.
Thanks to an appropriate scaling, the two-layer full Euler system~\eqref{eqn:EulerComplet} can be written in dimensionless form.
The study of the linearized system (see~\cite{Lannes13} for example), which can be solved explicitly, leads to a well-adapted rescaling.
\medskip

Let $a$ be the maximum amplitude of the deformation of the interface. We denote by $\lambda$ a characteristic horizontal length, say the wavelength of the interface. Then the typical velocity of small propagating internal waves (or wave celerity) is given by
\[c_0 \ = \ \sqrt{g\frac{(\rho_2-\rho_1) d_1 d_2}{\rho_2 d_1+\rho_1 d_2}}.\]
Consequently, we introduce the dimensionless variables\footnote{We choose $d_1$ as the reference vertical length. This choice is harmless as we assume in the following that the two layers of fluid have comparable depth: the depth ratio $\delta$ do not approach zero or infinity. 
}
\[\begin{array}{cccc}
 \t z \ \equiv\  \dfrac{z}{d_1}, \quad\quad & \t x\ \equiv \ \dfrac{x}{\lambda}, \quad\quad & \t t\ \equiv\ \dfrac{c_0}{\lambda}t,
\end{array}\]
the dimensionless unknowns
\[\begin{array}{cc}
 \t{\zeta}(\t t,\t x)\ \equiv\ \dfrac{\zeta(t,x)}{a}, \quad\quad& \t{\phi_i}(\t t,\t x,\t z)\ \equiv\ \dfrac{d_1}{a\lambda c_0}\phi_i(t,x,z) \quad (i=1,2),
\end{array}\]
as well as five dimensionless parameters
\[
 \gamma\ =\ \dfrac{\rho_1}{\rho_2}, \quad \epsilon\ \equiv\ \dfrac{a}{d_1},\quad   \mu\ \equiv\ \dfrac{d_1^2}{\lambda^2},  \quad \delta\ \equiv \ \dfrac{d_1}{d_2}, \quad \bo\ =\ \dfrac{g(\rho_2-\rho_1)d_1^2}{\sigma}.
\]
\begin{Remark}We use here $\bo=\mu \Bo$ instead of the classical Bond number, $\Bo$, which measures the ratio of gravity forces over capillary forces. As we assume later on that $\bo$ is bounded from below, this amounts to the assumption $\Bo^{-1}=\O(\mu)$. 
\footnote{Such assumption is very natural in the context of internal gravity waves in the ocean. For example, using the values of the experiment of Koop and Butler~\cite{KoopButler81} and a typical surface tension coefficient $\sigma=0.005 N.m^{-1}$ as in~\cite{Lannes13}, one has
\[ \bo^{-1}\ =\ \dfrac{\sigma}{g(\rho_2-\rho_1)d_1^2} \ \approx \ \dfrac{0.005}{9.81(1\,563-998)\times 0.06948^2} \approx 1.87 \times 10^{-4}. \]}
\end{Remark}
\bigskip

We conclude by remarking that the system can be reduced into two evolution equations coupling Zakharov's canonical variables~\cite{Zakharov68,CraigSulem93}, namely  (withdrawing the tildes for the sake of readability)
the deformation of the free interface from its rest position, $\zeta$, and the trace of the dimensionless upper potential at the interface, $\psi$, defined as follows:
\[ \psi \ \equiv \ \phi_1(t,x,\zeta(t,x)).\]
Indeed, $\phi_1$ and $\phi_2$ are uniquely deduced from $(\zeta,\psi)$ as solutions of the following Laplace's problems:
\begin{align}
\label{eqn:Laplace1} &\left\{
\begin{array}{ll}
 \left(\ \mu\partial_x^2 \ +\  \partial_z^2\ \right)\ \phi_1=0 & \mbox{ in } \Omega_1\equiv \{(x,z)\in \RR^{2},\ \epsilon{\zeta}(x)<z<1\}, \\
\partial_z \phi_1 =0  & \mbox{ on } \Gamma_{\rm t}\equiv \{(x,z)\in \RR^{2},\ z=1\}, \\
 \phi_1 =\psi & \mbox{ on } \Gamma\equiv \{(x,z)\in \RR^{2},\ z=\epsilon \zeta\},
\end{array}
\right.\\
\label{eqn:Laplace2}&\left\{
\begin{array}{ll}
 \left(\ \mu\partial_x^2\ + \ \partial_z^2\ \right)\ \phi_2=0 & \mbox{ in } \Omega_2\equiv\{(x,z)\in \RR^{2},\ -\frac{1}{\delta}<z<\epsilon\zeta\}, \\
\partial_{n}\phi_2 = \partial_{n}\phi_1 & \mbox{ on } \Gamma, \\
 \partial_{z}\phi_2 =0 & \mbox{ on } \Gamma_{\rm b}\equiv \{(x,z)\in \RR^{2},\ z=-\frac{1}{\delta}\}.
\end{array}
\right.
\end{align}
More precisely, we define the so-called Dirichlet-Neumann operators.
\begin{Definition}[Dirichlet-Neumann operators]
Let $\zeta\in H^{t_0+1}(\RR)$, $t_0>1/2$, such that there exists $h>0$ with
$h_1 \ \equiv\  1-\epsilon\zeta \geq h>0$ and $h_2 \ \equiv \ \frac1\delta +\epsilon \zeta\geq h>0$, and let $\psi\in L^2_{\rm loc}(\RR),\partial_x \psi\in H^{1/2}(\RR)$.
 Then we define
  \begin{align*}
 G^{\mu}\psi \ \equiv\  &G^{\mu}[\epsilon\zeta]\psi \ \equiv \ \sqrt{1+\mu|\epsilon\partial_x\zeta|^2}\big(\partial_n \phi_1 \big)\id{z=\epsilon\zeta}  \ = \ -\mu\epsilon(\partial_x\zeta) (\partial_x\phi_1)\id{z=\epsilon\zeta}+(\partial_z\phi_1)\id{z=\epsilon\zeta},\\
H^{\mu,\delta}\psi \ \equiv\   &H^{\mu,\delta}[\epsilon\zeta]\psi\ \equiv\  \partial_x \big(\phi_2\id{z=\epsilon\zeta}\big) \ = \  (\partial_x\phi_2)\id{z=\epsilon\zeta}+\epsilon(\partial_x \zeta)(\partial_z\phi_2)\id{z=\epsilon\zeta},
\end{align*}
where $\phi_1$ and $\phi_2$ are uniquely defined (up to a constant for $\phi_2$) as the solutions in $H^2(\RR)$ of the Laplace's problems~\eqref{eqn:Laplace1}--\eqref{eqn:Laplace2}.
\end{Definition}
The existence and uniqueness of a solution to~\eqref{eqn:Laplace1}--\eqref{eqn:Laplace2}, and therefore the well-posedness of the Dirichlet-Neumann operators follow from classical arguments detailed, for example, in~\cite{Lannes}.
\medskip

Using the above definition, and after straightforward computations, one can rewrite the nondimensionalized version of~\eqref{eqn:EulerComplet} as a simple system of two coupled evolution equations, namely
\begin{equation}\label{eqn:EulerCompletAdim}
\left\{ \begin{array}{l}
\displaystyle\partial_{ t}{\zeta} \ -\ \frac{1}{\mu}G^{\mu}\psi\ =\ 0,  \\ \\
\displaystyle\partial_{ t}\Big(H^{\mu,\delta}\psi-\gamma \partial_x{\psi} \Big)\ + \ (\gamma+\delta)\partial_x{\zeta} \ + \ \frac{\epsilon}{2} \partial_x\Big(|H^{\mu,\delta}\psi|^2 -\gamma |\partial_x {\psi}|^2 \Big) \\
\displaystyle\hspace{7cm} = \ \mu\epsilon\partial_x\N^{\mu,\delta}-\mu\frac{\gamma+\delta}{\bo}\frac{\partial_x \big(k(\epsilon\sqrt\mu\zeta)\big)}{{\epsilon\sqrt\mu}} \ ,
\end{array}
\right.
\end{equation}
where we denote
\[  \N^{\mu,\delta} \ \equiv \ \dfrac{\big(\frac{1}{\mu}G^{\mu}\psi+\epsilon(\partial_x{\zeta})H^{\mu,\delta}\psi \big)^2\ -\ \gamma\big(\frac{1}{\mu}G^{\mu}\psi+\epsilon(\partial_x{\zeta})(\partial_x{\psi}) \big)^2}{2(1+\mu|\epsilon\partial_x{\zeta}|^2)}.
      \]
We will refer to~\eqref{eqn:EulerCompletAdim} as the {\em full Euler system}, and solutions of this system will be exact solutions of our problem.

\section{Main results}\label{mr}
We now present our new Green-Naghdi type model, that we fully justify as an asymptotic model for the full Euler system~\eqref{eqn:EulerCompletAdim}, for a set of dimensionless parameters limited to the so-called {\em Camassa-Holm regime}, that we describe precisely below. 

Let us introduce first the so-called {\em shallow water regime} for two layers of comparable depths:
\begin{multline} \label{eqn:defRegimeSWmr}
\P_{\rm SW} \ \equiv \ \Big\{ (\mu,\epsilon,\delta,\gamma,\bo):\ 0\ < \ \mu \ \leq \ \mu_{\max}, \ 0 \ \leq \ \epsilon \ \leq \ 1, \ \delta \in (\delta_{\min},\delta_{\max}),  \Big.\\
 \Big. \ 0\ \leq \ \gamma\ <\ 1,\  \bo_{\min}\leq \bo\leq \infty \ \Big\},
\end{multline}
with given $0\leq \mu_{\max},\delta_{\min}^{-1},\delta_{\max},\bo_{\min}^{-1}<\infty$. 
The two additional key restrictions for the validity of our model define the {\em Camassa-Holm regime}:
\begin{equation} \label{eqn:defRegimeCHmr}
\P_{\rm CH} \ \equiv \ \left\{ (\mu,\epsilon,\delta,\gamma,\bo) \in \P_{\rm SW}:\ \epsilon \ \leq \ M \sqrt{\mu}  \quad \text{ and } \quad \nu\ \equiv \ \frac{1+\gamma\delta}{3\delta(\gamma+\delta)}-\frac1{\bo} \ge \ \nu_{0}  \ \right\},
\end{equation}
with given $0\leq M,\nu_0^{-1}<\infty$. We denote for convenience
\[ M_{\rm SW} \ \equiv \ \max\big\{\mu_{\max},\delta_{\min}^{-1},\delta_{\max},\bo_{\min}^{-1}\big\}, \quad M_{\rm CH} \ \equiv \ \max\big\{M_{\rm SW},M,\nu_0^{-1}\big\}.\]

Our new system is:
\begin{equation}\label{eq:Serre2mr}\left\{ \begin{array}{l}
\partial_{ t}\zeta +\partial_x\left(\dfrac{h_1 h_2}{h_1+\gamma h_2}\b v\right)\ =\  0,\\ \\
\mathfrak T[\epsilon\zeta] \left( \partial_{ t} \b v + \epsilon \varsigma {\b v } \partial_x {\b v } \right) + (\gamma+\delta)q_1(\epsilon\zeta)\partial_x \zeta   \\
\qquad \qquad+\frac\epsilon2 q_1(\epsilon\zeta) \partial_x \left(\frac{h_1^2  -\gamma h_2^2 }{(h_1+\gamma h_2)^2}|\b v|^2-\varsigma |\b v|^2\right)=  -  \mu \epsilon\frac23\frac{1-\gamma}{(\gamma+\delta)^2} \partial_x\big((\partial_x \b v)^2\big) ,
\end{array} \right. \end{equation}
where $h_1\equiv1-\epsilon\zeta$ (resp. $h_2\equiv\frac1\delta+\epsilon\zeta$) denotes the depth of the upper (resp. lower) fluid, and $\bar v$ is the {\em shear mean velocity}\footnote{$\bar v$ is equivalently defined as $\bar v\equiv \bar u_2 - \gamma \bar u_1$ where $\bar u_1, \ \bar u_2$ are the mean velocities integrated across the vertical layer in each fluid:
$\bar u_1(t,x) \ = \ \frac{1}{h_1(t,x)}\int_{\epsilon\zeta(t,x)}^{1} \partial_x \phi_1(t,x,z) \ dz$ and $  \bar u_2(t,x) \ = \ \frac{1}{h_2(t,x)}\int_{-\frac1\delta}^{\epsilon\zeta(t,x)} \partial_x \phi_2(t,x,z) \ dz$.} defined by 
\begin{equation}\label{eqn:defbarvmr}
\frac1\mu G^{\mu}[\epsilon\zeta]\psi \ = \ -\partial_x \Big(\frac{h_1h_2}{h_1+\gamma h_2}\bar v\Big). 
\end{equation}
The operator ${\mathfrak T}$ is as follows:
\begin{equation}\label{defTmr}
{\mathfrak T}[\epsilon\zeta]V \ = \ q_1(\epsilon\zeta)V \ - \ \mu\nu \partial_x \Big(q_2(\epsilon\zeta)\partial_xV \Big),
\end{equation}
with $q_i(X)\equiv 1+\kappa_i X $ ($i=1,2$) and $\nu,\kappa_1,\kappa_2,\varsigma$ are constants displayed in~\eqref{deftnu},~\eqref{defkappa} and~\eqref{defvarsigma}, later on.
\medskip

This model is fully justified by the following results:
 \begin{Theorem}[Consistency]\label{th:ConsSerreVarmr}
For $\p= (\mu,\epsilon,\delta,\gamma,\bo) \in \P_{\rm SW}$, let $U^\p\equiv(\zeta^\p,\psi^\p)$ be a family of solutions of the full Euler system~\eqref{eqn:EulerCompletAdim} such that there exists $C_0,T>0$ with 
\[ \big\Vert \zeta^\p \big\Vert_{L^\infty([0,T);H^{s+\frac92})}  +  \big\Vert \partial_t\zeta^\p \big\Vert_{L^\infty([0,T);H^{s+\frac72})}  +  \big\Vert \partial_x\psi^\p \big\Vert_{L^\infty([0,T);H^{s+\frac{11}2})}  +  \big\Vert \partial_t\partial_x\psi^\p \big\Vert_{L^\infty([0,T);H^{s+\frac92})}   \leq C_0 ,\]
for any $s\geq s_0+1/2$, $s_0>1/2$, and uniformly with respect to $\p\in \P_{\rm SW}$. Moreover, assume 
\begin{equation}\tag{H1}
\exists h_{01}>0 \text{ such that }\quad h_1 \ \equiv\  1-\epsilon\zeta^\p \geq\ h_{01}\ >\ 0, \quad h_2 \ \equiv \ \frac1\delta +\epsilon \zeta^\p\geq\ h_{01}\ >\ 0.
\end{equation}
 Define $\bar v^\p$ as in~\eqref{eqn:defbarvmr}.
Then $(\zeta^\p,\bar v^\p)$ satisfies~\eqref{eq:Serre2mr}, up to a remainder $R$, bounded by
\[ \big\Vert R \big\Vert_{L^\infty([0,T);H^s)} \ \leq \ (\mu^2+\mu\epsilon^2 )\ C_1,\]
with $C_1=C(M_{\rm SW},h_{01}^{-1},C_0)$, uniformly with respect to the parameters $\p\in\P_{\rm SW}$.
\end{Theorem}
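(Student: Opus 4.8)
The plan is to carry out rigorously the formal derivation of Section~4, retaining every term that is dropped there and estimating it in $H^s$. Concretely, one forms, for a given family of exact solutions $U^\p=(\zeta^\p,\psi^\p)$ of~\eqref{eqn:EulerCompletAdim}, the associated pair $(\zeta^\p,\bar v^\p)$ via~\eqref{eqn:defbarvmr}, plugs it into~\eqref{eq:Serre2mr}, and controls the residual. The first equation is handled immediately: by the definition~\eqref{eqn:defbarvmr} of $\bar v^\p$ and the first equation of~\eqref{eqn:EulerCompletAdim}, one has $\partial_t\zeta^\p = \frac1\mu G^{\mu}[\epsilon\zeta^\p]\psi^\p = -\partial_x\big(\frac{h_1h_2}{h_1+\gamma h_2}\bar v^\p\big)$, so the first equation of~\eqref{eq:Serre2mr} holds with zero remainder. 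Before anything else one records that, under (H1) and the assumed bounds on $\zeta^\p$ and $\partial_x\psi^\p$, the elliptic theory for~\eqref{eqn:Laplace1}--\eqref{eqn:Laplace2} recalled in Section~5 guarantees that $\bar v^\p$ and $\partial_t\bar v^\p$ (the latter bringing in $\partial_t\zeta^\p$ and $\partial_t\partial_x\psi^\p$, which is why those quantities appear in the hypotheses) are well defined and bounded in the relevant Sobolev spaces, uniformly in $\p\in\P_{\rm SW}$.

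The core is the transformation of the second equation of~\eqref{eqn:EulerCompletAdim}. I would: \emph{(a)} insert the shallow-water expansions of $G^{\mu}[\epsilon\zeta]\psi$, $H^{\mu,\delta}[\epsilon\zeta]\psi$ and $\N^{\mu,\delta}$ established in Section~5 (in the spirit of~\cite{Duchene13,Lannes}), each written as an explicit differential expression in $\zeta$ and $\partial_x\psi$ plus a remainder of size $\mu^2$ in $H^s$; using~\eqref{eqn:defbarvmr} to trade $\partial_x\psi$ for $\bar v$, this already produces $H^{\mu,\delta}\psi-\gamma\partial_x\psi=\mathfrak{T}[\epsilon\zeta]\bar v+\mu\,(\text{explicit }O(\mu)\text{ term})+O(\mu^2)$, hence the operator $\mathfrak{T}$ of~\eqref{defTmr}. \emph{(b)} Eliminate all time derivatives of $\zeta^\p$ arising after $\partial_t$ is applied by substituting the \emph{exact} mass equation $\partial_t\zeta^\p=-\partial_x\big(\frac{h_1h_2}{h_1+\gamma h_2}\bar v^\p\big)$, turning them into spatial derivatives inside terms already carrying a factor $\mu$ or $\epsilon$. \emph{(c)} Perform the near-identity (BBM-type) substitutions of Section~4: in every term already of size $O(\mu\epsilon)$ or $O(\mu\epsilon^2)$, replace $\partial_t\bar v^\p$ by $-(\gamma+\delta)q_1(\epsilon\zeta)^{-1}\mathfrak{T}[\epsilon\zeta]^{-1}\big(q_1(\epsilon\zeta)\partial_x\zeta\big)$, i.e. by $-(\gamma+\delta)\partial_x\zeta$ at leading order, at the cost of a remainder one order smaller, hence $O(\mu^2+\mu\epsilon^2)$. \emph{(d)} Expand the capillary term as $\frac{\partial_x(k(\epsilon\sqrt\mu\zeta))}{\epsilon\sqrt\mu}=-\partial_x^3\zeta+O(\mu\epsilon^2)$; the resulting $-\mu\frac{\gamma+\delta}{\bo}\partial_x^3\zeta$ is exactly what turns the ``gravity'' coefficient of the dispersive operator into $\nu=\frac{1+\gamma\delta}{3\delta(\gamma+\delta)}-\frac1{\bo}$. \emph{(e)} Collect the surviving leading terms: by construction — this is the content of Section~4, where $\nu,\kappa_1,\kappa_2,\varsigma$ are fixed so that all $O(\epsilon)$, $O(\mu)$ and $O(\mu\epsilon)$ mismatches cancel — they add up to precisely the two sides of the second equation of~\eqref{eq:Serre2mr}, leaving a remainder $R$ that is a finite sum of terms each with prefactor $\mu^2$ or $\mu\epsilon^2$.

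It then remains to estimate $R$. Every term in it is a finite sum of products of $\zeta^\p$, $\partial_x\psi^\p$ (equivalently $\bar v^\p$), $\partial_t\zeta^\p$, $\partial_t\partial_x\psi^\p$ and their spatial derivatives, multiplied by smooth functions of $\epsilon\zeta^\p$ (such as $h_1^{-1}$, $(h_1+\gamma h_2)^{-1}$, $q_i(\epsilon\zeta)^{-1}$, or coefficients arising from $\mathfrak{T}[\epsilon\zeta]^{-1}$), times $\mu^2$ or $\mu\epsilon^2$. Using the product and commutator estimates of Appendix~A, the uniform (under (H1) and $\p\in\P_{\rm SW}$) boundedness on $H^s$ of $\mathfrak{T}[\epsilon\zeta]^{-1}$ and of $q_i(\epsilon\zeta)^{-1}$ established in Sections~5--6, and the hypothesis bound $C_0$ on the family $U^\p$, each term is controlled in $L^\infty([0,T);H^s)$ by $(\mu^2+\mu\epsilon^2)\,C(M_{\rm SW},h_{01}^{-1},C_0)$; since $\mu\le\mu_{\max}$ and $\epsilon\le1$, any contribution of a still higher type (such as $\mu^3$, $\mu^2\epsilon^2$ or $\mu\epsilon^4$) is absorbed into the same bound, which gives the claimed estimate.

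The main obstacle is not any single computation but the bookkeeping: one must track the exact order $\mu^a\epsilon^b$ of each term produced in steps \emph{(a)}--\emph{(d)}, check that only those with $a\ge2$, or with $a\ge1$ and $b\ge2$, are relegated to $R$, and verify that neither the BBM-type substitutions nor the elimination of $\partial_t\zeta^\p$ ever generates an error worse than $\mu^2+\mu\epsilon^2$ — all of this while keeping control in a \emph{high} Sobolev norm, which forces the derivative counts in the hypotheses up to $s+\tfrac{11}{2}$ (the expansions of the Dirichlet--Neumann operators to order $\mu^2$, together with the loss incurred in defining $\bar v^\p$ from $\psi^\p$ and in the commutators $[\partial_t,\mathfrak{T}[\epsilon\zeta]]$, each cost several derivatives). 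A secondary but standard point, needed in step \emph{(c)}, is the uniform $H^s$-invertibility of $\mathfrak{T}[\epsilon\zeta]$ under (H1), which is provided by the linear analysis of Sections~5--6.
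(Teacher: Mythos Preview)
Your overall strategy is sound and close in spirit to the paper's, but there is a genuine gap in step~\emph{(c)} and in your final paragraph: you invoke the invertibility of $\mathfrak{T}[\epsilon\zeta]$ ``under~(H1)'', claiming this is provided by Sections~5--6. It is not. The invertibility results there (Lemma~\ref{proprim}) require both the ellipticity hypothesis~(H2) and the restriction $\p\in\P_{\rm CH}$ (in particular $\nu\ge\nu_0>0$), neither of which is assumed in Theorem~\ref{th:ConsSerreVarmr}, which is stated for $\p\in\P_{\rm SW}$ under~(H1) only. In that generality $\nu$ may vanish or be negative and $q_1(\epsilon\zeta),q_2(\epsilon\zeta)$ need not be bounded below, so $\mathfrak{T}^{-1}$ is simply not available. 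Your BBM substitution as written is therefore unjustified (and also mildly circular, since it uses the target model to approximate $\partial_t\bar v$).

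The paper circumvents this entirely by inserting the Green--Naghdi system~\eqref{eqn:GreenNaghdiMean} as an intermediate step: Proposition~\ref{prop:ConsGreenNaghdiMean} already gives that $(\zeta,\bar v)$ satisfies~\eqref{eqn:GreenNaghdiMean} up to a residual $R_0=\O(\mu^2)$, and \emph{that} system (not the new model) furnishes the leading-order identities $\partial_t\bar v=-(\gamma+\delta)\partial_x\zeta+\O(\epsilon,\mu)$ and $\partial_t\zeta=-\frac{1}{\gamma+\delta}\partial_x\bar v+\O(\epsilon,\mu)$ used in the BBM-type substitutions. One then only has to check that each algebraic manipulation of Section~\ref{ssec:Camassa-Holm} (the expansions of $\overline{\Q},\overline{\R}$, the passage from $\partial_t(\bar v+\mu\overline{\Q}\bar v)$ to $\mathfrak{T}\partial_t\bar v$, etc.) produces remainders controlled by $(\mu^2+\mu\epsilon^2)C(s+3)$ in $H^s$, using only the product estimates of Appendix~A and Lemmata~\ref{lem:f/h}--\ref{lem:f/h1}; $\mathfrak{T}$ is applied but never inverted. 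Your step~\emph{(a)} also misidentifies how $\mathfrak{T}$ arises: it does not come from expanding $H^{\mu,\delta}\psi-\gamma\partial_x\psi$, but is \emph{engineered} (with the specific choices of $\nu,\kappa_1,\kappa_2$) so that $\mathfrak{T}[\epsilon\zeta]\partial_t\bar v$ matches $q_1(\epsilon\zeta)\partial_t(\bar v+\mu\overline{\Q}\bar v)-q_1(\epsilon\zeta)\mu\frac{\gamma+\delta}{\bo}\partial_x^3\zeta$ up to $\O(\mu^2,\mu\epsilon^2)$. Once you route the argument through~\eqref{eqn:GreenNaghdiMean} and drop all appeals to $\mathfrak{T}^{-1}$, the rest of your plan (steps~\emph{(b)},~\emph{(d)},~\emph{(e)} and the final $H^s$ bookkeeping) is correct.
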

\medskip

\noindent For parameters in the Camassa-Holm regime~\eqref{eqn:defRegimeCHmr}, our system is well-posed (in the sense of Hadamard) in the energy space $X^s\ = \ H^s(\RR)\times H^{s+1}(\RR)$, endowed with the norm
\[
\forall\; U=(\zeta,v)^\top \in X^s, \quad \vert U\vert^2_{X^s}\equiv \vert \zeta\vert^2 _{H^s}+\vert v\vert^2 _{H^s}+ \mu\vert \partial_xv\vert^2 _{H^s},
\]
provided the following ellipticity condition (for the operator $\mathfrak{T}$) holds:
\begin{equation}\tag{H2}
\exists h_{02}>0 \text{ such that }\quad \inf_{x\in \RR} \left(1+\epsilon\kappa_2\zeta\right) \ge \ h_{02} \ > \ 0 \ ; \qquad    \inf_{x\in \RR}  \left( 1+\epsilon \kappa_1\zeta  \right)\ge \ h_{02} \ > \ 0.
\end{equation}
\begin{Theorem}[Existence and uniqueness]\label{thbi1mr}
Let $\p= (\mu,\epsilon,\delta,\gamma,\bo) \in \P_{\rm CH}$ and  $s\geq s_0+1$, $s_0>1/2$, and assume
	 $U_0=(\zeta_0,v_0)^\top\in X^s$ satisfies~\eqref{CondDepth},\eqref{CondEllipticity}. Then there exists
         a maximal time $T_{\max}>0$, uniformly bounded from below with respect to $\p\in \P_{\rm CH}$, such that the system of
         equations~\eqref{eq:Serre2mr} admits
	 a unique solution $U=(\zeta,v)^\top \in C^0([0,T_{\max});X^s)\cap C^1([0,T_{\max});X^{s-1})$ with the initial value $(\zeta,v)\id{t=0}=(\zeta_0,v_0)$
         and preserving the conditions~\eqref{CondDepth},\eqref{CondEllipticity} (with different lower bounds) for any $t\in [0,T_{\max})$.
         
   Moreover, there exists $T^{-1},C_0,\lambda=  C(M_{\rm CH},h_{01}^{-1},h_{02}^{-1},\big\vert U_0\big\vert_{X^{s}})$, independent of $\p\in\P_{\rm CH}$, such that $T_{\max}\geq T/\epsilon$ and one has the energy estimate
\[\forall\ 0\leq t\leq\frac{T}{\epsilon}\ , \qquad 
\big\vert U(t,\cdot)\big\vert_{X^{s}} \ + \ 
\big\vert \partial_t U(t,\cdot)\big\vert_{X^{s-1}}  \leq C_0 e^{\epsilon\lambda t}\ .
\]
If $T_{\max}<\infty$, one has
         \[ \vert U(t,\cdot)\vert_{X^{s}}\longrightarrow\infty\quad\hbox{as}\quad t\longrightarrow T_{\max},\]
         or one of the two conditions~\eqref{CondDepth},\eqref{CondEllipticity} ceases to be true as $t\longrightarrow T_{\max}$.
\end{Theorem}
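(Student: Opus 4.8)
The plan is to view~\eqref{eq:Serre2mr} as a quasilinear evolution system for $U=(\zeta,\bar v)^\top$ and to run the classical program for Green--Naghdi type models: reformulate the system so that energy estimates are available (the operator $\mathfrak T[\epsilon\zeta]$ supplying the gain of one derivative on $\bar v$, hence the $H^{s+1}$-part of the $X^s$-norm), establish uniform a priori bounds in $X^s$ over a time scale $1/\epsilon$, construct the solution through a mollified scheme, and finally obtain uniqueness, time-continuity and persistence of~\eqref{CondDepth}--\eqref{CondEllipticity}. The algebraic backbone is the identity $(\gamma+\delta)\,\dfrac{h_1h_2}{h_1+\gamma h_2}\id{\zeta=0}=1=q_1(0)$: it shows that at $\epsilon=0$ the system collapses to the constant-coefficient, globally well-posed linear system $\partial_t\zeta+\tfrac1{\gamma+\delta}\partial_x\bar v=0$, $\mathfrak T[0]\partial_t\bar v+(\gamma+\delta)\partial_x\zeta=0$, while every remaining term in~\eqref{eq:Serre2mr} carries an explicit factor $\epsilon$ --- which is exactly the source of the $O(1/\epsilon)$ existence time.

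A first step is to record, as preliminary lemmas (the content of Section~5), the mapping properties of $\mathfrak T[\epsilon\zeta]$ and $\mathfrak T[\epsilon\zeta]^{-1}$ that are \emph{uniform} with respect to $\p\in\P_{\rm CH}$: under~\eqref{CondEllipticity} and the lower bound $\nu\ge\nu_0>0$ built into $\P_{\rm CH}$, $\mathfrak T[\epsilon\zeta]$ is a self-adjoint isomorphism with $\big(\Lambda^sv,\mathfrak T[\epsilon\zeta]\Lambda^sv\big)$ uniformly equivalent to $\vert\Lambda^sv\vert_{L^2}^2+\mu\vert\Lambda^s\partial_xv\vert_{L^2}^2$, $\mathfrak T[\epsilon\zeta]^{-1}$ is bounded on $H^k$ with norm $\lesssim h_{02}^{-1}$, and $\sqrt\mu\,\partial_x\mathfrak T[\epsilon\zeta]^{-1}$ is bounded on $H^k$ with a bound depending only on $h_{02}^{-1}$ and $\nu_0^{-1}$ (using $q_1+\mu\nu q_2k^2\ge2\sqrt{q_1\nu q_2}\,\sqrt\mu\,\vert k\vert$). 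This is precisely where the Camassa--Holm scaling $\epsilon\le M\sqrt\mu$ enters: it prevents any negative power of $\mu$ from surfacing, and it allows $\partial_t\bar v$ to be recovered from the second equation of~\eqref{eq:Serre2mr} with values in the correct $\mu$-weighted space.

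The heart of the proof is the energy estimate. I would take an energy $E^s(U)$ whose quadratic part in $\bar v$ is $\big(\Lambda^s\bar v,\mathfrak T[\epsilon\zeta]\Lambda^s\bar v\big)$ and whose quadratic part in $\zeta$ carries a positive, $\zeta$-dependent weight (equal to $(\gamma+\delta)^2$ at $\zeta=0$) tailored to symmetrize the first-order coupling, so that $E^s(U)\simeq\vert U\vert_{X^s}^2$ under~\eqref{CondDepth}--\eqref{CondEllipticity}. Applying $\Lambda^s$ to~\eqref{eq:Serre2mr}, pairing the first equation with the weighted $\Lambda^s\zeta$ and the second with $\Lambda^s\bar v$, and using the self-adjointness of $\mathfrak T[\epsilon\zeta]$, one expresses $\tfrac{d}{dt}E^s(U)$ as a sum of: symmetric coupling terms whose leading parts combine into a perfect $x$-derivative and integrate to zero (thanks to the backbone identity); commutator terms $[\Lambda^s,\cdot]$, controlled by the product and commutator estimates of Appendix~A; the term $\tfrac12\big(\Lambda^s\bar v,(\partial_t\mathfrak T[\epsilon\zeta])\Lambda^s\bar v\big)$, which carries an explicit $\epsilon$ and is dealt with after substituting $\partial_t\zeta=-\partial_x\big(\tfrac{h_1h_2}{h_1+\gamma h_2}\bar v\big)$ from the first equation, using~\eqref{CondDepth} and the resulting $L^\infty$-bound on $\partial_t\zeta$; and the remaining, genuinely nonlinear terms. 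The only nonlinear term requiring real care is the quasilinear second-order contribution $\mu\epsilon\,\partial_x\big((\partial_x\bar v)^2\big)$: after one integration by parts it becomes $\mu\epsilon\,c\,\big(\Lambda^s[(\partial_x\bar v)^2],\Lambda^s\partial_x\bar v\big)$ and, via Appendix~A and the equivalence $\mu\vert\bar v\vert_{H^{s+1}}^2\lesssim E^s(U)$, is bounded by $\epsilon\,C(E^s(U))\,E^s(U)$ --- the two powers of $\mu$ absorbing the two inverse powers produced by the top-order norm of $\bar v$. Altogether one obtains
\[
\frac{d}{dt}E^s(U)\ \le\ \epsilon\,C\big(M_{\rm CH},h_{01}^{-1},h_{02}^{-1},E^s(U)\big),
\]
and a bootstrap argument then gives $T_{\max}\ge T/\epsilon$ with $T$ depending only on the stated quantities, together with the announced bound $\vert U(t)\vert_{X^s}+\vert\partial_tU(t)\vert_{X^{s-1}}\le C_0e^{\epsilon\lambda t}$ on $[0,T/\epsilon]$ (the control of $\partial_tU$ being read off the equations via $\mathfrak T[\epsilon\zeta]^{-1}$). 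I expect this estimate to be the main obstacle: the symmetrizing cancellation must be made to work using the explicit values of $\nu,\kappa_1,\kappa_2,\varsigma$ and the precise relations among the coefficients of~\eqref{eq:Serre2mr}, and everything must be kept uniform over $\P_{\rm CH}$ --- the delicate items being $\partial_t\mathfrak T[\epsilon\zeta]$, the commutators $[\Lambda^s,\mathfrak T[\epsilon\zeta]]$, and the term $\mu\epsilon\,\partial_x((\partial_x\bar v)^2)$, none of which may generate a factor $\mu^{-1}$ or $\epsilon^{-1}$.

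For the construction itself I would mollify~\eqref{eq:Serre2mr} with a Friedrichs mollifier $J_\iota$, solve the resulting ODE in $X^s$ by Cauchy--Lipschitz, and note that the energy estimate above, carried out uniformly in $\iota$, yields a common existence time and uniform $X^s$ bounds; the family $(U^\iota)$ then converges, strongly in $X^{s'}$ for $s'<s$ and weakly-$\ast$ in $L^\infty([0,T);X^s)$, to a solution. Uniqueness follows from a stability estimate for the difference of two solutions in the lower norm $X^{s-1}$ (or $X^0$), and the same estimate, together with a Bona--Smith-type regularization argument, upgrades weak time-continuity to $U\in C^0([0,T_{\max});X^s)\cap C^1([0,T_{\max});X^{s-1})$. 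Finally, since $\zeta\in C^0([0,T_{\max});H^s)$ with $s>1/2$ and $\epsilon\sup_{t}\vert\partial_t\zeta(t,\cdot)\vert_{L^\infty}$ is bounded in terms of $\vert U_0\vert_{X^s}$, the quantity $\epsilon\big(\zeta(t)-\zeta_0\big)$ remains as small as one wishes in $L^\infty$ for $t\le T/\epsilon$ with $T$ small, so~\eqref{CondDepth}--\eqref{CondEllipticity} persist (with slightly degraded lower bounds); the blow-up alternative is then the standard consequence of the a priori bound and this persistence.
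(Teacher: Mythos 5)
Your proposal is correct, and its analytic core --- the symmetrizer built from $\mathfrak T[\epsilon\zeta]$ together with a positive weight on $\zeta$ (equal to $(\gamma+\delta)^2$ at $\zeta=0$), the uniform coercivity and inversion properties of $\mathfrak T$ under~\eqref{CondEllipticity} and $\nu\ge\nu_0$, the treatment of $\partial_t\mathfrak T[\epsilon\zeta]$ via the exact first equation, the commutators $[\Lambda^s,\cdot]$, and the absorption of $\mu\epsilon\,\partial_x((\partial_x \b v)^2)$ by the $\mu$-weighted top norm --- is exactly what the paper does in Sections~5 and~6. Where you diverge is the construction scheme: you mollify the \emph{nonlinear} system, pass to the limit using the uniform-in-$\iota$ energy bound, and invoke a Bona--Smith regularization to recover $C^0([0,T_{\max});X^s)$ continuity, with uniqueness from a difference estimate in a lower norm. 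The paper instead first proves well-posedness of the \emph{linear} system with frozen coefficients (Proposition~\ref{ESprop}, itself by Friedrichs mollification), then runs a Picard iteration $\partial_tU^{n+1}+A[U^n]\partial_xU^{n+1}=0$, controls the sequence uniformly by induction (including persistence of~\eqref{CondDepth}--\eqref{CondEllipticity} via the $L^\infty$ bound on $\partial_t\zeta^{n+1}$, as in your last paragraph), and shows convergence by a telescoping $X^0$ contraction estimate on $V^n=U^{n+1}-U^n$ followed by interpolation. The two routes are equivalent in strength; the paper's has the practical advantage that the stability estimate for differences of solutions (Proposition~\ref{prop:stability}) is needed anyway for the full justification in Section~8, so uniqueness and Hadamard stability come for free from machinery already built, whereas your route economizes on the separate linear theory at the cost of the extra Bona--Smith step. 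One small point of care if you pursue the lower-norm uniqueness: the paper's $X^s$ stability estimate requires one of the two solutions to lie in $X^{s+1}$, so for uniqueness of two solutions merely in $X^s$ you do indeed need the $X^0$ (or $X^{s-1}$) version, as you propose --- this is consistent with the paper's Lemma~\ref{Lem:L2}.
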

\begin{Theorem}[Stability] \label{th:stabilityWPmr}
Let $ (\mu,\epsilon,\delta,\gamma,\bo) \in \P_{\rm CH}$ and $s\geq s_0+1$ with $s_0>1/2$, and assume
	 $U_{0,1}=(\zeta_{0,1},v_{0,1})^\top\in X^{s}$ and $U_{0,2}=(\zeta_{0,2},v_{0,2})^\top\in X^{s+1}$ satisfies~\eqref{CondDepth},\eqref{CondEllipticity}. Denote $U_j$ the solution to~\eqref{eq:Serre2mr} with $U_j\id{t=0}=U_{0,j}$.Then there exists $T^{-1},\lambda,C_0= C(M_{\rm CH},h_{01}^{-1},h_{02}^{-1},\big\vert U_{0,1}\big\vert_{X^s},\vert U_{0,2}\vert_{X^{s+1}})$ such that
	  \begin{equation*}
	 	\forall t\in [0,\frac{T}{\epsilon}],\qquad
	 	\big\vert (U_1-U_2)(t,\cdot)\big\vert_{X^s} \leq C_0 e^{\epsilon\lambda t} \big\vert U_{1,0}-U_{2,0}\big\vert_{X^s}.
	 \end{equation*}
\end{Theorem}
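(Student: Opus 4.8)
The plan is to derive the stability estimate from the same energy method that underlies Theorem~\ref{thbi1mr}, applied to the equation satisfied by the difference $W \equiv U_1 - U_2 = (\zeta_1-\zeta_2, v_1-v_2)^\top$. First I would write the system~\eqref{eq:Serre2mr} in the quasilinear form $\partial_t U + A[U]\partial_x U + B[U] U = 0$ (or whatever compact form Section~7 introduces), so that subtracting the two equations for $U_1$ and $U_2$ yields $\partial_t W + A[U_1]\partial_x W + B[U_1] W = \big(A[U_2]-A[U_1]\big)\partial_x U_2 + \big(B[U_2]-B[U_1]\big)U_2 \equiv F$, where the right-hand side $F$ is quadratic in $W$ in the sense that $\vert F \vert$ is controlled by $\vert W\vert_{X^s}$ times a constant depending on $\vert U_1\vert_{X^s}$ and $\vert U_2\vert_{X^{s+1}}$ — this is exactly why $U_{0,2}$ must be taken one derivative smoother, since $F$ contains $\partial_x U_2$ (and terms like $\partial_x^2 U_2$ coming from the operator $\mathfrak T$), which costs a derivative that $W$ cannot afford to lose.

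Next I would introduce the same symmetrizer / modified energy functional $E^s(W)$ used in the proof of Theorem~\ref{thbi1mr} — essentially $\vert \Lambda^s W\vert$ measured in a norm weighted by the operator $\mathfrak T[\epsilon\zeta_1]$ and the other coefficients evaluated at $U_1$ — which is equivalent to $\vert W\vert_{X^s}^2$ uniformly on $\P_{\rm CH}$ thanks to the ellipticity condition (H2) and the depth condition (H1), both of which are propagated for $U_1$ by Theorem~\ref{thbi1mr} on $[0,T/\epsilon]$. Differentiating $E^s(W)$ in time, the antisymmetric (transport) part contributes only lower-order commutator terms bounded by $\epsilon\, C_0\, E^s(W)$ exactly as in the existence proof, the time-derivative of the coefficients hitting the energy weight produces another $\epsilon\, C_0\, E^s(W)$ term (here $\partial_t\zeta_1$ is controlled via the equation and Theorem~\ref{thbi1mr}), and the only genuinely new contribution is the source term $\big(\Lambda^s F, \Lambda^s W\big)$, which by the product and commutator estimates of Appendix~A is bounded by $C\big(M_{\rm CH},h_{01}^{-1},h_{02}^{-1},\vert U_1\vert_{X^s},\vert U_2\vert_{X^{s+1}}\big)\,\vert W\vert_{X^s}^2$. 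Collecting everything gives $\frac{d}{dt}E^s(W) \leq \epsilon\,\lambda\, E^s(W)$, and Grönwall's lemma together with the norm equivalence yields the claimed bound $\vert W(t)\vert_{X^s}\leq C_0 e^{\epsilon\lambda t}\vert W(0)\vert_{X^s}$ on $[0,T/\epsilon]$.

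Two points deserve care. The operator $\mathfrak T[\epsilon\zeta]$ is applied to $\partial_t\bar v + \epsilon\varsigma\bar v\partial_x\bar v$, so to isolate $\partial_t W$ one must invert $\mathfrak T[\epsilon\zeta_1]$ and keep track of the difference $\mathfrak T[\epsilon\zeta_1]^{-1} - \mathfrak T[\epsilon\zeta_2]^{-1}$; this difference is again quadratic in $W$ (with one derivative landing on $\zeta_2$, hence the $X^{s+1}$ requirement), and its mapping properties on $H^s$ follow from the elliptic estimates for $\mathfrak T$ established in Section~5. The other point is that all constants must be shown independent of $\p\in\P_{\rm CH}$: this is automatic once one checks that every application of the Appendix~A estimates only uses $M_{\rm CH}$, the lower bounds $h_{01},h_{02}$ for $U_1$, and the $X^s$ (resp.\ $X^{s+1}$) norms of $U_1$ (resp.\ $U_2$), all of which are uniformly controlled on $[0,T/\epsilon]$ by Theorem~\ref{thbi1mr}.

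\medskip

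\noindent\textbf{Main obstacle.} The delicate step is handling the source term $F$ without losing a derivative on $W$: the terms where $\mathfrak T$ or $\partial_x A[\cdot]$ differentiate the coefficients produce, a priori, $\partial_x^2(\zeta_1-\zeta_2)$ or $\partial_x^2 v_2$ inside an $H^s$ pairing against $\Lambda^s W$. One must check that every such top-order contribution either carries the smoother factor $U_2\in X^{s+1}$ (so the extra derivative falls on $\zeta_2$ or $v_2$, not on $W$), or else can be integrated by parts / symmetrized against the $\mathfrak T$-weighted energy so that the genuinely dangerous term cancels — this is the bi-fluidic analogue of the standard Green--Naghdi symmetrization and is where the structure of $\mathfrak T[\epsilon\zeta]$ (self-adjoint, of the form $q_1 - \mu\nu\partial_x q_2\partial_x$) is essential. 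Once this cancellation is in place the rest is a routine Grönwall argument.
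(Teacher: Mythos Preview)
Your proposal is correct and follows essentially the same route as the paper: the paper writes the difference equation by first applying the symmetrizer $S[U_i]$, subtracting, then applying $S^{-1}[U_1]$ to obtain a linear system in $V=U_1-U_2$ with a forcing $F$ built from $(S[U_1]-S[U_2])$ and $(\Sigma[U_1]-\Sigma[U_2])$ acting on $\partial_x U_2$ and $\partial_t U_2$, and then invokes the linear $X^s$ energy estimate (Lemma~\ref{Lem:Hs}) after bounding $\big(\Lambda^s F, S[U_1]\Lambda^s V\big)$ via a dedicated difference lemma (Lemma~\ref{lemst}); the result is then Proposition~\ref{prop:stability} with $F_1=F_2=0$, combined with Theorem~\ref{thbi1} for the uniform control of $U_1,U_2$. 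One small point: your source-term bound should carry an explicit factor of $\epsilon$ (it comes from $q_i(\epsilon\zeta_1)-q_i(\epsilon\zeta_2)=\epsilon\kappa_i(\zeta_1-\zeta_2)$ and similar identities), which is what makes the final Gr\"onwall rate $\epsilon\lambda$ rather than $\lambda$; without it the estimate would only hold on $[0,T]$, not $[0,T/\epsilon]$.
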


Finally, the following ``convergence result'' states that the solutions of our system approach the solutions of the full Euler system, with the accuracy predicted by Theorem~\ref{th:ConsSerreVarmr}.

\begin{Theorem}[Convergence] \label{th:convergencemr}
Let $\p\equiv (\mu,\epsilon,\delta,\gamma,\bo)\in \P_{\rm CH}$ and $s\geq s_0+1$ with $s_0>1/2$, and $U^0\equiv(\zeta^0,\psi^0)^\top\in H^{s+N}$, $N$ sufficiently large, satisfy the hypotheses of Theorem~5 in~\cite{Lannes13}\footnote{in particular, 
it satisfies a stability criterion, which in the shallow water configuration ($\mu\ll1$), can be roughly expressed as 
$\Upsilon\equiv\frac{\epsilon^4}{4}\bo  \frac{\gamma^2(\delta+\gamma)^2}{(1+\gamma)^6}$ is sufficiently small ; see section~5.1.3 of~\cite{Lannes13}.}, as well as~\eqref{CondDepth},\eqref{CondEllipticity}. Then there exists $C,T>0$, independent of $\p$, such that
\begin{itemize}
\item There exists a unique solution $U\equiv (\zeta,\psi)^\top$ to the full Euler system~\eqref{eqn:EulerCompletAdim}, defined on $[0,T]$ and with initial data $(\zeta^0,\psi^0)^\top$ (provided by Theorem~5 in~\cite{Lannes13});
\item There exists a unique solution $U_a\equiv (\zeta_a,v_a)^\top$ to our new model~\eqref{eq:Serre2mr}, defined on $[0,T]$ and with initial data $(\zeta^0,v^0)^\top$ (provided by Theorem~\ref{thbi1mr});
\item With $\bar v\equiv\bar v[\zeta,\psi]$, defined as in~\eqref{eqn:defbarvmr},one has
\[ \forall t\in[0,T], \quad \big\vert (\zeta,\bar v)-(\zeta_a,v_a) \big\vert_{L^\infty([0,t];X^s)} \ \leq \ C\ \mu^2\  t.\]
\end{itemize}
\end{Theorem}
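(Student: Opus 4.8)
The plan is to assemble three results already available: the well-posedness of the full Euler system in high Sobolev regularity (Theorem~5 of~\cite{Lannes13}), the consistency estimate of Theorem~\ref{th:ConsSerreVarmr}, and the well-posedness and stability of our model (Theorems~\ref{thbi1mr}--\ref{th:stabilityWPmr}). First I would apply Theorem~5 of~\cite{Lannes13} with $s+N$ in place of $s$, $N$ large, to obtain a unique solution $U=(\zeta,\psi)^\top$ of~\eqref{eqn:EulerCompletAdim} on some interval $[0,T_1]$, with $T_1^{-1}$ and $\big\Vert(\zeta,\partial_x\psi)\big\Vert_{L^\infty([0,T_1];H^{s+N})}$ bounded uniformly in $\p\in\P_{\rm CH}$, and satisfying the non-cavitation condition~(H1). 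The time derivatives $\partial_t\zeta$, $\partial_t\partial_x\psi$ (and higher ones) are then recovered from the equations~\eqref{eqn:EulerCompletAdim} together with the mapping estimates for $G^\mu$ and $H^{\mu,\delta}$ recalled in Section~5, so that, for $N$ large enough, $U$ fulfils all the regularity assumptions of Theorem~\ref{th:ConsSerreVarmr} on $[0,T_1]$, uniformly in $\p$. Setting $\bar v\equiv\bar v[\zeta,\psi]$ as in~\eqref{eqn:defbarvmr}, the same estimates give $(\zeta,\bar v)^\top\in L^\infty([0,T_1];X^{s+1})$ with uniform bounds, and $(\zeta^0,v^0)^\top$, $v^0\equiv\bar v[\zeta^0,\psi^0]$, lies in $X^s$ and satisfies~(H1),~(H2) (the latter being precisely the hypothesis made on the initial data). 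Theorem~\ref{thbi1mr} then yields a unique solution $U_a=(\zeta_a,v_a)^\top$ of~\eqref{eq:Serre2mr} with data $(\zeta^0,v^0)^\top$, defined on $[0,T_{\max})$ with $T_{\max}\geq T_2/\epsilon\geq T_2$ for some uniform $T_2>0$, preserving~(H1),~(H2) along the flow. I would then set $T\equiv\min(T_1,T_2)$.

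On $[0,T]$, Theorem~\ref{th:ConsSerreVarmr} tells us that $(\zeta,\bar v)^\top$ solves~\eqref{eq:Serre2mr} up to a remainder $R$ with $\big\Vert R\big\Vert_{L^\infty([0,T];H^s)}\leq(\mu^2+\mu\epsilon^2)C_1$; since $\p\in\P_{\rm CH}$ forces $\epsilon\leq M\sqrt{\mu}$, this is $\leq C\mu^2$ with $C$ uniform in $\p$. It remains to compare $(\zeta,\bar v)^\top$ with the exact solution $(\zeta_a,v_a)^\top$, which share the same initial datum. Writing $W\equiv(\zeta-\zeta_a,\bar v-v_a)^\top$ and subtracting the two versions of~\eqref{eq:Serre2mr}, one sees that $W$ solves a linear system of the kind treated in Sections~6--7 — the linearization of~\eqref{eq:Serre2mr}, with a source term built from $R$ — to which I would apply the symmetrizer-based energy estimate of the proof of Theorem~\ref{th:stabilityWPmr}, with $(\zeta,\bar v)^\top$ in the role of the more regular reference solution (legitimate since it is bounded in $X^{s+1}$). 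This gives, for $t\in[0,T]$,
\[ \big\vert W(t,\cdot)\big\vert_{X^s}\ \leq\ C_0\, e^{\epsilon\lambda t}\Big(\big\vert W(0,\cdot)\big\vert_{X^s}\ +\ \int_0^t\big\Vert R(\tau,\cdot)\big\Vert_{H^s}\,d\tau\Big), \]
with $C_0,\lambda$ uniform in $\p$. Since $W(0,\cdot)=0$, $e^{\epsilon\lambda t}\leq e^{\lambda T}=O(1)$ on $[0,T]$, and $\big\Vert R(\tau,\cdot)\big\Vert_{H^s}\leq C\mu^2$, I would conclude $\big\vert W(t,\cdot)\big\vert_{X^s}\leq C\mu^2\, t$, which is the claim.

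The hard part will be this comparison step: estimating the difference of two solutions of the quasilinear system~\eqref{eq:Serre2mr}, with a source, without losing a derivative. The loss is to be absorbed exactly as in Theorem~\ref{th:stabilityWPmr}, by using that the reference solution $(\zeta,\bar v)^\top$ is one notch more regular, which hinges on the coercivity of the natural energy and hence on the ellipticity~(H2) for $\mathfrak T$ holding along both flows — granted for $U_a$ by Theorem~\ref{thbi1mr}, and for $(\zeta,\bar v)^\top$ by continuity from its value at $t=0$, after shrinking $T$ if needed. A secondary technical point is the bookkeeping of the first step: one must count precisely how many derivatives of $(\zeta,\psi)$, and of their time derivatives recovered from~\eqref{eqn:EulerCompletAdim}, are consumed by Theorem~\ref{th:ConsSerreVarmr} in order to fix $N$, and verify that every constant thus generated depends only on $M_{\rm CH}$ and on the (uniform) high Sobolev norm of the initial data, so that $C$ and $T$ are truly independent of $\p$.
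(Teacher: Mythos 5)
Your proposal is correct and follows essentially the same route as the paper: existence of both solutions from Theorem~5 of~\cite{Lannes13} and Theorem~\ref{thbi1mr}, the consistency estimate of Theorem~\ref{th:ConsSerreVarmr} (with $\mu\epsilon^2\lesssim\mu^2$ in the Camassa-Holm regime), and then the stability estimate of Proposition~\ref{prop:stability} applied to the difference, with the residual $R=(r_1,r_2)^\top$ fed in as the source term (the paper takes $F=(r_1,\mathfrak T[\epsilon\zeta]^{-1}r_2)^\top$ to match the quasilinear form~\eqref{condensedeq}). The extra bookkeeping you describe (recovering time derivatives from~\eqref{eqn:EulerCompletAdim}, fixing $N$, uniformity in $\p$) is exactly what the paper leaves implicit in the phrase ``if $N$ is large enough''.
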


\begin{Remark}
The above proposition is valid for time interval $t\in[0,T/\epsilon]$ with $T$ bounded from below, independently of $\p\in\P_{\rm CH}$, provided that a stronger criterion is satisfied by the initial data; see criterion (5.5) and Theorem~6 in~\cite{Lannes13}.
\end{Remark}

\begin{Remark}
We would like to emphasize here that, contrarily to the full Euler system, our model is well-posed even in the absence of surface tension (the only modification consists in setting $\bo^{-1}=0$ in the constants $\nu,\kappa_1,\kappa_2,\varsigma$). Thus the subtle regularizing effect of surface tension highlighted by Lannes in~\cite{Lannes13} does not play a role in our model.
\end{Remark}

We conclude this section by asserting  that our new model allows to fully justify any well-posed asymptotic model, consistent with our model~\eqref{eq:Serre2mr} in the Camassa-Holm regime.

\begin{Proposition}\label{prop:convergence2mr}
Consider (S) a system of equations such that
\begin{itemize}
\item The Cauchy problem for (S) is well-posed in $X^r$, $r$ sufficiently large.
\item For $U^0\equiv(\zeta^0,v^0)^\top\in H^{s+N}$, $N$ sufficiently large, the solutions of (S) satisfy our model~\eqref{eq:Serre2mr}, up to a remainder $R$ of size $\O(\iota)$ in $L^\infty([0,T];H^s\times H^s)$.
\end{itemize}

Then under the assumptions of Theorem~\ref{th:convergencemr}, the difference between the solution of the full Euler system~\eqref{eqn:EulerCompletAdim}, $U\equiv (\zeta,\psi)^\top$, and the solution of the asymptotic model (S) with corresponding initial data, $U_a\equiv (\zeta_a,v_a)^\top$, is estimated as follows:
\[ \big\vert (\zeta,\bar v[\zeta,\psi])-(\zeta_a,v_a) \big\vert_{L^\infty([0,t];X^s)}\leq C(\iota+\mu^2) t.\]
\end{Proposition}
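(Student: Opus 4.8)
The plan is to deduce this statement from Theorem~\ref{th:convergencemr} by inserting the exact solution of our model~\eqref{eq:Serre2mr} as an intermediary, the only extra ingredient being a stability estimate for~\eqref{eq:Serre2mr} with respect to an external source term — which is nothing but the a~priori energy estimate already used in Section~7 to prove Theorems~\ref{thbi1mr} and~\ref{th:stabilityWPmr}, now run with a nonzero right-hand side.

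First I would introduce the exact solution $U_m\equiv(\zeta_m,v_m)^\top$ of~\eqref{eq:Serre2mr} with initial data $(\zeta^0,v^0)^\top$, provided on a time interval $[0,T]$ by Theorem~\ref{thbi1mr}; this is precisely the intermediary solution already appearing in Theorem~\ref{th:convergencemr}, so that, with $\bar v\equiv\bar v[\zeta,\psi]$ defined by~\eqref{eqn:defbarvmr},
\[ \forall t\in[0,T],\qquad \big\vert (\zeta,\bar v)-(\zeta_m,v_m)\big\vert_{L^\infty([0,t];X^s)}\ \leq\ C\,\mu^2\,t .\]
On the other hand, by the assumed well-posedness of (S) in $X^r$ with $r$ large and $U^0\in H^{s+N}$, the solution $U_a\equiv(\zeta_a,v_a)^\top$ of (S) with the same initial data belongs to $C^0([0,T];X^r)\subset C^0([0,T];X^s)$ (shrinking $T$ if necessary); shrinking $T$ once more — which is harmless — one may assume that $U_a$ keeps satisfying~\eqref{CondDepth} and~\eqref{CondEllipticity} on $[0,T]$, by continuity from the initial data. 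By hypothesis, $U_a$ solves~\eqref{eq:Serre2mr} up to a remainder $R$ with $\Vert R\Vert_{L^\infty([0,T];H^s\times H^s)}=\O(\iota)$. It then suffices to prove $\vert(\zeta_m,v_m)-(\zeta_a,v_a)\vert_{X^s}\lesssim\iota\,t$ on $[0,T]$ and to conclude by the triangle inequality.

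For this last estimate I would set $W\equiv U_m-U_a$ and subtract the equation satisfied by $U_a$ (which is~\eqref{eq:Serre2mr} with its right-hand side augmented by $R$) from the one satisfied by $U_m$. Then $W$ solves a system with the same quasilinear structure as the linearization of~\eqref{eq:Serre2mr} around $U_a$, with an extra source term $-R$ and extra terms quadratic in $W$ whose coefficients are controlled by $\vert U_m\vert_{X^r}+\vert U_a\vert_{X^r}$. This is exactly the configuration treated by the energy method of Section~7: the energy functional $E^s(W)\simeq\vert W\vert_{X^s}^2$ constructed there obeys an inequality of the form $\frac{d}{dt}E^s(W)\leq\epsilon\lambda\,E^s(W)+C\,\vert W\vert_{X^s}\,\Vert R\Vert_{H^s}$, the source being harmless since it enters linearly and is paired against a quantity controlled in $H^s$ (so that $H^s\times H^s$ control of $R$ suffices). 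Since $W\id{t=0}=0$, Gronwall's lemma yields $\vert W(t)\vert_{X^s}\leq C\,e^{\epsilon\lambda t}\int_0^t\Vert R(\tau)\Vert_{H^s}\,d\tau\lesssim\iota\,t$ for $t\in[0,T]$, using that $\epsilon$ and $T$ are bounded. Combining with the previous bound through the triangle inequality gives $\vert(\zeta,\bar v[\zeta,\psi])-(\zeta_a,v_a)\vert_{X^s}\leq C(\iota+\mu^2)\,t$.

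I do not expect a genuine obstacle: the argument is essentially a two-step triangle inequality, and its only analytic content — the stability of~\eqref{eq:Serre2mr} under a source term — is the very mechanism already underlying the passage from the consistency Theorem~\ref{th:ConsSerreVarmr} and the well-posedness Theorem~\ref{thbi1mr} to the convergence Theorem~\ref{th:convergencemr} (there, the ``approximate solution'' is the full Euler solution $(\zeta,\bar v[\zeta,\psi])$ and $\iota=\mu^2$). The cleanest write-up isolates this source-term stability estimate as a single lemma and applies it twice. The only points requiring a little care are the regularity of $U_a$ (hence the $H^{s+N}$ data and $X^r$-well-posedness of (S)) and the persistence of~\eqref{CondDepth}--\eqref{CondEllipticity} for $U_a$, both of which are ensured on a possibly smaller interval $[0,T]$.
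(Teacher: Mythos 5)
Your argument is correct and is essentially the paper's own: the paper justifies Proposition~\ref{prop:convergence2mr} by the same two-step triangle inequality through the exact solution of~\eqref{eq:Serre2mr}, invoking Theorem~\ref{th:convergencemr} for one leg and the source-term stability estimate for the other (this is carried out explicitly for the Constantin--Lannes approximation in Section~\ref{sec:fulljustification}). The ``stability under a right-hand side'' lemma you propose to isolate is precisely Proposition~\ref{prop:stability}, applied with $F_1=(r_1,\mfT[\epsilon\zeta_a]^{-1}r_2)^\top$ and $F_2=0$, and your remark that $H^s\times H^s$ control of $R$ suffices (because $\mfT^{-1}$ maps $H^s$ boundedly into $H^{s+1}_\mu$) is exactly the point that makes this work.
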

This procedure of full justification is precisely described in section~\ref{sec:fulljustification}, and applied to the so-called Constantin-Lannes decoupled approximation model.

\section{Construction of the model}
This section is dedicated to the construction of the model we study. 
The key ingredient for constructing shallow water asymptotic models lies in the expansion of the Dirichlet-to-Neumann operators, with respect to the shallowness parameter, $\mu$; see Proposition~\ref{prop:expGH}, below. Thanks to such an expansion, one is able to obtain the so-called Green-Naghdi model (for internal waves), displayed in~\eqref{eqn:GreenNaghdiMean}. This model has been introduced by one of the author in~\cite{Duchene13}, and generalized in~\cite{DucheneIsrawiTalhouk13}. It is justified by a consistency result recalled in Proposition~\ref{prop:ConsGreenNaghdiMean}: roughly speaking, any solution of the full Euler system satisfies the Green-Naghdi asymptotic model up to a small remainder, of size $\O(\mu^2)$. 

In a second step, we use the additional assumption of the Camassa-Holm regime, $\epsilon=\O(\sqrt \mu)$. Simplified models, with the same order of precision may then be deduced. We use several transformations which allow to obtain a well-prepared model: system~\eqref{eq:Serre2}, presented in the previous section~\eqref{eq:Serre2mr}. The justification of this model, in the sense of consistency, is stated in Theorem~\ref{th:ConsSerreVar}. 
The stronger results (well-posedness, stability, convergence) described in Section~\ref{mr} are proved in subsequent sections.

\subsection{The Green-Naghdi model}
The following Proposition is given in~\cite{Duchene13} (see also~\cite{DucheneIsrawiTalhouk13}), extending the result of~\cite{BonaLannesSaut08}.
\begin{Proposition}[Expansion of the Dirichlet-Neumann operators]\label{prop:expGH}
Let $s\geq s_0+1/2,\ s_0>1/2$. Let $\psi$ be such that $\partial_x \psi \in H^{s+11/2}(\RR)$, and $\zeta \in H^{s+9/2}(\RR)$. Let $h_1=1-\epsilon\zeta$ and $h_2=1/\delta+\epsilon\zeta$ such that there exists $h>0$ with $h_1,h_2\geq h>0$.
Then
\begin{align}\label{eqn:expG0}
\big\vert \frac1\mu G^{\mu}\psi  \ -\ \partial_x(h_1\partial_x\psi) \big\vert_{H^s} \ \leq \ \mu\ C_1,\\
\label{eqn:expG}
\big\vert \frac1\mu G^{\mu}\psi  \ -\ \partial_x(h_1\partial_x\psi)\ -\ \mu\frac13\partial_x^2(h_1^3\partial_x^2\psi)\big\vert_{H^s} \ \leq \ \mu^2\ C_3,\\
\label{eqn:expH0}
 \big\vert H^{\mu,\delta}\psi  +\frac{h_1}{h_2}\partial_x\psi  \big\vert_{H^s} \ \leq \ \mu\ C_0,\\
 \label{eqn:expH}
 \big\vert H^{\mu,\delta}\psi  +\frac{h_1}{h_2}\partial_x\psi -\frac\mu{3h_2}\partial_x\Big(h_2^3\partial_x\big(\frac{h_1}{h_2}\partial_x\psi\big)-h_1^3\partial_x^2\psi\Big) \big\vert_{H^s} \ \leq \ \mu^2\ C_2,
 \end{align}
with $C_j=C(\frac1{h},\mu_{\max},\frac1{\delta_{\min}},\delta_{\max},\big\vert \zeta \big\vert_{H^{s+3/2+j}},\big\vert \partial_x\psi \big\vert_{H^{s+5/2+j}} )$. The estimates are uniform with respect to the parameters $\epsilon\in [0,1],\mu \in[0,\mu_{\max}]$, $\delta\in (\delta_{\min},\delta_{\max})$.
\end{Proposition}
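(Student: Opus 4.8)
The plan is to reduce the two elliptic problems \eqref{eqn:Laplace1}--\eqref{eqn:Laplace2} to boundary value problems on fixed flat strips, build a formal power series in $\mu$ for the transformed potentials by solving a cascade of vertical (one-dimensional) equations, and then read off the expansions of $G^\mu$ and $H^{\mu,\delta}$ from the boundary traces, controlling the error by a $\mu$-weighted energy estimate. Concretely, for each layer I would introduce the diffeomorphism sending a flat strip onto the fluid domain --- e.g. $\Sigma_1(x,z)=(x,\epsilon\zeta(x)+(1-\epsilon\zeta(x))z)$ from $\RR\times(0,1)$ onto $\Omega_1$, and the analogous $\Sigma_2$ onto $\Omega_2$ --- and set $\varphi_i=\phi_i\circ\Sigma_i$. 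Then $(\mu\partial_x^2+\partial_z^2)\phi_i=0$ becomes a variational elliptic equation $\nabla_{x,z}\cdot\big(P_i[\epsilon\zeta,\mu]\nabla_{x,z}\varphi_i\big)=0$ on the flat strip, where the matrix $P_i$ has the crucial structure that its only coefficient not of order $\mu$ is the one multiplying $\partial_z^2$ (namely $1/h_i$). The boundary conditions (homogeneous Neumann at the rigid lid and at the bottom, Dirichlet $\varphi_1=\psi$ at the interface, the transmission condition for $\varphi_2$) transport to the flat boundaries, and existence, uniqueness and the basic $\mu$-weighted energy estimate follow from Lax--Milgram together with tangential elliptic regularity, exactly as in \cite{Lannes}, using $h_1,h_2\geq h>0$.

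Next I would construct the formal expansion $\varphi_i=\varphi_i^{(0)}+\mu\varphi_i^{(1)}+\mu^2\varphi_i^{(2)}+\cdots$: inserting it into the transformed equation and identifying powers of $\mu$ yields at each order $k$ a two-point boundary value problem $\partial_z^2\varphi_i^{(k)}=F_i^{(k)}$, with $F_i^{(k)}$ an explicit differential expression in $x$ of $\varphi_i^{(0)},\dots,\varphi_i^{(k-1)}$ and of $\zeta$, solved by two integrations in $z$ with the transported boundary data. In particular $\varphi_1^{(0)}=\psi$ is independent of $z$, $\varphi_1^{(1)}$ is an explicit quadratic polynomial in $z$ whose coefficients involve $\partial_x(h_1\partial_x\psi)$, and likewise for the lower layer (producing the $-\tfrac{h_1}{h_2}\partial_x\psi$ term). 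Since each source $F_i^{(k)}$ costs two extra $x$-derivatives on the data while the vertical integrations are harmless, the approximation of order $\mu^N$ requires $\zeta\in H^{s+3/2+N}$ and $\partial_x\psi\in H^{s+5/2+N}$; this accounts for the shifted Sobolev indices appearing in the constants $C_j$, and I would keep track of it throughout using the product and commutator estimates of Appendix~A.

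Then I would estimate the remainder $R_i^{(N)}=\varphi_i-\sum_{k=0}^N\mu^k\varphi_i^{(k)}$, which solves the same elliptic problem with a source of size $\O(\mu^{N+1})$ and homogeneous boundary data: coercivity of $P_i$ controls $\sqrt\mu\,\partial_x R_i^{(N)}$ and $\partial_z R_i^{(N)}$ in $L^2$, and tangential elliptic regularity then gives $\big\Vert R_i^{(N)}\big\Vert\lesssim\mu^{N+1}C_j$ in the relevant Sobolev norm, uniformly for $\epsilon\in[0,1]$, $\mu\in[0,\mu_{\max}]$, $\delta\in(\delta_{\min},\delta_{\max})$. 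Finally, by the very definitions of the Dirichlet--Neumann operators, $\tfrac1\mu G^\mu\psi$ and $H^{\mu,\delta}\psi$ are explicit combinations of $\partial_x\varphi_i$ and $\partial_z\varphi_i$ evaluated at the flat interface $z=0$ (with the $\partial_z$ part divided by $\mu$ in the case of $G^\mu$); substituting the expansion of $\varphi_i$ and collecting first the $\mu^0$ and then the $\mu^1$ contributions yields exactly $\partial_x(h_1\partial_x\psi)$, $-\tfrac{h_1}{h_2}\partial_x\psi$ and the stated first corrections, with the error inherited from $R_i^{(N)}$ --- taking $N=1$ for \eqref{eqn:expG0} and \eqref{eqn:expH0}, and $N=2$ for \eqref{eqn:expG} and \eqref{eqn:expH}.

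The formal cascade is routine; the real work lies in the remainder estimates with the advertised \emph{uniform} constants. One must keep the coercivity constant of $P_i$ depending only on $h$ and on a Lipschitz-type norm of $\epsilon\zeta$ (not on higher norms of $\zeta$), manage the half power of $\mu$ lost in the tangential regularity step --- which is compensated exactly by the $\mu^{N+1}$ gain coming from the source --- and propagate the nonlinear substitutions in $\zeta$ without exceeding the Sobolev budget, while checking the (elementary) dependence on $\delta$ through the thickness $1/\delta$ of the lower strip and noting that no surface-tension parameter enters the statement. This is precisely the scheme carried out in \cite{BonaLannesSaut08,Duchene13}, which I would follow.
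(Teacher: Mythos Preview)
Your proposal is correct and outlines precisely the standard strategy --- flattening via a diffeomorphism, building a formal $\mu$-expansion by solving the cascade of vertical ODEs, and controlling the remainder through $\mu$-weighted elliptic estimates --- that is carried out in \cite{BonaLannesSaut08,Duchene13}. The paper itself does not prove this proposition but simply quotes it from those references, so your sketch is in fact more detailed than what the paper provides.
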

\bigskip

Plugging these expansions into the full Euler system~\eqref{eqn:EulerCompletAdim}, and withdrawing $\O(\mu^2)$ terms, immediately yields an asymptotic Green-Naghdi model. This model is presented in~\cite{Duchene13} and justified in the sense of consistency.
However, such a Green-Naghdi model is only one of the variety of models which satisfy such a property. In the following, we decide to introduce an equivalent model, using as unknown (instead of $\psi$, the trace of the upper velocity potential at the interface) the {\em shear mean velocity}, defined by
\begin{equation}\label{eqn:defbarv}
\bar v \ \equiv \  \bar u_2\ -\ \gamma \bar u_1,
\end{equation}
where $\bar u_1, \ \bar u_2$ are the mean velocities integrated across the vertical layer in each fluid:
\[
\bar u_1(t,x) \ = \ \frac{1}{h_1(t,x)}\int_{\epsilon\zeta(t,x)}^{1} \partial_x \phi_1(t,x,z) \ dz, \quad \text{ and }\quad  \bar u_2(t,x) \ = \ \frac{1}{h_2(t,x)}\int_{-\frac1\delta}^{\epsilon\zeta(t,x)} \partial_x \phi_2(t,x,z) \ dz.
\]
Equivalently (as shown in~\cite{Duchene13}), one has
\[ \frac1\mu G^{\mu,\epsilon}\psi \ = \ -\partial_x \Big(\frac{h_1h_2}{h_1+\gamma h_2}\bar v\Big).\]

Such a choice has been used in~\cite{ChoiCamassa96,ChoiCamassa99} for example, and present at least two benefits. First, the equation describing the evolution of the deformation of the interface is an exact equation, and not a $\O(\mu^2)$ approximation. Indeed, one has immediately from the full Euler system~\eqref{eqn:EulerCompletAdim}:
\[ \partial_t\zeta \ = \ \frac1\mu G^{\mu,\epsilon}\psi \ = \ -\partial_x \Big(\frac{h_1h_2}{h_1+\gamma h_2}\bar v\Big).\]
What is more, the system obtained using mean velocities have a nicer behavior with respect to its linear well-posedness, thus one can expect nonlinear well-posedness only for the latter. 
\medskip

Altogether, several technical but straightforward computations yield the following Green-Naghdi model
\begin{equation}\label{eqn:GreenNaghdiMean}
\left\{ \begin{array}{l}
\displaystyle\partial_{ t}{\zeta} \ + \ \partial_x \Big(\frac{h_1h_2}{h_1+\gamma h_2}\bar v\Big)\ =\ 0,  \\ \\
\displaystyle\partial_{ t}\Bigg( \bar v \ + \ \mu\overline{\Q}[h_1,h_2]\bar v \Bigg) \ + \ (\gamma+\delta)\partial_x{\zeta} \ + \ \frac{\epsilon}{2} \partial_x\Big(\dfrac{h_1^2 -\gamma h_2^2 }{(h_1+\gamma h_2)^2} |\bar v|^2\Big) \ = \ \mu\epsilon\partial_x\big(\overline{\R}[h_1,h_2]\bar v  \big) \\
\hfill \displaystyle+\mu\frac{\gamma+\delta}{\bo}\partial_x^3 \zeta \ ,
\end{array}
\right.
\end{equation}
where we define:
    \begin{align*}
        \overline{\Q}[h_1,h_2]V \  &\equiv \ \frac{-1}{3h_1 h_2}\Bigg(h_1 \partial_x \Big(h_2^3\partial_x\big(\frac{h_1\ V}{h_1+\gamma h_2} \big)\Big)\ +\ \gamma h_2\partial_x \Big( h_1^3\partial_x \big(\frac{h_2\ V}{h_1+\gamma h_2}\big)\Big)\Bigg), \\
        \overline{\R}[h_1,h_2]V  \  &\equiv \ \frac12  \Bigg( \Big( h_2\partial_x \big( \frac{h_1\ V}{h_1+\gamma h_2} \big)\Big)^2\ -\ \gamma\Big(h_1\partial_x \big(\frac{h_2\ V}{h_1+\gamma h_2} \big)\Big)^2\Bigg)\\
    &\qquad + \frac13\frac{V}{h_1+\gamma h_2}\ \Bigg( \frac{h_1}{h_2}\partial_x\Big( h_2^3\partial_x\big(\frac{h_1\ V}{h_1+\gamma h_2} \big)\Big) \ - \ \gamma\frac{h_2}{h_1}\partial_x\Big(h_1^3\partial_x \big(\frac{h_2\ V}{h_1+\gamma h_2}\big)\Big) \Bigg).
 \end{align*}
This model has been derived in~\cite{Duchene13} and justified in the sense of consistency, as follows.
 \begin{Proposition}\label{prop:ConsGreenNaghdiMean}
For $\p= (\mu,\epsilon,\delta,\gamma,\bo) \in \P_{\rm SW}$, let $U^\p\equiv(\zeta^\p,\psi^\p)$ be a family of solutions of the full Euler system~\eqref{eqn:EulerCompletAdim} such that
such that there exists $C_0,T>0$ with 
\[ \big\Vert \zeta^\p \big\Vert_{L^\infty([0,T);H^{s+\frac92})}  +  \big\Vert \partial_t\zeta^\p \big\Vert_{L^\infty([0,T);H^{s+\frac72})}  +  \big\Vert \partial_x\psi^\p \big\Vert_{L^\infty([0,T);H^{s+\frac{11}2})}  +  \big\Vert \partial_t\partial_x\psi^\p \big\Vert_{L^\infty([0,T);H^{s+\frac92})}   \leq C_0 ,\]
for given $s\geq s_0+1/2$, $s_0>1/2$, uniformly with respect to $\p\in \P_{\rm SW}$.
 Moreover, assume that there exists $h_{01}>0$ such that
\[
h_1 \ \equiv\  1-\epsilon\zeta^\p \geq h_{01}>0, \quad h_2 \ \equiv \ \frac1\delta +\epsilon \zeta^\p\geq h_{01}>0.
\]
 Define $\bar v^\p$ as in~\eqref{eqn:defbarv} or, equivalently, by
\[\frac1\mu G^{\mu}[\epsilon\zeta^\p]\psi^\p \ = \ -\partial_x \Big(\frac{h_1h_2}{h_1+\gamma h_2}\bar v^\p\Big). \]
Then $(\zeta,\bar v)$ satisfies~\eqref{eqn:GreenNaghdiMean}, up to a remainder $R$, bounded by
\[ \big\Vert R \big\Vert_{L^\infty([0,T);H^s)} \ \leq \  C_1 \mu^2 \]
with $C_1=C(M_{\rm SW},h_{01}^{-1},C_0)$,  uniformly with respect to the parameters $\p\in\P_{\rm SW}$.
\end{Proposition}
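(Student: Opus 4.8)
\emph{Strategy, and the first equation.} Following~\cite{Duchene13,DucheneIsrawiTalhouk13}, the plan is to insert a smooth solution $(\zeta^\p,\psi^\p)$ of the full Euler system~\eqref{eqn:EulerCompletAdim} into the candidate model~\eqref{eqn:GreenNaghdiMean} and to bound the resulting defect $R$ using the expansions of Proposition~\ref{prop:expGH} together with the tame product and commutator estimates of Appendix~A. For the first equation there is nothing to prove: the defining relation $\frac1\mu G^\mu[\epsilon\zeta^\p]\psi^\p=-\partial_x\big(\tfrac{h_1h_2}{h_1+\gamma h_2}\bar v^\p\big)$, combined with the first equation of~\eqref{eqn:EulerCompletAdim}, gives $\partial_t\zeta^\p+\partial_x\big(\tfrac{h_1h_2}{h_1+\gamma h_2}\bar v^\p\big)=0$ \emph{exactly}, so the first component of $R$ vanishes identically.

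\emph{Second equation, static part.} I would first convert between $\partial_x\psi^\p$ and $\bar v^\p$ to the needed order. From~\eqref{eqn:expG0}--\eqref{eqn:expG} and the definition of $\bar v^\p$ one has $\partial_x(h_1\partial_x\psi^\p)=-\partial_x\big(\tfrac{h_1h_2}{h_1+\gamma h_2}\bar v^\p\big)+\O(\mu)$; inverting $\partial_x$ (using decay at infinity) and dividing by $h_1$ (bounded below by the depth hypothesis) yields $\partial_x\psi^\p=-\tfrac{h_2}{h_1+\gamma h_2}\bar v^\p+\mu\,w_1+\O(\mu^2)$ in $H^s$, with an explicit corrector $w_1$ fixed by matching the $\O(\mu)$ terms of~\eqref{eqn:expG}. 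Substituting this together with the expansion~\eqref{eqn:expH0}--\eqref{eqn:expH} of $H^{\mu,\delta}\psi^\p$ into $H^{\mu,\delta}\psi^\p-\gamma\partial_x\psi^\p$, into $|H^{\mu,\delta}\psi^\p|^2-\gamma|\partial_x\psi^\p|^2$, and into $\N^{\mu,\delta}$: the $\O(1)$ contributions reproduce $\bar v^\p$, the nonlinearity $\tfrac{h_1^2-\gamma h_2^2}{(h_1+\gamma h_2)^2}|\bar v^\p|^2$, and (after the prefactor $\mu\epsilon$) the leading part of $\partial_x\big(\overline{\R}[h_1,h_2]\bar v^\p\big)$; the $\O(\mu)$ corrections then reorganize — this is the step requiring a few lines of algebra — into $\mu\,\overline{\Q}[h_1,h_2]\bar v^\p$ and into the remaining part of $\mu\epsilon\,\partial_x\big(\overline{\R}[h_1,h_2]\bar v^\p\big)$; and the capillary term is treated by Taylor expansion, $\partial_x k(\epsilon\sqrt\mu\zeta^\p)/(\epsilon\sqrt\mu)=-\partial_x^3\zeta^\p+\O(\epsilon^2\mu)$, producing $\mu\tfrac{\gamma+\delta}{\bo}\partial_x^3\zeta^\p+\O(\mu^2)$ since $\epsilon\le1$. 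All errors accumulated here are $\O(\mu^2)$ in $H^s$, with constants depending on $M_{\rm SW}$, $h_{01}^{-1}$ and on $|\zeta^\p|_{H^{s+9/2}}$, $|\partial_x\psi^\p|_{H^{s+11/2}}$ — Proposition~\ref{prop:expGH} being used with $j=3$, which accounts for the Sobolev indices in the hypotheses — with all products and quotients controlled by the estimates of Appendix~A.

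\emph{The time derivative, and the main obstacle.} The delicate point is that $H^{\mu,\delta}\psi^\p-\gamma\partial_x\psi^\p$ and $\bar v^\p+\mu\overline{\Q}[h_1,h_2]\bar v^\p$ occur under a $\partial_t$; writing their difference as $\mu^2 E$ with $\|E\|_{H^s}\lesssim C_1$, one must also control $\|\partial_t E\|_{H^s}$. The plan is to differentiate the Laplace problems~\eqref{eqn:Laplace1}--\eqref{eqn:Laplace2} in time — equivalently, to use the shape-derivative formula for the Dirichlet--Neumann operators (see~\cite{Lannes}) — which expresses $\partial_t\big(G^\mu[\epsilon\zeta^\p]\psi^\p\big)$ and $\partial_t\big(H^{\mu,\delta}[\epsilon\zeta^\p]\psi^\p\big)$ through the same operators applied to data built from $\partial_t\zeta^\p$ and $\partial_t\partial_x\psi^\p$, and then to re-run the expansion of Proposition~\ref{prop:expGH} on these quantities at the cost of one extra derivative — whence the hypotheses $\partial_t\zeta^\p\in H^{s+7/2}$, $\partial_t\partial_x\psi^\p\in H^{s+9/2}$. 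Assembling the $\partial_t$ and $\partial_x$ remainders and invoking once more the continuity of products in Sobolev spaces gives $\|R\|_{L^\infty([0,T);H^s)}\le C_1\mu^2$ with $C_1=C(M_{\rm SW},h_{01}^{-1},C_0)$, uniformly in $\p\in\P_{\rm SW}$. I expect the only genuinely delicate points to be this bookkeeping of the one-derivative loss under $\partial_t$ and the verification that the $\O(\mu)$ algebra really closes onto $\overline{\Q}$ and $\overline{\R}$ (rather than some other operators of the same order); everything else is routine tame estimation.
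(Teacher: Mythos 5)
Your outline is sound, but note that the paper does not actually prove this Proposition: it is imported from~\cite{Duchene13} (where it is established with $\bo^{-1}=0$), and the only ingredient the present paper adds is the Remark immediately following the statement, namely the bound
\[ \Big\vert \tfrac{-\mu}{\bo}\tfrac{\partial_x\big(k(\epsilon\sqrt\mu\zeta)\big)}{\epsilon\sqrt\mu}-\tfrac{\mu}{\bo}\partial_x^3\zeta\Big\vert_{H^s}\ \leq\ \tfrac{\mu^2\epsilon^2}{\bo}\,C\big(\mu\epsilon^2,\vert\zeta\vert_{H^{s+2}}\big), \]
obtained via Lemma~\ref{lem:f/h1}; this is exactly your treatment of the capillary term (uniformity uses $\bo\geq\bo_{\min}$, which is built into $\P_{\rm SW}$). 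Your reconstruction of the argument of~\cite{Duchene13} identifies the correct skeleton: exactness of the mass equation, the expansions~\eqref{eqn:expG0}--\eqref{eqn:expH}, the conversion between $\partial_x\psi$ and $\bar v$, and the need to propagate the estimates through $\partial_t$ by differentiating the Laplace problems in time --- the same device the paper invokes, via~\cite[Prop.~2.12]{Duchene10}, to bound $\partial_t\bar v$ in the proof of Theorem~\ref{th:ConsSerreVar}. Two points deserve more care than your sketch affords them. First, ``inverting $\partial_x$'' to pass from~\eqref{eqn:expG} to an expansion of $\partial_x\psi$ itself requires knowing that the $\O(\mu^2)$ remainder in~\eqref{eqn:expG} is an exact derivative whose antiderivative is controlled in $H^s$; the expansions in the cited reference are stated in a form adapted to this, but as written your step is not justified by Proposition~\ref{prop:expGH} alone. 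Second, the claim that the $\O(\mu)$ algebra ``closes onto $\overline{\Q}$ and $\overline{\R}$'' is precisely the computational content of the derivation in~\cite{Duchene13} and is where these operators come from in the first place, so it cannot be treated as a verification. Neither point invalidates the strategy; they are gaps of detail, not of idea.
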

\begin{Remark}
In~\cite{Duchene13}, the author works with $\bo^{-1}=0$. However, it is clear that the results are still valid with the surface tension term, using
\[ \left\vert  \frac{-\mu}{\bo}\frac{\partial_x \big(k(\epsilon\sqrt\mu\zeta)\big)}{{\epsilon\sqrt\mu}} \ - \ \frac{\mu}{\bo}\partial_x^3\zeta \right\vert_{H^s} \ \leq \ \frac{\mu^2 \epsilon^2}{\bo}\ C(\mu\epsilon^2,\big\vert \zeta\big\vert_{H^{s+2}}),\] 
where we used Lemma~\ref{lem:f/h1}. Of course, one could have simply kept the surface tension term unchanged at this point, as in~\cite{DucheneIsrawiTalhouk13}. The smallness of surface tension, expressed by $\bo^{-1}\leq \bo_{\min}^{-1}$, is useful in the derivation of our new model, in the following subsection.
\end{Remark}

\subsection{Our new model}\label{ssec:Camassa-Holm}
As announced in the introduction, the present work is limited to the so-called Camassa-Holm regime, that is
using additional assumption $\epsilon=\O(\sqrt\mu)$. In this section, we manipulate the Green-Naghdi system~\eqref{eqn:GreenNaghdiMean}, systematically withdrawing $\O(\mu^2,\mu\epsilon^2)$ terms, in order to recover our model presented in~\eqref{eq:Serre2mr}.
In particular, one can check that the following approximations formally hold:
 \begin{align*}
        \overline{\Q}[h_1,h_2]\b v \ &= \ - \underline{\nu}\partial_x^2 \b v-\epsilon\frac{\gamma+\delta}3 \left((\beta-\alpha)\b v \partial_x^2\zeta \ + \ (\alpha+2\beta)\partial_x(\zeta\partial_x\b v)-\beta\zeta\partial_x^2\b v\right) \ + \ \O(\epsilon^2), \\
        \overline{\R}[h_1,h_2]\b v \  &= \ \alpha\left(\frac12 (\partial_x \b v)^2+\frac13\b v\partial_x^2 \b v\right) \ + \ \O(\epsilon).
 \end{align*}
with \begin{equation}\label{eqn:defnualphabeta}
\underline{\nu}=\frac{1+\gamma\delta}{3\delta(\gamma+\delta)}\ , \quad \alpha=\dfrac{1-\gamma}{(\gamma+\delta)^2} \quad \text{ and } \quad \beta=\dfrac{(1+\gamma\delta)(\delta^2-\gamma)}{\delta(\gamma+\delta)^3}\ .
\end{equation}

Plugging these expansions into system~\eqref{eqn:GreenNaghdiMean} yields a simplified model, with the same order of precision of the original model (that is $\O(\mu^2)$) in the Camassa-Holm regime. However, we will use several additional transformations, in order to produce an equivalent model (again, in the sense of consistency), which possess a structure similar to symmetrizable quasilinear systems, and allows the study of the subsequent sections.
\medskip

The first step is to introduce the following symmetric operator,
 \[
      {\mathfrak T}[\epsilon\zeta]V \ = \ q_1(\epsilon\zeta)V \ - \ \mu \nu \partial_x \Big(q_2(\epsilon\zeta)\partial_xV \Big),
      \]
      where $q_i(\epsilon\zeta)\equiv 1+\kappa_i\epsilon\zeta $ ($i=1,2$) and $\nu,\kappa_1,\kappa_2$ are constants to be determined,
      so as to write
      \[ q_1(\epsilon\zeta) \partial_{ t}\Big( \bar v \ + \ \mu\overline{\Q}[h_1,h_2]\bar v \Big) -q_1(\epsilon\zeta) \mu \frac{\gamma+\delta}{\bo}\partial_x^3\zeta\ = \ {\mathfrak T}[\epsilon\zeta]\partial_t \b v \ + \ \text{higher order terms.}\]

      More precisely, one can check
      \begin{multline*}
        {\mathfrak T}[\epsilon\zeta]\partial_t \b v \ -\ q_1(\epsilon\zeta) \partial_{ t}\Big( \bar v \ + \ \mu\overline{\Q}[h_1,h_2]\bar v \Big)+q_1(\epsilon\zeta) \mu \frac{\gamma+\delta}{\bo}\partial_x^3\zeta\\  =-\mu\nu\partial_x^2\partial_t \b v+\mu\underline{\nu}\partial_x^2\partial_t \b v +\mu\frac{\gamma+\delta}{\bo}\partial_x^3\zeta  - \ \mu\epsilon\nu \kappa_2 \partial_x \Big(\zeta\partial_x\partial_t  \b v \Big) \ + \ \mu\epsilon\underline{\nu} \kappa_1 \zeta \partial_x^2\partial_t  \b v\\
       +\mu\epsilon q_1(\epsilon\zeta)\frac{\gamma+\delta}3\partial_t\Big((\beta-\alpha) \b v \partial_x^2\zeta \ + \ (\alpha+2\beta)\partial_x(\zeta\partial_x \b v)-\beta\zeta\partial_x^2 \b v\Big)+\mu\epsilon\kappa_1\zeta\frac{\gamma+\delta}{\bo}\partial_x^3\zeta .
      \end{multline*}
      The first order ($\O(\mu)$) terms may be canceled with a proper choice of $\nu$, making use of the fact that the second equation of system~\eqref{eqn:GreenNaghdiMean} yields 
      \[ \partial_t  \b v=-(\gamma+\delta)\partial_x\zeta  \ - \ \frac{\epsilon}{2} \partial_x\Big(\dfrac{\delta^2 -\gamma }{(\delta+\gamma )^2} |\bar v|^2\Big) +\O(\epsilon^2,\mu).\]
      Indeed, it follows that 
      \[ \frac{\gamma+\delta}{\bo}\partial_x^3\zeta \ = \  \frac{-1}{\bo}\partial_x^2\partial_t \b v- \frac{\epsilon}{2\bo} \dfrac{\delta^2 -\gamma }{(\delta+\gamma )^2} \partial_x^3\Big(|\bar v|^2\Big) +\O(\epsilon^2,\mu),\]
      thus one defines
      \begin{equation}\label{deftnu}
 \nu \ = \ \underline{\nu}-\frac1{\bo} \ = \ \frac{1+\gamma\delta}{3\delta(\gamma+\delta)}-\frac1{\bo},
      \end{equation}
      and one has
       \begin{multline*}
              {\mathfrak T}[\epsilon\zeta]\partial_t\b v \ -\ q_1(\epsilon\zeta) \partial_{ t}\Big( \bar v \ + \ \mu\overline{\Q}[h_1,h_2]\bar v \Big)+q_1(\epsilon\zeta) \mu \frac{\gamma+\delta}{\bo}\partial_x^3\zeta\\  =\ - \mu\frac{\epsilon}{2\bo} \dfrac{\delta^2 -\gamma }{(\delta+\gamma )^2} \partial_x^3\Big(|\bar v|^2\Big) \  - \ \mu\epsilon\nu \kappa_2 \partial_x \Big(\zeta\partial_x\partial_t  \b v \Big) \ + \ \mu\epsilon\underline{\nu} \kappa_1 \zeta \partial_x^2\partial_t \b v+\mu\epsilon\kappa_1\zeta\frac{\gamma+\delta}{\bo}\partial_x^3\zeta\\
             +\mu\epsilon \frac{\gamma+\delta}3\partial_t\Big((\beta-\alpha) \b v \partial_x^2\zeta \ + \ (\alpha+2\beta)\partial_x(\zeta\partial_x \b v)-\beta\zeta\partial_x^2 \b v\Big)
             + \ \O(\mu^2,\mu\epsilon^2) .
            \end{multline*}
      Use again that~\eqref{eqn:GreenNaghdiMean} yields $ \partial_t \b v=-(\gamma+\delta)\partial_x\zeta  \ +\O(\epsilon,\mu)$ and $ \partial_t\zeta=\frac{-1}{\gamma+\delta}\partial_x \b v+\O(\epsilon,\mu)$, one obtains
      \begin{multline*}
          {\mathfrak T}[\epsilon\zeta]\partial_t \b v \ -\ q_1(\epsilon\zeta) \partial_{ t}\Big( \bar v \ + \ \mu\overline{\Q}[h_1,h_2]\bar v \Big)+q_1(\epsilon\zeta) \mu \frac{\gamma+\delta}{\bo}\partial_x^3\zeta\\ =  \ \mu\epsilon(\gamma+\delta)\Big(\nu \kappa_2 \partial_x \Big(\zeta\partial_x^2 \zeta \Big) \ - \ \underline{\nu} \kappa_1 \zeta\partial_x^3\zeta \Big) +\mu\epsilon\kappa_1\frac{\gamma+\delta}{\bo}\zeta\partial_x^3\zeta\\
       -\mu\epsilon \frac{(\gamma+\delta)^2}3\Big((\beta-\alpha)(\partial_x\zeta)( \partial_x^2\zeta) \ + \ (\alpha+2\beta)\partial_x(\zeta\partial_x^2 \zeta)-\beta\zeta\partial_x^3\zeta \Big)\\
      -\mu\epsilon\frac{1}3  \Big((\beta-\alpha)\b v \partial_x^3 \b v \ + \ (\alpha+2\beta)\partial_x((\partial_x \b v)^2)-\beta(\partial_x \b v)(\partial_x^2 v)
      \Big) \\
      -\mu\epsilon\frac{1}{2\bo} \dfrac{\delta^2 -\gamma }{(\delta+\gamma )^2} \partial_x^3\Big(|\bar v|^2\Big)\ + \ \O(\mu^2,\mu\epsilon^2) .
      \end{multline*}
      It becomes clear, now, that one can adjust $\kappa_1,\kappa_2\in\RR$ so that all terms involving $\zeta$ and its derivatives are withdrawn. More specifically, we set\footnote{Of course, the definition of $\kappa_1,\kappa_2$ in~\eqref{defkappa} and $\varsigma$ in~\eqref{defvarsigma} forbids the particular value $\bo^{-1}=\underline{\nu}=\frac{1+\gamma\delta}{3\delta(\gamma+\delta)}$. Thus in order to be completely rigorous, one should exclude a small neighborhood around this value as for the set parameters for which Theorem~\ref{th:ConsSerreVar} (Theorem~\ref{th:ConsSerreVarmr}) holds true. This restriction is automatically satisfied in the Camassa-Holm regime used thereafter; see~\eqref{eqn:defRegimeCHmr}}
      \begin{equation}\label{defkappa}
      (\underline{\nu}-\frac1{\bo})\kappa_1 \ = \ \frac{\gamma+\delta}{3}(2\beta-\alpha) , \quad   (\underline{\nu}-\frac1{\bo})\kappa_2 \ = \ (\gamma+\delta)\beta ,
      \end{equation}
      and one obtains consequently
      \begin{multline}\label{TversusQ}
        {\mathfrak T}[\epsilon\zeta]\partial_t \b v \ -\ q_1(\epsilon\zeta) \partial_{ t}\Big( \bar v \ + \ \mu\overline{\Q}[h_1,h_2]\bar v \Big) +q_1(\epsilon\zeta) \mu \frac{\gamma+\delta}{\bo}\partial_x^3\zeta \\
          = \ \mu\epsilon \Big(\big\{\frac{-1}{\bo} \dfrac{\delta^2 -\gamma }{(\delta+\gamma )^2}-\frac13(\beta-\alpha)\big\}\b v \partial_x^3 \b v \ + \ \big\{\frac{-3}{2\bo} \dfrac{\delta^2 -\gamma }{(\delta+\gamma )^2}-\frac13 (\alpha+\frac32\beta)\big\}\partial_x((\partial_x \b v)^2)\Big)+   \O(\mu^2,\mu\epsilon^2).
      \end{multline}
      
      However, one of the remaining terms in~\eqref{TversusQ}, as well as in $\partial_x\big(\overline{\R}[h_1,h_2] \b v\big) $, involves three derivatives on $\b v$. In order to deal with these terms, we introduce $  {\mathfrak T}[\epsilon\zeta](\epsilon\varsigma \b  v\partial_x \b v)$ where, again, $\varsigma\in\RR$ is to be determined. More precisely, one has
      \begin{multline*}
        {\mathfrak T}[\epsilon\zeta](\epsilon \varsigma \b v\partial_x  \b v) \ +\ \mu\epsilon q_1(\epsilon\zeta) \partial_{ x}\Big(\overline{\R}[h_1,h_2] \b v\Big)\\
       \quad = \ \epsilon \varsigma q_1(\epsilon\zeta) \b v\partial_x \b  v  \ - \ \mu\epsilon\nu \varsigma \partial_x \Big(q_2(\epsilon\zeta)\partial_x( \b v\partial_x \b v ) \Big) +\mu\epsilon q_1(\epsilon\zeta)\alpha\partial_x\Big(\frac12 (\partial_x  \b v)^2+\frac13 \b v\partial_x^2 \b v\Big) .
      \end{multline*}
This yields
      \begin{multline}\label{TversusR}
       {\mathfrak T}[\epsilon\zeta](\epsilon \varsigma \b v\partial_x \b v)  + \mu\epsilon q_1(\epsilon\zeta) \partial_{ x}\Big(\overline{\R}[h_1,h_2] \b v\Big)  \\ =  \epsilon \varsigma q_1(\epsilon\zeta) \b v\partial_x \b v   + \mu\epsilon \partial_x\Big((\frac\alpha2-\nu\varsigma )(\partial_x \b v)^2+(\frac\alpha3-\nu\varsigma)\b v\partial_x^2 \b v\Big)  +  \O(\mu^2,\mu\epsilon^2) .
      \end{multline}
      
      Combining~\eqref{TversusQ} with~\eqref{TversusR}, one can check that if we set
      \begin{equation}\label{defvarsigma}
  (\underline{\nu}-\frac1{\bo})\varsigma \ = \ \frac{2\alpha-\beta}{3} \ - \ \frac{1}{\bo} \dfrac{\delta^2 -\gamma }{(\delta+\gamma )^2} ,
      \end{equation}
      then the following approximation holds (withdrawing $  \O(\mu^2,\mu\epsilon^2)$ terms):
      \begin{multline*}
       {\mathfrak T}[\epsilon\zeta](\partial_t \b v +\epsilon \varsigma \b v\partial_x \b v) - q_1(\epsilon\zeta) \partial_{ t}\Big( \bar v \ + \ \mu\overline{\Q}[h_1,h_{2}]\bar v \Big) +q_1(\epsilon\zeta) \mu \frac{\gamma+\delta}{\bo}\partial_x^3\zeta+ \mu\epsilon q_1(\epsilon\zeta) \partial_{ x}\Big(\overline{\R}[h_1,h_2] \b v \Big) \\
         =  \epsilon \varsigma q_1(\epsilon\zeta) \b v\partial_x  \b v   -  \mu\epsilon \frac{2\alpha}3\partial_x\big((\partial_x  \b v)^2\big)+\O(\mu^2,\mu\epsilon^2).
      \end{multline*}
      
      When plugging this estimate in~\eqref{eqn:GreenNaghdiMean}, and after multiplying the second equation by $q_1(\epsilon\zeta)$, we obtain the following system of equations:
      \begin{equation}\label{eq:Serre2}\left\{ \begin{array}{l}
      \partial_{ t}\zeta +\partial_x\left(\dfrac{h_1 h_2}{h_1+\gamma h_2} \b v\right)\ =\  0,\\ \\
      \mathfrak T[\epsilon\zeta] \left( \partial_{ t}  \b v + \epsilon\varsigma {\b v } \partial_x { \b v } \right) + (\gamma+\delta)q_1(\epsilon\zeta)\partial_x 
      \zeta   \\
      \qquad \qquad+\frac\epsilon2 q_1(\epsilon\zeta) \partial_x \left(\frac{h_1^2  -\gamma h_2^2 }{(h_1+\gamma h_2)^2}| \b v|^2-\varsigma | \b v|^2\right)=     -  \mu \epsilon\frac23\frac{1-\gamma}{(\gamma+\delta)^2} \partial_x\big((\partial_x  \b v)^2\big) ,
      \end{array} \right. \end{equation}
      where we recall that
      \begin{equation}\label{defT}
      {\mathfrak T}[\epsilon\zeta]V \ = \ q_1(\epsilon\zeta)V \ - \ \mu\nu \partial_x \Big(q_2(\epsilon\zeta)\partial_xV \Big),
      \end{equation}
      with $q_i(X)\equiv 1+\kappa_i X $ ($i=1,2$) and $\nu,\kappa_1,\kappa_2,\varsigma$ are defined by~\eqref{deftnu},\eqref{defkappa} and~\eqref{defvarsigma}.
      \medskip

System~\eqref{eq:Serre2} has been introduced in Section~\ref{mr}, and is the system studied in the present work. We reproduce and prove below Theorem~\ref{th:ConsSerreVarmr}, which asserts the validity of~\eqref{eq:Serre2} as an asymptotic model for the full Euler system in the sense of consistency.
 \begin{Theorem}\label{th:ConsSerreVar}
For $(\mu,\epsilon,\delta,\gamma,\bo)\equiv \p\in \P_{\rm SW}$, let $U^\p\equiv(\zeta^\p,\psi^\p)$ be a family of solutions of the full Euler system~\eqref{eqn:EulerCompletAdim} such that
such that there exists $C_0,T>0$ with 
\[ \big\Vert \zeta^\p \big\Vert_{L^\infty([0,T);H^{s+\frac92})}  +  \big\Vert \partial_t\zeta^\p \big\Vert_{L^\infty([0,T);H^{s+\frac72})}  +  \big\Vert \partial_x\psi^\p \big\Vert_{L^\infty([0,T);H^{s+\frac{11}2})}  +  \big\Vert \partial_t\partial_x\psi^\p \big\Vert_{L^\infty([0,T);H^{s+\frac92})}   \leq C_0 ,\]
for any $s\geq s_0+1/2$, $s_0>1/2$, and uniformly with respect to $\p\in \P_{\rm SW}$. Moreover, assume that there exists $h_{01}>0$ such that
\[
h_1 \ \equiv\  1-\epsilon\zeta^\p \geq h_{01}>0, \quad h_2 \ \equiv \ \frac1\delta +\epsilon \zeta^\p\geq h_{01}>0.
\]

 Define $\bar v^\p$ as in~\eqref{eqn:defbarv} or, equivalently, by
\[\frac1\mu G^{\mu}[\epsilon\zeta^\p]\psi^\p \ = \ -\partial_x \Big(\frac{h_1h_2}{h_1+\gamma h_2}\bar v^\p\Big). \]

Then $(\zeta,\bar v)$ satisfies~\eqref{eq:Serre2}, up to a remainder $R$, bounded by
\[ \big\Vert R \big\Vert_{L^\infty([0,T);H^s)} \ \leq \ (\mu^2+\mu\epsilon^{2})\ C_1,\]
with $C_1=C(M_{\rm SW},h_{01}^{-1},C_0)$,  uniformly with respect to the parameters $\p\in\P_{\rm SW}$.
\end{Theorem}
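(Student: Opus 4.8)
The plan is to chain together the two consistency results already established: Proposition~\ref{prop:ConsGreenNaghdiMean}, which tells us that $(\zeta^\p,\bar v^\p)$ solves the Green--Naghdi system~\eqref{eqn:GreenNaghdiMean} up to a remainder of size $\O(\mu^2)$ in $H^s$, and the sequence of algebraic identities~\eqref{TversusQ}--\eqref{TversusR} derived in Section~4, which show that system~\eqref{eq:Serre2} is obtained from~\eqref{eqn:GreenNaghdiMean} by exact manipulations modulo terms that are formally $\O(\mu^2,\mu\epsilon^2)$. The first equation of~\eqref{eq:Serre2} coincides exactly with the first equation of~\eqref{eqn:GreenNaghdiMean} (both express $\partial_t\zeta = \frac1\mu G^\mu\psi$, which is an exact identity from~\eqref{eqn:EulerCompletAdim}), so the entire remainder lives in the second equation. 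There, I would start from the fact that $(\zeta^\p,\bar v^\p)$ solves the momentum equation of~\eqref{eqn:GreenNaghdiMean} up to $R_{\rm GN}$ with $\Vert R_{\rm GN}\Vert_{L^\infty H^s}\leq C_1\mu^2$, apply the operator $q_1(\epsilon\zeta^\p)$ to it (multiplication by a smooth bounded function, harmless in $H^s$), and then add and subtract the combination ${\mathfrak T}[\epsilon\zeta^\p](\partial_t\bar v^\p + \epsilon\varsigma\bar v^\p\partial_x\bar v^\p)$ using the identity displayed just before~\eqref{eq:Serre2}.

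The technical heart of the argument is to turn the \emph{formal} $\O(\mu^2,\mu\epsilon^2)$ bookkeeping of Section~4 into \emph{rigorous} $H^s$ estimates. Each time an equation of~\eqref{eqn:GreenNaghdiMean} was used to substitute $\partial_t\bar v$ by $-(\gamma+\delta)\partial_x\zeta + \O(\epsilon,\mu)$ or $\partial_t\zeta$ by $\frac{-1}{\gamma+\delta}\partial_x\bar v + \O(\epsilon,\mu)$, the error incurred must be controlled in $H^s$; this is where the high-regularity hypotheses on $\zeta^\p,\psi^\p$ (up to $H^{s+9/2}$, $H^{s+11/2}$, etc.) are consumed, since every substitution costs derivatives and the operator $\overline{\Q}$ and the nonlinearity $\overline{\R}$ are second-order in $\bar v$. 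I would first record, via Proposition~\ref{prop:expGH} together with~\eqref{eqn:defbarvmr}, that $\bar v^\p$ inherits Sobolev bounds from $\psi^\p$ and $\zeta^\p$ (roughly, $\partial_x\psi^\p\in H^{s+11/2}$ and $\zeta^\p\in H^{s+9/2}$ give $\bar v^\p$ bounded in $H^{s+9/2}$ say, and $\partial_t\bar v^\p$ bounded in $H^{s+7/2}$, uniformly in $\p$), and likewise for the time derivatives; then the various remainder terms — all of which are products of $\mu\epsilon$ or $\mu^2$ with polynomial expressions in $\zeta^\p,\bar v^\p$ and their derivatives up to order three or so — are estimated by the product and commutator estimates of Appendix~A together with Moser-type inequalities and the fact that $h_1,h_2\geq h_{01}>0$ (so that $\frac{h_1 h_2}{h_1+\gamma h_2}$ and its reciprocal are controlled via Lemma~\ref{lem:f/h1}-type bounds). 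Because $\p\in\P_{\rm SW}$ we have $\epsilon\leq1$ and all structural constants $\underline\nu,\alpha,\beta,\nu,\kappa_1,\kappa_2,\varsigma$ bounded in terms of $M_{\rm SW}$ and $h_{01}^{-1}$ — note $\nu = \underline\nu - \bo^{-1}$ is bounded since $\bo\geq\bo_{\min}$ — so every constant produced is of the claimed form $C(M_{\rm SW},h_{01}^{-1},C_0)$.

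Assembling the pieces: the $\O(\mu^2)$ contributions come from $R_{\rm GN}$ (multiplied by the bounded factor $q_1(\epsilon\zeta^\p)$) and from the $\O(\mu^2)$ terms explicitly generated in~\eqref{TversusQ},~\eqref{TversusR}; the $\O(\mu\epsilon^2)$ contributions come from the second and third rounds of substitution in Section~4, where the errors carry an extra factor $\epsilon$ relative to terms that were already $\O(\mu\epsilon)$. Collecting them gives a total remainder $R$ with $\Vert R\Vert_{L^\infty([0,T);H^s)}\leq (\mu^2+\mu\epsilon^2)C_1$, which is exactly the claim. The main obstacle I anticipate is \emph{not} any single estimate but the careful accounting of derivative losses across the chain of substitutions — one must verify that the nine-halves extra derivatives assumed on $\zeta^\p$ (and eleven-halves on $\partial_x\psi^\p$) are genuinely enough to close every term, in particular the worst one involving $\bar v^\p\partial_x^3\bar v^\p$ and the time-differentiated quadratic expressions $\partial_t\big((\beta-\alpha)\bar v\partial_x^2\zeta + \dots\big)$, which after using the evolution equations become products of $\partial_x\zeta^\p$ with third derivatives; tracking that these fit within the stated regularity budget, uniformly in $\p$, is the delicate part.
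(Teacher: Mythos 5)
Your proposal is correct and follows essentially the same route as the paper: invoke Proposition~\ref{prop:ConsGreenNaghdiMean} to get the Green--Naghdi remainder $R_0=\O(\mu^2)$, then rigorously control in $H^s$ each term dropped in the chain of substitutions~\eqref{TversusQ}--\eqref{TversusR}, using the product/commutator estimates of Appendix~A and Lemmata~\ref{lem:f/h}--\ref{lem:f/h1} for the $1/h_i$ factors, with the needed bounds on $\bar v^\p$ and $\partial_t\bar v^\p$ deduced from the exact identity $\partial_x\big(\tfrac{h_1h_2}{h_1+\gamma h_2}\bar v\big)=-\partial_t\zeta$ and the regularity hypotheses. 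The derivative bookkeeping you flag as the delicate point is precisely what the paper's proof tracks through the constants $C(s+3)$.
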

\begin{proof}
Let $U\equiv(\zeta,\psi)$ satisfy the hypotheses of the Proposition (withdrawing the explicit dependence with respect to parameters $\p$ for the sake of readability). As a consequence of Proposition~\ref{prop:ConsGreenNaghdiMean}, we know that $(\zeta,\bar v)$ satisfies~\eqref{eqn:GreenNaghdiMean}, up to a remainder $R_0$, bounded by
\[ \big\Vert R_0 \big\Vert_{L^\infty([0,T);H^s)} \ \leq \ \mu^2\ C_{1},\]
with $C_1=C(M_{\rm CH},h_{01}^{-1},C_0)$,  uniformly with respect to $(\mu,\epsilon,\delta,\gamma,\bo)\in\P_{\rm SW}$.
The proof now consists in checking that all terms neglected in the above calculations can be rigorously estimated in the same way.

The formal expansions can easily be checked. When turning to control the remainder terms in $H^s$ norm we make great use of classical product estimates in $H^s(\RR)$, $s\geq s_0+1/2$, recalled in Lemma~\ref{Moser}. A technical issue appears when such products involve terms as $\frac1{h_1}$, since $ \frac1{h_1}$ is controlled in $ L^\infty$ (thanks to the non-vanishing depth condition), but not in $H^s$ (as it does not decay at infinity).
We detail in Lemmata~\ref{lem:f/h} and~\ref{lem:f/h1} how such difficulty can be treated.

For the sake of brevity, we do not develop each estimate, but rather provide the precise bound on the various remainder terms.
One has
 \begin{multline*}
 \big\vert    \partial_t\big(   \overline{\Q}[h_1,h_2]V\big) \b v  -   \big[- \underline{\nu}\partial_x^2\partial_t \b v-\epsilon\frac{\gamma+\delta}3 \partial_t\left((\beta-\alpha)\b v \partial_x^2\zeta  +  (\alpha+2\beta)\partial_x(\zeta\partial_x\b v)-\beta\zeta\partial_x^2\b v\right) \big] \big\vert_{H^{s}}\\
  \leq \epsilon^2 C(s+3), \ \end{multline*}
with $C(s+3)\equiv C\Big(M_{\rm CH},h_{01}^{-1},\big\vert \zeta \big\vert_{H^{s+3}},\big\vert \partial_t\zeta  \big\vert_{H^{s+2}},\big\vert \bar v \big\vert_{H^{s+3}},\big\vert \bar v \big\vert_{H^{s+2}}\Big)$, and
\[
    \big\vert     \partial_x\big(  \overline{\R}[h_1,h_2]\b v\big)   -  \partial_x\big[\alpha\big(\frac12 (\partial_x \b v)^2+\frac13\b v\partial_x^2 \b v\big)\big]\big\vert_{H^{s}} \\
    \leq
    \epsilon C(s+3) .\
\]
Then, since $(\zeta,\bar v)$ satisfies~\eqref{eqn:GreenNaghdiMean}, up to the remainder $R_0$, one has
\[\big\vert  \partial_t \b v+(\gamma+\delta)\partial_x\zeta  \big\vert_{H^{s}}+  \big\vert\partial_t\zeta+\frac{1}{\gamma+\delta}\partial_x \b v  \big\vert_{H^{s}}\ \leq \ \epsilon C(s+3) + \big\vert R_0 \big\vert_{H^{s}} .\]
It follows that~\eqref{TversusR} is valid up to a remainder $R_1$, bounded by
\[ \big\vert R_1\big\vert_{H^s} \ \leq \ (\mu^2+\mu\epsilon^2) C(s+3) +\mu(\epsilon+\mu) \big\vert R_0\big\vert_{H^s}.\]
Finally, $(\zeta,\bar v)$ satisfies~\eqref{eqn:GreenNaghdiMean}, up to the remainder $R_0+R_1$, and
\[ \big\vert R_0+R_1\big\vert_{H^s} \ \leq \ \mu^2 C(M_{\rm CH},h_{01}^{-1},C_0) , \]
where we use that
\[\big\vert \bar v  \big\vert_{H^{s+3}}+\big\vert \partial_t \bar v \big\vert_{H^{s+2}}\leq C(M_{\rm CH},h_{01}^{-1},C_0).\]
The estimate on $\bar v$ follows directly from the identity $\partial_x \Big(\frac{h_1h_2}{h_1+\gamma h_2}\bar v\Big)  =  -\frac1\mu G^{\mu,\epsilon}\psi=\partial_t\zeta$. The estimate on $\partial_t\bar v$ can be proved, for example, following~\cite[Prop. 2.12]{Duchene10}. 
This concludes the proof of Theorem~\ref{th:ConsSerreVar}.
\end{proof}

\section{Preliminary results}\label{pr}

In this section, we study the operator $\mfT[\epsilon\zeta]$, defined in~\eqref{defT} and recalled below:
\begin{equation}
{\mfT}[\epsilon\zeta]V= (1+\epsilon \kappa_1\zeta)V-\dsp \mu \nu\partial_x\left((1+\epsilon\kappa_2 \zeta)\partial_x V \right).
\end{equation}
with $\nu,\kappa_1,\kappa_2$ are constants. In our setting, $\nu,\kappa_1,\kappa_2$ depend on the parameters $\gamma,\delta,\bo$; but in what follows, we use only that the restrictions of the Camassa-Holm regime ensures that $\nu>0$ is bounded from below (by hypothesis):
\[
 \nu \ \equiv \ \frac{1+\gamma\delta}{3\delta(\gamma+\delta)}-\frac{1}{\bo} \ge \nu_0>0, 
\]
and  $\nu+|\kappa_1|+|\kappa_2|$ is bounded from above, uniformly with respect to $(\mu,\epsilon,\delta,\gamma,\bo)\in \P_{\rm CH}$ (see~\eqref{eqn:defRegimeCHmr}).
\medskip

When no confusion is possible, we write simply $\mfT\equiv\mfT[\epsilon\zeta]$.
In the following, we seek sufficient conditions to ensure the strong ellipticity of the operator  ${\mfT}$ which will yield to the well-posedness and continuity of the inverse ${\mfT}^{-1}$.

As a matter of fact, this condition, namely~\eqref{CondEllipticity} (and similarly the classical non-zero depth condition,~\eqref{CondDepth}) simply consists in assuming that the deformation of the interface is not too large. For fixed $\zeta\in L^\infty$, the restriction reduces to an estimate on $\epsilon_{\max}\big\vert \zeta\big\vert_{L^\infty}$, with $\epsilon_{\max}=\min(M\sqrt{\mu_{\max}},1)$, and~\eqref{CondDepth}-\eqref{CondEllipticity} hold uniformly with respect to $(\mu,\epsilon,\delta,\gamma,\bo)\in\P_{\rm CH}$; see Lemma~\ref{conditionh}, below. 

Let us shortly detail the argument. Recall the non-zero depth condition
\begin{equation}\label{CondDepth}\tag{H1}
\exists\ h_{01}>0 , \text{ such that } \quad   \min {(\inf_{x\in \RR}h_{1},\inf_{x\in \RR}h_2)}\ge h_{01}\,,
\end{equation}
where $h_1\equiv 1-\epsilon \zeta$ and $h_2\equiv \frac{1}{\delta}+\epsilon \zeta$ are the depth of, respectively, the upper and the lower layer of fluid.
It is straightforward to check that, since for all $(\mu,\epsilon,\delta,\gamma,\bo)\in\P_{\rm CH}$,  the following condition
\[
\epsilon_{\max}\big\vert \zeta\big\vert_{L^\infty} \ < \ \min(1,\frac{1}{\delta_{\max}})
\]
is sufficient to define $h_{01}>0$ such that~\eqref{CondDepth} is valid, independently of $(\mu,\epsilon,\delta,\gamma,\bo)\in\P_{\rm CH}$. 
Briefly, since $\epsilon\leq\epsilon_{\max}$, 
$\inf_{x\in \RR}h_{1} \geq 1-\epsilon \big\vert \zeta\big\vert_{L^\infty} \geq 1-\epsilon_{\max}\big\vert \zeta\big\vert_{L^\infty}, \ \inf_{x\in \RR}h_{2} \geq \frac1\delta-\epsilon \big\vert \zeta\big\vert_{L^\infty} \geq \frac1\delta-\epsilon_{\max}\big\vert \zeta\big\vert_{L^\infty}.$
Note that conversely, for~\eqref{CondDepth} to be satisfied for any $(\epsilon,\mu,\delta,\gamma,\bo)\in\P_{\rm CH}$, then one needs
\[ \epsilon_{\max}\big\vert \zeta\big\vert_{L^\infty} \ \leq \ \max(1,\frac{1}{\delta_{\min}}).\]
Indeed, one has for $\epsilon=\epsilon_{\max}$, $\epsilon_{\max}\zeta \ = \ 1-h_1 \ \leq \ 1-h_{01} $  and $ -\epsilon_{\max}\zeta \ = \ \frac1\delta-h_2 \ \leq \ \frac1\delta -h_{01}$.

In the same way, we introduce the condition
\begin{equation}\label{CondEllipticity}\tag{H2}
\exists\ h_{02}>0 , \text{ such that } \quad  \inf_{x\in \RR} \left(1+\epsilon\kappa_2\zeta\right) \ge  h_{02} \ > \ 0 \ ; \qquad    \inf_{x\in \RR}  \left( 1+\epsilon \kappa_1\zeta  \right)\ge h_{02} \ > \ 0.
\end{equation}

As above, such a condition is a consequence of a simple smallness assumption on $\epsilon\big\vert \zeta\big\vert_{L^\infty}$. More precisely, one has the following result.
\begin{Lemma}\label{conditionh} Let $\zeta\in L^{\infty}$ and $\epsilon_{\max}=\min(M\sqrt{\mu_{\max}},1)$ be such that there exists $h_0>0$ with
\[ \max(|\kappa_1|,|\kappa_2|,1,\delta_{\max})\epsilon_{\max}\big\vert \zeta\big\vert_{L^\infty} \  \leq \ 1-h_0 \ < \ 1.\]
 Then there exists $h_{01},h_{02}>0$ such that~\eqref{CondDepth}-\eqref{CondEllipticity} hold for any $(\mu,\epsilon,\delta,\gamma,\bo)\in\P_{\rm CH}$.
\end{Lemma}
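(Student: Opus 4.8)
The plan is to derive both depth conditions \eqref{CondDepth} and \eqref{CondEllipticity} directly from the single smallness hypothesis on $\epsilon_{\max}\vert\zeta\vert_{L^\infty}$, by bounding each of the four relevant quantities $h_1=1-\epsilon\zeta$, $h_2=\frac1\delta+\epsilon\zeta$, $1+\epsilon\kappa_1\zeta$, $1+\epsilon\kappa_2\zeta$ from below pointwise in $x$. The core observation is that for any real constant $a$ and any $(\mu,\epsilon,\delta,\gamma,\bo)\in\P_{\rm CH}$, we have $\epsilon\le\epsilon_{\max}$, hence
\[
\inf_{x\in\RR}\bigl(1+\epsilon a\zeta(x)\bigr)\ \ge\ 1-\epsilon\,\vert a\vert\,\vert\zeta\vert_{L^\infty}\ \ge\ 1-\vert a\vert\,\epsilon_{\max}\vert\zeta\vert_{L^\infty}.
\]

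First I would treat \eqref{CondEllipticity}: applying the above with $a=\kappa_1$ and $a=\kappa_2$, and using $\vert\kappa_i\vert\le\max(\vert\kappa_1\vert,\vert\kappa_2\vert,1,\delta_{\max})$, the hypothesis $\max(\vert\kappa_1\vert,\vert\kappa_2\vert,1,\delta_{\max})\,\epsilon_{\max}\vert\zeta\vert_{L^\infty}\le 1-h_0$ gives $\inf_x(1+\epsilon\kappa_i\zeta)\ge h_0>0$ for $i=1,2$, uniformly in the parameters; so \eqref{CondEllipticity} holds with $h_{02}=h_0$. Next I would treat \eqref{CondDepth}: for $h_1$, take $a=-1$, so $\inf_x h_1\ge 1-\epsilon_{\max}\vert\zeta\vert_{L^\infty}\ge 1-(1-h_0)=h_0$ (using that $1\le\max(\vert\kappa_1\vert,\vert\kappa_2\vert,1,\delta_{\max})$). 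For $h_2=\frac1\delta+\epsilon\zeta$, since $\delta\le\delta_{\max}$ we have $\frac1\delta\ge\frac1{\delta_{\max}}$, so $\inf_x h_2\ge\frac1{\delta_{\max}}-\epsilon_{\max}\vert\zeta\vert_{L^\infty}\ge\frac1{\delta_{\max}}\bigl(1-\delta_{\max}\epsilon_{\max}\vert\zeta\vert_{L^\infty}\bigr)\ge\frac{h_0}{\delta_{\max}}$, again using $\delta_{\max}\le\max(\vert\kappa_1\vert,\vert\kappa_2\vert,1,\delta_{\max})$. Thus \eqref{CondDepth} holds with $h_{01}=\min(h_0,\frac{h_0}{\delta_{\max}})=\frac{h_0}{\max(1,\delta_{\max})}>0$.

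Finally I would collect these into explicit uniform lower bounds: setting $h_{01}=h_0/\max(1,\delta_{\max})$ and $h_{02}=h_0$, both conditions hold for every $(\mu,\epsilon,\delta,\gamma,\bo)\in\P_{\rm CH}$, which is exactly the assertion of the Lemma. There is essentially no obstacle here — the statement is a bookkeeping consequence of the triangle inequality together with the fact that all parameter-dependent quantities ($\epsilon$, $1/\delta$, $\vert\kappa_i\vert$) are controlled by the fixed constants appearing in the hypothesis; the only mild subtlety is making sure the $h_2$ estimate is homogenized correctly by the factor $1/\delta_{\max}$ rather than comparing $\frac1\delta+\epsilon\zeta$ to $1$ directly. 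I would also remark in passing that $\kappa_1,\kappa_2$ are bounded uniformly over $\P_{\rm CH}$ (as noted at the start of Section~\ref{pr}, since $\nu\ge\nu_0$ and $\gamma,\delta,\bo$ range over compact sets away from the degenerate values), so a hypothesis of the stated form can indeed always be met by taking $\epsilon_{\max}\vert\zeta\vert_{L^\infty}$ small enough.
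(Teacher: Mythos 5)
Your proof is correct and follows essentially the same route as the paper, which establishes the lemma through the informal discussion immediately preceding its statement: one uses $\epsilon\le\epsilon_{\max}$ and $\tfrac1\delta\ge\tfrac1{\delta_{\max}}$ to bound $h_1$, $h_2$, $1+\epsilon\kappa_1\zeta$ and $1+\epsilon\kappa_2\zeta$ from below by the triangle inequality, exactly as you do. Your explicit choices $h_{02}=h_0$ and $h_{01}=h_0/\max(1,\delta_{\max})$ are a correct quantitative instance of the paper's argument.
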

\medskip

{\em In what follows, we will always assume that~\eqref{CondDepth} and~\eqref{CondEllipticity} are satisfied}. It is a consequence of our work that such assumption may be imposed only on the initial data, and then is automatically satisfied over the relevant time scale.
\bigskip

Before asserting the strong ellipticity of the operator  ${\mfT}$, let us
first recall the quantity $\vert \cdot \vert_{H^{1}_\mu}$, which is defined as
\[\forall v\in H^1(\RR),\ \quad 
\vert\ v\ \vert^2_{H^{1}_\mu}\ =\ \vert\ v\ \vert^2_{L^2}\ +\ \mu\ \vert\ \partial_x v\ \vert^2_{L^2},
\]
 and is equivalent to the $H^1(\RR)$-norm but not uniformly with respect to $\mu$.
 We define by $H^1_\mu(\RR)$ the space $H^1(\RR)$ endowed with this norm.

\begin{Lemma}\label{Lem:mfT}
Let $(\mu,\epsilon,\delta,\gamma,\bo)\in\P_{\rm CH}$ and $\zeta \in L^{\infty}(\RR)$ such that~\eqref{CondEllipticity} is satisfied.
 Then the operator
\[
{\mfT}[\epsilon\zeta]: H^1_\mu(\RR)\longrightarrow (H^1_\mu(\RR))^\star
\]
is uniformly continuous and coercive. More precisely, there exists $c_0>0$ such that
\begin{align}
({\mfT} u,v)  \ & \leq \  c_0\vert u\vert_{H^1_\mu}\vert v\vert_{H^1_\mu}  ; \label{continous} \\
({\mfT} u,u) \ & \geq  \frac1{c_0}\vert u\vert_{H^1_\mu}^{2} \label{coercive}
\end{align}
with $c_0=C(M_{\rm CH},h_{02}^{-1},\epsilon\big\vert \zeta\big\vert_{L^\infty} )$.
\medskip

Moreover, the following estimates hold:
\begin{enumerate}
\item[(i)] Let $ s_0>\frac{1}{2}$ and $s\geq0$. If $\zeta \in H^{s_0}(\RR)\cap H^{s} (\RR)$ and $u\in  H^{s+1}(\RR)$ and $v\in H^{1}(\RR)$, then:
\begin{align}
\label{eq:estLambdaT}\big\vert \big(\Lambda^s \mfT[\epsilon\zeta]  u, v \big) \big\vert \ &\leq \  C_0 \left((1+\epsilon \big\vert \zeta \big\vert_{H^{s_0}})\big\vert u\big\vert_{H^{s+1}_\mu} +\big\langle \epsilon \big\vert \zeta \big\vert_{H^{s}}\big\vert u\big\vert_{H^{s_0+1}_\mu} \big\rangle_{s>s_0}\right) \big\vert v\big\vert_{H^{1}_\mu}  \ ,
\end{align}
\item[(ii)] Let $ s_0>\frac{1}{2}$ and $s\geq0$. If $\zeta \in H^{s_0+1}\cap H^{s} (\RR)$, $u\in H^s(\RR)$ and $v\in H^{1}(\RR)$, then:
\begin{align}
\label{eq:estComT}\big\vert \big( \big[\Lambda^s, \mfT[\epsilon\zeta]\big]u, v\big) \big\vert \ &\leq \ \epsilon \ C_0 \left(\big\vert \zeta \big\vert_{H^{s_0+1}}\big\vert u\big\vert_{H^{s}_\mu} +\big\langle \big\vert \zeta \big\vert_{H^{s}}\big\vert u\big\vert_{H^{s_0+1}_\mu}\big\rangle_{s>s_0+1}\right) \big\vert v\big\vert_{H^{1}_\mu} \ ,
\end{align}
\end{enumerate}
where $C_0=C(M_{\rm CH},h_{02}^{-1})$.
\end{Lemma}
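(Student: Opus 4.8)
\textbf{Proof plan for Lemma~\ref{Lem:mfT}.}

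The plan is to treat the four assertions in increasing order of difficulty, all of them resting on the same bilinear-form identity obtained by integration by parts,
\[
(\mfT u,v) \ = \ \big((1+\epsilon\kappa_1\zeta)u,v\big) \ + \ \mu\nu\big((1+\epsilon\kappa_2\zeta)\partial_x u,\partial_x v\big).
\]
First I would establish the continuity~\eqref{continous}: bound the first term by $(1+|\kappa_1|\epsilon|\zeta|_{L^\infty})|u|_{L^2}|v|_{L^2}$ and the second by $\nu(1+|\kappa_2|\epsilon|\zeta|_{L^\infty})\sqrt\mu|\partial_x u|_{L^2}\sqrt\mu|\partial_x v|_{L^2}$, using only the $L^\infty$ bound on $\zeta$ and the uniform upper bound on $\nu+|\kappa_1|+|\kappa_2|$ coming from $\P_{\rm CH}$; the factors then recombine into $|u|_{H^1_\mu}|v|_{H^1_\mu}$ by Cauchy--Schwarz. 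Then for the coercivity~\eqref{coercive}, I would take $v=u$ and invoke~\eqref{CondEllipticity}, which gives pointwise $1+\epsilon\kappa_1\zeta\ge h_{02}$ and $1+\epsilon\kappa_2\zeta\ge h_{02}$, so that $(\mfT u,u)\ge h_{02}|u|_{L^2}^2 + \mu\nu h_{02}|\partial_x u|_{L^2}^2 \ge h_{02}\min(1,\nu_0)|u|_{H^1_\mu}^2$; absorbing $\min(1,\nu_0)$ into the constant yields the claim with $c_0=C(M_{\rm CH},h_{02}^{-1},\epsilon|\zeta|_{L^\infty})$. These two steps are essentially bookkeeping.

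For~(i), the estimate~\eqref{eq:estLambdaT}, I would write $(\Lambda^s\mfT u,v)=(\mfT u,\Lambda^s v)$ (self-adjointness of $\Lambda^s$) and again integrate by parts to get $\big((1+\epsilon\kappa_1\zeta)u,\Lambda^s v\big)+\mu\nu\big((1+\epsilon\kappa_2\zeta)\partial_x u,\partial_x\Lambda^s v\big)$. Since $|\Lambda^s v|_{L^2}=|v|_{H^s}$ is not what appears on the right-hand side, the point is instead to move $\Lambda^s$ back onto the other factor in a controlled way — more cleanly, bound $|(\mfT u,\Lambda^s v)|\le |\Lambda^s(\mfT u)|_{(H^1_\mu)^\star\text{-dual pairing with }H^1_\mu}$... actually the efficient route is to estimate $|\Lambda^s((1+\epsilon\kappa_1\zeta)u)|_{L^2}$ and $|\Lambda^s((1+\epsilon\kappa_2\zeta)\partial_x u)|_{L^2}$ directly and pair against $|v|_{L^2}\le|v|_{H^1_\mu}$ and $\sqrt\mu|\partial_x v|_{L^2}\le|v|_{H^1_\mu}$ respectively, but this would cost one more derivative on $u$ than stated. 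To get the sharp count in~\eqref{eq:estLambdaT} I would instead use the product/commutator estimates of Lemma~\ref{Moser} (Appendix~A) in the form $|\Lambda^s(fg)|_{H^{-1}}$-type bounds, or equivalently pair $\Lambda^{s-1}$ against $\Lambda v$: writing $(\Lambda^s\mfT u,v)=(\Lambda^{s}\mfT u,v)$ and distributing one power of $\Lambda$ onto $v$, the term $\mu\nu(\Lambda^{s-1}\partial_x((1+\epsilon\kappa_2\zeta)\partial_x u),\Lambda\partial_x v)$ is controlled by $\mu\nu|\Lambda^{s}((1+\epsilon\kappa_2\zeta)\partial_x u)|_{L^2}\cdot\sqrt\mu|\partial_x v|_{L^2}/\sqrt\mu$... the cleanest is: pair the whole of $\mfT u$ against $v$ in the $H^1_\mu$ duality with $\Lambda^s$ split as $\Lambda^{s}$ on $\mfT u$, then apply Lemma~\ref{Moser}'s estimate $|fg|_{H^\sigma}\lesssim |f|_{H^{s_0}}|g|_{H^\sigma}+\langle|f|_{H^\sigma}|g|_{H^{s_0}}\rangle_{\sigma>s_0}$ with $\sigma=s$, $f=1+\epsilon\kappa_i\zeta$, $g=u$ or $\partial_x u$, which produces exactly the two terms $(1+\epsilon|\zeta|_{H^{s_0}})|u|_{H^{s+1}_\mu}$ and $\langle\epsilon|\zeta|_{H^s}|u|_{H^{s_0+1}_\mu}\rangle_{s>s_0}$ after regrouping the $\mu$-weights into the $H^{s+1}_\mu$ and $H^{s_0+1}_\mu$ norms.

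For~(ii), the commutator estimate~\eqref{eq:estComT}, I would expand $[\Lambda^s,\mfT]u = [\Lambda^s,1+\epsilon\kappa_1\zeta]u - \mu\nu\,\partial_x\big([\Lambda^s,1+\epsilon\kappa_2\zeta]\partial_x u\big)$, the constant parts commuting with $\Lambda^s$ so only the $\epsilon\kappa_i\zeta$ pieces survive; the $\partial_x$ outside the second commutator is then moved onto $v$ by integration by parts, producing a pairing against $\partial_x v$ with a $\mu$-weight that recombines into $|v|_{H^1_\mu}$. The core input is the Kato--Ponce type commutator estimate $|[\Lambda^s,f]g|_{L^2}\lesssim |f|_{H^{s_0+1}}|g|_{H^{s-1}}+\langle|f|_{H^s}|g|_{H^{s_0}}\rangle_{s>s_0+1}$ from Lemma~\ref{Moser} in Appendix~A, applied with $f=\epsilon\kappa_i\zeta$ and $g=u$ (first commutator) or $g=\partial_x u$ (second); note the gain of one derivative in the commutator is exactly what makes the right-hand side involve $|u|_{H^s_\mu}$ rather than $|u|_{H^{s+1}_\mu}$, and the overall factor $\epsilon$ is pulled out since $f$ is proportional to $\epsilon\zeta$. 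The main obstacle, and the step demanding the most care, is precisely this Sobolev-index juggling in~(i) and~(ii): keeping the $\mu$-dependence packaged correctly inside the $H^1_\mu$, $H^{s+1}_\mu$ and $H^{s_0+1}_\mu$ norms (so that no negative power of $\mu$ leaks out) while simultaneously hitting the sharp regularity thresholds $s>s_0$ and $s>s_0+1$ in the $\langle\cdot\rangle$ brackets; the coercivity and continuity are, by contrast, routine. I would therefore present~(i) and~(ii) as straightforward consequences of Lemma~\ref{Moser} once the integration-by-parts reductions above are in place, and relegate the elementary verifications to a sentence each.
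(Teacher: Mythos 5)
Your proposal is correct and follows essentially the same route as the paper's proof: the same bilinear-form identity $(\mfT u,v)=\big((1+\epsilon\kappa_1\zeta)u,v\big)+\mu\nu\big((1+\epsilon\kappa_2\zeta)\partial_x u,\partial_x v\big)$, Cauchy--Schwarz plus the $L^\infty$ bounds for~\eqref{continous}, condition~\eqref{CondEllipticity} for~\eqref{coercive}, the product estimates of Lemma~\ref{Moser} for~(i), and the Kato--Ponce commutator estimates of Lemma~\ref{K-P} for~(ii), with the $\partial_x$ moved onto $v$ exactly as in the paper. The only wobble is your mid-paragraph claim in~(i) that estimating $\big\vert\Lambda^s\big((1+\epsilon\kappa_2\zeta)\partial_x u\big)\big\vert_{L^2}$ directly ``would cost one more derivative on $u$ than stated'' --- it does not, since that term carries the prefactor $\mu$ and pairs with $\partial_x v$, so $\mu\,\vert\partial_x u\vert_{H^s}\,\vert\partial_x v\vert_{L^2}=\sqrt\mu\,\vert\partial_x u\vert_{H^s}\cdot\sqrt\mu\,\vert\partial_x v\vert_{L^2}\le\vert u\vert_{H^{s+1}_\mu}\vert v\vert_{H^1_\mu}$, which is exactly the stated bound and exactly the computation the paper performs; the ``cleanest'' route you eventually settle on is the paper's.
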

\begin{proof}
Let us define the bilinear form
\[
a(u,v)=\big(\ {\mfT} u\ ,\ v\ \big)=\big(\ (1+\epsilon \kappa_1\zeta)u\ ,\ v\ \big)+\nu\mu \big(\ (1+\epsilon\kappa_2\zeta)\partial_x u\ ,\ \partial_x v\ \big),
\]
where $\big(\ \cdot\ , \ \cdot \ ) $ denotes the $L^2$-based inner product. It is straightforward to check that
\[
\big| a(u,v)\big|\leq \sup_{x\in\RR}|1+\epsilon \kappa_1\zeta|\big(\ u\ ,\ v\ \big)+\mu\nu \sup_{x\in\RR}|1+\epsilon\kappa_2\zeta| \big(\ \partial_x u\ ,\ \partial_x v\ \big),\]
so that~\eqref{continous} is now straightforward, by Cauchy-Schwarz inequality.

The $H^1_\mu(\RR)$-coercivity of $a(\cdot,\cdot)$, inequality~\eqref{coercive}, is a consequence of conditions~\eqref{CondEllipticity}:
\begin{align*}
a(u,u)=\big(\ {\mfT} u\ ,\ u\ \big)& = \dsp\int_{\RR}(1+\epsilon \kappa_1\zeta)\vert u\vert^2\,dx + \nu\mu \dsp\int_{\RR}(1+\epsilon\kappa_2\zeta)\vert u_x\vert^2\,dx \\
&\geq  h_{02}\min(1,\nu_0)\vert u\vert_{H^1_\mu}^{2} .
\end{align*}
\medskip

Let us now prove the higher-order estimates of the Lemma, starting with the product estimates. One has
\[
\big(\ \Lambda^s {\mfT} u\ ,\  v\ \big)=\big(\ \Lambda^s\{(1+\epsilon \kappa_1\zeta)u\}\ ,\  v\ \big)+\nu\mu \big(\ \Lambda^s\{(1+\epsilon\kappa_2\zeta)\partial_x u\}\ ,\  \partial_x v\ \big).
\]
Estimate~\eqref{eq:estLambdaT} is now a straightforward consequence of Cauchy-Schwarz inequality, and Lemma~\ref{Moser}.

As for the commutator estimates, one uses
\[
\big(\ [\Lambda^s, {\mfT} ] u\ ,\  v\ \big)=\epsilon \kappa_1 \big(\  [\Lambda^s, \zeta] u \ ,\  v\ \big)+\nu\mu\epsilon \kappa_2 \big(\ [\Lambda^s,\zeta]\partial_x u\ ,\  \partial_x v\ \big).
\]
Estimates~\eqref{eq:estComT} follow, using again Cauchy-Schwarz inequality, and Lemma~\ref{K-P}.
\end{proof}
The following lemma offers an important invertibility result on ${\mfT}$.
\begin{Lemma}\label{proprim}
Let $(\mu,\epsilon,\delta,\gamma,\bo)\in\P_{\rm CH}$ and $\zeta \in L^{\infty}(\RR)$ such that~\eqref{CondEllipticity} is satisfied.
 Then the operator
\[
{\mfT}[\epsilon\zeta]: H^2(\RR)\longrightarrow L^2(\RR)
\]
is one-to-one and onto. Moreover, one has the following estimates:
\begin{enumerate}
\item[(i)] $({\mfT}[\epsilon\zeta])^{-1}:L^2\to H^1_\mu(\RR)$ is continuous. More precisely, one has
\[
\parallel {\mfT}^{-1}\parallel_{L^2(\RR)\rightarrow H^1_\mu(\RR)} \ \leq \  c_0,
\]
with $c_0=C(M_{\rm CH},h_{02}^{-1}, \epsilon\big\vert \zeta\big\vert_{L^\infty} )$.
\item[(ii)] Additionally, if $\zeta \in H^{s_0+1}(\RR)$ with $s_0>\frac{1}{2}$, then one has for any $0< s\leq s_0 +1$,
\[
\parallel {\mfT}^{-1}\parallel_{H^s(\RR)\rightarrow H^{s+1}_\mu(\RR)}\ \leq \ c_{s_0+1}.
\]
\item[(iii)] If $\zeta \in H^{s}(\RR)$ with $ s\ge s_0 +1,\; s_0>\frac{1}{2}$, then one has
\[
\parallel {\mfT}^{-1}\parallel_{H^s(\RR)\rightarrow H^{s+1}_\mu(\RR)}\ \leq \ c_{s}
\]
\end{enumerate}
where $c_{\bar s}=C(M_{\rm CH},h_{02}^{-1},\epsilon|\zeta|_{H^{\bar s}})$, thus uniform with respect to $(\mu,\epsilon,\delta,\gamma,\bo)\in \P_{\rm CH}$.
\end{Lemma}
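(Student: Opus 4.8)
The strategy is to combine the Lax--Milgram theorem (for the basic invertibility and the $L^2\to H^1_\mu$ bound) with elliptic regularity bootstrapping (for the higher-order estimates). First I would establish existence and uniqueness: by Lemma~\ref{Lem:mfT}, the bilinear form $a(u,v)=(\mfT u,v)$ is continuous and coercive on $H^1_\mu(\RR)$, which is a Hilbert space topologically equal to $H^1(\RR)$. Hence for every $f\in L^2(\RR)\hookrightarrow (H^1_\mu(\RR))^\star$, Lax--Milgram provides a unique $u\in H^1_\mu(\RR)$ with $\mfT u=f$ in the variational sense, and coercivity gives immediately $\vert u\vert_{H^1_\mu}\leq c_0\vert f\vert_{L^2}$, which is estimate (i). Since $\mfT u = q_1(\epsilon\zeta)u - \mu\nu\partial_x(q_2(\epsilon\zeta)\partial_x u) = f$ with $u\in H^1$, $q_2(\epsilon\zeta)\partial_x u\in H^1$ (using $\zeta\in W^{1,\infty}$, which follows from $\zeta\in H^{s_0+1}$ or can be assumed in the $L^\infty$-only case by noting the statement (i) only needs $\zeta\in L^\infty$ — there one reads $\partial_x(q_2(\epsilon\zeta)\partial_x u)=(f-q_1(\epsilon\zeta)u)/(\mu\nu)\in L^2$ directly), so dividing by $q_2(\epsilon\zeta)$ (bounded below by $h_{02}$, by~\eqref{CondEllipticity}) and differentiating once more shows $u\in H^2(\RR)$; thus $\mfT:H^2(\RR)\to L^2(\RR)$ is a bijection.

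For the higher-order estimates (ii)--(iii), the plan is an induction/bootstrap on $s$. Given $\mfT u=f$ with $f\in H^s$, I want to test the equation against $\Lambda^{2s}u$ and use the commutator structure. Concretely, write $(\Lambda^s\mfT u,\Lambda^s u)=(\Lambda^s f,\Lambda^s u)$ and decompose $\Lambda^s\mfT u = \mfT\Lambda^s u + [\Lambda^s,\mfT]u$. The term $(\mfT\Lambda^s u,\Lambda^s u)\geq \frac{1}{c_0}\vert\Lambda^s u\vert_{H^1_\mu}^2$ by coercivity~\eqref{coercive} applied to $\Lambda^s u$; the right-hand side is bounded by $\vert f\vert_{H^s}\vert\Lambda^s u\vert_{L^2}\leq \vert f\vert_{H^s}\vert u\vert_{H^{s+1}_\mu}$; and the commutator term is controlled by~\eqref{eq:estComT} of Lemma~\ref{Lem:mfT}, giving a bound $\epsilon C_0(\vert\zeta\vert_{H^{s_0+1}}\vert u\vert_{H^s_\mu}+\langle\vert\zeta\vert_{H^s}\vert u\vert_{H^{s_0+1}_\mu}\rangle_{s>s_0+1})\vert\Lambda^s u\vert_{H^1_\mu}$. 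The factor $\vert u\vert_{H^s_\mu}$ (respectively $\vert u\vert_{H^{s_0+1}_\mu}$) on the right is of lower order than $\vert u\vert_{H^{s+1}_\mu}$, so after absorbing $\vert\Lambda^s u\vert_{H^1_\mu}\simeq\vert u\vert_{H^{s+1}_\mu}$ into the coercive left-hand side (Young's inequality) and iterating the estimate down from $s$ to $0$ (using (i) as the base case), one obtains $\vert u\vert_{H^{s+1}_\mu}\leq c_{\bar s}\vert f\vert_{H^s}$ with $\bar s = s_0+1$ in the range $0<s\leq s_0+1$ and $\bar s=s$ for $s\geq s_0+1$, exactly matching the claimed dependence of the constants.

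One technical point to handle carefully: the manipulations above are formal unless $u$ is already known to lie in $H^{s+1}$; the standard remedy is to run the argument with $\Lambda^s$ replaced by the truncated/mollified operator $\Lambda^s_\epsilon \equiv \Lambda^s(1+\epsilon\Lambda)^{-N}$ (or a Friedrichs mollifier), which maps $H^2$ into itself, derive the estimate uniformly in the mollification parameter, and pass to the limit; alternatively one bootstraps in integer steps, having already shown $u\in H^2$, and then interpolates. The main obstacle I anticipate is precisely this regularization bookkeeping together with tracking that no constant degenerates: one must keep $\nu\geq\nu_0$ and $q_1,q_2\geq h_{02}$ throughout so that both the coercivity constant and the divisions by $q_2(\epsilon\zeta)$ stay uniform over $\P_{\rm CH}$, and one must check that the lower-order norms $\vert u\vert_{H^s_\mu}$ appearing in the commutator estimate genuinely close the induction rather than creating a loss — this works because Lemma~\ref{Lem:mfT}(ii) places $\vert\zeta\vert_{H^{s_0+1}}$ (not $\vert\zeta\vert_{H^s}$) in front of the top-order piece $\vert u\vert_{H^s_\mu}$, so the high norm of $\zeta$ is only needed when $s>s_0+1$, consistent with cases (ii) and (iii).
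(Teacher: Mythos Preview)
Your proposal is correct and follows essentially the same route as the paper: Lax--Milgram for the basic invertibility and estimate~(i), then for~(ii)--(iii) apply $\Lambda^s$ to $\mfT u=f$, pair with $\Lambda^s u$, invoke coercivity~\eqref{coercive} on the left and the commutator bound~\eqref{eq:estComT} on the right, and close by (continuous) induction on $s$. The paper is slightly terser about the $H^2$-regularity step and the justification of the formal pairings, but the argument is the same.
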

\begin{proof}
 To show the invertibility of $\mfT$ we use the Lax-Milgram theorem. From the previous Lemma, we know that the bilinear form:
\[
a(u,v)=\big(\ {\mfT} u\ ,\ v\ \big)=\big(\ (1+\epsilon \kappa_1\zeta)u\ ,\ v\ \big)+\mu\nu \big(\ (1+\epsilon\kappa_2\zeta)\partial_x u\ ,\ \partial_x v\ \big)
\]
is continuous and uniformly coercive on $H^1_\mu(\RR)$.
For any $\mu>0$, the dual of $H^1_\mu(\RR)$ is $H^{-1}(\RR)$, of whom $L^2(\RR)$ is a subspace, and one has
$ \big( f,g\big)\leq \big\vert f\big\vert_{H^1_\mu}\big\vert g\big\vert_{L^2}$, {\em independently of }$\mu>0$. Therefore, using Lax-Milgram lemma, for all $f \in L^2(\RR)$,
there exists a unique $u\in H^1_\mu(\RR)$
such that, for all $v\in H^1_\mu(\RR)$
\[
a(u,v)=(f,v);
\]
equivalently, there is a unique variational solution to the equation
\begin{equation}\label{reeq}
{\mfT} u=f.
\end{equation}
We then get from the definition of ${\mfT}$ that
\begin{equation}\label{regularity}
\nu\mu\left( 1+\epsilon\kappa_2 \zeta\right) \partial_x^2u= (1+\epsilon \kappa_1\zeta)u-\mu\epsilon\nu\kappa_2(\partial_x\zeta)( \partial_x u)-f.
\end{equation}
Now, using condition~\eqref{CondEllipticity}, and since $u\in H^1(\RR)$, $\zeta\in L^{\infty}(\RR)$ and $f\in L^2(\RR)$, we deduce that $\partial_x^2u \in L^2(\RR)$, and thus $u\in H^2(\RR)$. We proved that ${\mfT}[\epsilon\zeta]: H^2(\RR)\longrightarrow L^2(\RR)$ is one-to-one and onto.
\medskip

Let us now turn to the proof of estimates in $(i)-(iii)$.

We start from the equality
$a(u,u)=(f,u)$. Using elliptic inequality~\eqref{coercive} and Cauchy-Schwarz inequality, one has
\[ \frac1{c_0}\vert u\vert_{H^1_\mu}^{2}  \ \leq \ a(u,u) \ = \ \big(\ f\ ,\ u\ \big) \ \leq \ \big\vert f\big\vert_{L^2}\big\vert u\big\vert_{L^2} \ \leq \ \big\vert f\big\vert_{L^2}\big\vert u\big\vert_{H^1_\mu}.\]
Dividing by $ c_0^{-1}\big\vert u\big\vert_{H^1_\mu}$ yields the estimate in $(i)$.

Let us now assume that $f\in H^s(\RR)$, for $s\geq 0$. We apply  $\Lambda^s$ to  equation~\eqref{reeq} and we write it under the form:
\[
{\mfT} (\Lambda^s u)=\Lambda^s f-[\Lambda^s,{\mfT}]u.
\]
Proceeding as above, we use the $L^2$ inner product with $\Lambda^s u$, and deduce
\begin{align}\frac1{c_0}\vert \Lambda^s u\vert_{H^{1}_\mu}^{2} \ &\leq \  a(\Lambda^s u,\Lambda^s u) \ = \ \big( {\mfT}\Lambda^s u,\Lambda^s u\big) \ = \  \big( \Lambda^s f-[\Lambda^s,{\mfT}]u \ , \ \Lambda^s u \big) \ \nn \\
 &\leq \ \big\vert \Lambda^s f \big\vert_{L^2} \big\vert \Lambda^s u \big\vert_{L^2} \ + \ \left|\big( [\Lambda^s,{\mfT}]u \ , \ \Lambda^s u \big) \right| .\label{eq:continuity}
\end{align}
The result is now a consequence of~\eqref{eq:estComT}.
\medskip

\noindent{\em  --- If $0\leq s\leq s_0+1$}, one has
\[ \frac1{c_0}\vert  u\vert_{H^{s+1}_\mu}^{2} \ \leq \  \big\vert  f \big\vert_{H^s} \big\vert  u \big\vert_{H^s} \ + \ \epsilon \ C_0\big\vert \zeta \big\vert_{H^{s_0+1}}\big\vert u\big\vert_{H^{s}_\mu}  \big\vert u\big\vert_{H^{s+1}_\mu}  ,\]
thus
\[ \vert  u\vert_{H^{s+1}_\mu} \ \leq \  c_0\ \Big(\ \big\vert  f \big\vert_{H^s} \ + \ \epsilon \ C_0 \big\vert \zeta \big\vert_{H^{s_0+1}}\big\vert u\big\vert_{H^{s}_\mu}\ \Big)   .\]
The estimate of {\em (ii)} for $0<s\leq1$ follows, using estimate {\em (i)} and $\big\vert \Lambda^{s-1} u \big\vert_{H^1_\mu}\leq \big\vert u \big\vert_{H^1_\mu}$. The result for greater values of $s$, $1<s\leq s_0+1$, follows by continuous induction.
\medskip

\noindent{\em --- If $s>s_0+1$}, then plugging~\eqref{eq:estComT} into~\eqref{eq:continuity} yields
\[ \frac1{c_0}\vert  u\vert_{H^{s+1}_\mu}^{2} \ \leq \  \big\vert  f \big\vert_{H^s} \big\vert  u \big\vert_{H^s} \ + \ \epsilon \ C_0\ \big(\big\vert \zeta \big\vert_{H^{s_0+1}}\big\vert u\big\vert_{H^{s}_\mu} + \big\vert \zeta \big\vert_{H^{s}}\big\vert u\big\vert_{H^{s_0+1}_\mu} \big)   \big\vert u\big\vert_{H^{s+1}_\mu}  ,\]
thus
\[ \vert  u\vert_{H^{s+1}_\mu} \ \leq \  c_0\ \Big(\ \big\vert  f \big\vert_{H^s} \ + \ \epsilon \ C_0\  \big(\big\vert \zeta \big\vert_{H^{s_0+1}}\big\vert u\big\vert_{H^{s}_\mu} + \big\vert \zeta \big\vert_{H^{s}}\big\vert u\big\vert_{H^{s_0+1}_\mu} \big) \Big)  .\]
As above, the result follows by continuous induction on $s$.
\end{proof}
\bigskip

Finally, let us introduce the following technical estimate, which is used several times in the subsequent sections.
\begin{Corollary}\label{col:comwithT}
Let $(\mu,\epsilon,\delta,\gamma,\bo)\in\P_{\rm CH}$ and $\zeta \in H^{s}(\RR)$ with $ s\ge s_0 +1,\; s_0>\frac{1}{2}$, such that~\eqref{CondEllipticity} is satisfied. Assume moreover that $u\in H^{s-1}(\RR)$ and that $v\in H^{1}(\RR)$. Then one has
\begin{align}
\big|\big( \ \big[\Lambda^s, \mfT^{-1}[\epsilon\zeta]\big] u\ ,\ \mfT[\epsilon\zeta]  v\ \big)\big|  \ &= \ \big|\big(\  \big[\Lambda^s,\mfT[\epsilon\zeta] \big] {\mfT}^{-1}[\epsilon\zeta]  u\ ,\  v\ \big)\big| \nn \\
& \leq \ \epsilon\ C(M_{\rm CH},h_{02}^{-1},\big\vert \zeta \big\vert_{H^s})\big\vert u \big\vert_{H^{s-1}} \big\vert v \big\vert_{H^{1}_\mu}
\label{eq:comwithT}\end{align}
\end{Corollary}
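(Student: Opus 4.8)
The plan is to establish the identity first, and then estimate one of the two equal quantities. For the identity, I would write $\big[\Lambda^s,\mfT^{-1}\big] = \Lambda^s\mfT^{-1}-\mfT^{-1}\Lambda^s$ and insert $\mfT^{-1}\mfT=\mathrm{Id}$ on the appropriate sides:
\[
\big[\Lambda^s,\mfT^{-1}\big] \ = \ -\mfT^{-1}\big(\Lambda^s\mfT-\mfT\Lambda^s\big)\mfT^{-1} \ = \ -\mfT^{-1}\big[\Lambda^s,\mfT\big]\mfT^{-1}.
\]
Applying this to $u$ and pairing with $\mfT v$, the outermost $\mfT^{-1}$ transfers onto $\mfT v$ (using that $\mfT$ is symmetric, by the structure of $\mfT$ in~\eqref{defT}), producing $\big(\big[\Lambda^s,\mfT\big]\mfT^{-1}u,v\big)$ up to sign; since only the absolute value is claimed, this gives the stated equality. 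One should note here that $u\in H^{s-1}$ and Lemma~\ref{proprim}~(iii) give $\mfT^{-1}u\in H^{s}_\mu\subset H^s$, so $\big[\Lambda^s,\mfT\big]$ is applied to an $H^s$ function and all pairings are well-defined.

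For the estimate I would work with the right-hand expression $\big(\big[\Lambda^s,\mfT\big]\mfT^{-1}u,v\big)$ and apply the commutator estimate~\eqref{eq:estComT} of Lemma~\ref{Lem:mfT} with the function $\mfT^{-1}u$ in the slot of ``$u$''. Since $s\ge s_0+1$, that estimate yields a bound of the form
\[
\epsilon\, C_0\Big(\big\vert\zeta\big\vert_{H^{s_0+1}}\big\vert \mfT^{-1}u\big\vert_{H^s_\mu} + \big\langle \big\vert\zeta\big\vert_{H^s}\big\vert \mfT^{-1}u\big\vert_{H^{s_0+1}_\mu}\big\rangle_{s>s_0+1}\Big)\big\vert v\big\vert_{H^1_\mu},
\]
with $C_0=C(M_{\rm CH},h_{02}^{-1})$. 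It then remains to control $\big\vert \mfT^{-1}u\big\vert_{H^s_\mu}$ (and $\big\vert \mfT^{-1}u\big\vert_{H^{s_0+1}_\mu}$) by $\big\vert u\big\vert_{H^{s-1}}$. Here is where some care is needed: the cleanest route is to use Lemma~\ref{proprim}~(iii) applied not to $u\in H^{s-1}$ but with a shift — one has $\mfT^{-1}:H^{s-1}\to H^{s}_\mu$ continuously with norm $c_s=C(M_{\rm CH},h_{02}^{-1},\epsilon\vert\zeta\vert_{H^s})$ (applying part (iii) at regularity $s-1\ge s_0$, which falls under (ii) if $s-1\le s_0+1$ and under (iii) otherwise, in both cases with a constant depending on $\vert\zeta\vert_{H^{\max(s,s_0+1)}}=\vert\zeta\vert_{H^s}$). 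Thus $\big\vert\mfT^{-1}u\big\vert_{H^s_\mu}\le c_s\big\vert u\big\vert_{H^{s-1}}$, and similarly $\big\vert\mfT^{-1}u\big\vert_{H^{s_0+1}_\mu}\le c_{s_0+1}\big\vert u\big\vert_{H^{s_0}}\le c_s\big\vert u\big\vert_{H^{s-1}}$ since $s-1\ge s_0$. Substituting and absorbing $\vert\zeta\vert_{H^{s_0+1}}\le\vert\zeta\vert_{H^s}$ into the constant yields exactly~\eqref{eq:comwithT}.

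The main obstacle is the bookkeeping of regularity indices: one must verify that applying~\eqref{eq:estComT} with argument $\mfT^{-1}u$ only requires $H^s_\mu$ control of $\mfT^{-1}u$ (not $H^{s+1}_\mu$), so that $u\in H^{s-1}$ — rather than $H^s$ — suffices, and that the resulting constant depends on $\zeta$ only through $\vert\zeta\vert_{H^s}$, matching the statement. The algebraic identity and the symmetry of $\mfT$ are routine; the estimate is a direct chaining of Lemma~\ref{Lem:mfT} and Lemma~\ref{proprim}, with the only subtlety being this loss/gain of one derivative between $u$, $\mfT^{-1}u$, and the output.
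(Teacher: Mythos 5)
Your proposal is correct and follows essentially the same route as the paper: the identity is obtained from the symmetry of $\mfT$ together with the commutator algebra $[\Lambda^s,\mfT^{-1}]=-\mfT^{-1}[\Lambda^s,\mfT]\mfT^{-1}$, and the estimate chains~\eqref{eq:estComT} with the bound $\vert\mfT^{-1}u\vert_{H^s_\mu}\leq C\vert u\vert_{H^{s-1}}$ from Lemma~\ref{proprim}~(ii)--(iii). Your regularity bookkeeping (that $\mfT^{-1}u\in H^s_\mu$ suffices for the commutator estimate, so $u\in H^{s-1}$ is enough) matches what the paper does implicitly.
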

\begin{proof}
The first identity can be obtained through simple calculation: using that $\mfT[\epsilon\zeta] $ is symmetric,
\begin{align*} \big( \big[\Lambda^s, \mfT^{-1}[\epsilon\zeta]\big] u,\mfT[\epsilon\zeta] v\big) \ &= \ \big( \mfT[\epsilon\zeta]  \big[\Lambda^s, \mfT^{-1}[\epsilon\zeta]\big] u, v\big) \\
&= \ \big( \mfT[\epsilon\zeta]  \Lambda^s \mfT^{-1}[\epsilon\zeta] u \ -\ \Lambda^s u \  ,\  v\big) \\
&= \ \big(-\big[\Lambda^s,\mfT[\epsilon\zeta] \big] {\mfT}^{-1}[\epsilon\zeta]  u \  ,\  v\big) .
\end{align*}

The estimate is now a direct application of~\eqref{eq:estComT} and~Lemma~\ref{proprim}. From point {\it (ii)} and {\it (iii)} of Lemma~\ref{proprim}, one has
\[ \big\vert {\mfT}^{-1}[\epsilon\zeta]  u \big\vert_{H^s_\mu}\ \leq \ C \big\vert u \big\vert_{H^{s-1}} ,\]
with $C=C(M_{\rm CH},h_{02}^{-1},\epsilon\big\vert \zeta \big\vert_{H^{s-1}},\epsilon\big\vert \zeta \big\vert_{H^{s_0+1}})$. Now apply commutator estimate~\eqref{eq:estComT}, and one obtains straightforwardly our desired estimate.
\end{proof}

\section{Linear analysis}\label{la}
Let us recall the system~\eqref{eq:Serre2} introduced in section~\ref{ssec:Camassa-Holm}.
\begin{equation}\label{eqn:Serre2var}\left\{ \begin{array}{l}
\partial_{ t}\zeta +\partial_x\left(\dfrac{h_1 h_2}{h_1+\gamma h_2}\b v\right)\ =\  0,\\ \\
\mathfrak T[\epsilon\zeta] \left( \partial_{ t} \b v + \epsilon \varsigma {\b v } \partial_x {\b v } \right) + (\gamma+\delta)q_1(\epsilon\zeta)\partial_x \zeta   \\
\qquad \qquad+\frac\epsilon2 q_1(\epsilon\zeta) \partial_x \left(\frac{h_1^2  -\gamma h_2^2 }{(h_1+\gamma h_2)^2}|\b v|^2-\varsigma |\b v|^2\right)=  -  \mu \epsilon\frac23\frac{1-\gamma}{(\gamma+\delta)^2} \partial_x\big((\partial_x \b v)^2\big) ,
\end{array} \right. \end{equation}
with $h_1=1-\epsilon \zeta$ , $h_2=1/\delta+\epsilon \zeta$ $q_i(X)=1+\kappa_i X$ ($i=1,2$) , $\kappa_i,\varsigma$ defined in\eqref{defkappa},\eqref{defvarsigma}, and
\[
\mfT[\epsilon\zeta] V= q_1(\epsilon\zeta)V -\dsp \mu \nu\partial_x \left(q_2(\epsilon\zeta)\partial_x V \right).
\]

In order to ease the reading, we define the function
\[ f:X\to\frac{(1-X)(\delta^{-1}+X)}{1-X+\gamma(\delta^{-1}+X)}.\]
One can easily check that
\[
f(\epsilon\zeta) \ =\ \dsp\frac{h_1h_2}{h_1+\gamma h_2}, \qquad \text{ and } \qquad
f'(\epsilon\zeta) \ =\ \dsp\frac{h_1^2-\gamma h_2^2}{(h_1+\gamma h_2)^2}.\]
Additionally, let us denote \[
\kappa=\frac23\frac{1-\gamma}{(\delta+\gamma)^2}
\quad \text{ and } \quad q_3(\epsilon\zeta)=\frac12\big(\frac{h_1^2-\gamma h_2^2}{(h_1+\gamma h_2)^2}-\varsigma\big),\]
so that one can rewrite (here and in the following, we omit the bar on $v$ for the sake of readability)
\begin{equation}\label{eqn:Serre2varf}\left\{ \begin{array}{l}
\dsp \partial_{ t}\zeta +f(\epsilon\zeta)\partial_x  v+\epsilon \partial_x\zeta f'(\epsilon\zeta)  v  \ =\  0,\\ \\
\dsp {\mathfrak T} \left( \partial_{ t}  v + \frac{\epsilon}{2} \varsigma \partial_x({v }^2) \right) + (\gamma+\delta)q_1(\epsilon\zeta)\partial_x \zeta + \epsilon q_1(\epsilon\zeta)\partial_x(q_3(\epsilon\zeta)  {v}^2)  \ + \ \mu\epsilon\kappa\partial_x\big((\partial_x v)^2\big) \ = \ 0.
\end{array} \right. \end{equation}

The equations can be written after applying ${\mathfrak T}^{-1}$ to the second equation
in~\eqref{eqn:Serre2varf} as
\begin{equation}\label{condensedeq}
\partial_tU+A_0[U]\partial_xU +A_1[U]\partial_xU\ = \ 0,\end{equation}
 with
\begin{equation}\label{defA0A1}
A_0[U]=\begin{pmatrix}
\epsilon f'(\epsilon \zeta) v&f(\epsilon\zeta)\\
\mfT^{-1}(Q_0(\epsilon\zeta) \cdot)& \epsilon \mfT^{-1}(\mfQ[\epsilon\zeta,v] \cdot)
\end{pmatrix}, \quad A_1[U]=\begin{pmatrix}
0&0\\
\epsilon^2\mfT^{-1}(Q_1(\epsilon\zeta,v) \cdot)& \epsilon\varsigma v
\end{pmatrix},
\end{equation}
where $Q_0(\epsilon\zeta),Q_1(\epsilon\zeta,v)$ are defined as
\begin{equation}\label{defQ0Q1}
Q_0(\epsilon\zeta) \ = \ (\gamma+\delta)q_1(\epsilon\zeta),\quad Q_1(\epsilon\zeta,v)=q_1(\epsilon\zeta) q_3'(\epsilon \zeta){ v}^2
\end{equation}
and the operator $\mfQ[\epsilon\zeta,v]$ defined by
\begin{equation}\label{defmfQ}
\mfQ[\epsilon\zeta,v]f \ \equiv \ 2q_1(\epsilon\zeta)q_3(\epsilon\zeta)v f +\mu\kappa \partial_x(f \partial_x v).
\end{equation}

Following the classical theory of hyperbolic systems, the well-posedness of the initial value problem of the above system will rely on a precise study of the properties, and in particular energy estimates, for the linearized system around some reference state
$\underline{U}=(\underline{\zeta},\underline{v})^\top$:
\begin{equation}\label{SSlsys}
	\left\lbrace
	\begin{array}{l}
	\dsp\partial_t U+A_0[\underline{U}]\partial_x U+A_1[\underline{U}]\partial_x U=0;
        \\
	\dsp U_{\vert_{t=0}}=U_0.
	\end{array}\right.
\end{equation}

In the following subsection, we construct the natural energy space for our problem. Energy estimates are then proved in section~\ref{ssec:energyestimates}. Finally, we state the well-posedness of the linear system~\eqref{SSlsys} in section~\ref{ssec:linearWP}.

\subsection{Energy space}\label{ssec:energyspace}
Let us first remark that by construction, one has a pseudo-symmetrizer of the system, given by
\begin{equation}\label{defS}
S[\underline{U}]=\begin{pmatrix}
 \frac{Q_0(\epsilon\underline{\zeta})}{f(\epsilon\underline{\zeta})}& 0 \\
0&\mfT[\epsilon\underline{\zeta}]
\end{pmatrix}
, \qquad S[\underline{U}]A_0[\underline{U}]=\begin{pmatrix}
 \epsilon\frac{Q_0(\epsilon\underline{\zeta})}{f(\epsilon\underline{\zeta})}f'(\epsilon\underline{\zeta}) \underline{v}& Q_0(\epsilon\underline{\zeta}) \\
Q_0(\epsilon\underline{\zeta}) &\epsilon \mfQ[\epsilon\underline{\zeta},\underline{v}]
\end{pmatrix}.
\end{equation}

Notice that $S[\underline{U}]$ is symmetric, as $\mfT[\epsilon\underline{\zeta}]$ is symmetric, and one can easily check that $S[\underline{U}]A_0[\underline{U}]$ is symmetric as well. On the contrary, the operator
\[S[\underline{U}]A_1[\underline{U}] \ = \ \begin{pmatrix}
0& 0 \\
\epsilon^2 Q_1(\epsilon\underline{\zeta},\underline{v}) & \epsilon\varsigma \mfT[\epsilon\underline{\zeta}](\underline{v} \cdot)
\end{pmatrix}\] represents the defect of symmetry. However, $\mfT[\epsilon\underline{\zeta}] (\underline{v} \cdot)$ has the desired following property:
\begin{align} \big(\mfT[\epsilon\underline{\zeta}] (\underline{v} \partial_x V) ,V\big)&= \big(\ q_1(\epsilon\underline{\zeta})\underline{v} \partial_x V+\mu\partial_x(q_2(\epsilon\underline{\zeta})\partial_x(\underline{v} \partial_x V ))\ ,\ V\ \big) \nn \\
&= -\frac12\big(\ \partial_x(q_1(\epsilon\underline{\zeta})\underline{v}) V\ ,\ V\ \big)-\mu\big(\ q_2(\epsilon\underline{\zeta})\partial_x(\underline{v} \partial_x V )\ ,\ \partial_x V\ \big)\nn \\
&= -\frac12\big( \partial_x(q_1(\epsilon\underline{\zeta})\underline{v}) V , V \big)-\mu\big( q_2(\epsilon\underline{\zeta})(\partial_x\underline{v}) \partial_x V  , \partial_x V \big)+\mu\frac12\big( \partial_x( q_2(\epsilon\underline{\zeta})\underline{v} )\partial_x V  , \partial_x V \big).\label{eqn:symmetry-like}
\end{align}
Therefore, the inner product $\big(\mfT[\epsilon\underline{\zeta}] (\underline{v} \partial_x V) ,V\big)$ is controlled by $\big|V\big|_{H^1_\mu}^2$, which is bounded in our energy space, as defined below. In the same way, one can control the contribution of $\epsilon^2 Q_1(\epsilon\underline{\zeta},\underline{v}) $, using the smallness of $\epsilon$ through the assumption of the Camassa-Holm regime $\epsilon=\O(\sqrt\mu)$.
\begin{Remark}
In the analysis below, the only place where the smallness assumption of the Camassa-Holm regime, $\epsilon=\O(\sqrt\mu)$, is used (apart from the simplifications it offers when constructing system~\eqref{eqn:Serre2var}), stands in the estimation of the contribution of $\epsilon^2 Q_1(\epsilon\underline{\zeta},\underline{v}) $. 
As a matter of fact, this assumption is actually not required: one could replace $Q_0(\epsilon\underline{\zeta})$ by $Q_0(\epsilon\underline{\zeta})+\epsilon^2 Q_1(\epsilon\underline{\zeta},\underline{v})$ in the pseudo-symmetrizer $S[\underline{U}]$, thus canceling out the $\epsilon^2 Q_1(\epsilon\underline{\zeta},\underline{v}) $ term.
Note however 
that the energy would then be slightly different than the one defined below, and that in order for Lemma~\ref{lemmaes} to hold, one should add an additional smallness assumption on $\epsilon \underline{v}$, in order to ensure $Q_0(\epsilon\underline{\zeta})+\epsilon^2 Q_1(\epsilon\underline{\zeta},\underline{v})\geq h_{03} >0$.
\end{Remark}

Let us now define our energy space.
\begin{Definition}\label{defispace}
 For given $s\ge 0$ and $\mu,T>0$, we denote by $X^s$ the vector space $H^s(\RR)\times H^{s+1}(\RR)$ endowed with the norm
\[
\forall\; U=(\zeta,v) \in X^s, \quad \vert U\vert^2_{X^s}\equiv \vert \zeta\vert^2 _{H^s}+\vert v\vert^2 _{H^s}+ \mu\vert \partial_xv\vert^2 _{H^s},
\]
while $X^s_T$ stands for the space of $U=(\zeta,v)$ such that $U\in C^0([0,\frac{T}{\epsilon}];X^{s})$ and $\partial_t\zeta \in  L^\infty([0,\frac{T}{\epsilon}]\times \RR)$, endowed with the canonical norm
\[
 \Vert U\Vert_{X^s_T}\equiv \sup_{t\in [0,T/\epsilon]}\vert U(t,\cdot)\vert_{X^s}+\esssup_{t\in [0,T/\epsilon],x\in\RR}\vert \partial_t \zeta(t,x)\vert.
\]
\end{Definition}

A natural energy for the initial value problem~\eqref{SSlsys} is given by
\begin{equation}\label{es}
 E^s(U)^2=(\Lambda^sU,S[\underline{U}]\Lambda^sU)=( \Lambda^s\zeta,\frac{Q_0(\epsilon\underline{\zeta})}{f(\epsilon\underline{\zeta})}\Lambda^s\zeta)+\left(\Lambda^s v,\frac{\mfT[\epsilon\underline{\zeta}]}{\gamma+\delta} \Lambda^s v\right).
\end{equation}
The link between $ E^s(U)$ and the $X^s$-norm is investigated in the following Lemma.
\begin{Lemma}\label{lemmaes}
 Let $\p=(\mu, \epsilon,\delta,\gamma,\bo) \in \P_{\rm CH}$,  $s\geq 0$ and $ \underline{\zeta}\in L^{\infty}(\RR)$, satisfying~\eqref{CondDepth} and~\eqref{CondEllipticity}. Then
$E^s(U)$ is equivalent to the $\vert \cdot\vert_{X^s}$-norm  uniformly with respect to $\p\in \P_{\rm CH}$. More precisely, there exists $c_0=C(M_{\rm CH},h_{01}^{-1},h_{02}^{-1})>0$ such that
\[
\frac1{c_0}E^s(U) \ \leq \ \big\vert U \big\vert_{X^s} \ \leq \  c_0 E^s(U).
\]
\end{Lemma}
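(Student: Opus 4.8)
The plan is to show the two-sided bound by estimating each of the two quadratic terms in the definition~\eqref{es} of $E^s(U)^2$ separately, using the coercivity/continuity properties of $\mfT[\epsilon\underline\zeta]$ from Lemma~\ref{Lem:mfT} and the pointwise bounds on the scalar multiplier $\frac{Q_0(\epsilon\underline\zeta)}{f(\epsilon\underline\zeta)}$. First I would recall that $Q_0(\epsilon\underline\zeta)=(\gamma+\delta)q_1(\epsilon\underline\zeta)=(\gamma+\delta)(1+\epsilon\kappa_1\underline\zeta)$ and $f(\epsilon\underline\zeta)=\frac{h_1h_2}{h_1+\gamma h_2}$. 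By hypothesis~\eqref{CondDepth}, $h_1,h_2\geq h_{01}>0$, and since in the Camassa-Holm regime $\gamma+\delta$, $h_1$, $h_2$ are all bounded above by a constant $C(M_{\rm CH})$ (using $\epsilon\leq1$, $\delta\leq\delta_{\max}$, and that $\epsilon|\underline\zeta|_{L^\infty}$ is bounded via~\eqref{CondDepth}), one gets $0<\frac{1}{C}\leq f(\epsilon\underline\zeta)\leq C$. Likewise, by~\eqref{CondEllipticity}, $q_1(\epsilon\underline\zeta)=1+\epsilon\kappa_1\underline\zeta\geq h_{02}>0$, and it is bounded above by $1+|\kappa_1|\epsilon|\underline\zeta|_{L^\infty}\leq C(M_{\rm CH})$. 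Hence the multiplier $m(\epsilon\underline\zeta)\equiv\frac{Q_0(\epsilon\underline\zeta)}{f(\epsilon\underline\zeta)}$ satisfies $\frac1{c_0}\leq m(\epsilon\underline\zeta)\leq c_0$ pointwise, with $c_0=C(M_{\rm CH},h_{01}^{-1},h_{02}^{-1})$.

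Next I would handle the $\zeta$-term: since $m$ is a bounded positive multiplier,
\[
\frac1{c_0}\big|\Lambda^s\zeta\big|_{L^2}^2 \ \leq \ \big(\Lambda^s\zeta,\,m(\epsilon\underline\zeta)\Lambda^s\zeta\big) \ \leq \ c_0\big|\Lambda^s\zeta\big|_{L^2}^2,
\]
i.e. this term is equivalent to $|\zeta|_{H^s}^2$. For the $v$-term, I would apply the coercivity estimate~\eqref{coercive} and the continuity estimate~\eqref{continous} of Lemma~\ref{Lem:mfT} with $u=v=\Lambda^s v$ (noting $\Lambda^s$ commutes with the $H^1_\mu$ structure in the sense that $|\Lambda^s v|_{H^1_\mu}^2=|v|_{H^s}^2+\mu|\partial_x v|_{H^s}^2$), which gives
\[
\frac1{c_0(\gamma+\delta)}\big(|v|_{H^s}^2+\mu|\partial_x v|_{H^s}^2\big) \ \leq \ \Big(\Lambda^s v,\,\tfrac{\mfT[\epsilon\underline\zeta]}{\gamma+\delta}\Lambda^s v\Big) \ \leq \ \frac{c_0}{\gamma+\delta}\big(|v|_{H^s}^2+\mu|\partial_x v|_{H^s}^2\big),
\]
and since $\gamma+\delta$ is bounded above and below in $\P_{\rm CH}$ (indeed $\gamma\geq0$, $\delta\geq\delta_{\min}>0$, and $\gamma<1$, $\delta\leq\delta_{\max}$), this term is equivalent to $|v|_{H^s}^2+\mu|\partial_x v|_{H^s}^2$ uniformly. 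Adding the two contributions and recalling $|U|_{X^s}^2=|\zeta|_{H^s}^2+|v|_{H^s}^2+\mu|\partial_x v|_{H^s}^2$ yields the claimed equivalence $\frac1{c_0}E^s(U)\leq|U|_{X^s}\leq c_0 E^s(U)$ after adjusting $c_0$.

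The main (and really only) obstacle is bookkeeping: making sure every constant that appears — from the upper bounds on $h_1,h_2,\gamma+\delta,q_1$, the lower bounds coming from~\eqref{CondDepth}–\eqref{CondEllipticity}, and the constant $c_0$ in Lemma~\ref{Lem:mfT} — depends only on $M_{\rm CH}$, $h_{01}^{-1}$, $h_{02}^{-1}$ and not on the individual parameters $\p\in\P_{\rm CH}$. This is where one must use that $\epsilon|\underline\zeta|_{L^\infty}$ is controlled: it is bounded above because~\eqref{CondDepth} forces $\epsilon|\underline\zeta|_{L^\infty}\leq\max(1,\delta_{\min})\leq C(M_{\rm CH})$ (as noted in the discussion following~\eqref{CondDepth}), so the constant $c_0=C(M_{\rm CH},h_{02}^{-1},\epsilon|\underline\zeta|_{L^\infty})$ from Lemma~\ref{Lem:mfT} is in fact of the form $C(M_{\rm CH},h_{01}^{-1},h_{02}^{-1})$. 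Once this is observed, the rest is the elementary argument above, and no $s$-dependence or $\mu$-dependence enters since $\Lambda^s$ passes through all the scalar multipliers and the $H^1_\mu$ bounds of Lemma~\ref{Lem:mfT} are already $\mu$-uniform by design of the norm $|\cdot|_{H^1_\mu}$.
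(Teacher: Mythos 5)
Your proposal is correct and follows essentially the same route as the paper: pointwise upper and lower bounds on the multiplier $Q_0(\epsilon\underline\zeta)/f(\epsilon\underline\zeta)$ coming from~\eqref{CondDepth}--\eqref{CondEllipticity} for the $\zeta$-component, combined with the continuity and coercivity estimates~\eqref{continous}--\eqref{coercive} of Lemma~\ref{Lem:mfT} applied to $\Lambda^s v$ for the $v$-component. The only blemish is the typo $\max(1,\delta_{\min})$, which should read $\max(1,\delta_{\min}^{-1})$; this does not affect the conclusion that $\epsilon\vert\underline\zeta\vert_{L^\infty}\leq C(M_{\rm CH})$.
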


\begin{proof}
This is a straightforward application of Lemma~\ref{Lem:mfT}, and that for $Q_0(\epsilon\underline{\zeta}),f(\epsilon\underline{\zeta})>0$,
\begin{equation}\label{boundf}
\inf_{x\in\RR} \frac{Q_0(\epsilon\underline{\zeta})}{f(\epsilon\underline{\zeta})}\geq  (\inf_{x\in\RR} Q_0(\epsilon\underline{\zeta}))(\sup_{x\in\RR} f(\epsilon\underline{\zeta}))^{-1}\qquad \sup_{x\in\RR} \left|\frac{Q_0(\epsilon\underline{\zeta})}{f(\epsilon\underline{\zeta})}\right|\leq (\sup_{x\in\RR} Q_0(\epsilon\underline{\zeta}))(\inf_{x\in\RR} f(\epsilon\underline{\zeta}))^{-1},
\end{equation}
where we recall that if~\eqref{CondDepth} is satisfied, then $\underline{h_1}=1-\epsilon\underline{\zeta}$ and $\underline{h_2}=\frac1\delta+\epsilon\underline{\zeta}$ satisfy
\[ \inf_{x\in\RR} \underline{h_1}\geq  h_{01} , \quad  \sup_{x\in\RR} \left|\underline{h_1} \right|\leq 1+1/\delta , \quad \inf_{x\in\RR} \underline{h_2}\geq  h_{01} , \quad  \sup_{x\in\RR} \left|\underline{h_2}\right|\leq 1+1/\delta.
\]
\end{proof}

We conclude this section by proving some general estimates concerning our new operators, which will be useful in the following subsection.
\begin{Lemma}\label{lemS}
 Let $(\mu,\epsilon,\delta,\gamma,\bo)=\p\in\P_{\rm CH}$, and let $U=(\zeta_u,u)^\top$ such that $\zeta_u\in L^\infty$ satisfies~\eqref{CondDepth},\eqref{CondEllipticity}. Then for any
 $V,W\in X^{0}$, one has
\begin{equation}\label{eq:est-S}
\Big\vert \ \Big(\ S[U] V \ , \ W \ \Big) \ \Big\vert \  \leq \ \ C \ \big\vert V\big\vert_{X^0}\big\vert W\big\vert_{X^0},
\end{equation}
with $C=C(M_{\rm CH},h_{01}^{-1},h_{02}^{-1},\epsilon\big\vert \zeta_u\big\vert_{L^\infty})\ $.

Moreover, if $\zeta_u\in H^s,V\in X^{s-1}$ with $s\geq s_0+1,\ s_0>1/2$, then one has
\begin{align}
\label{eq:est-com-S}
\Big\vert  \Big(\ \big[ \Lambda^s,S[U]\big] V \ , \ W \ \Big) \Big\vert  \ &\leq \ C\ \big\vert V\big\vert_{X^{s-1}}\big\vert W\big\vert_{X^0}\\
\label{eq:est-com-S-S}
\Big\vert  \Big(\ \big[ \Lambda^s,S^{-1}[U]\big] V \ , \ S[U] W \ \Big) \Big\vert  \ &\leq \  C\ \big\vert V\big\vert_{H^{s-1}\times H^{s-1}}\big\vert W\big\vert_{X^0}
\end{align}
with $C=C(M_{\rm CH},h_{01}^{-1},h_{02}^{-1},\epsilon \big\vert \zeta_u\big\vert_{H^s})\ $.
\end{Lemma}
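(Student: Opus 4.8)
\textbf{Proof plan for Lemma~\ref{lemS}.}
The plan is to unfold the block-diagonal structure of $S[U]$ and reduce everything to the estimates already obtained for $\mfT$ and for scalar multiplication operators. For~\eqref{eq:est-S}, I would write $\big(S[U]V,W\big)$ componentwise: if $V=(\zeta_V,v_V)^\top$ and $W=(\zeta_W,v_W)^\top$, then $\big(S[U]V,W\big)=\big(\frac{Q_0(\epsilon\zeta_u)}{f(\epsilon\zeta_u)}\zeta_V,\zeta_W\big)+\frac1{\gamma+\delta}\big(\mfT[\epsilon\zeta_u]v_V,v_W\big)$. The first term is bounded by $\sup_x\big|\frac{Q_0(\epsilon\zeta_u)}{f(\epsilon\zeta_u)}\big|\,|\zeta_V|_{L^2}|\zeta_W|_{L^2}$, which is controlled using~\eqref{boundf} together with~\eqref{CondDepth} (to bound $f$ from below and $Q_0$ from above via $\epsilon\kappa_1$, hence via $M_{\rm CH}$ and $\epsilon|\zeta_u|_{L^\infty}$). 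The second term is exactly~\eqref{continous} from Lemma~\ref{Lem:mfT}, giving $c_0|v_V|_{H^1_\mu}|v_W|_{H^1_\mu}$. Since $|\zeta_V|_{L^2}^2+|v_V|_{H^1_\mu}^2\leq|V|_{X^0}^2$, this yields~\eqref{eq:est-S}.

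For~\eqref{eq:est-com-S}, the commutator is again block-diagonal: $\big[\Lambda^s,S[U]\big]V$ has first component $\big[\Lambda^s,\frac{Q_0(\epsilon\zeta_u)}{f(\epsilon\zeta_u)}\big]\zeta_V$ and second component $\frac1{\gamma+\delta}\big[\Lambda^s,\mfT[\epsilon\zeta_u]\big]v_V$. The second is controlled directly by~\eqref{eq:estComT} of Lemma~\ref{Lem:mfT}, paired with $v_W$ in $L^2$: this gives $\epsilon\,C_0\big(|\zeta_u|_{H^{s_0+1}}|v_V|_{H^s_\mu}+\langle|\zeta_u|_{H^s}|v_V|_{H^{s_0+1}_\mu}\rangle_{s>s_0+1}\big)|v_W|_{H^1_\mu}$, which for $s\geq s_0+1$ is $\leq C(M_{\rm CH},h_{01}^{-1},h_{02}^{-1},\epsilon|\zeta_u|_{H^s})\,|v_V|_{H^{s-1}_\mu}|v_W|_{H^1_\mu}$ (absorbing the $\epsilon$ and shifting one derivative off $v_V$ to match $|V|_{X^{s-1}}$); the point is that $|v_V|_{H^{s-1}_\mu}\lesssim |v_V|_{H^{s-1}}+\sqrt\mu|\partial_x v_V|_{H^{s-1}}\leq\sqrt 2\,|V|_{X^{s-1}}$ while $|v_W|_{H^1_\mu}\leq |W|_{X^0}$. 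For the scalar commutator $\big[\Lambda^s,\frac{Q_0(\epsilon\zeta_u)}{f(\epsilon\zeta_u)}\big]\zeta_V$, one first needs that the coefficient $\frac{Q_0(\epsilon\zeta_u)}{f(\epsilon\zeta_u)}$, minus its constant value $\gamma+\delta$ at $\zeta_u=0$, lies in $H^{s_0+1}$ (indeed in $H^s$) with norm controlled by $\epsilon|\zeta_u|_{H^s}$ — this uses Lemma~\ref{lem:f/h} / Lemma~\ref{lem:f/h1} to handle the division by $f$, since $1/f\notin H^s$ but $f-\text{const}$ is a product of $H^s$ functions with bounded-below denominators. Then the Kato-Ponce commutator estimate (Lemma~\ref{K-P}) gives $\big|\big[\Lambda^s,\frac{Q_0(\epsilon\zeta_u)}{f(\epsilon\zeta_u)}\big]\zeta_V\big|_{L^2}\lesssim |\partial_x(\tfrac{Q_0}{f})|_{H^{s-1}}|\zeta_V|_{L^\infty}+\dots\lesssim C(\ldots,\epsilon|\zeta_u|_{H^s})|\zeta_V|_{H^{s-1}}$, and pairing with $|\zeta_W|_{L^2}\leq|W|_{X^0}$ closes it.

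For~\eqref{eq:est-com-S-S}, I would use the identity trick from Corollary~\ref{col:comwithT}: since $S[U]$ is symmetric, $\big(\big[\Lambda^s,S^{-1}[U]\big]V,S[U]W\big)=\big(S[U]\big[\Lambda^s,S^{-1}[U]\big]V,W\big)=-\big(\big[\Lambda^s,S[U]\big]S^{-1}[U]V,W\big)$, using $S[U]\Lambda^s S^{-1}[U]-\Lambda^s=-\big[\Lambda^s,S[U]\big]S^{-1}[U]$. Then apply~\eqref{eq:est-com-S} with $S^{-1}[U]V$ in place of $V$: this requires $\big|S^{-1}[U]V\big|_{X^{s-1}}\lesssim |V|_{H^{s-1}\times H^{s-1}}$. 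Block-diagonally, $S^{-1}[U]$ acts as multiplication by $\frac{f(\epsilon\zeta_u)}{Q_0(\epsilon\zeta_u)}$ on the first component (bounded on $H^{s-1}$ by~\eqref{boundf}-type bounds and Lemma~\ref{Moser}/\ref{lem:f/h}) and as $(\gamma+\delta)\mfT^{-1}[\epsilon\zeta_u]$ on the second; for the latter, point~(ii)-(iii) of Lemma~\ref{proprim} gives $\big|\mfT^{-1}[\epsilon\zeta_u]v\big|_{H^s_\mu}\lesssim|v|_{H^{s-1}}$, i.e. it gains one derivative, precisely what is needed so that the $X^{s-1}$-norm of the second component of $S^{-1}[U]V$ is controlled by the $H^{s-1}$-norm of $v_V$ alone (note $|\cdot|_{H^s_\mu}$ dominates $|\cdot|_{H^{s-1}}+\sqrt\mu|\partial_x\cdot|_{H^{s-1}}$). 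Combining, $\big|S^{-1}[U]V\big|_{X^{s-1}}\leq C(M_{\rm CH},h_{01}^{-1},h_{02}^{-1},\epsilon|\zeta_u|_{H^s})|V|_{H^{s-1}\times H^{s-1}}$, and~\eqref{eq:est-com-S} then gives the claim.

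The main obstacle is bookkeeping rather than conceptual: one must carefully track that division by $f(\epsilon\zeta_u)$ and by $Q_0(\epsilon\zeta_u)$ produces coefficients whose $H^s$-norms are genuinely controlled (the functions $1/f$, $1/Q_0$ do not decay at infinity, so one works with the differences $\tfrac{Q_0}{f}-(\gamma+\delta)$ etc., invoking Lemmata~\ref{lem:f/h}--\ref{lem:f/h1}), and that in~\eqref{eq:est-com-S-S} the $\mfT^{-1}$ appearing inside genuinely buys back the derivative lost by the commutator, so that the final estimate only sees $|V|_{H^{s-1}\times H^{s-1}}$ and not $|V|_{X^{s-1}}$ — this derivative accounting is the subtle point, and it is exactly where the regularizing property of $\mfT^{-1}$ (Lemma~\ref{proprim}) is essential.
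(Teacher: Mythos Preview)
Your proposal is correct and follows essentially the same approach as the paper: componentwise decomposition of $S[U]$, then Cauchy--Schwarz plus~\eqref{boundf} for the scalar block and Lemma~\ref{Lem:mfT} for the $\mfT$-block, with Lemma~\ref{K-P} handling the scalar commutator in~\eqref{eq:est-com-S}. The only organizational difference is in~\eqref{eq:est-com-S-S}: you invoke the identity $\big(\big[\Lambda^s,S^{-1}\big]V,SW\big)=-\big(\big[\Lambda^s,S\big]S^{-1}V,W\big)$ globally and then feed $S^{-1}[U]V$ into~\eqref{eq:est-com-S}, whereas the paper decomposes componentwise first and applies Corollary~\ref{col:comwithT} directly to the $\mfT^{-1}$-block --- the two routes are equivalent and use the same ingredients. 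One small slip: the factor $\tfrac{1}{\gamma+\delta}$ you write in front of $\mfT$ belongs to the energy $E^s$, not to $S[U]$ itself (see~\eqref{defS}); this is harmless for the estimate.
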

\begin{proof} Let $U=(\zeta_u,u)^\top\in X^{s}$, $V=(\zeta_v,v)^\top\in X^{s}$, $W=(\zeta_w,w)^\top\in X^{0}$. Then by definition of $S[\cdot]$ in~\eqref{defS}, one has
\[
\Big(\ S[U] V \ , \ W \ \Big) = \Big(\ \frac{Q_0(\epsilon\zeta_u)}{f(\epsilon\zeta_u)} \zeta_v \ , \ \zeta_w \ \Big) +\Big(\ \mfT[\epsilon\zeta_u] v \ , \ w \ \Big) .
\]
The first term is straightforwardly estimated by Cauchy-Schwarz inequality:
\[  \big\vert\Big(\ \frac{Q_0(\epsilon\zeta_u)}{f(\epsilon\zeta_u)} \zeta_v \ , \ \zeta_w \ \Big)  \big\vert\leq \big\vert \frac{Q_0(\epsilon\zeta_u)}{f(\epsilon\zeta_u)}\big\vert_{L^\infty} \big\vert \zeta_v\big\vert_{L^2}\big\vert \zeta_w\big\vert_{L^2},\]
and  $\frac{Q_0(\epsilon\zeta_u)}{f(\epsilon\zeta_u)}$ is uniformly bounded since $\zeta_u$ satisfies~\eqref{CondDepth}.

The second term is estimated by Lemma~\ref{Lem:mfT},\eqref{continous}.
\[
 \big\vert\Big(\ \mfT[\epsilon\zeta_u] v \ , \ w \ \Big) \big\vert\leq c_0\vert v\vert_{H^1_\mu}\vert w\vert_{H^1_\mu}\leq c_0\vert V\vert_{X^0}\vert W\vert_{X^0}.
\]
Estimate~\eqref{eq:est-S} is proved.
\medskip

Now, let us decompose
\[
\Big(\ \big[ \Lambda^s,S[U]\big] V \ , \ W \ \Big) = \Big(\ \big[ \Lambda^s,\frac{Q_0(\epsilon\zeta_u)}{f(\epsilon\zeta_u)}\big]  \zeta_v \ , \ \zeta_w \ \Big) +\Big(\ \big[ \Lambda^s,\mfT[\epsilon\zeta_u]\big] v \ , \ w \ \Big) .
\]
By Cauchy-Schwarz inequality and Lemma~\ref{K-P}, one has
\begin{align*}
\big\vert \Big(\ \big[ \Lambda^s,\frac{Q_0(\epsilon\zeta_u)}{f(\epsilon\zeta_u)}\big]  \zeta_v \ , \ \zeta_w \ \Big) \big\vert  &\leq \big\vert \big[ \Lambda^s,\frac{Q_0(\epsilon\zeta_u)}{f(\epsilon\zeta_u)}\big]  \zeta_v\big\vert_{L^2}\big\vert\zeta_w\big\vert_{L^2}\\
 &\leq \big\vert \partial_x\big\{\frac{Q_0(\epsilon\zeta_u)}{f(\epsilon\zeta_u)}\big\} \big\vert_{H^{s-1}} \big\vert \zeta_v\big\vert_{H^{s-1}}\big\vert\zeta_w\big\vert_{L^2}\\
 &\leq  C(\epsilon\big\vert \zeta_u\big\vert_{H^{s}} ) \big\vert \zeta_v\big\vert_{H^{s-1}}\big\vert\zeta_w\big\vert_{L^2},
\end{align*}
where we used Lemma~\ref{Moser} and continuous Sobolev embedding for the last inequality. 

The second term is estimated using Lemma~\ref{Lem:mfT},~\eqref{eq:estComT}:
\[\big\vert \Big(\ \big[ \Lambda^s,\mfT[\epsilon\zeta_u]\big] v \ , \ w \ \Big)\big\vert \leq C(\epsilon\vert \zeta_u \vert_{H^s})\vert V\vert_{X^{s-1}}\vert W\vert_{X^{0}}.\]
Estimate~\eqref{eq:est-com-S} is proved.
\medskip

Finally, one has
\[ \Big(\ \big[ \Lambda^s,S^{-1}[U]\big] V \ , \ S[U] W \ \Big)  \leq  \Big(\ \big[ \Lambda^s,\frac{f(\epsilon\zeta_u)}{Q_0(\epsilon\zeta_u)}\big]  \zeta_v  \ ,\frac{Q_0(\epsilon\zeta_u)}{f(\epsilon\zeta_u)}\zeta_w \ \Big) +\Big(\ \big[ \Lambda^s,\mfT^{-1}[\epsilon\zeta_u]\big] v \ , \ \mfT[\epsilon\zeta_u] w \ \Big) .
\]
The first term is estimated exactly as above, noticing that both $f(\epsilon\zeta_u)$ and $Q_0(\epsilon\zeta_u)$ are bounded from above and below since~\eqref{CondDepth} and~\eqref{CondEllipticity} are satisfied.

The second term is estimated using Corollary~\ref{col:comwithT},~\eqref{eq:comwithT}. Estimate~\eqref{eq:est-com-S-S} follows, and the Lemma is proved.
\end{proof}
\subsection{Energy estimates}\label{ssec:energyestimates}
Our aim is to establish {\em a priori} energy estimates concerning our linear system. In order to be able to use the linear analysis to both the well-posedness and stability of the nonlinear system, we consider the following modified system 
\begin{equation}\label{SSlsysm}
	\left\lbrace
	\begin{array}{l}
	\dsp\partial_t U+A_0[\underline{U}]\partial_x U+A_1[\underline{U}]\partial_x U= F;
        \\
	\dsp U_{\vert_{t=0}}=U_0.
	\end{array}\right.
\end{equation}
where we added a right-hand-side $F$, whose properties will be precised in the following Lemmas.
\medskip

We begin by asserting a basic $X^0$ energy estimate, that we extend to $X^s$ space ($s>3/2$) later on.
\begin{Lemma}[$X^0$ energy estimate]\label{Lem:L2}
Set $(\mu,\epsilon,\delta,\gamma,\bo)\in\P_{\rm CH}$. Let $T>0$ and $U\in L^\infty ( [0,T/\epsilon];X^0)$ and $\underline{U},\partial_x \underline{U}\in L^\infty([0,T/\epsilon]\times \RR)$ such that $\partial_t \underline{\zeta} \in L^\infty([0,T/\epsilon]\times \RR)$ and $\underline{\zeta}$ satisfies~\eqref{CondDepth},\eqref{CondEllipticity}, and $U,\underline{U}$ satisfy system~\eqref{SSlsysm}, with a right hand side, $F$, such that
\[
\big(F,S[\underline U] U\big) \ \leq \ C_F\ \epsilon \big\vert U \big\vert_{X^0}^2\ +\ f(t)\ \big\vert U \big\vert_{X^0},\]
with $C_F$ a constant and $f$ a positive integrable function on $[0,T/\epsilon]$.

 Then there exists $\lambda\equiv C(\big\Vert \partial_t \underline \zeta \big\Vert_{L^\infty([0,T/\epsilon]\times \RR)},\big\Vert \underline U \big\Vert_{L^\infty([0,T/\epsilon]\times \RR)},\big\Vert \partial_x\underline U \big\Vert_{L^\infty([0,T/\epsilon]\times \RR)},C_F)$ such that
\begin{equation}\label{energyestimateL2}
	\forall t\in [0,\frac{T}{\epsilon}],\qquad
	E^0(U)(t)\leq e^{\epsilon\lambda t}E^0(U_0)+ \int^{t}_{0} e^{\epsilon\lambda( t-t')}f(t')dt'.
\end{equation}
The constant $\lambda$ is independent of $(\mu,\epsilon,\delta,\gamma,\bo)\in\P_{\rm CH}$, but depends on $M_{\rm CH},h_{01}^{-1},h_{02}^{-1}$.
\end{Lemma}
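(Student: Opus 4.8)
The plan is to run a standard symmetrizer-based energy argument at the $L^2$ (i.e. $X^0$) level, using $S[\underline U]$ as pseudo-symmetrizer. First I would differentiate the energy $E^0(U)^2 = (U, S[\underline U]U)$ in time. Writing $\frac{d}{dt}E^0(U)^2 = (\partial_t U, S[\underline U]U) + (U, S[\underline U]\partial_t U) + (U, (\partial_t S[\underline U])U)$ and using the symmetry of $S[\underline U]$, the first two terms combine to $2(S[\underline U]\partial_t U, U)$. Then I would substitute $\partial_t U = -A_0[\underline U]\partial_x U - A_1[\underline U]\partial_x U + F$ from~\eqref{SSlsysm}. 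This produces four contributions: the symmetric principal part $-2(S[\underline U]A_0[\underline U]\partial_x U, U)$, the antisymmetric-defect part $-2(S[\underline U]A_1[\underline U]\partial_x U, U)$, the time-derivative-of-symmetrizer part $(U,(\partial_t S[\underline U])U)$, and the forcing part $2(F, S[\underline U]U)$.

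The key steps in order: (1) For the principal term, since $S[\underline U]A_0[\underline U]$ is symmetric (as noted after~\eqref{defS}), integrate by parts to move $\partial_x$ off $U$: $-2(S[\underline U]A_0[\underline U]\partial_x U, U) = (\partial_x(S[\underline U]A_0[\underline U])\, U, U)$, which is controlled by $\epsilon\,C(\Vert\underline U\Vert_{W^{1,\infty}}, \Vert\partial_x\underline\zeta\Vert_{L^\infty})\,\vert U\vert_{X^0}^2$ — note every entry of $S[\underline U]A_0[\underline U]$ that is not $O(\epsilon)$ is $\underline\zeta$-dependent only through $Q_0$ and $\mfQ$, and differentiating in $x$ brings down a factor $\epsilon\partial_x\underline\zeta$ or $\epsilon\partial_x\underline v$; the $\mfQ[\epsilon\underline\zeta,\underline v]$ block carries the $\mu\kappa\partial_x(\cdot\,\partial_x\underline v)$ piece, which is handled exactly as in~\eqref{eqn:symmetry-like} so that the apparently two-derivative term reduces to a quantity bounded by $\vert U\vert_{H^1_\mu}^2 \lesssim \vert U\vert_{X^0}^2$. (2) For the defect term $-2(S[\underline U]A_1[\underline U]\partial_x U, U)$, use the explicit form of $S[\underline U]A_1[\underline U]$ given in the excerpt: its $(2,1)$ entry is $\epsilon^2 Q_1(\epsilon\underline\zeta,\underline v)$ which contributes $\epsilon^2$-small terms (here is where the Camassa-Holm smallness $\epsilon=O(\sqrt\mu)$ enters, to absorb $\epsilon^2\partial_x\zeta$ against $\mu\vert\partial_x v\vert$-type quantities via $\epsilon^2 \le \mu M^2$), and its $(2,2)$ entry is $\epsilon\varsigma\,\mfT[\epsilon\underline\zeta](\underline v\,\cdot)$, whose contribution $\epsilon\varsigma(\mfT[\epsilon\underline\zeta](\underline v\,\partial_x v), v)$ is precisely the quantity computed in~\eqref{eqn:symmetry-like}, hence bounded by $\epsilon\,C(\Vert\underline v\Vert_{W^{1,\infty}})\,\vert v\vert_{H^1_\mu}^2 \le \epsilon\,C\,\vert U\vert_{X^0}^2$. (3) For $(U,(\partial_t S[\underline U])U)$: since $S[\underline U]$ depends on $\underline U$ only through $\underline\zeta$ (in the $Q_0/f$ block and in $\mfT[\epsilon\underline\zeta]$), $\partial_t S[\underline U]$ is controlled by $\epsilon\Vert\partial_t\underline\zeta\Vert_{L^\infty}$ times a bounded operator, giving $\le \epsilon\,C(\Vert\partial_t\underline\zeta\Vert_{L^\infty})\,\vert U\vert_{X^0}^2$; again the $\mu\nu\partial_x(\partial_t\underline\zeta\,\partial_x\cdot)$ piece of $\partial_t\mfT$ is integrated by parts once so only one derivative falls on each copy of $v$. (4) The forcing term is controlled by hypothesis: $2(F,S[\underline U]U)\le 2C_F\epsilon\vert U\vert_{X^0}^2 + 2f(t)\vert U\vert_{X^0}$.

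Collecting (1)–(4), and using Lemma~\ref{lemmaes} to pass between $E^0(U)$ and $\vert U\vert_{X^0}$, one obtains
\[
\frac{d}{dt}E^0(U)^2 \ \le \ \epsilon\lambda\, E^0(U)^2 \ + \ 2f(t)\,E^0(U)
\]
for a constant $\lambda$ of the advertised form (depending only on $M_{\rm CH}, h_{01}^{-1}, h_{02}^{-1}$ and the $L^\infty$-type norms of $\underline U$, $\partial_x\underline U$, $\partial_t\underline\zeta$, and $C_F$, but not on $\p$). Dividing by $2E^0(U)$ (or working with $\sqrt{E^0(U)^2+\eta}$ and letting $\eta\to 0$ to handle the vanishing of $E^0$) gives $\frac{d}{dt}E^0(U) \le \frac{\epsilon\lambda}{2}E^0(U) + f(t)$, and Gr\"onwall's lemma then yields~\eqref{energyestimateL2}. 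The main obstacle is bookkeeping in step (1)–(2): making sure that none of the terms arising from the operator blocks $\mfT$, $\mfQ$ carry a net second derivative on $U$ that cannot be absorbed into $\mu\vert\partial_x v\vert_{L^2}^2$ — this is resolved precisely by the integration-by-parts identity~\eqref{eqn:symmetry-like} and its analogues, and by the Camassa-Holm smallness $\epsilon^2 \le \mu M^2$ for the $Q_1$ defect term; everything else is routine Cauchy-Schwarz together with the continuity estimate~\eqref{continous} from Lemma~\ref{Lem:mfT}.
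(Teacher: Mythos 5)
Your proposal is correct and follows essentially the same route as the paper's proof: testing against $S[\underline U]U$, exploiting the symmetry of $S[\underline U]A_0[\underline U]$ via integration by parts, treating the defect $S[\underline U]A_1[\underline U]$ through the identity~\eqref{eqn:symmetry-like} for the $\mfT[\epsilon\underline\zeta](\underline v\,\cdot)$ block and the Camassa--Holm smallness $\epsilon\leq M\sqrt\mu$ for the $\epsilon^2 Q_1$ block, bounding $[\partial_t,S[\underline U]]$ by $\epsilon\Vert\partial_t\underline\zeta\Vert_{L^\infty}$, and closing with Gr\"onwall. The only cosmetic difference is your $\sqrt{E^0(U)^2+\eta}$ regularization versus the paper's $e^{-\epsilon\lambda t}$ integrating-factor computation; both are standard and equivalent.
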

\begin{proof}
Let us take the inner product of~\eqref{SSlsysm} by $ S[\underline U] U$:
\[ \big(\partial_t U,S[\underline U] U\big) \ + \ \big(A_0[\underline U]\partial_x U,S[\underline U]  U\big)\ + \ \big(A_1[\underline U]\partial_x U,S[\underline U]  U\big) 
\ =  \ \big( F ,S[\underline U] U\big) \ ,
\]
From the symmetry property of $S[\underline U]$, and using the  definition of $E^s(U)$, one deduces
\begin{equation}\label{qtsctrL2}
\frac12 \frac{d}{dt}E^0(U)   \ =  \frac12\big( U,\big[\partial_t,  S[\underline U]\big] U\big)-\big(S[\underline U]A_0[\underline U]\partial_xU,  U\big) -\big(S[\underline U]A_1[\underline U]\partial_x  U, U\big)  +
\big(F,S[\underline U] U\big). 
\end{equation}

Let us first estimate $\big(S[\underline U]A_0[\underline U]\partial_x U,  U\big) $.
 Let us recall that
 \begin{equation*}
S[\underline{U}]A_0[\underline{U}]=\begin{pmatrix}
 \epsilon\frac{Q_0(\epsilon\underline{\zeta}) }{f(\epsilon\underline{\zeta})}f'(\epsilon\underline{\zeta}) \underline{v}& Q_0(\epsilon\underline{\zeta}) \\
Q_0(\epsilon\underline{\zeta})  &\epsilon \mfQ[\epsilon\underline{\zeta},\underline{v}]
\end{pmatrix}
\end{equation*}
so that
\begin{align*}
\big(S[\underline U]A_0[\underline U]\partial_x U, U\big) \ &= -\frac{1}{2}\ \Big( \zeta, \epsilon\partial_x\big(\frac{Q_0(\epsilon\underline{\zeta}) }{f(\epsilon\underline{\zeta})}f'(\epsilon\underline{\zeta}) \underline{v} \big)  \zeta \Big)-\Big( \zeta, \partial_x(Q_0(\epsilon\underline{\zeta}) ) v\Big)\ + \ \epsilon\Big(\mfQ[\epsilon\underline{\zeta},\underline{v}] \partial_x  v,  v \Big) \\
&\equiv A_1+A_2+A_3
\end{align*}

The estimates concerning $A_1$ and $A_2$ are straightforward. Using Cauchy-Schwarz inequality, there exists $C=C(\big\Vert \underline U\big\Vert_{L^\infty}+\big\Vert \partial_x \underline U\big\Vert_{L^\infty})$ such that
\[
| A_1| +| A_2| \ \leq\ \epsilon C (\big\vert 
 \zeta \big\vert_{L^2}^2+\big\vert  v\big\vert_{L^2}^2 ) \ \leq \ \epsilon C \big\vert U\big\vert_{X^0}^2.
\]

As for $A_3$, one has (recalling the definition of $\mfQ$ in~\eqref{defmfQ}):
\[
 \big(\mfQ[\epsilon\underline{\zeta},\underline{v}] \partial_x v, v\big) \ = \ -\big( \partial_x(q_1(\epsilon\underline{\zeta})q_3(\epsilon\underline{\zeta})\underline{v} )  v,v\big) \ - \ \mu\kappa\big( (\partial_x\underline{v})(\partial_x v),\partial_x v \big)
\]
 Those two terms are controlled, again thanks to Cauchy-Schwarz inequality, so that
\[ |  \big(\mfQ[\epsilon\underline{\zeta},\underline{v}] \partial_x v, v\big)| \  \leq  C(\big\Vert \underline U\big\Vert_{L^\infty}+\big\Vert \partial_x \underline U\big\Vert_{L^\infty}) \big\vert v\big\vert_{H^{1}_\mu}^2 \ \leq \  C(\big\Vert \underline U\big\Vert_{L^\infty}+\big\Vert \partial_x \underline U\big\Vert_{L^\infty}) \big\vert U\big\vert_{X^0}^2 .\]

Altogether, we proved
\begin{equation}\label{est:0L2}
|\big(S[\underline U]A_0[\underline U]\partial_x  U,  U\big) |\leq \epsilon  C(\big\Vert \underline U\big\Vert_{L^\infty}+\big\Vert \partial_x \underline U\big\Vert_{L^\infty}) \big\vert U\big\vert_{X^0}^2.
\end{equation}
\bigskip

Let us now estimate $\big(S[\underline U]A_1[\underline U]\partial_x  U,  U\big) $.
One has
\begin{align*}
\big(S[\underline U]A_1[\underline U]\partial_x  U,  U\big) & \ = \  \big(\epsilon^2 Q_1(\epsilon\underline{\zeta},\underline{v})\partial_x  \zeta,  v\big) +\epsilon\varsigma\big(\mfT[\epsilon\underline{\zeta}](\underline{v}
\partial_x v),  v )\\ &\ \equiv \ A_{4}+A_{5}.
 \end{align*}
In order to control the term $A_4$, we write 
 \begin{align*}
 \big(\epsilon^2 Q_1(\epsilon\underline{\zeta},\underline{v})\partial_x   \zeta,   v\big)
 & \ = \ \epsilon^2\big(q_1(\epsilon\underline{\zeta})q'_3(\epsilon\underline{\zeta})\underline{v}^2\partial_x   \zeta,  v\big) \ =\ -\epsilon^2\big(\partial_x(q_1(\epsilon\underline{\zeta})q'_3(\epsilon\underline{\zeta})\underline{v}^2   v),  \zeta\big)\\
& \ =\ -\epsilon^2\big(\partial_x(q_1(\epsilon\underline{\zeta})q'_3(\epsilon\underline{\zeta})\underline{v}^2)   v,  \zeta\big)
-\epsilon^2\big(q_1(\epsilon\underline{\zeta})q'_3(\epsilon\underline{\zeta})\underline{v}^2   \partial_x v,  \zeta\big).
\end{align*}
Since $\p\in\P_{\rm CH}$, as defined in~\eqref{eqn:defRegimeCHmr}, one has $\epsilon\leq M \sqrt{\mu}$, and therefore 
\[
\vert A_4\vert \leq \epsilon C(\big\Vert \underline U\big\Vert_{L^\infty}+\big\Vert \partial_x \underline U\big\Vert_{L^\infty}) \big\vert U\big\vert_{X^0}^2.
\]
(where we used, once again, Cauchy-Schwarz inequality.)

 In order to control $A_5$ one makes use of the identity given in~\eqref{eqn:symmetry-like}, applied to $V=  v$, and deduce easily
 \[
\vert A_5\vert \leq \epsilon C(\big\Vert \underline U\big\Vert_{L^\infty}+\big\Vert \partial_x \underline U\big\Vert_{L^\infty}) E^0(U)^2.
\]
Altogether, one has
\begin{equation}\label{est:1L2}
|\big(S[\underline U]A_1[\underline U]\partial_x   U,   U\big) |\leq \epsilon C(\big\Vert \underline U\big\Vert_{L^\infty}+\big\Vert \partial_x \underline U\big\Vert_{L^\infty}) \big\vert U\big\vert_{X^0}^2.
\end{equation}
\bigskip

The last term to estimate is $\big(  U,\big[\partial_t,  S[\underline U]\big]  U\big)$.

One has
\begin{align*}
\big(  U,\big[\partial_t,  S[\underline U]\big]  U\big)&\equiv\dsp( v,\Big[\partial_t,\underline{\mfT}\Big] v)
+( \zeta,\Big[\partial_t,\frac{Q_0(\epsilon\underline{\zeta})}{f(\epsilon\underline{\zeta})}\Big] \zeta)\\
&=\Big(  v,\big(\partial_t q_1(\epsilon\underline{\zeta}) \big)  v\Big)+\mu\Big(  v,\partial_x\big((\partial_tq_2(\epsilon\underline{\zeta}))( \partial_x  v)\big) \Big)
+\Big( \zeta,\partial_t \Big(\frac{Q_0(\epsilon\underline{\zeta})}{f(\epsilon\underline{\zeta})}\Big) \zeta\Big)\\
&=\epsilon\kappa_1\Big(  v,(\partial_t \underline{\zeta})   v\Big)-\mu\epsilon\kappa_2\Big(  \partial_x v,(\partial_t\underline{\zeta})  \partial_x v) \Big)\\
&\qquad \qquad+\epsilon\Big( \zeta,\frac{Q_0'(\epsilon\underline{\zeta})f(\epsilon\underline{\zeta})-Q_0(\epsilon\underline{\zeta})f'(\epsilon\underline{\zeta})}{f(\epsilon\underline{\zeta})^2}(\partial_t\underline{\zeta}) \zeta\Big)
\end{align*}

From Cauchy-Schwarz inequality and since $\underline{\zeta}$ satisfies~\eqref{CondDepth}, one deduces
\begin{equation}\label{est:2L2}
\big\vert \ \frac12\big(  U,\big[\partial_t,  S[\underline U]\big]  U\big)\ \big\vert\ \leq\  \epsilon C(\big\Vert \partial_t \underline \zeta \big\Vert_{L^\infty([0,T/\epsilon]\times \RR)},\big\Vert \underline \zeta \big\Vert_{L^\infty([0,T/\epsilon]\times \RR)}) \big\vert U\big\vert_{X^0}^2 .
\end{equation}

One can now conclude with the proof of the energy estimate. Plugging~\eqref{est:0L2},~\eqref{est:1L2} and~\eqref{est:2L2} into~\eqref{qtsctrL2}, invoking Lemma~\ref{lemmaes} and making use of the assumption of the Lemma on $F$, yields
\[ \frac12 \frac{d}{dt}E^0(U)^2 \ \leq \ \epsilon \ C_0 E^0(U)^2+ f(t) E^0(U),\]
where $C_0\equiv  C(\big\Vert \partial_t \underline \zeta \big\Vert_{L^\infty([0,T/\epsilon]\times \RR)},\big\Vert \underline U \big\Vert_{L^\infty([0,T/\epsilon]\times \RR)},\big\Vert \partial_x\underline U \big\Vert_{L^\infty([0,T/\epsilon]\times \RR)},C_F)$. Consequently
\[ \frac{d}{dt}E^0(U) \ \leq \ C_0\epsilon E^0(U)+ f(t).\]
Making use of the usual trick, we compute for any $\lambda \in \RR$,
\[
e^{\epsilon\lambda t}\partial_t(e^{-\epsilon\lambda t}E^0(U))=-\epsilon\lambda E^0(U) +\frac{d}{dt} E^0(U).
\]
Thanks to the above inequality, one can choose  $\lambda= C_0$, so that for all $t\in [0,\frac{T}{\epsilon}]$, one deduces
\[
\frac{d}{dt} (e^{-\epsilon\lambda t}E^0(U)) \ \leq \  f(t)e^{-\epsilon\lambda t} .\]
Integrating this differential inequality yields
\begin{equation}\label{energyestimateL2inproof}
	\forall t\in [0,\frac{T}{\epsilon}],\qquad
	E^0(U)(t)\leq e^{\epsilon\lambda t}E^0(U_0)+ \int^{t}_{0} e^{\epsilon\lambda( t-t')}f(t')dt'.
\end{equation}
This proves the energy estimate~\eqref{energyestimateL2}.
\end{proof}
\medskip

Let us now turn to the {\em a priori} energy estimate in ``large'' $X^s$ norm.
\begin{Lemma}[$X^s$ energy estimate]\label{Lem:Hs}
Set $(\mu,\epsilon,\delta,\gamma,\bo)\in \P_{\rm CH}$, and  $s\geq s_0+1$, $s_0>1/2$. Let $U=(\zeta,v)^\top$ and $\underline{U}=(\underline{\zeta},\underline{v})^\top$ be such that $U,\underline{U}\in L^\infty([0,T/\epsilon];X^{s}) $, $\partial_t\underline{\zeta}\in L^\infty([0,T/\epsilon]\times\RR)$ and $\underline{\zeta}$ satisfies~\eqref{CondDepth},\eqref{CondEllipticity} uniformly on $[0,T/\epsilon]$, and such that system~\eqref{SSlsysm} holds with a right hand side, $F$, with
\[
\big( \Lambda^s F,S[\underline U]\Lambda^s U\big)\ \leq  \ C_F\ \epsilon \big\vert  U\big\vert_{X^s}^2\ +\ f(t)\ \big\vert U\big\vert_{X^s} ,
\]
where $C_F$ is a constant and $f$ is an integrable function on $[0,T/\epsilon]$.

Then there exists $\lambda=C(\big\Vert \underline U\big\Vert_{X^s_T}+C_F)$ such that the following energy estimate holds:
\begin{align}\label{energyestimate}
E^s(U)(t)&\leq e^{\epsilon\lambda t}E^s(U_0)+
\int^{t}_{0} e^{\epsilon\lambda( t-t')}f(t')dt'.\end{align}
The constant $\lambda$ is independent of $(\mu,\epsilon,\delta,\gamma,\bo)\in\P_{\rm CH}$, but depends on $M_{\rm CH},h_{01}^{-1},h_{02}^{-1}$.
\end{Lemma}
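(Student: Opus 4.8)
The plan is to run the energy argument of Lemma~\ref{Lem:L2} one derivative higher, that is at the level of $\Lambda^s U$, and to absorb the extra terms produced when $\Lambda^s$ is commuted past the variable coefficients. First I would apply $\Lambda^s$ to~\eqref{SSlsysm}, so that $\Lambda^s U$ solves
\[ \partial_t(\Lambda^s U)+A_0[\underline U]\partial_x(\Lambda^s U)+A_1[\underline U]\partial_x(\Lambda^s U) \ = \ \Lambda^s F-\big[\Lambda^s,A_0[\underline U]\big]\partial_x U-\big[\Lambda^s,A_1[\underline U]\big]\partial_x U . \]
Taking the $L^2$ inner product with $S[\underline U]\Lambda^s U$ and using that $S[\underline U]$ and $S[\underline U]A_0[\underline U]$ are symmetric (see~\eqref{defS}), one gets, exactly as in~\eqref{qtsctrL2},
\[ \frac12\frac{d}{dt}E^s(U)^2 = \frac12\big(\Lambda^sU,[\partial_t,S[\underline U]]\Lambda^sU\big)-\big(S[\underline U]A_0[\underline U]\partial_x\Lambda^sU,\Lambda^sU\big)-\big(S[\underline U]A_1[\underline U]\partial_x\Lambda^sU,\Lambda^sU\big)+\big(\Lambda^sF,S[\underline U]\Lambda^sU\big)-\big(S[\underline U]\big[\Lambda^s,A_0[\underline U]\big]\partial_xU,\Lambda^sU\big)-\big(S[\underline U]\big[\Lambda^s,A_1[\underline U]\big]\partial_xU,\Lambda^sU\big) . \]
The first three terms are bounded by $\epsilon\,C(\|\underline U\|_{X^s_T})\,|U|_{X^s}^2$ by the very computations of Lemma~\ref{Lem:L2} applied with $\Lambda^s U$ in place of $U$ (those estimates used only $L^\infty$-control of $\underline U$, $\partial_x\underline U$, $\partial_t\underline\zeta$ — guaranteed here by $\underline U\in L^\infty_t X^s$ with $s\ge s_0+1$ — together with the identity~\eqref{eqn:symmetry-like}, now invoked with $V=\Lambda^s v$), using $|\Lambda^sU|_{X^0}=|U|_{X^s}$; the fourth term is handled by the hypothesis on $F$ together with Lemma~\ref{lemmaes}.

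The crux is then the two commutator terms. The structural point is that in~\eqref{defA0A1}--\eqref{defmfQ} every coefficient is of the form ``(constant) $+\,\epsilon\times$(a smooth function of $\epsilon\underline\zeta,\underline v$)'', possibly post-composed with $\mfT^{-1}$; hence each commutator $\big[\Lambda^s,\cdot\big]$ against such a coefficient carries a factor $\epsilon$. For the scalar (multiplication) entries — $f(\epsilon\underline\zeta)$ and $\epsilon f'(\epsilon\underline\zeta)\underline v$ in the first component, $\epsilon\varsigma\,\underline v$ in the second — this follows from Lemma~\ref{K-P}, using $\partial_x\zeta,\partial_x v\in H^{s-1}$, the embedding $H^{s-1}\hookrightarrow L^\infty$ ($s\ge s_0+1$), and the fact that the $\mu$-weights attached to the top derivative of $v$ are precisely those building $|\cdot|_{X^s}$. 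For the entries involving $\mfT^{-1}$, namely $\mfT^{-1}(Q_0(\epsilon\underline\zeta)\,\cdot)$ and $\epsilon\mfT^{-1}(\mfQ[\epsilon\underline\zeta,\underline v]\,\cdot)$ in the second row of $A_0$ and $\epsilon^2\mfT^{-1}(Q_1(\epsilon\underline\zeta,\underline v)\,\cdot)$ in $A_1$, I would split
\[ \big[\Lambda^s,\mfT^{-1}\!\circ B\big] \ = \ \big[\Lambda^s,\mfT^{-1}\big]\!\circ B \ + \ \mfT^{-1}\!\circ\big[\Lambda^s,B\big], \]
where $B$ is the relevant zeroth- or first-order operator. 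When the velocity block of $S[\underline U]$ — which is $\mfT[\epsilon\underline\zeta]$ — hits the $\big[\Lambda^s,\mfT^{-1}\big]\!\circ B$ piece, the leading $\mfT$ cancels the $\mfT^{-1}$, and the resulting inner product has the form $\big([\Lambda^s,\mfT^{-1}]\,u,\mfT\,\Lambda^s v\big)$ with $u=B\partial_x\zeta$ or $u=B\partial_x v$ lying in $H^{s-1}$ with norm $\lesssim|U|_{X^s}$ (for $B=\mfQ$ the $\mu$-weighted derivative term is absorbed by the $X^s$-norm); Corollary~\ref{col:comwithT}, applied with $v$ replaced by $\Lambda^s v$ so that $|\Lambda^s v|_{H^1_\mu}\le|U|_{X^s}$, then bounds this by $\epsilon\,C(\|\underline U\|_{X^s_T})\,|U|_{X^s}^2$. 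The remaining piece $\mfT^{-1}\!\circ[\Lambda^s,B]$, multiplied by $\mfT$, reduces to $[\Lambda^s,B]$, controlled by Lemma~\ref{K-P} (again gaining an $\epsilon$). Finally the $\epsilon\varsigma\,\mfT[\epsilon\underline\zeta](\underline v\,\cdot)$ contribution of $[\Lambda^s,A_1]$ is treated by expanding $\mfT$ via~\eqref{defT} and integrating by parts as in the derivation of~\eqref{eqn:symmetry-like}, reducing it to $L^2$- and $\mu$-weighted $L^2$-commutator norms estimated by Lemma~\ref{K-P} (here one uses $\underline v\in H^{s+1}$). This yields
\[ \Big|\big(S[\underline U]\big[\Lambda^s,A_0[\underline U]\big]\partial_xU,\Lambda^sU\big)\Big|+\Big|\big(S[\underline U]\big[\Lambda^s,A_1[\underline U]\big]\partial_xU,\Lambda^sU\big)\Big| \ \le \ \epsilon\,C(\|\underline U\|_{X^s_T})\,|U|_{X^s}^2 . \]

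Collecting all the estimates and using Lemma~\ref{lemmaes} to pass between $E^s(U)$ and $|U|_{X^s}$, one obtains $\frac{d}{dt}E^s(U)\le\epsilon\lambda\,E^s(U)+f(t)$ with $\lambda=C(\|\underline U\|_{X^s_T}+C_F)$ independent of $\p\in\P_{\rm CH}$; integrating this differential inequality exactly as at the end of the proof of Lemma~\ref{Lem:L2} gives~\eqref{energyestimate}. I expect the main obstacle to be the second paragraph: one must be careful to pair the $\mfT^{-1}$-commutators against the factor $\mfT$ furnished by $S[\underline U]$ \emph{before} invoking Corollary~\ref{col:comwithT}, and to keep track of all the $\mu$-weights, so that the resulting constant is uniform with respect to $(\mu,\epsilon,\delta,\gamma,\bo)\in\P_{\rm CH}$ — which is precisely what the operator estimates collected in Section~\ref{pr} are designed to deliver.
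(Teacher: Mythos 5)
Your proposal is correct and follows essentially the same route as the paper: the same energy identity obtained by pairing $\Lambda^s$ of the system against $S[\underline U]\Lambda^s U$, the same reuse of the $X^0$ estimates applied to $\Lambda^s U$, the same splitting of the commutator $[\Lambda^s,A_0+A_1]$ with the $\mfT^{-1}$-entries paired against the $\mfT$ coming from $S[\underline U]$ and controlled via Corollary~\ref{col:comwithT} and Lemma~\ref{K-P}, and the same Gronwall conclusion. The identity you write for $[\Lambda^s,\mfT^{-1}\circ B]$ is exactly the decomposition the paper uses for the terms $B_2$, $B_{31}$, $B_{32}$.
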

\begin{Remark}
In this Lemma, and in the proof below, the norm $\big\Vert \underline U\big\Vert_{X^s_T}$ is to be understood as essential sup:
\[
 \Vert U\Vert_{X^s_T}\equiv \esssup_{t\in [0,T/\epsilon]}\vert U(t,\cdot)\vert_{X^s}+\esssup_{t\in [0,T/\epsilon],x\in\RR}\vert \partial_t \zeta(t,x)\vert.
\]
\end{Remark}
\begin{proof}
Let us multiply the system~\eqref{SSlsysm} on the right by $\Lambda^s S[\underline U]\Lambda^s U$, and integrate by parts. One obtains
\begin{multline} \big(\Lambda^s\partial_t U,S[\underline U]\Lambda^s U\big) \ + \ \big(\Lambda^sA_0[\underline U]\partial_x U,S[\underline U]\Lambda^s U\big)\ + \ \big(\Lambda^sA_1[\underline U]\partial_x U,S[\underline U]\Lambda^s U\big)  \\
\ =  \ \epsilon\big(\Lambda^s F,S[\underline U]\Lambda^s U\big)\ ,
\end{multline}
from which we deduce, using the symmetry property of $S[\underline U]$, as well as the definition of $E^s(U)$:
\begin{multline}\label{qtsctr}
\frac12 \frac{d}{dt}E^s(U)   \ =  \frac12\big(\Lambda^s U,\big[\partial_t,  S[\underline U]\big]\Lambda^s U\big)-\big(S[\underline U]A_0[\underline U]\partial_x \Lambda^s U, \Lambda^s U\big) -\big(S[\underline U]A_1[\underline U]\partial_x \Lambda^s U, \Lambda^s U\big)  \\
-\big(\big[\Lambda^s ,A_0[\underline U]+A_1[\underline U]\big]\partial_x U,S[\underline U]\Lambda^s U\big)
+\epsilon\big( \Lambda^s F,S[\underline U]\Lambda^s U\big). 
\end{multline}
We now estimate each of the different components of the r.h.s of the above identity.

\bigskip

\noindent $\bullet$ {\em Estimate of $\big(S[\underline U]A_0[\underline U]\partial_x \Lambda^s U, \Lambda^s U\big) $ and $\big(S[\underline U]A_1[\underline U]\partial_x \Lambda^s U, \Lambda^s U\big) $.}
One can use the $L^2$ estimate derived in~\eqref{est:0L2}, applied to $\Lambda^s U$. One deduces 
\begin{equation}\label{est:0}
|\big(S[\underline U]A_0[\underline U]\partial_x \Lambda^s  U, \Lambda^s  U\big) |\leq \epsilon  C(\big\Vert \underline U\big\Vert_{L^\infty}+\big\Vert \partial_x \underline U\big\Vert_{L^\infty}) \big\vert U\big\vert_{X^s}^2.
\end{equation}
  Now, thanks to Sobolev embedding, one has for $s>s_0+1$, $s_0>1/2$
\[C(\big\Vert \underline U\big\Vert_{L^\infty}+\big\Vert \partial_x \underline U\big\Vert_{L^\infty})\ \leq \ C(\big\Vert \underline U\big\Vert_{X^s_T}), \]
so that
\begin{equation}\label{est:1}
|\big(S[\underline U]A_0[\underline U]\partial_x \Lambda^s U, \Lambda^s U\big) |\leq \epsilon C(\big\Vert \underline U\big\Vert_{X^s_T}) \big\vert  U\big\vert_{X^s}^2.
\end{equation}
\bigskip

Similarly, using~\eqref{est:1L2}, applied to $\Lambda^s U$ and continuous Sobolev embedding, one has
\begin{equation}\label{est:1'}
|\big(S[\underline U]A_1[\underline U]\partial_x \Lambda^s U, \Lambda^s U\big) |\leq \epsilon C(\big\Vert \underline U\big\Vert_{X^s_T}) \big\vert U\big\vert_{X^s}^2.
\end{equation}
\bigskip

\noindent $\bullet$ {\em Estimate of $ \big(\big[\Lambda^s,A[\underline{U}]\big]\partial_x U,S[\underline{U}]\big]\Lambda^s U\big)$}, where $A[\underline{U}]=A_0[\underline{U}]+A_1[\underline{U}]$. Using the definition of $A[\cdot]$ and $S[\cdot]$ in~\eqref{defA0A1},\eqref{defS}, one has 
 \begin{align*}
\big(\big[\Lambda^s,A[\underline{U}]\big]\partial_x U,S[\underline{U}]\Lambda^s U\big)&= \Big([\Lambda^s,\epsilon f'(\epsilon\underline{\zeta}) \underline{v}]\partial_x\zeta +[\Lambda^s, f(\epsilon\underline{\zeta})]\partial_x v\ ,\ \frac{Q(\epsilon\underline{\zeta},\underline{v})}{f(\epsilon\underline{\zeta})}\Lambda^s\zeta\Big) \\
&\quad +\Big([\Lambda^s, \underline{\mfT}^{-1}\big(Q(\epsilon\underline{\zeta},\underline{v})\partial_x \zeta\big)\ ,\ \underline{\mfT}\Lambda^s v\Big)\ +\ \epsilon\Big([\Lambda^s, \underline{\mfT}^{-1}\mfQ[\epsilon\underline{\zeta},\underline{v}] +\varsigma \underline v]\partial_x v, \underline{\mfT}\Lambda^s v\Big)\\
&\equiv B_1+B_2+B_3.
\end{align*}
Here and in the following, we denote  $\underline{\mfT}\equiv \mfT[\epsilon\underline{\zeta}]$ and $Q(\epsilon\underline{\zeta},\underline{v})=Q_0(\epsilon\underline{\zeta})+\epsilon^2 Q_1(\epsilon\underline{\zeta},\underline{v}).$
Let us treat each of these terms separately.

$-$ Control of $B_1=\Big([\Lambda^s,\epsilon f'(\epsilon\underline{\zeta}) \underline{v}]\partial_x\zeta +[\Lambda^s, f(\epsilon\underline{\zeta})]\partial_x v\ ,\ \frac{Q(\epsilon\underline{\zeta},\underline{v})}{f(\epsilon\underline{\zeta})}\Lambda^s\zeta\Big)$.

From Cauchy-Schwarz inequality, one has
\[ |B_1| \ \leq \ \Big\vert[\Lambda^s,\epsilon f'(\epsilon\underline{\zeta}) \underline{v}]\partial_x\zeta +[\Lambda^s, f(\epsilon\underline{\zeta})]\partial_x v\Big\vert_{L^2}\Big\vert \frac{Q(\epsilon\underline{\zeta},\underline{v})}{f(\underline{\zeta})}\Lambda^s\zeta\Big\vert_{L^2}.\]
Since $s\geq s_0+1$, we can use the commutator estimate Lemma~\ref{K-P} to get
\begin{align*}
\Big\vert [\Lambda^s,\epsilon f'(\epsilon\underline{\zeta}) \underline{v}]\partial_x\zeta +[\Lambda^s, f(\epsilon\underline{\zeta})]\partial_x v\Big\vert_{L^2}
&\lesssim \left(\vert \partial_x(\epsilon f'(\epsilon\underline{\zeta}))\vert_{H^{s-1}}+\vert \partial_x(f(\epsilon\underline{\zeta}))\vert_{H^{s-1}}\right) \vert \partial_x U\vert_{H^{s-1}} \\
 &\lesssim \epsilon C(\big\Vert \underline U\big\Vert_{X^s_T})\big\vert U\big\vert_{X^s}.
\end{align*}
where we used, for the last inequality,
\[ \partial_x(f(\epsilon\underline{\zeta})) \ = \ \epsilon(\partial_x\underline{\zeta}) f'(\epsilon\underline{\zeta}),\]
It follows, using that $\frac{Q(\epsilon\underline{\zeta},\underline{v})}{f(\underline{\zeta})}\in L^\infty$ since $\underline{\zeta}$ satisfies~\eqref{CondDepth}:
\[
|B_1| \  \leq\  \epsilon C(\big\Vert \underline U\big\Vert_{X^s_T})\big\vert U\big\vert_{X^s}^2.
\]
\medskip

$-$ Control of $B_2=\Big([\Lambda^s, \underline{\mfT}^{-1}(Q(\epsilon\underline{\zeta},\underline{v})\cdot)]\partial_x \zeta\ ,\ \underline{\mfT}\Lambda^s v\Big)$.

By symmetry of $\underline{\mfT}$, one has
\[ B_2=\Big(\underline{\mfT}[\Lambda^s, \underline{\mfT}^{-1}(Q(\epsilon\underline{\zeta},\underline{v})\cdot)]\partial_x \zeta\ ,\ \Lambda^s v\Big).\]
Now, one can check that, by definition of the commutator,
\begin{align*}
\underline{\mfT}[\Lambda^s, \underline{\mfT}^{-1}(Q(\epsilon\underline{\zeta},\underline{v})\cdot)]\partial_x \zeta &= \underline{\mfT}\Lambda^s \underline{\mfT}^{-1} Q(\epsilon\underline{\zeta},\underline{v})\partial_x\zeta - Q(\epsilon\underline{\zeta},\underline{v})\Lambda^s\partial_x\zeta \\
&= \underline{\mfT}\Lambda^s \underline{\mfT}^{-1} Q(\epsilon\underline{\zeta},\underline{v})\partial_x\zeta -\Lambda^s \underline{\mfT} \underline{\mfT}^{-1}(Q(\epsilon\underline{\zeta},\underline{v})\partial_x\zeta )+\Lambda^s (Q(\epsilon\underline{\zeta},\underline{v})\partial_x\zeta ) - Q(\epsilon\underline{\zeta},\underline{v})\Lambda^s\partial_x\zeta\\
&=  -\big[\Lambda^s, \underline{\mfT}\big] \underline{\mfT}^{-1}(Q(\epsilon\underline{\zeta},\underline{v})\partial_x\zeta )+ \big[\Lambda^s,Q(\epsilon\underline{\zeta},\underline{v}) \big]\partial_x\zeta
\end{align*}
We can now use Corollary~\ref{col:comwithT}, and deduce
\[ \left|\big([\Lambda^s, \underline{\mfT}] \underline{\mfT}^{-1}(Q(\epsilon\underline{\zeta},\underline{v})\partial_x\zeta ) \ , \ \Lambda^s v\Big)\right| \ \leq \ C(\epsilon\big\vert \underline{\zeta} \big\vert_{H^s})\big\vert Q(\epsilon\underline{\zeta},\underline{v})\partial_x\zeta \big\vert_{H^{s-1}}\big\vert v\big\vert_{H^{s+1}_\mu}\leq C(\big\vert \underline{U} \big\vert_{X^s})\big\vert \zeta \big\vert_{H^s}\big\vert v\big\vert_{H^{s+1}_\mu}.\]
The last inequality is obtained using Lemma~\ref{Moser}.

From Lemma~\ref{K-P} and the explicit definition of $Q=Q_0+\epsilon^2 Q_1$ in~\eqref{defQ0Q1}, one has
\[ \big\vert\ \big[\Lambda^s,Q(\epsilon\underline{\zeta},\underline{v}) \big]\partial_x\zeta\ \big\vert_{L^2}=\epsilon \big\vert [\Lambda^s, (\gamma+\delta)\kappa_1\underline{\zeta}+\epsilon q_1(\epsilon\underline{\zeta} )q_3'(\epsilon\underline{\zeta} ) \underline{v}^2 ]\partial_x\zeta \big\vert_{L^2}  \ \leq \ \epsilon C(\big\vert \underline{U} \big\vert_{H^s})\big\vert \partial_x\zeta \big\vert_{H^{s-1}},\]
so that  we finally get
\[
|B_2| \  \leq\  \epsilon C(\big\Vert U\big\Vert_{X^s_T})\big\vert U\big\vert_{X^s}^2.
\]

$-$ Control of $B_3=\epsilon\Big(\big[\Lambda^s, \underline{\mfT}^{-1}\mfQ[\epsilon\underline{\zeta},\underline{v}] +\varsigma \underline v\big]\partial_x v, \underline{\mfT}\Lambda^s v\Big)$.

Let us first use the definition of $\mfQ[\epsilon\underline{\zeta},\underline{v}]$~\eqref{defQ0Q1} to expand:
\begin{align*}
B_3&=\epsilon\Big([\Lambda^s, \underline{\mfT}^{-1}(q_1(\epsilon\underline{\zeta})q_3(\epsilon\underline{\zeta})\underline{v}\cdot)] \partial_x v, \underline{\mfT}\Lambda^s v\Big) \ + \ \mu\epsilon\kappa \Big([\Lambda^s, \underline{\mfT}^{-1}\partial_x ((\partial_x\underline{v})\ \cdot)] \partial_x v, \underline{\mfT}\Lambda^s v\Big)\\
&\qquad + \ \epsilon\varsigma \Big([\Lambda^s, \underline{v}] \partial_x v, \underline{\mfT}\Lambda^s v\Big) \\
&\equiv B_{31}+B_{32}+B_{33}.
\end{align*}
In order to estimate $B_{31}$ and $B_{32}$, one proceeds as for the control of $B_2$. One can check

\[\underline{\mfT} [\Lambda^s, \underline{\mfT}^{-1}(q_1(\epsilon\underline{\zeta})q_3(\epsilon\underline{\zeta})\underline{v}\cdot)]\partial_x v = -[\Lambda^s, \underline{\mfT}] \underline{\mfT}^{-1}(q_1(\epsilon\underline{\zeta})q_3(\epsilon\underline{\zeta})\underline{v}\partial_x v )+ [\Lambda^s,q_1(\epsilon\underline{\zeta})q_3(\epsilon\underline{\zeta})\underline{v} ]\partial_x v.
\]
As above, using Cauchy-Schwarz inequality, Corollary~\ref{col:comwithT} and Lemma~\ref{K-P}, one obtains
\[
|B_{31}| \  \leq\  \epsilon  C(\big\Vert \underline{U}\big\Vert_{X^s_T})\big\vert U\big\vert_{X^s}^2.
\]

In the same way,
\[\underline{\mfT} [\Lambda^s, \underline{\mfT}^{-1}(\partial_x ((\partial_x\underline{v})\ \cdot)]\partial_x v = -[\Lambda^s, \underline{\mfT}] \underline{\mfT}^{-1}(\partial_x ((\partial_x\underline{v})(\partial_x v))] +\partial_x \big( [\Lambda^s,\partial_x\underline{v} ]\partial_x v\big).
\]
Again, using Cauchy-Schwarz inequality, Corollary~\ref{col:comwithT} and Lemma~\ref{K-P}, one has
\begin{align*}
\mu\Big\vert\big([\Lambda^s, \underline{\mfT}] \underline{\mfT}^{-1}(\partial_x ((\partial_x\underline{v})(\partial_x v))] ,\Lambda^s v\big)\Big\vert \ &\leq \ c_s \mu\big|\underline{\zeta}\big|_{H^{s}}\big| \partial_x ((\partial_x\underline{v})(\partial_x v))\big|_{H^{s-1}}\big|v\big|_{H^{s+1}_\mu}\\
&\leq\ c_s\big|\underline{\zeta}\big|_{H^{s}}\big| \underline{v}\big|_{H^{s+1}_\mu}\big|v\big|_{H^{s+1}_\mu}^2,
\end{align*}
and
\[\mu\Big|\big( \partial_x \big( [\Lambda^s,\partial_x\underline{v} ]\partial_x v\big),\Lambda^s v\big)\Big| \ = \ \mu\Big|\big(  [\Lambda^s,\partial_x\underline{v} ]\partial_x v,\Lambda^s \partial_x  v\big)\Big| \leq c_s\big| \underline{v}\big|_{H^{s+1}_\mu}\big|v\big|_{H^{s+1}_\mu}^2.\]
 Thus we proved
\[
|B_{32}| \  \leq\  \epsilon  C(\big\Vert \underline{U}\big\Vert_{X^s_T})\big\vert U\big\vert_{X^s}^2.
\]

Finally, we turn to $B_{33}=\epsilon\varsigma \Big([\Lambda^s, \underline{v}] \partial_x v, \underline{\mfT}\Lambda^s v\Big)$. From Lemma~\ref{Lem:mfT}, one has
\[ |B_{33}|\leq \big\vert [\Lambda^s, \underline{v}] \partial_x v \big\vert_{H^1_\mu}\big\vert \Lambda^s v\big\vert_{H^1_\mu}\leq \big\vert [\Lambda^s, \underline{v}] \partial_x v \big\vert_{L^2}\big\vert \Lambda^s v\big\vert_{H^1_\mu}+\sqrt\mu\big\vert \partial_x\big( [\Lambda^s, \underline{v}] \partial_x v \big) \big\vert_{L^2}\big\vert \Lambda^s v\big\vert_{H^1_\mu}.\]
Note the identity
\[ \partial_x \big( [\Lambda^s,\underline{v} ]\partial_x v\big) \ = \ \big( [\Lambda^s,\partial_x\underline{v} ]\partial_x v\big)+ \big( [\Lambda^s,\underline{v} ]\partial_x^2 v\big),\]
so that Lemma~\ref{K-P} yields $\sqrt\mu\big\vert \partial_x\big( [\Lambda^s, \underline{v}] \partial_x v \big) \big\vert_{L^2}\leq \big\vert \underline{v}\big\vert_{H^{s+1}_\mu}\big\vert v \big\vert_{H^{s+1}_\mu}$.

Altogether, we proved
\begin{equation}\label{est:2}
|\big(\big[\Lambda^s,A[\underline{U}]\big]\partial_x U,S[\underline{U}]\big]\Lambda^s U\big) |\leq \epsilon C(\big\Vert \underline U\big\Vert_{X^s_T})\big\vert U\big\vert_{X^s}^2.
\end{equation}
\bigskip

\noindent $\bullet$ {\em Estimate of $\frac12\big(\Lambda^s U,\big[\partial_t,  S[\underline U]\big]\Lambda^s U\big)$.}

One has
\begin{align*}
\big(\Lambda^s U,\big[\partial_t,  S[\underline U]\big]\Lambda^s U\big)&\equiv\dsp(\Lambda^sv,\Big[\partial_t,\underline{\mfT}\Big]\Lambda^sv)
+(\Lambda^s\zeta,\Big[\partial_t,\frac{Q_0(\epsilon\underline{\zeta})}{f(\epsilon\underline{\zeta})}\Big]\Lambda^s\zeta)\\
&=\Big(\Lambda^s v,\big(\partial_t q_1(\epsilon\underline{\zeta}) \big)\Lambda^s v\Big)+\mu\Big(\Lambda^s v,\partial_x\big((\partial_tq_2(\epsilon\underline{\zeta}))( \partial_x\Lambda^s v)\big) \Big)\\
&\qquad \qquad
+\Big(\Lambda^s\zeta,\partial_t \Big(\frac{Q_0(\epsilon\underline{\zeta})}{f(\epsilon\underline{\zeta})}\Big)\Lambda^s\zeta\Big)\\
&=\epsilon\kappa_1\Big(\Lambda^s v,(\partial_t \underline{\zeta}) \Lambda^s v\Big)-\mu\epsilon\kappa_2\Big(\Lambda^s \partial_x v,(\partial_t\underline{\zeta})\Lambda^s \partial_x v) \Big)\\
&\qquad \qquad+\epsilon\Big(\Lambda^s\zeta,\frac{Q_0'(\epsilon\underline{\zeta})f(\epsilon\underline{\zeta})-Q_0(\epsilon\underline{\zeta})f'(\epsilon\underline{\zeta})}{f(\epsilon\underline{\zeta})^2}(\partial_t\underline{\zeta})\Lambda^s\zeta\Big)
\end{align*}

From Cauchy-Schwarz inequality and since $\underline{\zeta}$ satisfies~\eqref{CondDepth}, one deduces
\[\big\vert \ \frac12\big(\Lambda^s U,\big[\partial_t,  S[\underline U]\big]\Lambda^s U\big)\ \big\vert\ \leq\  \epsilon C(\big\Vert \partial_t \underline \zeta \big\Vert_{L^\infty([0,T/\epsilon]\times \RR)},\big\Vert \underline \zeta \big\Vert_{L^\infty([0,T/\epsilon]\times \RR)}) \big\vert U\big\vert_{X^s}^2,\]
and continuous Sobolev embedding yields
\begin{equation}\label{est:3}
\big\vert \ \frac12\big(\Lambda^s U,\big[\partial_t,  S[\underline U]\big]\Lambda^s U\big)\ \big\vert\ \leq\  \epsilon C(\big\Vert \underline U\big\Vert_{X^s_T}) \big\vert U\big\vert_{X^s}^2.
\end{equation}

\bigskip

Finally, let us recall the assumption of the Lemma:
\begin{align}\label{est:4}
\big( \Lambda^s F,S[\underline U]\Lambda^s U\big)\leq  \epsilon\ C_F\big\vert U\big\vert_{X^s}^2+ f(t)\big\vert U\big\vert_{X^s} .
\end{align}
\bigskip

We now plug~\eqref{est:1},~\eqref{est:1'},~\eqref{est:2},~\eqref{est:3},  and~\eqref{est:4} into~\eqref{qtsctr}. Using Lemma~\ref{lemmaes}, it follows
\[ \frac12 \frac{d}{dt}E^s(U)^2 \ \leq \ C_0\epsilon E^s(U)^2+ E^s(U)f(t),\]
with $C_0=C(\big\Vert \underline U\big\Vert_{X^s_T}+C_F)$, and consequently
\[ \frac{d}{dt}E^s(U) \ \leq \ \epsilon C_0 E^s(U)+ f(t)\ .\]
Now, for any $\lambda \in \RR$, one has
\[
e^{\epsilon\lambda t}\partial_t(e^{-\epsilon\lambda t}E^s(U))=-\epsilon\lambda E^s(U) +\frac{d}{dt} E^s(U).
\]
Thus with $\lambda=C_0$, one has for all $t\in [0,\frac{T}{\epsilon}]$, 
\[
\frac{d}{dt} (e^{-\epsilon\lambda t}E^s(U)) \ \leq \  f(t)e^{-\epsilon\lambda t} .\]
Integrating this differential inequality yields
\[
	\forall t\in [0,\frac{T}{\epsilon}],\qquad
	E^s(U)(t)\leq e^{\epsilon\lambda t}E^s(U_0)+ C_0\int^{t}_{0} e^{\epsilon\lambda( t-t')}f(t')dt'.
\]
This concludes the proof of Lemma~\ref{Lem:Hs}.
\end{proof}

\subsection{Well-posedness of the linear system}\label{ssec:linearWP}
Let us now state the main result of this section.
\begin{Proposition}\label{ESprop}
Set $\p=(\mu,\epsilon,\delta,\gamma,\bo)\in\P_{\rm CH}$ (see~\eqref{eqn:defRegimeCHmr}) and $s\geq s_0+1$ with $s_0>1/2$, and let $\underline{U}^\p=(\underline{\zeta}^\p, \underline{v}^\p)^\top\in X^{s}_T$ (see Definition~\ref{defispace}) be such that~\eqref{CondDepth},\eqref{CondEllipticity} are satisfied for $t\in [0,T/\epsilon]$, uniformly with respect to $\p\in\P_{\rm CH}$. For any $U_0\in X^{s}$, there exists 
a unique solution to~\eqref{SSlsys}, $U^\p\in C^0([0,T/\epsilon]; X^{s})\cap C^1([0,T/\epsilon]; X^{s-1})\subset X^s_T$, and
$C_0,\lambda_T= C(\big\Vert \underline U\big\Vert_{ X^{s}_T},T,M_{\rm CH},h_{01}^{-1},h_{02}^{-1})$, independent of $\p\in\P_{\rm CH}$, such that
 one has the energy estimates 
\[
\forall\  0\leq t\leq\frac{T}{\epsilon}, \quad E^s(U^\p)(t)\leq e^{\epsilon\lambda_{T} t}E^s(U_0)\quad \text{ and } \quad
E^{s-1}( \partial_t U^\p) \leq C_0 e^{\epsilon\lambda_{T} t}E^s(U_0).
\]
\end{Proposition}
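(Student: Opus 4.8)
The plan is to prove Proposition~\ref{ESprop} by the standard Friedrichs-type regularization scheme for linear symmetrizable hyperbolic systems, relying crucially on the energy estimates established in Lemma~\ref{Lem:L2} and Lemma~\ref{Lem:Hs}. First I would mollify the system: introduce the Friedrichs mollifier $J_\iota$ (convolution with an approximate identity, or equivalently $J_\iota = \chi(\iota |D|)$ for a suitable cutoff $\chi$), and consider the regularized ODE system in the Banach space $X^s$,
\begin{equation*}
\partial_t U^\iota + J_\iota\big(A_0[\underline U] + A_1[\underline U]\big)\partial_x J_\iota U^\iota = 0, \qquad U^\iota\id{t=0}= J_\iota U_0 .
\end{equation*}
Since $\underline U\in X^s_T$ with $s\geq s_0+1$ and $\mathfrak T^{-1}[\epsilon\underline\zeta]$ is a bounded operator (Lemma~\ref{proprim}), the operator $U\mapsto J_\iota(A_0[\underline U]+A_1[\underline U])\partial_x J_\iota U$ is, for fixed $\iota>0$, Lipschitz continuous on $X^s$ (the mollifier absorbs the loss of one derivative coming from $\partial_x$ and from the operators $\mathfrak T^{-1}\partial_x$ inside $A_0,A_1$). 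The Cauchy–Lipschitz theorem then yields a unique maximal solution $U^\iota\in C^1([0,T^\iota_{\max}); X^s)$.

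Next I would derive $\iota$-uniform bounds. Apply Lemma~\ref{Lem:Hs} (in the form of the a priori estimate, with the mollified right-hand side $F = F^\iota$ absorbed into the argument since $J_\iota$ is symmetric and commutes with $\Lambda^s$, producing only harmless commutator terms of the right form $C_F\epsilon|U^\iota|_{X^s}^2$) to obtain
\begin{equation*}
\forall\ 0\leq t\leq \min(T^\iota_{\max},T/\epsilon),\qquad E^s(U^\iota)(t)\ \leq\ e^{\epsilon\lambda_T t} E^s(J_\iota U_0)\ \leq\ e^{\epsilon\lambda_T t} E^s(U_0),
\end{equation*}
with $\lambda_T = C(\|\underline U\|_{X^s_T}, M_{\rm CH}, h_{01}^{-1}, h_{02}^{-1})$ independent of $\iota$ and of $\p$. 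By Lemma~\ref{lemmaes} this gives a uniform bound on $|U^\iota(t,\cdot)|_{X^s}$ on $[0,T/\epsilon]$, which in particular forces $T^\iota_{\max}> T/\epsilon$ (the solution cannot blow up in $X^s$ before that time). I would then pass to the limit $\iota\to 0$: the uniform $X^s$ bound, combined with an estimate on $\partial_t U^\iota$ in $X^{s-1}$ read off directly from the equation (using boundedness of $A_0,A_1$ from $X^s$ to $X^{s-1}$, i.e. the mapping properties of $\mathfrak T^{-1}$ in Lemma~\ref{proprim}), gives, by Aubin–Lions / Arzelà–Ascoli on compact $x$-intervals and a diagonal argument, a subsequence converging to some $U$ which solves~\eqref{SSlsys}; strong convergence in $C^0([0,T/\epsilon]; X^{s'}_{\rm loc})$ for $s'<s$ suffices to pass to the limit in the (at most first-order) terms. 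Uniqueness of the solution to~\eqref{SSlsys} is exactly the $X^0$ energy estimate of Lemma~\ref{Lem:L2} applied to the difference of two solutions (with $F=0$, $f=0$); this simultaneously upgrades the weak convergence and shows the whole family converges.

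Then I would recover the stated regularity and estimates. Lemma~\ref{Lem:Hs} applied to the limit $U$ gives $E^s(U)(t)\leq e^{\epsilon\lambda_T t}E^s(U_0)$; the equation~\eqref{SSlsys} then reads $\partial_t U = -(A_0[\underline U]+A_1[\underline U])\partial_x U$, and since $A_0[\underline U]+A_1[\underline U]$ maps $X^s$ boundedly into $X^{s-1}$ (with operator norm $C(\|\underline U\|_{X^s_T}, M_{\rm CH}, h_{01}^{-1}, h_{02}^{-1})$, using Lemma~\ref{proprim}~(ii)-(iii), Lemma~\ref{Moser}, and boundedness of $f(\epsilon\underline\zeta)$), one gets $E^{s-1}(\partial_t U)\leq C_0 e^{\epsilon\lambda_T t} E^s(U_0)$, hence $U\in C^1([0,T/\epsilon]; X^{s-1})$. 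Time-continuity with values in $X^s$ (not merely $L^\infty_t X^s \cap C^0_t X^{s-1}$) follows from the Bona–Smith-type argument: weak continuity in $X^s$ plus continuity of $t\mapsto E^s(U)(t)$ (which in turn uses the energy identity~\eqref{qtsctr} and the fact that all terms on its right-hand side are continuous in $t$), giving norm-continuity and hence strong continuity. The main obstacle, and the place requiring genuine care rather than bookkeeping, is ensuring that the mollified problem genuinely retains the pseudo-symmetric structure so that Lemma~\ref{Lem:Hs} applies \emph{uniformly in} $\iota$: the symmetrizer $S[\underline U]$ involves the nonlocal operator $\mathfrak T[\epsilon\underline\zeta]$, so the commutators $[J_\iota, \mathfrak T]$, $[J_\iota, \mathfrak T^{-1}]$, $[J_\iota, S[\underline U]]$ must be shown to be bounded (by $C\epsilon$, uniformly in $\iota\in(0,1]$) on the relevant spaces — this is analogous to Corollary~\ref{col:comwithT} and Lemma~\ref{lemS} but with $J_\iota$ in place of (or alongside) $\Lambda^s$, and is where the precise mapping properties of $\mathfrak T^{-1}$ from Lemma~\ref{proprim} are indispensable.
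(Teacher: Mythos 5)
Your proposal is correct and follows essentially the same route as the paper: Friedrichs mollification reducing to an ODE solved by Cauchy--Lipschitz, $\iota$-uniform bounds from the a priori estimate of Lemma~\ref{Lem:Hs}, passage to the limit, uniqueness from the energy estimate applied to the difference of two solutions, and the $X^{s-1}$ bound on $\partial_t U$ read off from the equation via Lemma~\ref{proprim}. The paper is in fact terser (it cites \cite{TaylorII,M'etivier08} for the details you spell out, and recovers $C^0_t X^s$ continuity by regularizing the initial data rather than by your Bona--Smith-type argument), but these are standard variants of the same construction.
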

\begin{proof}
Existence and uniqueness of a solution to the initial value problem~\eqref{SSlsys} follows, by standard techniques, from the {\em a priori} estimate~\eqref{energyestimate} in Lemma~\ref{Lem:Hs}: 
\begin{align}\label{energyestimateinproof}
E^s(U)(t)\leq e^{\epsilon\lambda t}E^s(U_0),\end{align}
(since $F\equiv0$, and omitting the index $\p$ for the sake of simplicity.)  We briefly recall the argument below, and refer to~\cite{TaylorII,M'etivier08}, for example, for more details.

\medskip

First, let us notice that using the system of equation~\eqref{SSlsys}, one can deduce an energy estimate on the time-derivative of the solution. Indeed, one has
\begin{align}
\big\vert\partial_t U\big\vert_{X^{s-1}}&= \big\vert -A_0[\underline{U}]\partial_x U-A_1[\underline{U}]\partial_x U\big\vert_{X^{s-1}}\nn \\
&\leq \big\vert \epsilon f'(\epsilon\underline{\zeta})\underline{v}\partial_x v+f(\epsilon\underline{\zeta})\underline{v}\partial_x v\big\vert_{H^{s-1}}\nn \\
&\quad + \big\vert  \mathfrak{T}[\epsilon\underline{\zeta}]^{-1}\big(Q_0(\epsilon\underline{\zeta})\partial_x\zeta+\epsilon \mathfrak{Q}[\epsilon\underline{\zeta},\underline{v}]\partial_x v+\epsilon^2Q_1(\epsilon\underline{\zeta},\underline{v})\partial_x\zeta\big)+\epsilon\varsigma\underline{v}\partial_x v\big\vert_{H^s_\mu} \nn \\
&\leq C(\big\vert \underline U\big\vert_{X^s})\big\vert U\big\vert_{X^s} \leq C_0 e^{\epsilon\lambda_{T} t}E^s(U_0),
\label{energyestimatederivativeinproof}\end{align}
where we use Lemmata~\ref{Moser} and~\ref{proprim}.

The completion of the proof is as follows. In order to construct a solution to~\eqref{SSlsys}, we use a sequence of Friedrichs mollifiers, defined by $J_\nu\equiv (1-\nu\partial_x^2)^{-1/2}$ ($\nu>0$), in order to reduce our system to ordinary differential equation systems on $X^s$, which are solved uniquely by Cauchy-Lipschitz theorem. Estimates~\eqref{energyestimateinproof},\eqref{energyestimatederivativeinproof} hold for each $U_\nu\in C^0([0,T/\epsilon];X^{s})$, uniformly in $\nu>0$. One deduces that a subsequence converges towards $U\in L^2([0,T/\epsilon];X^s)$, a (weak) solution of the Cauchy problem~\eqref{SSlsys}. By regularizing the initial data as well, one can show that the system induces a smoothing effect in time, and that the solution $U\in C^0([0,T/\epsilon];X^{s})\cap C^1([0,T/\epsilon];X^{s-1})$ is actually a strong solution. The uniqueness is a straightforward consequence of~\eqref{energyestimateinproof} (with $U_0\equiv 0$) applied to the difference of two solutions. 
\end{proof}

\section{Proof of existence, stability and convergence}\label{existenceandconvergence}

In this section we prove the main results of this paper.  We start by proving an {\em a priori} estimate on the difference of two possible solutions. Existence and uniqueness of the solution of the Cauchy problem for our new Green-Naghdi system, in the Camassa-Holm regime $\epsilon=O(\sqrt{\mu})$ and over large times, is then deduced from the linear analysis of the previous section and this {\em a priori} estimate. The estimate also provides
\begin{itemize}
\item the stability of the solution with respect to the initial data, thus the Cauchy problem for our system is well-posed in the sense of Hadamard, in Sobolev spaces; see subsection~\ref{ssec:WP}.
\item the stability of the solution with respect to a perturbation of the equation, which allows, together with the well-posedness, to fully justify our system (and any other well-posed, consistent model); see section~\ref{sec:fulljustification}.
\end{itemize}

\subsection{One more a priori estimate}

In this subsection, we control the difference of two solutions of the nonlinear system, with different initial data and right-hand sides. More precisely, we prove the following result.
\begin{Proposition} \label{prop:stability}
 Let $(\mu,\epsilon,\delta,\gamma,\bo)\in\P_{\rm CH}$ and $s\geq s_0+1$, $s_0>1/2$, and assume that there exists $U_i$ for $i\in \{1,2\}$, such that
 $U_{i}=(\zeta_{i},v_{i})^\top \in  X^{s}_{T}$, $U_2\in L^\infty([0,T/\epsilon);X^{s+1})$, $\zeta_1$ satisfies~\eqref{CondDepth},\eqref{CondEllipticity} on $[0,T/\epsilon]$, with $h_{01},h_{02}>0$, 
 and $U_i$ satisfy
\begin{align*}
		\partial_t U_1\ +\ A_0[U_1]\partial_x U_1\ +\ A_1[U_1]\partial_x U_1\ =\ F_1 \ ,  \\
		\partial_t U_2\ +\ A_0[U_2]\partial_x U_2\ +\ A_1[U_2]\partial_x U_2\ =\ F_2 \ ,
\end{align*}		
with $F_i\in L^1([0,T/\epsilon];X^{s})$. Then there exists constants $C_0=C(M_{\rm CH},h_{01}^{-1},h_{02}^{-1},\epsilon\big\vert U_1\big\vert_{X^s},\epsilon\big\vert U_2\big\vert_{X^s})$ and $\lambda_T=C_0\times C( \vert U_2\vert_{L^\infty([0,T/\epsilon);X^{s+1})})$ such that
 \begin{equation*}
	\forall t\in [0,\frac{T}{\epsilon}],\quad
	E^s(U_1-U_2)(t)\leq e^{\epsilon\lambda_{T} t}E^s(U_{1}\id{t=0}-U_{2}\id{t=0})+C_0\int^{t}_{0} e^{\epsilon\lambda_T( t-t')} E^s(F_1-F_2)(t')dt'.
\end{equation*}

\end{Proposition}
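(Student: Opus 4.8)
The natural approach is to write an equation for the difference $V \equiv U_1 - U_2$, recognize it as a linearized-type system of the form~\eqref{SSlsysm} with reference state $\underline U = U_1$, identify all the terms coming from the difference $A_j[U_1]-A_j[U_2]$ as part of a right-hand side $F$, verify that this $F$ satisfies the structural hypothesis $\big(\Lambda^s F, S[U_1]\Lambda^s V\big)\le C_F\,\epsilon\,|V|_{X^s}^2 + f(t)|V|_{X^s}$ required by Lemma~\ref{Lem:Hs}, and then quote that lemma to conclude. The only genuinely new work relative to the linear analysis is the bookkeeping and estimation of the difference terms.

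\textbf{Step 1: Set up the difference equation.} Subtracting the two equations and adding and subtracting $A_j[U_1]\partial_x U_2$, one gets
\[
\partial_t V + A_0[U_1]\partial_x V + A_1[U_1]\partial_x V \ = \ (F_1-F_2) \ - \ \big(A_0[U_1]-A_0[U_2]\big)\partial_x U_2 \ - \ \big(A_1[U_1]-A_1[U_2]\big)\partial_x U_2 \ \equiv \ F .
\]
Thus $(V, U_1)$ solves~\eqref{SSlsysm} with this $F$, and $\underline U = U_1$ satisfies~\eqref{CondDepth},\eqref{CondEllipticity} with the stated lower bounds. It remains only to check that $F$ fits the hypothesis of Lemma~\ref{Lem:Hs}, with $f(t) = C_0\, E^s(F_1-F_2)(t)$ and $C_F = C_0\,C(|U_2|_{L^\infty([0,T/\epsilon);X^{s+1})})$.

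\textbf{Step 2: Estimate the $F_1-F_2$ contribution and the $A_j$-difference contributions.} The term $\big(\Lambda^s(F_1-F_2), S[U_1]\Lambda^s V\big)$ is bounded, via Lemma~\ref{lemS} (estimate~\eqref{eq:est-S} applied to $\Lambda^s(F_1-F_2)$ and $\Lambda^s V$), by $C_0\,|F_1-F_2|_{X^s}|V|_{X^s}$, which is of the form $f(t)|V|_{X^s}$ after invoking Lemma~\ref{lemmaes} to pass between $E^s$ and $|\cdot|_{X^s}$. For the differences $A_j[U_1]-A_j[U_2]$, one expands using the explicit formulas~\eqref{defA0A1}--\eqref{defmfQ}: the scalar entries involve smooth functions such as $f(\epsilon\zeta_1)-f(\epsilon\zeta_2)$, $q_i(\epsilon\zeta_1)-q_i(\epsilon\zeta_2)$, and polynomial-in-$v$ differences, all of which are $\epsilon\,\O(|V|)$ by the mean value theorem and the product/Moser estimates (Lemma~\ref{Moser}); the operator entries involve $\mfT^{-1}[\epsilon\zeta_1]-\mfT^{-1}[\epsilon\zeta_2] = \mfT^{-1}[\epsilon\zeta_1]\big(\mfT[\epsilon\zeta_2]-\mfT[\epsilon\zeta_1]\big)\mfT^{-1}[\epsilon\zeta_2]$, again $\epsilon\,\O(|\zeta_1-\zeta_2|)$ in operator norm by Lemma~\ref{proprim}. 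Each such factor carries one power of $\epsilon$ and one factor of $|V|_{X^s}$; the remaining factor $\partial_x U_2$ is what forces the appearance of $|U_2|_{X^{s+1}}$ (one derivative is lost on $U_2$ because $V$ already carries the top derivative $\partial_x V$ in the first equation). After pairing against $S[U_1]\Lambda^s V$ — using the symmetry and commutator estimates of Lemma~\ref{lemS} and Corollary~\ref{col:comwithT} exactly as in the proof of Lemma~\ref{Lem:Hs} — every such contribution is $\le \epsilon\, C_0\, C(|U_2|_{X^{s+1}})\,|V|_{X^s}^2$, i.e. absorbed into the $C_F\,\epsilon\,|V|_{X^s}^2$ term.

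\textbf{Step 3: Apply Lemma~\ref{Lem:Hs} and conclude.} With $F$ shown to satisfy the required bound with the stated $C_F$ and $f$, Lemma~\ref{Lem:Hs} gives $\lambda_T = C(|U_1|_{X^s_T} + C_F) = C_0\times C(|U_2|_{L^\infty([0,T/\epsilon);X^{s+1})})$ and the energy inequality
\[
E^s(V)(t) \ \leq \ e^{\epsilon\lambda_T t}E^s(V\id{t=0}) \ + \ C_0\int_0^t e^{\epsilon\lambda_T(t-t')} E^s(F_1-F_2)(t')\,dt',
\]
which is exactly the claimed estimate. The main obstacle is purely organizational: carefully tracking, for each of the (many) entries of $A_0$ and $A_1$, that the difference produces one clean power of $\epsilon$ and at most one lost derivative on $U_2$, so that the delicate operator-valued terms (those involving $\mfT^{-1}$ and the quadratic operator $\mfQ$, handled by Corollary~\ref{col:comwithT}) still land in $L^\infty([0,T/\epsilon];H^s)$ rather than a larger space. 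No new analytic difficulty arises beyond what is already encapsulated in Lemmata~\ref{Lem:mfT},~\ref{proprim},~\ref{lemS} and Corollary~\ref{col:comwithT}.
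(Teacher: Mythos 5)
Your proposal is correct and follows essentially the same route as the paper: the paper first multiplies each equation by $S[U_i]$ before subtracting and then applies $S^{-1}[U_1]$, so that its right-hand side reads $F_1-F_2-S^{-1}[U_1]\big(\Sigma[U_1]-\Sigma[U_2]\big)\partial_x U_2-S^{-1}[U_1]\big(S[U_1]-S[U_2]\big)(\partial_t U_2-F_2)$, which is algebraically identical to your $F_1-F_2-\big(A[U_1]-A[U_2]\big)\partial_x U_2$ once the equation for $U_2$ is used to replace $A[U_2]\partial_x U_2$ by $F_2-\partial_t U_2$. The difference terms are then estimated with the same tools you invoke (Moser and commutator estimates, Lemma~\ref{lemS}, Corollary~\ref{col:comwithT}, and the resolvent identity for $\mfT^{-1}$), the derivative lost on $U_2$ accounting for the $X^{s+1}$ hypothesis, and the conclusion follows from Lemma~\ref{Lem:Hs} exactly as you state.
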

\begin{proof}
When multiplying the equations satisfied by $U_i$ on the left by $S[U_i]$, one obtains
	\begin{align*}
	S[U_1]\dsp\partial_t U_1+\Sigma_0[U_1]\partial_x U_1+\Sigma_1[U_1]\partial_x U_1= S[U_1]F_1\\
	S[U_2]\dsp\partial_t U_2+\Sigma_0[U_2]\partial_x U_2+\Sigma_1[U_2]\partial_x U_1= S[U_2]F_2;
    \end{align*}
with $\Sigma_0[U]=S[U]A_0[U]$ and $\Sigma_1[U]=S[U]A_1[U]$.
Subtracting the two equations above, and defining  $V=U_1-U_2\equiv (\zeta,v)^\top$ one obtains
\begin{multline*}
	S[U_1]\dsp\partial_t V+\Sigma_0[U_1]\partial_x V+\Sigma_1[U_1]\partial_x V=\\
	S[U_1](F_1-F_2)-(\Sigma_0[U_1]+\Sigma_1[U_1]-\Sigma_0[U_2]-\Sigma_1[U_2])\partial_xU_2-(S[U_1]-S[U_2])(\partial_tU_2-F_2).
\end{multline*}
We then apply $S^{-1}[U_1]$ and deduce the following system satisfied by $V$:
\begin{equation}\label{systemstability}
	\left\lbrace
	\begin{array}{l}
	\dsp\partial_t V+A_0[U_1]\partial_x V+A_1[U_1]\partial_x V=F \\
	\dsp V(0)=(U_{1}-U_{2})\id{t=0},
	\end{array}\right.
\end{equation}
where, 
\begin{multline}\label{rhsstability}
F\ \equiv \  F_1-F_2\ -\  S^{-1}[U_1]\big(\Sigma_0[U_1]+\Sigma_1[U_1]-\Sigma_0[U_2]-\Sigma_1[U_2]\big)
\partial_xU_2\\
-\ S^{-1}[U_1]\big(S[U_1]-S[U_2]\big)(\partial_tU_2-F_2).
\end{multline}
We wish to use the energy estimate of Lemma~\ref{Lem:Hs} to the linear system~\eqref{systemstability}. Thus one needs to control accordingly the right hand side $F$.

In order to do so, we take advantage of the following Lemma.
\begin{Lemma}\label{lemst}
 Let $(\mu,\epsilon,\delta,\gamma,\bo)\in\P_{\rm CH}$ and $s\geq s_0>1/2$. Let $V=(\zeta_v,v)^\top$, $W=(\zeta_w,w)^\top\in X^{s}$ and $U_1=(\zeta_1,v_1)^\top$, $U_2=(\zeta_2,v_2)^\top\in X^{s}$ such that there exists $h>0$ with
\[ 1-\epsilon\zeta_1\geq h>0,\ \quad 1-\epsilon\zeta_2\geq h>0,\ \quad \frac1\delta+\epsilon\zeta_1\geq h>0,\ \quad \frac1\delta+\epsilon\zeta_2\geq h>0.\]
Then one has
\begin{align}\label{eq:est-dif-S}
\Big\vert \ \Big(\ \Lambda^s \big( S[U_1]-S[U_2]\big) V \ , \ W \ \Big) \ \Big\vert \ & \leq \ \epsilon\ C \  \big\vert U_1-U_2\big\vert_{X^s}\big\vert V\big\vert_{X^s}\big\vert W\big\vert_{X^0} \\
\label{eq:est-dif-SA}
\Big(\ \Lambda^s \big( S[U_1]A[U_1]-S[U_2]A[U_2]\big) V \ , \ W \ \Big)  \ & \leq \ \epsilon\ C\ \big\vert U_1-U_2\big\vert_{X^s}\big\vert V\big\vert_{X^s}\big\vert W\big\vert_{X^0}
\end{align}
with $C=C(M_{\rm CH},h^{-1},\epsilon\big\vert U_1\big\vert_{X^s},\epsilon\big\vert U_2\big\vert_{X^s})$, and denoting $A[\cdot]\equiv A_0[\cdot]+A_1[\cdot]$.
\end{Lemma}
\begin{proof}[Proof of Lemma~\ref{lemst}]
Let $V=(\zeta_v,v)^\top$, $W=(\zeta_w,w)^\top\in X^{0}$ and $U_1=(\zeta_1,v_1)^\top$, $U_2=(\zeta_2,v_2)^\top\in X^{s}$. By definition of $S[\cdot]$ (see~\eqref{defS}), one has
\[  \Big(\ \Lambda^s \big( S[U_1]-S[U_2]\big) V \ , \ W \ \Big)  \ = \ \Big( \
\Lambda^s \big(\frac{Q_0(\epsilon\zeta_1)}{f(\epsilon\zeta_1)}-\frac{Q_0(\epsilon\zeta_2)}{f(\epsilon\zeta_2)}\big) \zeta_v \ , \ \zeta_w\Big)  \ + \ 
 \Big( \ \Lambda^s \big( \mfT[\epsilon\zeta_1]-\mfT[\epsilon\zeta_2]\big)v\ , \ w \ \Big).\]
Now, one can check that
\begin{align*}
 \mfT[\epsilon\zeta_1]v-\mfT[\epsilon\zeta_2]v \ &= \  \big(q_1(\epsilon\zeta_1)-q_1(\epsilon\zeta_2)\big)v-\mu\nu\partial_x\big\{\big(q_2(\epsilon\zeta_1)-q_2(\epsilon\zeta_2)\big)\partial_x v\big\}   \\
\ &= \ \epsilon \Big( \kappa_1(\zeta_1-\zeta_2)v-\mu\nu\partial_x\big\{\kappa_2(\zeta_1-\zeta_2)\partial_x v\big\}   \Big),
\end{align*}
so that, after one integration by part, and using Cauchy-Schwarz inequality and Lemma~\ref{Moser}, one has
\begin{equation}\label{est-dif-mfT} \Big|\  \Big( \ \Lambda^s \big( \mfT[\epsilon\zeta_1]-\mfT[\epsilon\zeta_2]\big)v\ , \ w \ \Big) \ \Big| \ \leq \ \epsilon\ C(\kappa_1,\nu\kappa_2) \big\vert \zeta_1-\zeta_2\big\vert_{H^s} \big\vert v\big\vert_{H^{s+1}_\mu} \big\vert w\big\vert_{H^{1}_\mu} .\end{equation}
In the same way, we remark that one can write $\frac{Q_0(X)}{f(X)}$ as a rational function:
\[ \frac{Q_0(X)}{f(X)}\ = \  (\gamma+\delta)\big(\frac{(1+X)(1-X+\gamma(\delta^{-1}+X))}{(1-X)(\delta^{-1}+X)} \ \equiv \ \frac{P(X)}{Q(X)}.\]
It follows
\begin{align*}
\frac{Q_0(\epsilon\zeta_1)}{f(\epsilon\zeta_1)}-\frac{Q_0(\epsilon\zeta_2)}{f(\epsilon\zeta_2)} \ &= \ \frac{P(\epsilon\zeta_1)Q(\epsilon\zeta_2)-P(\epsilon\zeta_2)Q(\epsilon\zeta_1)}{Q(\epsilon\zeta_1)Q(\epsilon\zeta_2)} \\
\ &= \ \frac{\big(P(\epsilon\zeta_1)-P(\epsilon\zeta_2)\big)Q(\epsilon\zeta_2)-P(\epsilon\zeta_2)\big(Q(\epsilon\zeta_1)-Q(\epsilon\zeta_2)\big)}{Q(\epsilon\zeta_1)Q(\epsilon\zeta_2)} .
\end{align*}
Since $P(X)$ and $Q(X)$ are polynomials, and using Lemma~\ref{Moser}, it is straightforward to check that for $s\geq s_0>1/2$, one has
\[ \big\vert \big(P(\epsilon\zeta_1)-P(\epsilon\zeta_2)\big)Q(\epsilon\zeta_2)-P(\epsilon\zeta_2)\big(Q(\epsilon\zeta_1)-Q(\epsilon\zeta_2)\big)\big\vert_{H^s} \ \leq \ \epsilon C(\epsilon\big\vert \zeta_1\big\vert_{H^s},\epsilon\big\vert \zeta_2\big\vert_{H^s})\big\vert \zeta_1-\zeta_2\big\vert_{H^s}\]
Now, applying Cauchy-Schwarz inequality and Lemma~\ref{Moser} together with Lemma~\ref{lem:f/h}, one deduces that as long as
\[ h_1(\epsilon\zeta_1),h_2(\epsilon\zeta_1),h_1(\epsilon\zeta_2),h_2(\epsilon\zeta_2) \  \geq \  h \ > \ 0,\]
one has (again thanks to continuous Sobolev embedding for $s\geq s_0>1/2$)
\begin{equation}\label{est-dif-Q}\Big\vert  \Big( \
\Lambda^s \big(\frac{Q_0(\epsilon\zeta_1)}{f(\epsilon\zeta_1)}-\frac{Q_0(\epsilon\zeta_2)}{f(\epsilon\zeta_2)}\big) \zeta_v \ , \ \zeta_w\Big) \Big\vert \ \leq \ \epsilon\ C \ \big\vert \zeta_1-\zeta_2\big\vert_{H^s} \big\vert \zeta_v\big\vert_{H^s}\big\vert \zeta_w\big\vert_{L^2},\end{equation}
with $C=C(M_{\rm CH},h^{-1},\epsilon\big\vert \zeta_1\big\vert_{H^s},\epsilon\big\vert \zeta_2\big\vert_{H^s})$.
Estimates~\eqref{est-dif-mfT} and~\eqref{est-dif-Q} yield~\eqref{eq:est-dif-S}.
\medskip

Let us now turn to~\eqref{eq:est-dif-SA}. One has
\begin{align}\label{eq:decomp-SU} \Big(\ \Lambda^s \big( S[U_1]A[U_1]-S[U_2]A[U_2]\big) V \ , \ W \ \Big)  \ &= \ \epsilon \Big( 
\Lambda^s \big(\frac{Q_0(\epsilon\zeta_1)}{f(\epsilon\zeta_1)}f'(\epsilon\zeta_1)v_1-\frac{Q_0(\epsilon\zeta_2)}{f(\epsilon\zeta_2)}f'(\epsilon\zeta_2)v_2\big) \zeta_v \ , \ \zeta_w\Big)  \\
&\quad + \ \Big( \
\Lambda^s \big(Q_0(\epsilon\zeta_1)-Q_0(\epsilon\zeta_2) \big) v \ , \ \zeta_w\Big) \nn \\
&\quad  + \ \Big( \ \Lambda^s \big(Q_0(\epsilon\zeta_1)-Q_0(\epsilon\zeta_2) \big)  \zeta_v \ , \ w\ \Big) \nn \\
&\quad  + \ \epsilon^2 \Big( \ \Lambda^s \big( Q_1(\epsilon\zeta_1,v_1)- Q_1(\epsilon\zeta_2,v_2)\big)  \zeta_v \ , \ w\ \Big) \nn \\
&\quad +
 \epsilon\Big( \ \Lambda^s \big( \mfQ[\epsilon\zeta_1,v_1]-\mfQ[\epsilon\zeta_2,v_2]\big)v\ , \ w \ \Big)\nn\\
&\quad  + \ \epsilon\varsigma \Big( \ \Lambda^s \big( \mfT[\epsilon\zeta_1](v_1\ v)- \mfT[\epsilon\zeta_2](v_2\ v)\big)  \ , \ w\ \Big) . \nn
 \end{align}
The second and third terms in the right-hand-side of~\eqref{eq:decomp-SU} may be estimated exactly as in~\eqref{est-dif-Q}, and we do not detail the precise calculations. The first term follows in the same way, using the decomposition
\begin{multline*}\epsilon  \big(\frac{Q_0(\epsilon\zeta_1)}{f(\epsilon\zeta_1)}f'(\epsilon\zeta_1)v_1-\frac{Q_0(\epsilon\zeta_2)}{f(\epsilon\zeta_2)}f'(\epsilon\zeta_2)v_2\big) \ = \  \big(\frac{Q_0(\epsilon\zeta_1)}{f(\epsilon\zeta_1)}f'(\epsilon\zeta_1)-\frac{Q_0(\epsilon\zeta_2)}{f(\epsilon\zeta_2)}f'(\epsilon\zeta_2)\big) (\epsilon v_1)\\ + \ \epsilon (v_1-v_2) \frac{Q_0(\epsilon\zeta_2)}{f(\epsilon\zeta_2)}f'(\epsilon\zeta_2),\end{multline*}
so that one has
\begin{multline*} \left\vert \Big( \
\Lambda^s \big(\frac{Q_0(\epsilon\zeta_1)}{f(\epsilon\zeta_1)}f'(\epsilon\zeta_1)v_1-\frac{Q_0(\epsilon\zeta_2)}{f(\epsilon\zeta_2)}f'(\epsilon\zeta_2)v_2\big) \zeta_v \ , \ \zeta_w\Big) \right\vert \leq C(\epsilon \big\vert v_1\big\vert_{H^s} ) \epsilon  \big\vert \zeta_1-\zeta_2\big\vert_{H^s} \big\vert \zeta_v\big\vert_{H^s}\big\vert \zeta_w\big\vert_{L^2}\\+C(\epsilon \big\vert \zeta_2\big\vert_{H^s} ) \epsilon\big\vert v_1-v_2\big\vert_{H^s} \big\vert \zeta_v\big\vert_{H^s}\big\vert \zeta_w\big\vert_{L^2}.
\end{multline*}
The fourth term is similar, as $\epsilon^2 Q_1(\epsilon\zeta_i,v_i)=Q_1(\epsilon\zeta_i,\epsilon v_i)$ is a bivariate polynomial. Let us detail the last two estimates. One has
\begin{multline}\label{eq:decomp-Ta}  \big(\mfQ[\epsilon\zeta_1,v_1]-\mfQ[\epsilon\zeta_2,v_2]\big)v \ = \ 2\big(q_1(\epsilon\zeta_1)q_3(\epsilon\zeta_1)v_1 -q_1(\epsilon\zeta_2)q_3(\epsilon\zeta_2)v_2\big)v+\mu\kappa\partial_x\big(v\partial_x(v_1-v_2)\big).\end{multline}
Again, the contribution of the first term in~\eqref{eq:decomp-Ta} is estimated as above (recalling that this term is multiplied by a $\epsilon$- factor), and the contribution of the last term in~\eqref{eq:decomp-Ta} is estimated below:
\[
 \left\vert \epsilon\mu\kappa \Big( \ \Lambda^s  \partial_x\big(v\partial_x(v_1-v_2)\big) \ , \ w \ \Big)\right\vert \leq C \epsilon\kappa\big\vert v_1-v_2\big\vert_{H^{s+1}_\mu} \big\vert v\big\vert_{H^s}\big\vert w\big\vert_{H^1_\mu}.
\]
We conclude by estimating the last term in~\eqref{eq:decomp-SU}. One has 
\begin{align*}
 \mfT[\epsilon\zeta_1](v_1\ v)-\mfT[\epsilon\zeta_2](v_2\ v)  &= \big(q_1(\epsilon\zeta_1)v_1-q_1(\epsilon\zeta_2)v_2\big)v-\mu\nu\partial_x\big\{\big(q_2(\epsilon\zeta_1)\partial_x(v_1\ v)-q_2(\epsilon\zeta_2)\partial_x(v_2\ v)\big\}   \\
\ &= \  \big( \mfT[\epsilon\zeta_1]-\mfT[\epsilon\zeta_2]\big)(v_1\ v) +  \epsilon(v_1-v_2)\big(q_1(\epsilon\zeta_2)v\big)\\
&\qquad\qquad -\mu\nu\partial_x\big\{q_2(\epsilon\zeta_2)\partial_x \big((v_1-v_2)\ v\big)\big\}   .
\end{align*}
One finally uses Cauchy-Schwarz inequality, Lemma~\ref{Moser} as well as~\eqref{est-dif-mfT}, and obtain
\begin{align*} \Big|\ \epsilon \Big( \ \Lambda^s \big( \mfT[\epsilon\zeta_1](v_1\ v)-\mfT[\epsilon\zeta_2](v_2\ v)\big) \ , \ w \ \Big) \ \Big| \ &\leq \ \epsilon^2\ C(\kappa_1,\nu\kappa_2) \big\vert \zeta_1-\zeta_2\big\vert_{H^s} \big\vert v_1 \ v\big\vert_{H^{s+1}_\mu} \big\vert w\big\vert_{H^{1}_\mu} \\
&\quad + \epsilon\ C(\kappa_1\epsilon\big\vert \zeta_2\big\vert_{H^s}) \big\vert v_1-v_2\big\vert_{H^s} \big\vert  v\big\vert_{H^{s}} \big\vert w\big\vert_{L^2} 
\\
&\quad +\nu\epsilon\ C(\kappa_2\epsilon\big\vert \zeta_2\big\vert_{H^s}) \big\vert v_1-v_2\big\vert_{H^{s+1}_\mu} \big\vert  v\big\vert_{H^{s+1}_\mu} \big\vert w\big\vert_{H^{1}_\mu} 
.\end{align*}
Altogether, we obtain~\eqref{eq:est-dif-SA}, and the Lemma is proved.
\end{proof}
\bigskip

Let us continue the proof of Proposition~\ref{prop:stability}, by estimating $F$ defined in~\eqref{rhsstability}, that we recall:
\begin{multline}\label{rhsstability2}
F\ \equiv \  F_1-F_2\ -\  S^{-1}[U_1]\big(\Sigma_0[U_1]+\Sigma_1[U_1]-\Sigma_0[U_2]-\Sigma_1[U_2]\big)
\partial_xU_2\\
-\ S^{-1}[U_1]\big(S[U_1]-S[U_2]\big)(\partial_tU_2-F_2).
\end{multline}
More precisely, we want to estimate
\begin{align*}
\big(\ \Lambda^s F\ ,\ S[U_1]\Lambda^s V\ \big) &= \big(\ \Lambda^s F_1-\Lambda^s F_2\ ,\ S[U_1]\Lambda^s V\ \big) \\
&\quad +\big(\ \Lambda^s(\Sigma_0[U_1]+\Sigma_1[U_1]-\Sigma_0[U_2]-\Sigma_1[U_2])
\partial_xU_2\ ,\ \Lambda^s V\ \big) \\
&\quad -\big(\ \Lambda^s (S[U_1]-S[U_2])(\partial_tU_2-F_2)\ ,\ \Lambda^s V\ \big) \\
&\quad +\big(\ \big[\Lambda^s,S^{-1}[U_1]\big](\Sigma_0[U_1]+\Sigma_1[U_1]-\Sigma_0[U_2]-\Sigma_1[U_2])
\partial_xU_2\ ,\ S[U_1]\Lambda^s V\ \big)\\
&\quad -\big(\ \big[\Lambda^s,S^{-1}[U_1]\big](S[U_1]-S[U_2])(\partial_tU_2-F_2)\ ,\ S[U_1]\Lambda^s V\ \big)\\
&= (I)+(II)+(III)+(IV)+(V).
\end{align*}

Let us estimate each of these terms. The contribution of $(I)$ is immediately bounded using Lemma~\ref{lemS}:
\begin{equation}
\big\vert \ (I)\ \big\vert \ \leq\ C \vert F_1-F_2\vert_{X^s}\vert V\vert_{X^s},
\end{equation}
with $C=C(M_{\rm CH},h^{-1},\epsilon\big\vert U_1\big\vert_{L^\infty})$.
\medskip

The contributions of $(II)$ and $(III)$ follow from Lemma~\ref{lemst}. Indeed, recalling that $V\equiv U_1-U_2$,~\eqref{eq:est-dif-S} yields immediately
\begin{equation}
\big\vert \ (III)\ \big\vert \ \leq\ C\epsilon \vert \partial_tU_2-F_2\vert_{X^s}\vert V\vert_{X^s}^2,
\end{equation}
and 
\eqref{eq:est-dif-SA} yields 
\begin{equation}
\big\vert \ (II)\ \big\vert \ \leq\ C\epsilon \vert \partial_x U_2\vert_{X^s}\vert V\vert_{X^s}^2,
\end{equation}
with $C=C=C(M_{\rm CH},h^{-1},\epsilon\big\vert U_1\big\vert_{X^s},\epsilon\big\vert U_2\big\vert_{X^s})$.
\medskip

Finally, we control $(IV)$ and $(V)$ using Lemma~\ref{lemS},~\eqref{eq:est-com-S-S}:
\[  \big\vert \big(\ \big[\Lambda^s,S^{-1}[U_1]\big] U
\ ,\ S[U_1]\Lambda^s V\ \big)\big\vert \leq C \vert U\vert_{H^{s-1}}\vert V\vert_{X^s},
\]
with $C=C(M_{\rm CH},h^{-1},\epsilon\big\vert \zeta_1\big\vert_{H^s})$.

Thus it remains to estimate $\vert U \vert_{H^{s-1}}$, where $U\equiv U_{(i)}\equiv 
(\Sigma_0[U_1]+\Sigma_1[U_1]-\Sigma_0[U_2]-\Sigma_1[U_2])\partial_xU_2$ or $U\equiv U_{(ii)}\equiv (S[U_1]-S[U_2])(\partial_tU_2-F_2)$.

We proceed as in Lemma~\ref{lemst}, helped by the fact that one is allowed lose one derivative in our estimates. Let $W\equiv \partial_tU_2-F_2\equiv (\zeta_w,w)^\top$. One has
\[U_{(ii)}\equiv (S[U_1]-S[U_2])W\equiv \begin{pmatrix}
\Big( \dsp\frac{Q_0(\epsilon\zeta_1)}{f(\epsilon\zeta_1)}-
\dsp\frac{Q_0(\epsilon\zeta_2)}{f(\epsilon\zeta_2)}\Big)\zeta_w\\
\Big(\mfT[\epsilon\zeta_1]-\mfT[\epsilon\zeta_2]\Big)w
\end{pmatrix}
\equiv \begin{pmatrix}
\zeta_{(ii)}\\ u_{(ii)}
\end{pmatrix}.\]
Recall that
\[ \mfT[\epsilon\zeta_1]w-\mfT[\epsilon\zeta_2]w \ = \ \epsilon \Big( \kappa_1(\zeta_1-\zeta_2)w-\mu\nu\partial_x\big\{\kappa_2(\zeta_1-\zeta_2)\partial_x w\big\}   \Big),
\]
so that one has straightforwardly
\[
\vert u_{(ii)}\vert_{H^{s-1}} \leq \epsilon  C(\kappa_1,\nu\kappa_2)\vert \zeta_1-\zeta_2\vert_{H^{s}}\vert w\vert_{H^{s+1}_{\mu}}.
\]
As for the first component, we apply~\eqref{est-dif-Q} and deduce
\[
\vert \zeta_{(ii)}\vert_{H^{s-1}}^2 = \big(\Lambda^{s-1}\zeta_{(ii)}, \Lambda^{s-1}\zeta_{(ii)} \big)\leq \epsilon  C \vert \zeta_1-\zeta_2\vert_{H^{s-1}}\vert \zeta_w\vert_{H^{s-1}}\vert \zeta_{(ii)}\vert_{H^{s-1}}.
\]
It follows 
\begin{equation}
\big\vert \ (V)\ \big\vert \ \leq\ C\epsilon \vert \partial_t U_2-F_2 \vert_{X^s}\vert V\vert_{X^s}^2,
\end{equation}
with $C=C(M_{\rm CH},h^{-1},\epsilon\big\vert U_1\big\vert_{X^s},\epsilon\big\vert U_2\big\vert_{X^s})$.

Now, recall $U_{(i)}\equiv 
(\Sigma_0[U_1]+\Sigma_1[U_1]-\Sigma_0[U_2]-\Sigma_1[U_2])\partial_xU_2$. Proceeding as above, one obtains
\[\big\vert \ U_{(i)}\ \big\vert_{H^{s-1}} \ \leq\ C \vert \partial_x U_2\vert_{X^{s}}\vert V\vert_{X^s},  \]
and thus
\begin{equation}
\big\vert \ (IV)\ \big\vert \ \leq\ C\epsilon \vert \partial_x U_2\vert_{X^{s}}\vert V\vert_{X^s}^2.
\end{equation}

Altogether, we proved (using Lemma~\ref{lemmaes}) that $F$, as defined in~\eqref{rhsstability}, satisfies
\begin{equation}\label{estF}
\vert \Big(\Lambda^s F,S[U_1]\Lambda^sV\Big)\vert\leq C_0 ( \vert \partial_x U_2\vert_{X^{s}}+ \vert \partial_t U_2-F_2 \vert_{X^{s}})\epsilon E^s(V)^2+C_0 E^s(V)E^s(F_1-F_2).
\end{equation}
with $C_0=C(M_{\rm CH},h^{-1},\epsilon\big\vert U_1\big\vert_{X^s},\epsilon\big\vert U_2\big\vert_{X^s})$. Notice also that by the system satisfied by $U_2$, one has (see detailed calculations in~\eqref{energyestimatederivativeinproof})
\begin{align*}
 \vert \partial_t U_2-F_2 \vert_{X^{s}} &\equiv - \vert  (A_0[U_2]+A_1[U_2])\partial_xU_2 \vert_{X^s} \\
 &\leq C( \vert  U_2 \vert_{X^{s+1}} )
\end{align*}

We can now conclude by Lemma~\ref{Lem:Hs}, and the proof is complete.
\end{proof}

\subsection{Well-posedness result}\label{ssec:WP}
In this section, we prove the well-posedness of the Cauchy problem for our new Green-Naghdi model~\eqref{eq:Serre2} in the sense of Hadamard. Existence and uniqueness of the solution is given by Theorem~\ref{thbi1}, while the stability with respect to the initial data is provided by Theorem~\ref{th:stabilityWP}. These results correspond to Theorems~\ref{thbi1mr} and~\ref{th:stabilityWPmr}, as stated in section~\ref{mr}.
\begin{Theorem}[Existence and uniqueness]\label{thbi1}
Let $\p= (\mu,\epsilon,\delta,\gamma,\bo) \in \P_{\rm CH}$ and  $s\geq s_0+1$, $s_0>1/2$, and assume
	 $U_0=(\zeta_0,v_0)^\top\in X^s$ satisfies~\eqref{CondDepth},\eqref{CondEllipticity}. Then there exists
         a maximal time $T_{\max}>0$, uniformly bounded from below with respect to $\p\in \P_{\rm CH} $, such that the system of
         equations~\eqref{eq:Serre2} admits
	 a unique solution $U=(\zeta,v)^\top \in C^0([0,T_{\max});X^s)\cap C^1([0,T_{\max});X^{s-1})$ with the initial value $(\zeta_0,v_0)\id{t=0}=(\zeta_0,v_0)$,
         and preserving the conditions~\eqref{CondDepth},\eqref{CondEllipticity} (with different lower bounds) for any $t\in [0,T_{\max})$.
         
   Moreover, there exists $T^{-1},C_0,\lambda=  C(M_{\rm CH},h_{01}^{-1},h_{02}^{-1},\big\vert U_0\big\vert_{X^{s}})$, independent of $\p\in\P_{\rm CH}$, such that $T_{\max}\geq T/\epsilon$, and one has the energy estimates 
\[\forall\ 0\leq t\leq\frac{T}{\epsilon}\ , \qquad 
\big\vert U(t,\cdot)\big\vert_{X^{s}} \ + \ 
\big\vert \partial_t U(t,\cdot)\big\vert_{X^{s-1}}  \leq C_0 e^{\epsilon\lambda t}\ .
\]
If $T_{\max}<\infty$, one has
         \[ \vert U(t,\cdot)\vert_{X^{s}}\longrightarrow\infty\quad\hbox{as}\quad t\longrightarrow T_{\max},\]
         or one of the two conditions~\eqref{CondDepth},\eqref{CondEllipticity} ceases to be true as $ t\longrightarrow T_{\max}$.
\end{Theorem}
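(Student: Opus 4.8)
The plan is to build the solution by a Picard iteration anchored on the linear well-posedness result, Proposition~\ref{ESprop}, to propagate uniform bounds on the iterates over a time interval of length $T/\epsilon$ that is independent of $\p\in\P_{\rm CH}$ by means of the energy estimate of Lemma~\ref{Lem:Hs}, to pass to the limit, and finally to deduce uniqueness, the energy estimate and the blow-up alternative; the difference estimate of Proposition~\ref{prop:stability} is used both for uniqueness and for the continuity-in-time of the limit. An equally valid route is the mollified scheme sketched in the proof of Proposition~\ref{ESprop}, now applied to the nonlinear system, combined with the same uniform a priori bound.

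\emph{Iteration and uniform bounds.} Set $U^0\equiv U_0$ (constant in time); given $U^n\in X^s_T$ satisfying~\eqref{CondDepth} and~\eqref{CondEllipticity} uniformly on $[0,T/\epsilon]$, let $U^{n+1}$ be the unique solution of the linear system $\partial_t U^{n+1}+A_0[U^n]\partial_x U^{n+1}+A_1[U^n]\partial_x U^{n+1}=0$, $U^{n+1}\id{t=0}=U_0$, furnished by Proposition~\ref{ESprop}. Fix $R>2c_0\vert U_0\vert_{X^s}$ with $c_0$ the constant of Lemma~\ref{lemmaes}. The crux is to show, by induction, the existence of $T>0$ depending only on $M_{\rm CH},h_{01}^{-1},h_{02}^{-1}$ and $\vert U_0\vert_{X^s}$ — hence \emph{not} on $\p$ — such that $\Vert U^n\Vert_{X^s_T}\le R$ together with~\eqref{CondDepth},\eqref{CondEllipticity} holding with halved lower bounds on $[0,T/\epsilon]$ implies the same for $U^{n+1}$. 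The estimate~\eqref{energyestimate} (with $F\equiv0$) gives $E^s(U^{n+1})(t)\le e^{\epsilon\lambda_T t}E^s(U_0)\le e^{\lambda_T T}E^s(U_0)$ for $t\le T/\epsilon$, where $\lambda_T$ depends on $\Vert U^n\Vert_{X^s_T}\le R$; Lemma~\ref{lemmaes} converts this into $\vert U^{n+1}(t)\vert_{X^s}\le c_0^2e^{\lambda_T T}\vert U_0\vert_{X^s}\le R$ provided $T$ is small. As for~\eqref{CondDepth},\eqref{CondEllipticity}, estimate~\eqref{energyestimatederivativeinproof} yields $\vert\partial_t U^{n+1}(t)\vert_{X^{s-1}}\le C(R)$, so $\zeta^{n+1}(t)=\zeta_0+\int_0^t\partial_t\zeta^{n+1}$ obeys $\epsilon\,\vert\zeta^{n+1}(t)-\zeta_0\vert_{L^\infty}\le \epsilon\,C(R)\,t\le C(R)\,T$ on $[0,T/\epsilon]$ — the factors $\epsilon$ and $1/\epsilon$ cancelling — so for $T$ small Lemma~\ref{conditionh} preserves~\eqref{CondDepth},\eqref{CondEllipticity} with the degraded bounds. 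This closes the induction.

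\emph{Convergence, regularity and uniqueness.} The difference $U^{n+1}-U^n$ solves a linear system with coefficients $A[U^n]$ and right-hand side $-(A[U^n]-A[U^{n-1}])\partial_x U^n$, which is controlled in terms of $\vert U^n-U^{n-1}\vert_{X^s}$ exactly as in Lemma~\ref{lemst} and Proposition~\ref{prop:stability}; applying the $X^0$ estimate~\eqref{energyestimateL2} one obtains, for $T$ small, a genuine contraction in $C^0([0,T/\epsilon];X^0)$, so $(U^n)$ converges there. Interpolating with the uniform $X^s$ bound and using weak-$\ast$ compactness identifies a limit $U$, which solves~\eqref{eq:Serre2} with data $U_0$ and lies in $L^\infty([0,T/\epsilon];X^s)$; applying Lemma~\ref{Lem:Hs} directly to $U$ itself (taking $\underline U=U$, $F\equiv0$) gives $E^s(U)(t)\le e^{\epsilon\lambda t}E^s(U_0)$, while a Bona-Smith argument — regularising $U_0$ and using the continuous dependence of the solution on the data and coefficients furnished by Proposition~\ref{prop:stability} — upgrades regularity to $U\in C^0([0,T/\epsilon];X^s)\cap C^1([0,T/\epsilon];X^{s-1})$, the bound on $\vert\partial_t U\vert_{X^{s-1}}$ coming from the equation as in~\eqref{energyestimatederivativeinproof}. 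Uniqueness then follows from Proposition~\ref{prop:stability} with $F_1=F_2=0$ and equal data (a short density argument absorbing the apparent need for $X^{s+1}$ regularity on one of the two solutions).

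\emph{Maximal time and blow-up alternative.} A standard continuation procedure extends $U$ to a maximal interval $[0,T_{\max})$, with $T_{\max}\ge T/\epsilon$ by the construction above; if $T_{\max}<\infty$ while $\vert U(t)\vert_{X^s}$ stays bounded and~\eqref{CondDepth},\eqref{CondEllipticity} remain valid up to $T_{\max}$, one restarts the Cauchy problem from a time close to $T_{\max}$ and extends the solution past $T_{\max}$, contradicting maximality; this yields the stated alternative, and the long-time energy estimate is the one displayed above. The main obstacle throughout is the bookkeeping that makes $T$ uniform over $\P_{\rm CH}$: every constant must be traced back to $M_{\rm CH}$, the depth and ellipticity lower bounds and $\vert U_0\vert_{X^s}$, and never to $\mu$ or $\epsilon$ individually — which rests on the Camassa-Holm smallness $\epsilon\le M\sqrt\mu$ being invoked precisely where it appears in Lemmata~\ref{Lem:L2} and~\ref{Lem:Hs} (the $\epsilon^2Q_1$ term) and, crucially, on the $\epsilon\times(\text{growth rate})$ structure of the energy estimates, without which~\eqref{CondDepth},\eqref{CondEllipticity} could not be propagated over the time scale $T/\epsilon$.
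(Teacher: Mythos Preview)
Your proposal is correct and follows essentially the same route as the paper: a Picard iteration $U^{n+1}$ solving the linear system with coefficients $A[U^n]$, uniform $X^s$ bounds and propagation of~\eqref{CondDepth},\eqref{CondEllipticity} on $[0,T/\epsilon]$ via the linear energy estimate, convergence of the differences in $X^0$ followed by interpolation, and uniqueness and the blow-up alternative from Proposition~\ref{prop:stability} and standard continuation. The only cosmetic differences are that the paper obtains convergence through the factorial bound $E^0(V^n)(t)\le (\epsilon C t)^n/n!$ rather than a contraction, and handles the $C^0_t X^s$ regularity by appealing to the mollified scheme of Proposition~\ref{ESprop} rather than an explicit Bona--Smith argument; your remark on the density workaround for the $X^{s+1}$ hypothesis in Proposition~\ref{prop:stability} is a point the paper leaves equally implicit.
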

\begin{proof}
We construct a sequence of approximate solution $(U^n=(\zeta^n,u^n))_{n\ge 0}$ through the
induction relation
\begin{equation}\label{approximatesys}
         U^0=U_0,\quad\mbox{ and }\quad
	\forall n\in\NN, \quad
	\left\lbrace
	\begin{array}{l}
	\dsp\partial_t U^{n+1}+A[U^n]\partial_x U^{n+1}
	=0;
        \\
	\dsp U^{n+1}_{\vert_{t=0}}=U_0.
	\end{array}\right.
\end{equation}
By Proposition~\ref{ESprop}, there is a unique solution $U^{n+1}\in C^0([0,T_{n+1}/\epsilon];X^s)\cap C^1([0,T_{n+1}/\epsilon];X^{s-1})$ to
\eqref{approximatesys} if $U^{n}\in C^0([0,T_n/\epsilon];X^s)\cap C^1([0,T_n/\epsilon];X^{s-1})\subset X^s_T$, and satisfies~\eqref{CondDepth},\eqref{CondEllipticity}.
\medskip

\noindent {\em Existence and uniform control of the sequence $U^n.$}
The existence of $T'>0$ such that the sequence $U^n$ is  uniquely defined, controlled in $X^s_{T'}$, and satisfies~\eqref{CondDepth},\eqref{CondEllipticity}, uniformly with respect to $n\in\NN$, is obtained by induction, as follows.

Proposition~\ref{ESprop} yields
\begin{equation}\label{energyestimates-n} E^s(U^{n+1})(t) \leq  e^{\epsilon \lambda_n t}E^s(U_0)\quad \text{ and }\quad \big\vert \partial_t U^{n+1}(t,\cdot)\big\vert_{X^{s-1}} \leq C_n E^s( U^{n+1})\leq C_n e^{\epsilon \lambda_n t}E^s(U_0),\end{equation}
with  $\lambda_n,C_n=C(M_{\rm CH},h_{01,n}^{-1},h_{02,n}^{-1},\big\Vert U^n\big\Vert_{X^s_{T_n}})$, provided $U^n\in X^s_{T_n} $ satisfies~\eqref{CondDepth},\eqref{CondEllipticity} with positive constants $h_{01,n},h_{02,n}$ on $[0,T_n/\epsilon]$. 

Since $U^{n}=(\zeta^n,v^n)^\top$ satisfies~\eqref{approximatesys}, one has
\begin{equation*}
\partial_t \zeta^{n+1}=-\epsilon f'(\epsilon\zeta^{n})v^{n}\partial_x\zeta^{n+1}-f(\epsilon\zeta^{n})\partial_x v^{n+1}.
\end{equation*}
Using continuous Sobolev embedding of $H^{s-1}$ into $L^\infty$ ($s-1>1/2$), and since $\zeta^n$ satisfies~\eqref{CondDepth},\eqref{CondEllipticity} with $h_{01,n},h_{02,n}$ on $[0,T_n/\epsilon]$, one deduces that
 \begin{equation}\label{zeta_t}
\vert \partial_t \zeta^{n+1} \vert_{L^{\infty}}
\leq C(M_{\rm CH},h_{01,n}^{-1},h_{02,n}^{-1}\big)\big\Vert U^n\big\Vert_{X^s_{T_n}}.
\end{equation}
Let $g^{n+1}=a+b\epsilon\zeta^{n+1}
$, where $(a,b)\in \{(1,-1),(\frac{1}{\delta},1), (1,\kappa_1),(1,\kappa_2)\}$.
One has
\[
g^{n+1}=g\id{t=0}+b\epsilon\int_{0}^{t}\partial_t\zeta^{n+1},
\]
so that~\eqref{zeta_t} yields
\[\vert g^{n+1}-g^{n+1}\id{t=0}\vert_{L^\infty}\leq \epsilon t\ \times\ b   C(M_{\rm CH},h_{01,n}^{-1},h_{02,n}^{-1}\big)\big\Vert U^n\big\Vert_{X^s_{T_n}}.\]
Now, one has $g^{n+1}\id{t=0}\equiv g^0\id{t=0} \ge \min(h_{01,0}^{-1},h_{02,0}^{-1})>0$, independent of $n$. Thus one can easily prove (by induction) that it is possible to chose $T'>0$ such that $g^{n+1}>\alpha/2$ holds on $[0,T'/\epsilon]$, and the above energy estimates hold uniformly with respect to $n$, on $[0,T'/\epsilon]$.

More precisely, one has that
$\zeta^n$ satisfies~\eqref{CondDepth},\eqref{CondEllipticity} with $h_{01}/2,h_{02}/2>0$ 
  and the estimates
\begin{equation}\label{energyestimates-no-n} 
E^s(U^{n})(t) \leq  e^{\epsilon \lambda t}E^s(U_0)\quad \text{ and }\quad \big\vert \partial_t U^n(t,\cdot)\big\vert_{X^{s-1}} \leq C_0 e^{\epsilon \lambda t}E^s(U_0),\end{equation}
on $[0,T'/\epsilon]$, where $\lambda,C_0=C(M_{\rm CH},h_{01}^{-1},h_{02}^{-1},\big\vert U_0\big\vert_{X^s})$ are uniform with respect to $n$.
 
\bigskip\noindent {\em Convergence of $U^n$ towards a solution of the nonlinear problem.} We now conclude by proving that the sequence $U^n$ converges towards a solution of our nonlinear problem. In order to do so, let us define $V^n\equiv  U^{n+1}-U^{n}$. $V_n$ satisfies the system
\begin{equation}\label{systemstability-n}
	\left\lbrace
	\begin{array}{l}
	\dsp\partial_t V^n+A_0[U^n]\partial_x V+A_1[U^n]\partial_x V=F^n \\
	\dsp V\id{t=0}\equiv 0,
	\end{array}\right.
\end{equation}
where, 
\begin{equation}\label{rhsstability-n}
F^n\equiv - S^{-1}[U^n](\Sigma_0[U^n]+\Sigma_1[U^n]-\Sigma_0[U^{n-1}]-\Sigma_1[U^{n-1}])
\partial_xU^n-S^{-1}[U^n](S[U^n]-S[U^{n-1}])\partial_tU^n.
\end{equation}
We wish to use the energy estimate of Lemma~\ref{Lem:L2} to the linear system~\eqref{systemstability}. Thus one needs to control accordingly the right hand side $F^n$.

More precisely, we want to estimate
\begin{align*}
\big(\ F^n\ ,\ S[U^n] V^n\ \big) &= -\big(\ (\Sigma_0[U^n]+\Sigma_1[U^n]-\Sigma_0[U^{n-1}]-\Sigma_1[U^{n-1}])
\partial_xU^n \ ,\  V^n\ \big) \\
&\quad -\big(\ (S[U^n]-S[U^{n-1}])\partial_tU^n\ ,\  V^n\ \big) .
\end{align*}
 Proceding as in Lemma~\ref{lemst}, one can easily deduce
\[\big\vert \big(\ F^n\ ,\ S[U^n] V^n\ \big) \big\vert \leq \epsilon C \big\vert U^n-U^{n-1}\big\vert_{X^0}\big\vert V^n\big\vert_{X^0}(\big\vert \partial_xU^n \big\vert_{X^0}+\big\vert \partial_tU^n\big\vert_{X^0}),\]
with $C=C(M_{\rm CH},h_{01}^{-1},h_{02}^{-1},\epsilon\big\vert U^{n-1}\big\vert_{W^{1,\infty}},\epsilon\big\vert U^n\big\vert_{W^{1,\infty}})$.

Using the uniform control of $U^n,\partial_t U^n$ in~\eqref{energyestimates-no-n}, one deduces
\[\big\vert \big(\ F^n\ ,\ S[U^n] V^n\ \big) \big\vert \leq \epsilon C_0 \big\vert V^{n-1}\big\vert_{X^0}\big\vert V^n\big\vert_{X^0},\]
with $C_0$ independent of $n$. Thus Lemma~\ref{Lem:L2} yields
\[	\forall t\in [0,\frac{T'}{\epsilon}],\qquad
	E^0(V^n)(t)\leq  \epsilon C_0 \int^{t}_{0} e^{\epsilon\lambda( t-t')}	E^0(V^{n-1})(t') dt'\leq \epsilon C \int^{t}_{0}  E^0(V^{n-1})(t') dt',
\]
where $C$ is independent of $n$ and $t$. Hence
\[	\forall t\in [0,\frac{T'}{\epsilon}],\qquad
	E^0(V^n)(t)\leq \frac{\epsilon^n t^n C^n}{n!} \sup_{t'\in[0,T'/\epsilon]}  E^0(V^0)(t') ,
\]
and 
the sequence $U^n\equiv U^0+\sum V^n$ converges in $C^0([0,T'/\epsilon];X^0)$.
\medskip

\noindent {\em Completion of the proof.}
Since $U^n$ converges in $C^0([0,T'/\epsilon];X^0)$ and is uniformly bounded in $X^s$,
standard interpolation arguments imply that the sequence $U^n$ converges in $C^0([0,T'/\epsilon];X^{s'})$, for any $s'<s$. Similarly, $\partial_t U^n$ converges in $C^0([0,T'/\epsilon];X^{s'-1})$. Choosing $s'-1>1/2$, one may pass to the limit all the terms in system~\eqref{approximatesys}, and one deduces that the limit $U$ is a solution of system~\eqref{eq:Serre2}. Passing to the limit the properties of $U^n$, and in particular the energy estimates~\eqref{energyestimates-no-n}, one deduces $U\in L^\infty([0,T/\epsilon];X^s),\partial_t U\in L^\infty ([0,T/\epsilon];X^{s-1})$, and $U$ satisfies the energy estimate of the Theorem (using Lemma~\ref{lemmaes}), and preserves the conditions~\eqref{CondDepth},\eqref{CondEllipticity} for any $t\in [0,\frac{T'}{\epsilon}]$,  independently of $\p\in\P_{\rm CH} $.

Finally, as in Proposition~\ref{ESprop} (with $\underline{U}\equiv U$), one has $U\in C^0([0,T'/\epsilon];X^s)\cap C^1([0,T'/\epsilon];X^{s-1})$. The uniqueness of $U$ follows from the stability result of Proposition~\ref{prop:stability} with $F_1\equiv F_2\equiv 0$, and one can therefore define a maximal time of existence of the solution, that we denote $T_{\max}$.
$T_{\max}$ is bounded from below by $T'/\epsilon>0$, and the behavior of the solution as $t\to T_{\max}$ if $T_{\max}<\infty$ follows from standard continuation arguments.
\end{proof}

\begin{Theorem}[Stability] \label{th:stabilityWP}
Let $ (\mu,\epsilon,\delta,\gamma,\bo) \in \P_{\rm CH}$ and $s\geq s_0+1$ with $s_0>1/2$, and assume
	 $U_{0,1}=(\zeta_{0,1},v_{0,1})^\top\in X^{s}$ and $U_{0,2}=(\zeta_{0,2},v_{0,2})^\top\in X^{s+1}$ satisfies~\eqref{CondDepth},\eqref{CondEllipticity}. Denote $U_j$ the solution to~\eqref{eq:Serre2} with $U_j\id{t=0}=U_{0,j}$.Then there exists $T^{-1},\lambda,C_0= C(M_{\rm CH},h_{01}^{-1},h_{02}^{-1},\big\vert U_{0,1}\big\vert_{X^s},\vert U_{0,2}\vert_{X^{s+1}})$ such that
 \begin{equation*}
	\forall t\in [0,\frac{T}{\epsilon}],\qquad
	\big\vert (U_1-U_2)(t,\cdot)\big\vert_{X^s} \leq C_0 e^{\epsilon\lambda t} \big\vert U_{1,0}-U_{2,0}\big\vert_{X^s}.
\end{equation*}
\end{Theorem}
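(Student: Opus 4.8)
The plan is to deduce Theorem~\ref{th:stabilityWP} directly from the \emph{a priori} difference estimate already established in Proposition~\ref{prop:stability}. The point is that the two solutions $U_1$ and $U_2$ produced by Theorem~\ref{thbi1} are, by construction, solutions of the quasilinear system $\partial_t U_j + A_0[U_j]\partial_x U_j + A_1[U_j]\partial_x U_j = 0$, that is, they fit the hypotheses of Proposition~\ref{prop:stability} with $F_1 \equiv F_2 \equiv 0$. So the statement is essentially a corollary; the only genuine work is checking that the regularity bookkeeping is consistent.

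First I would fix $(\mu,\epsilon,\delta,\gamma,\bo)\in\P_{\rm CH}$ and $s\geq s_0+1$. Applying Theorem~\ref{thbi1} to the initial datum $U_{0,1}\in X^s$ gives a time $T_1^{-1} = C(M_{\rm CH},h_{01}^{-1},h_{02}^{-1},\vert U_{0,1}\vert_{X^s})$ and a solution $U_1\in C^0([0,T_1/\epsilon];X^s)\cap C^1([0,T_1/\epsilon];X^{s-1})$ preserving~\eqref{CondDepth},\eqref{CondEllipticity} (with halved lower bounds, say). Applying Theorem~\ref{thbi1} to $U_{0,2}\in X^{s+1}\subset X^s$ — and noting that $X^{s+1}$-regularity of the datum propagates, so that we get a solution $U_2\in C^0([0,T_2/\epsilon];X^{s+1})\cap C^1([0,T_2/\epsilon];X^{s})$ with $T_2^{-1} = C(M_{\rm CH},h_{01}^{-1},h_{02}^{-1},\vert U_{0,2}\vert_{X^{s+1}})$ — and then setting $T^{-1} = \max(T_1^{-1},T_2^{-1})$, both solutions are defined on the common interval $[0,T/\epsilon]$, belong to $X^s_T$ (indeed $U_2$ to $X^{s+1}_T$), and $\zeta_1$ satisfies~\eqref{CondDepth},\eqref{CondEllipticity} there with constants $h_{01},h_{02}$ depending only on the data. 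Thus every hypothesis of Proposition~\ref{prop:stability} is met.

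Next I would invoke Proposition~\ref{prop:stability} with $F_1 = F_2 = 0$. It yields, for all $t\in[0,T/\epsilon]$,
\[ E^s(U_1-U_2)(t) \ \leq\ e^{\epsilon\lambda_T t}\, E^s\big(U_{0,1}-U_{0,2}\big), \]
with $\lambda_T = C_0\times C(\vert U_2\vert_{L^\infty([0,T/\epsilon);X^{s+1})})$ and $C_0 = C(M_{\rm CH},h_{01}^{-1},h_{02}^{-1},\epsilon\vert U_1\vert_{X^s},\epsilon\vert U_2\vert_{X^s})$; by the energy bounds of Theorem~\ref{thbi1}, the quantities $\vert U_1\vert_{X^s}$, $\vert U_2\vert_{X^s}$ and $\vert U_2\vert_{X^{s+1}}$ are all controlled, over $[0,T/\epsilon]$, by $C(M_{\rm CH},h_{01}^{-1},h_{02}^{-1},\vert U_{0,1}\vert_{X^s},\vert U_{0,2}\vert_{X^{s+1}})$, so $\lambda$ has the claimed dependence. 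Finally I would convert the $E^s$-estimate into the $X^s$-norm statement using the equivalence $\frac1{c_0}E^s(U)\leq \vert U\vert_{X^s}\leq c_0 E^s(U)$ from Lemma~\ref{lemmaes} (legitimate since $\zeta_1$ satisfies~\eqref{CondDepth},\eqref{CondEllipticity}), absorbing the factor $c_0^2$ into the constant $C_0$ of the statement. This completes the proof.

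I do not expect a serious obstacle here: the heavy lifting — the symmetrizer structure, the commutator estimates through $\mfT^{-1}$, and especially the delicate control of the difference of the operators $S[U_1]-S[U_2]$ and $S[U_1]A[U_1]-S[U_2]A[U_2]$ in Lemma~\ref{lemst} — has all been done in Proposition~\ref{prop:stability}. The only point requiring a line of care is that the estimate genuinely needs $U_2$ one derivative smoother than $U_1$ (the terms $(II)$, $(IV)$, $(V)$ in the proof of Proposition~\ref{prop:stability} cost a derivative on $U_2$, via $\partial_x U_2$ and $\partial_t U_2$), which is exactly why the hypothesis asks $U_{0,2}\in X^{s+1}$ while only $U_{0,1}\in X^s$; one should therefore make sure to apply Theorem~\ref{thbi1} at the level $X^{s+1}$ for the second solution so that $\vert U_2\vert_{L^\infty([0,T/\epsilon);X^{s+1})}$ is finite and bounded by the data. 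The asymmetry of the conclusion (difference measured in $X^s$, stability constant depending on the stronger $X^{s+1}$-norm of $U_{0,2}$) is the expected and standard price for a quasilinear well-posedness result of this type.
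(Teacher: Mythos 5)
Your proposal is correct and follows essentially the same route as the paper: existence and uniform control of $U_1$ in $X^s$ and $U_2$ in $X^{s+1}$ from Theorem~\ref{thbi1}, then Proposition~\ref{prop:stability} with $F_1=F_2=0$, and Lemma~\ref{lemmaes} to pass from $E^s$ to the $X^s$-norm. Your additional remark about why $U_{0,2}\in X^{s+1}$ is needed (the loss of one derivative on $U_2$ in the difference estimate) is exactly the right point of care.
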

\begin{proof}
The existence and uniform control of the solution $U_1$ (resp. $U_2$) in $L^\infty([0,T/\epsilon];X^s)$ (resp. $L^\infty([0,T/\epsilon];X^{s+1})$) is provided by Theorem~\ref{thbi1}.
The proposition is then a direct consequence of the {\em a priori} estimate of Proposition~\ref{prop:stability}, with $F_1=F_2=0$, and Lemma~\ref{lemmaes}.
\end{proof}

\section{Full justification of asymptotic models}\label{sec:fulljustification}
We conclude our work by explaining how the results of the previous sections allow to fully justify our system (and other consistent ones) as asymptotic model for the propagation of internal waves. A model is said to be {\em fully justified} (using the terminology of~\cite{Lannes}) if the Cauchy problem for both the full Euler system and the asymptotic model is well-posed for a given class of initial data, and over the relevant time scale; and if the solutions with corresponding initial data remain close.
As described in~\cite[Section~6.3]{Lannes13}, the full justification of  a system (S) follows from:
\begin{itemize}
\item (Consistency) One proves that families of solutions to the full Euler system, existing and controlled over the relevant time scale satisfies the system (S) up to a small residual. 
\item (Existence) One proves that families of solutions to the full Euler system as above do exist. This difficult step is ensured by Theorem~5 (or Theorem~6 for large times) in~\cite{Lannes13}, provided that a stability criterion is satisfied (see details therein).
\item (Convergence) One proves that the solutions of the full Euler system, and the ones of the system (S), with corresponding initial data, remain close over the relevant time scale. 
\end{itemize}
 The last step supposes that the Cauchy problem for the model is well-posed, and is a consequence of the stability of its solutions with respect to perturbations of the equation, so that the first two steps of the procedure (consistency and existence) yield the conclusion (convergence), and therefore the full justification of the model. Let us refer to Theorem~7 in~\cite{Lannes13} for the application of such procedure for the full justification of the so-called {\em shallow-water/shallow-water} asymptotic model, which corresponds to our system, when withdrawing $\O(\mu)$ terms.
\medskip

The consistency of our model has been given in Theorem~\ref{th:ConsSerreVarmr}. The well-posedness of the Cauchy problem is stated in Theorem~\ref{thbi1mr}, and the stability results is a consequence of Proposition~\ref{prop:stability}. Thus we have all the ingredients for the full justification of our model, stated in Theorem~\ref{th:convergencemr}, and that we recall below.
\begin{Theorem}[Convergence] \label{th:convergence}
Let $\p\equiv (\mu,\epsilon,\delta,\gamma,\bo)\in \P_{\rm CH}$ (see~\eqref{eqn:defRegimeCHmr}) and $s\geq s_0+1$ with $s_0>1/2$, and let $U^0\equiv(\zeta^0,\psi^0)^\top\in H^{s+N}(\RR)^2$, $N$ sufficiently large, satisfy the hypotheses of Theorem 5 in~\cite{Lannes13}
as well as~\eqref{CondDepth},\eqref{CondEllipticity}. Then there exists $C,T>0$, independent of $\p$, such that
\begin{itemize}
\item There exists a unique solution $U\equiv (\zeta,\psi)^\top$ to the full Euler system~\eqref{eqn:EulerCompletAdim}, defined on $[0,T]$ and with initial data $(\zeta^0,\psi^0)^\top$ (provided by Theorem 5 in~\cite{Lannes13});
\item There exists a unique solution $U_a\equiv (\zeta_a,v_a)^\top$ to our new model~\eqref{eq:Serre2mr}, defined on $[0,T]$ and with initial data $(\zeta^0,v^0)^\top$ (provided by Theorem~\ref{thbi1mr});
\item With $\bar v\equiv\bar v[\zeta,\psi]$, defined as in~\eqref{eqn:defbarvmr}, one has for any $t\in[0,T]$,
\[ \big\vert (\zeta,\bar v)-(\zeta_a,v_a) \big\vert_{L^\infty([0,t];X^s)}\leq C \ \mu^2\ t.\]
\end{itemize}
\end{Theorem}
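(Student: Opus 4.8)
\textbf{Proof plan for Theorem~\ref{th:convergence}.}
The strategy is the classical three-step scheme (consistency $+$ existence $+$ stability $\Rightarrow$ convergence) described at the start of this section. First I would invoke Theorem~5 of~\cite{Lannes13}: under the stated hypotheses on $U^0=(\zeta^0,\psi^0)$ (in particular the stability criterion), it provides $T>0$, independent of $\p\in\P_{\rm CH}$, and a unique solution $U=(\zeta,\psi)$ of the full Euler system~\eqref{eqn:EulerCompletAdim} on $[0,T]$, controlled in $H^{s+N}$ uniformly in $\p$. Then I would define $\bar v\equiv\bar v[\zeta,\psi]$ via~\eqref{eqn:defbarvmr}, and note that the uniform $H^{s+N}$ bound on $(\zeta,\psi)$ (with $N$ large enough to absorb the $\tfrac{9}{2},\tfrac{7}{2},\tfrac{11}{2},\tfrac{9}{2}$ loss in Theorem~\ref{th:ConsSerreVar}) ensures that $(\zeta,\bar v)$ satisfies the hypotheses of Theorem~\ref{th:ConsSerreVar}, as well as~\eqref{CondDepth},\eqref{CondEllipticity} (possibly with slightly degraded constants but still uniform in $\p$). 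Applying Theorem~\ref{th:ConsSerreVar} gives that $(\zeta,\bar v)$ solves~\eqref{eq:Serre2} up to a residual $R$ with $\Vert R\Vert_{L^\infty([0,T];H^s)}\leq (\mu^2+\mu\epsilon^2)\,C_1$; and since $\p\in\P_{\rm CH}$ forces $\epsilon\leq M\sqrt\mu$, one has $\mu\epsilon^2\leq M^2\mu^2$, so $\Vert R\Vert_{L^\infty([0,T];H^s)}\leq C\,\mu^2$, with $C$ uniform in $\p$.

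Next I would produce the approximate solution. Theorem~\ref{thbi1} (equivalently Theorem~\ref{thbi1mr}), applied with initial datum $(\zeta^0,v^0)$ where $v^0\equiv\bar v[\zeta^0,\psi^0]$, yields a unique solution $U_a=(\zeta_a,v_a)\in C^0([0,T_{\max});X^s)\cap C^1([0,T_{\max});X^{s-1})$ of~\eqref{eq:Serre2} with $T_{\max}\geq T_0/\epsilon$ for some $T_0$ uniform in $\p$. Since $\epsilon\leq\epsilon_{\max}<\infty$, this in particular gives existence on $[0,T]$ after possibly shrinking $T$ (still uniformly in $\p$), together with a uniform $X^s$-bound $\vert U_a(t,\cdot)\vert_{X^s}\leq C_0$. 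The initial data coincide: $(\zeta_a,v_a)\id{t=0}=(\zeta^0,v^0)=(\zeta,\bar v)\id{t=0}$.

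The final step is the comparison, which rests on Proposition~\ref{prop:stability} (the key {\em a priori} estimate for the difference of two solutions of the nonlinear system with possibly different right-hand sides). Here I would take $U_1=(\zeta_a,v_a)$ with $F_1\equiv 0$, and $U_2=(\zeta,\bar v)$ with $F_2\equiv R$ (the consistency residual), so both solve the system in the form $\partial_t U_j+A_0[U_j]\partial_x U_j+A_1[U_j]\partial_x U_j=F_j$. One checks that $U_2=(\zeta,\bar v)$ lies in $L^\infty([0,T];X^{s+1})$ (again by the large $H^{s+N}$ control coming from~\cite{Lannes13} and the definition of $\bar v$), and that both satisfy~\eqref{CondDepth},\eqref{CondEllipticity} uniformly in $\p$, so the hypotheses of Proposition~\ref{prop:stability} are met. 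Since $U_{1}\id{t=0}=U_{2}\id{t=0}$, the proposition gives, for $t\in[0,T]$,
\[
E^s\big((\zeta_a,v_a)-(\zeta,\bar v)\big)(t)\ \leq\ C_0\int_0^t e^{\epsilon\lambda_T(t-t')}\,E^s(R)(t')\,dt'\ \leq\ C_0\, e^{\epsilon\lambda_T T}\, t\, \sup_{[0,T]}\vert R\vert_{X^s}.
\]
Using $\epsilon\leq\epsilon_{\max}$ so that $e^{\epsilon\lambda_T T}$ is bounded uniformly in $\p$, Lemma~\ref{lemmaes} to pass from $E^s$ to the $X^s$-norm, and the bound $\vert R\vert_{X^s}\lesssim\vert R\vert_{H^s\times H^s}\leq C\mu^2$ established above, one concludes $\vert(\zeta,\bar v)-(\zeta_a,v_a)\vert_{L^\infty([0,t];X^s)}\leq C\mu^2 t$, with $C,T$ independent of $\p$, which is the claimed estimate.

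\textbf{Main obstacle.} The routine parts are the invocations of Theorems~\ref{thbi1} and~\ref{th:ConsSerreVar} and of Proposition~\ref{prop:stability}; the genuinely delicate point is the bookkeeping needed to match regularities across the three inputs. One must verify that $(\zeta,\bar v)$ — built from the Euler solution via~\eqref{eqn:defbarvmr} — has enough Sobolev regularity simultaneously for (i) the consistency Theorem~\ref{th:ConsSerreVar} (which costs several half-derivatives on $\zeta,\partial_x\psi$ and their time derivatives) and (ii) the role of the more regular solution $U_2\in X^{s+1}$ in Proposition~\ref{prop:stability}; this is what forces "$N$ sufficiently large" and requires control of $\partial_t\bar v$, $\partial_t\zeta$ in the appropriate norms (obtainable, as in the proof of Theorem~\ref{th:ConsSerreVar}, from the identity $\partial_t\zeta=-\partial_x(\tfrac{h_1h_2}{h_1+\gamma h_2}\bar v)$ and a bound following~\cite[Prop. 2.12]{Duchene10}). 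The other subtlety is purely in the constants: one must confirm at each step that the uniformity in $\p\in\P_{\rm CH}$ is preserved, in particular that $\epsilon\leq M\sqrt\mu$ is what converts the $O(\mu^2+\mu\epsilon^2)$ consistency error into a clean $O(\mu^2)$, and that the time of existence $T$ can be taken independent of $\p$ despite the $T_{\max}\gtrsim 1/\epsilon$ scaling in Theorem~\ref{thbi1}.
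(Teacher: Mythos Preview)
Your approach is essentially the one in the paper: existence for Euler via~\cite{Lannes13}, existence for the model via Theorem~\ref{thbi1}, consistency via Theorem~\ref{th:ConsSerreVar} (using $\epsilon\le M\sqrt\mu$ to turn $\mu^2+\mu\epsilon^2$ into $O(\mu^2)$), and then Proposition~\ref{prop:stability} to compare. The paper assigns the roles the other way around --- $U_1=(\zeta,\bar v)$ with nonzero forcing, $U_2=U_a$ with $F_2=0$ --- but your swap is equally valid since both candidates for $U_2$ can be placed in $X^{s+1}$ by taking $N$ large.

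There is one genuine slip to fix. The consistency residual $R=(r_1,r_2)$ is the remainder for system~\eqref{eq:Serre2} in its original form, where the second equation carries the operator $\mfT[\epsilon\zeta]$. The condensed form $\partial_t U+A_0[U]\partial_x U+A_1[U]\partial_x U=F$ used in Proposition~\ref{prop:stability} is obtained \emph{after} applying $\mfT^{-1}$ to that second equation, so the correct forcing is $F=(r_1,\mfT[\epsilon\zeta]^{-1}r_2)^\top$, not $F=R$. This matters because your inequality ``$\vert R\vert_{X^s}\lesssim\vert R\vert_{H^s\times H^s}$'' is false in general (the $X^s$-norm is the stronger one on the second component). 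With the corrected forcing, Lemma~\ref{proprim} gives $\big\vert \mfT^{-1}r_2\big\vert_{H^{s+1}_\mu}\le c_s\,\vert r_2\vert_{H^s}$, hence $E^s(F)\lesssim\vert R\vert_{H^s\times H^s}\le C\mu^2$, and the rest of your argument goes through unchanged. The paper records exactly this by writing $F_1=(r_1,\mfT[\epsilon\zeta]^{-1}r_2)^\top$.
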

\begin{proof}
As stated above, the existence of $U$ is provided by Theorem 5 in~\cite{Lannes13}, and the existence of $U_a$ is given by our Theorem~\ref{thbi1mr} (we choose $T$ as the minimum of the existence time of both solutions; it is bounded from below, independently of $\p\in\P_{\rm CH}$). If $N$ is large enough, then $U\equiv(\zeta,\psi)^\top$ satisfies the assumptions of our consistency result, Theorem~\ref{th:ConsSerreVarmr}, and therefore $(\zeta,\bar v)^\top$  solves~\eqref{eq:Serre2mr} up to a residual $R=(r_1,r_2)^\top$, with $\big\vert R\big\vert_{L^\infty([0,T];H^s)}\leq C(M_{\rm CH},h_{01}^{-1},\big\vert U^0\big\vert_{H^{s+N}})(\mu^2+\mu\epsilon^2)$. The result follows from the stability Proposition~\ref{prop:stability}, with $F_1=(r_1,\mfT[\epsilon\zeta]^{-1}r_2)^\top$ and $F_2=0$.
\end{proof}

\bigskip

In addition to allowing the complete, full justification of our model, the results of the previous sections allow to rigorously justify any lower order, well-posed and consistent model. We quickly show how to apply the procedure to such models, with the example of the so-called Constantin-Lannes decoupled approximation, introduced by one of the authors in~\cite{Duchene13}, and that we recall below.
\begin{Definition}[Constantin-Lannes decoupled approximation model] \label{def:CL}
Let $\zeta^0,v^0$ be given scalar functions, and set parameters $(\mu,\epsilon,\delta,\gamma,\bo)\in\P_{\rm CH}$, as defined in~\eqref{eqn:defRegimeCHmr}, and $(\lambda,\theta)\in\RR^2$. The Constantin-Lannes decoupled approximation is then
\[ U_{\text{CL}}\ \equiv \ \Big(v_+(t,x-t)+v_-(t,x+t),(\gamma+\delta)\big(v_+(t,x-t)-v_-(t,x+t)\big)\Big),\]
 where ${v_\pm}\id{t=0} \ = \ \frac12(\zeta^0\pm\frac{v^0}{\gamma+\delta})\id{t=0}$ and $v_\pm=(1\pm\mu\lambda \partial_x^2)^{-1}v_\pm^\lambda$ with $v_\pm^\lambda$ satisfying
\begin{multline} \label{eq:CL}
 \partial_t v_\pm^\lambda \ \pm \ \epsilon\alpha_1 v_\pm^\lambda\partial_x v_\pm^\lambda \ \pm \ \epsilon^2 \alpha_2 (v_\pm^\lambda)^2\partial_x v_\pm^\lambda\ \pm \ \epsilon^3 \alpha_3^{\theta,\lambda}(v_\pm^\lambda)^3\partial_x v_\pm \\
\pm\ \mu\nu^{\theta,\lambda}_x \partial_x^3 v_\pm^\lambda \ - \ \mu\nu^{\theta,\lambda}_t \partial_x^2\partial_t v_\pm^\lambda \ \pm \ \mu\epsilon\partial_x\big(\kappa_1^{\theta,\lambda} v_\pm^\lambda\partial_x^2 v_\pm^\lambda+\kappa_2^{\theta}(\partial_x v_\pm^\lambda)^2\big) \ = \ 0, 
\end{multline}
with parameters defined as follows:
\begin{equation}\label{eqn:parameters}
\begin{array}{c} \displaystyle
\alpha_1=\frac32\frac{\delta^2-\gamma}{\gamma+\delta},\quad \alpha_2=-3\frac{\gamma\delta(\delta+1)^2}{(\gamma+\delta)^2},\quad \alpha_3=-5\frac{\delta^2(\delta+1)^2\gamma(1-\gamma)}{(\gamma+\delta)^3},\\ \displaystyle
 \nu_t^{\theta,\lambda}\equiv \frac\theta6\frac{1+\gamma\delta}{\delta(\delta+\gamma)} + \lambda , \qquad \nu_x^{\theta,\lambda}\equiv \frac{1-\theta}6\frac{1+\gamma\delta}{\delta(\delta+\gamma)}-\frac1{2\bo} -\lambda,\\ \displaystyle
 \kappa_1^{\theta,\lambda} \equiv \frac{(1+\gamma\delta)(\delta^2-\gamma)}{3\delta(\gamma+\delta)^2}(1+\frac{1-\theta}4)-\frac{(1-\gamma)}{6(\gamma+\delta)}+\lambda\frac32\frac{\delta^2-\gamma}{\gamma+\delta} , \\ \displaystyle
 \kappa_2^\theta \equiv \frac{(1+\gamma\delta)(\delta^2-\gamma)}{3\delta(\gamma+\delta)^2}(1+\frac{1-\theta}4)-\frac{(1-\gamma)}{12(\gamma+\delta)}.\end{array}
\end{equation}
\end{Definition}

\begin{Remark}
The scalar equation~\eqref{eq:CL} has been introduced as a model for gravity surface wave (one layer of homogeneous fluid) in~\cite{Johnson02}, and its rigorous justification has been developed in~\cite{ConstantinLannes09}. The justification is to be understood in the following sense: if one chooses carefully the initial data (so as to focus on only one direction of propagation), then the Constantin-Lannes equation provides a good (unidirectional) approximation of the flow. The reason why a bi-directional, decoupled approximation as above has not been developed in the water-wave setting is that lower order scalar equations offer the same accuracy. The specificity of internal waves lies in the existence of a critical ratio ($\delta^2=\gamma$) for which quadratic nonlinearities vanish, thus calling for higher order decoupled models, especially in the Camassa-Holm regime.
\end{Remark}

The Cauchy problem for the equation~\eqref{eq:CL} has been proved to be locally well-posed in~\cite{ConstantinLannes09}, and a property of persistence of spatial decay at infinity has been proved in~\cite{Duchene13}. We state these results below.

\begin{Proposition}[Well-posedness and persistence]\label{prop:WPI}
Let $u^0 \in H^{s+1}$, with $s\geq s_0+1,\ s_0>1/2$. Let the parameters in~\eqref{eqn:parameters} be such that $\mu,\epsilon,\nu^{\theta,\lambda}_t>0$, and define $m>0$ such that
\begin{equation}\label{condparam}
\nu^{\theta,\lambda}_t+(\nu^{\theta,\lambda}_t)^{-1}+\mu+\epsilon +|\alpha_1|+|\alpha_2|+|\alpha_3|+|\nu^{\theta,\lambda}_x|+|\kappa^{\theta,\lambda}_1|+|\kappa^{\theta}_2| \ \leq \ m.
\end{equation}
Then there exists $T=C\big(m,\big\vert u^0 \big\vert_{H^{s+1}_\mu}\big)$ and a unique $u \in C^0([0,T/\epsilon); H^{s+1}_\mu)\cap C^1([0,T/\epsilon);H^{s}_\mu)$ such that $u$ satisfies~\eqref{eq:CL} and initial data $u\id{t=0}=u^0$.

 Moreover, $u$ satisfies the following energy estimate for $0\leq t\leq T/\epsilon$:
\[
\big\Vert \partial_t u \big\Vert_{L^\infty([0,T/\epsilon);H^{s}_\mu)} \ + \ \big\Vert u \big\Vert_{L^\infty([0,T/\epsilon);H^{s+1}_\mu)} \ \leq \ C(m, \big\vert u^0 \big\vert_{H^{s+1}_\mu}) .
\]

\medskip

 Assume additionally that for fixed $n, k\in\NN$, the function $x^{j} {u^0}\in H^{s+\b s}$, with $0\leq j \leq n$ and $\b s=k+1+2(n-j)$. Then 
 there exists $T=C\big(m,n,k,\sum_{j=0}^n\big\vert x^j {u^0}\big\vert_{H^{s+k+1+2(n-j)}_\mu}\big)$
such that for $0\leq t\leq T\times\min(1/\epsilon,1/\mu)$, one has
\[ \big\Vert x^n\partial_k \partial_t {u}\big\Vert_{L^\infty([0,t);H^{s}_\mu)} + \big\Vert x^n \partial_k {u}\big\Vert_{L^\infty([0,t);H^{s+1}_\mu)} \ \leq \ C\Big( m,n,k,\sum_{j=0}^n\big\vert x^j {u^0}\big\vert_{H^{s+k+1+2(n-j)}_\mu}\Big) .\]
\end{Proposition}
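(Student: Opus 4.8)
\emph{Reduction and functional setting.} The plan is to view~\eqref{eq:CL} (for definiteness with the $+$ sign) as a quasilinear non-local evolution equation and to run the standard energy method. Setting $\mathcal{T}_\lambda:=1-\mu\nu_t^{\theta,\lambda}\partial_x^2$, a Fourier multiplier with symbol $1+\mu\nu_t^{\theta,\lambda}\xi^2$ which under the assumption $\nu_t^{\theta,\lambda}>0$ is uniformly elliptic with constants controlled by $m$, I would rewrite the equation as $\mathcal{T}_\lambda\partial_t u=\Phi[u]$, where $\Phi[u]$ gathers all the spatial terms. The only structural facts about $\mathcal{T}_\lambda$ needed are the elementary multiplier bounds $\|\mathcal{T}_\lambda^{-1}\|_{H^s\to H^{s+1}_\mu}\leq C(m)$, $\|\mathcal{T}_\lambda^{-1}(\mu\partial_x^3\cdot)\|_{H^{s+1}_\mu\to H^s_\mu}\leq C(m)$, and $\|\mathcal{T}_\lambda^{-1}(\mu\partial_x\cdot)\|_{L^2\to L^2}\leq C(m)\sqrt{\mu}$ (all verified on the Fourier side using $1+\mu\nu_t^{\theta,\lambda}\xi^2\geq 2\sqrt{\mu\nu_t^{\theta,\lambda}}\,|\xi|$); in particular $\partial_t u=\mathcal{T}_\lambda^{-1}\Phi[u]$ is well-defined in $H^s_\mu$ as soon as $u\in H^{s+1}_\mu$, which will give the $C^1$ regularity in time.

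\emph{A priori estimate.} I would use the energy $E^s(u)^2:=\big(\mathcal{T}_\lambda\Lambda^s u,\Lambda^s u\big)=|\Lambda^s u|_{L^2}^2+\mu\nu_t^{\theta,\lambda}|\Lambda^s\partial_x u|_{L^2}^2$, equivalent to $|u|_{H^{s+1}_\mu}^2$ with constants depending only on $m$. As $\mathcal{T}_\lambda$ has constant coefficients it commutes with $\Lambda^s$ and $\partial_t$, so $\tfrac12\tfrac{d}{dt}E^s(u)^2=\big(\Lambda^s\Phi[u],\Lambda^s u\big)$, and everything reduces to estimating this term by term. The dispersive term $\mu\nu_x^{\theta,\lambda}\partial_x^3 u$ contributes exactly $0$ by skew-adjointness of $\partial_x^3$; the nonlinear transport terms $\epsilon^j\alpha_j u^j\partial_x u$ ($j=1,2,3$) are handled by commuting $\Lambda^s$ past the coefficient (Lemma~\ref{K-P}), integrating the leading piece by parts, and Lemma~\ref{Moser}; the term $\mu\epsilon\kappa_2^\theta\partial_x\big((\partial_x u)^2\big)$ gives $\epsilon|\partial_x u|_{L^\infty}|u|_{H^{s+1}_\mu}^2$; and the genuinely quasilinear term $\mu\epsilon\kappa_1^{\theta,\lambda}\partial_x(u\partial_x^2 u)$ is the delicate one — after one integration by parts it is $-\mu\epsilon\kappa_1^{\theta,\lambda}\big(\Lambda^s(u\partial_x^2 u),\Lambda^s\partial_x u\big)$, whose leading piece $-\mu\epsilon\kappa_1^{\theta,\lambda}\big(u\partial_x^2\Lambda^s u,\partial_x\Lambda^s u\big)$ is integrated by parts once more to give $\tfrac12\mu\epsilon\kappa_1^{\theta,\lambda}\big((\partial_x u)\,\partial_x\Lambda^s u,\partial_x\Lambda^s u\big)$, bounded by $\epsilon|\partial_x u|_{L^\infty}\,\mu|\partial_x\Lambda^s u|_{L^2}^2$, while the commutator remainder is controlled by Lemma~\ref{K-P} — the crucial point being that the $\mu$ in front compensates the $\mu^{-1}$ lost when bounding $|\partial_x u|_{H^s}^2$ by $|u|_{H^{s+1}_\mu}^2$. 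Using $|u|_{H^{s_0+1}}\leq|u|_{H^s}\leq|u|_{H^{s+1}_\mu}$ (valid since $s\geq s_0+1$), I obtain $\tfrac{d}{dt}E^s(u)^2\leq\epsilon\,C\big(m,E^s(u)\big)E^s(u)^2$, hence by a standard ODE comparison argument a lifespan $T/\epsilon$ with $T=C(m,|u^0|_{H^{s+1}_\mu})$ and the stated bound on $u$; the bound on $\partial_t u$ then follows from $\partial_t u=\mathcal{T}_\lambda^{-1}\Phi[u]$, the multiplier bounds, and Lemma~\ref{Moser}.

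\emph{Existence, uniqueness, and persistence.} Existence and the $C^0_tH^{s+1}_\mu\cap C^1_tH^s_\mu$ regularity I would obtain exactly as in the proof of Proposition~\ref{ESprop} — solving mollified equations by Cauchy–Lipschitz, passing to the limit with the uniform a priori estimate, and regularizing the data to upgrade $L^\infty_t$ to $C^0_t$ — while uniqueness follows from the analogue of the above estimate applied to the difference of two solutions, at the $H^s$ level (one may afford to lose one derivative there). For the persistence statement I would argue by a finite double induction, on the weight power $n$ and, for each $n$, on the number of derivatives $k$. Applying $x^n\partial_x^k$ to $\mathcal{T}_\lambda\partial_t u=\Phi[u]$ and using $[\mathcal{T}_\lambda,x^n]=-\mu\nu_t^{\theta,\lambda}\big(2n\,x^{n-1}\partial_x+n(n-1)x^{n-2}\big)$, one finds that $w_{n,k}:=x^n\partial_x^k u$ satisfies an equation of the same type, whose right-hand side is the transport term $\epsilon\alpha_1 u\,\partial_x w_{n,k}$ plus the skew-adjoint dispersive piece $\mu\nu_x^{\theta,\lambda}\partial_x^3 w_{n,k}$ (both treated as in the base case) plus source terms, each of the form (a power of $\mu$ or $\epsilon$)$\times x^{n'}\partial_x^{k'}u$ with $n'<n$, or $n'=n$ and $k'<k$; crucially, every source term that lowers the weight — those coming from the commutators with $\mathcal{T}_\lambda$, $\partial_x^2$ and $\partial_x^3$, and from weight-lowering inside the dispersive nonlinearities — carries a factor $\mu$, while those coming from the nonlinearities carry a factor $\epsilon$ (or $\mu\epsilon$), and the derivative order is raised by at most $2$ whenever the weight drops by $1$, which is precisely the budget encoded in the hypothesis $x^j u^0\in H^{s+k+1+2(n-j)}$. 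Running $E^s(w_{n,k})$ as above and inserting the inductively controlled lower quantities then yields $\tfrac{d}{dt}E^s(w_{n,k})\leq(\epsilon+\mu)\,C(\cdots)$, hence control on $[0,T\min(1/\epsilon,1/\mu)]$; the base case $n=0$ is the higher-regularity well-posedness ($u^0\in H^{s+k+1}_\mu\Rightarrow u\in L^\infty_tH^{s+k+1}_\mu$ on $[0,T/\epsilon]$), and the corresponding bounds on $\partial_t(x^n\partial_x^k u)$ are read off from the equation.

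\emph{Main obstacle.} The real difficulty lies not in any single inequality but in the bookkeeping of the persistence step: one must verify that, after commuting $x^n\partial_x^k$ through the non-local operator $\mathcal{T}_\lambda$ and through the third-order and quasilinear terms of~\eqref{eq:CL}, every term that degrades the weight/derivative budget indeed comes with a compensating power of $\mu$ or $\epsilon$. This is exactly what produces the $\min(1/\epsilon,1/\mu)$ time scale and what has to be organized into a clean induction; the remainder is the quasilinear machinery already developed for system~\eqref{eq:Serre2}.
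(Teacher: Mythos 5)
The paper does not actually prove Proposition~\ref{prop:WPI}: both halves are imported verbatim, the local well-posedness over times $O(1/\epsilon)$ from~\cite{ConstantinLannes09} and the persistence of spatial decay from~\cite{Duchene13}, and the text preceding the statement says exactly this. Your proposal is therefore a from-scratch reconstruction rather than an alternative to an argument in the paper — but it is a faithful one: the BBM-type energy $E^s(u)^2=\vert\Lambda^s u\vert_{L^2}^2+\mu\nu_t^{\theta,\lambda}\vert\Lambda^s\partial_x u\vert_{L^2}^2$, the cancellation of the $\mu\nu_x^{\theta,\lambda}\partial_x^3$ term by skew-adjointness, the double integration by parts on $\mu\epsilon\kappa_1^{\theta,\lambda}\partial_x(u\,\partial_x^2 u)$ with the factor $\mu$ absorbing the extra derivative into $\vert u\vert_{H^{s+1}_\mu}^2$, and the resulting $\epsilon$-prefactored Gronwall inequality are precisely the mechanism of the Constantin--Lannes proof, and your weighted double induction with the accounting ``one unit of weight costs two derivatives and comes with a factor $\mu$ or $\epsilon$'' is the mechanism of the persistence proof in~\cite{Duchene13}, including the correct emergence of the $T\min(1/\epsilon,1/\mu)$ time scale from $\frac{d}{dt}E^s(w_{n,k})\lesssim(\epsilon+\mu)C$. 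The multiplier bounds you state for $\mathcal{T}_\lambda^{-1}$ check out on the Fourier side with constants depending only on $m$ through $\nu_t^{\theta,\lambda}$ and $(\nu_t^{\theta,\lambda})^{-1}$. The one place where your sketch is genuinely schematic is the persistence bookkeeping itself (as you acknowledge): the commutator $[\mathcal{T}_\lambda,x^n]$ acts on $\partial_t u$, so the source terms involve weighted derivatives of $\partial_t u$, which must be re-expressed through $\partial_t u=\mathcal{T}_\lambda^{-1}\Phi[u]$ and folded back into the induction; this is exactly the part that is carried out in detail in~\cite{Duchene13} and that a complete write-up would need to spell out, but it does not affect the validity of the strategy.
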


The decoupled model of Definition~\ref{def:CL} is consistent with our new model~\eqref{eq:Serre2mr}, in the following way.
\begin{Proposition}[Consistency]\label{prop:decompositionI}
Let $\zeta^0,v^0\in H^{s+6}$, with $s\geq s_0+1,\ s_0> 1/2$. For $(\mu,\epsilon,\delta,\gamma,\bo)=\p\in\P_{\rm CH}$, we denote $U_{\text{CL}}^\p$ the unique solution of the CL approximation, as defined in Definition~\ref{def:CL}. For some given $M^\star_{s+6}>0$, sufficiently large, assume that there exists $T^\star>0$ and a family $(U_{\text{CL}}^\p)_{\p\in\P_{\rm CH}}$ such that
\[ T^\star \ = \ \max\big\{ \ T\geq0\quad \text{such that}\quad \big\Vert U_{\text{CL}}^\p \big\Vert_{L^\infty([0,T);H^{s+6})}+\big\Vert \partial_t U_{\text{CL}}^\p \big\Vert_{L^\infty([0,T);H^{s+5})} \ \leq \ M^\star_{s+6}\ \big\} \ .\]

Then there exists $U^c=U^c[U_{\text{CL}}^\p]$ such that $U\equiv U_{\text{CL}}^\p+U^c$ satisfies the Green-Naghdi system~\eqref{eq:Serre2mr} up to a remainder $R$ bounded for $t\in [0,T^\star]$ by
\[ \big\Vert R\big\Vert_{L^{\infty}([0,t];H^s)} \ \leq \ C\ \max(\epsilon^2(\delta^2-\gamma)^2,\mu^2)\ (1+\sqrt t) ,\]
with $C=C(M^\star_{s+6},M_{\rm CH},|\lambda|,|\theta|)$, and the corrector term $U^c$ is estimated as
\[ \big\Vert U^c \big\Vert_{L^\infty([0,t];H^{s})}+\big\Vert \partial_t U^c \big\Vert_{L^\infty([0,t];H^{s})} \leq C\ \max(\epsilon(\delta^2-\gamma),\mu) \min( t, \sqrt t) .\]
\smallskip

Additionally, if there exists $\alpha>1/2$, $M^\sharp_{s+6},\ T^\sharp>0$ such that 
\[ \sum_{k=0}^6\big\Vert (1+x^2)^\alpha \partial_x^k U_{\text{CL}}^\p \big\Vert_{L^\infty([0,T);H^{s})} +\sum_{k=0}^5\big\Vert (1+x^2)^\alpha \partial_x^k\partial_t U_{\text{CL}}^\p \big\Vert_{L^\infty([0,T);H^{s})} \ \leq \ M^\sharp_{s+6}\ ,\]
then the remainder term, $R$ is uniformly bounded for $t\in[0,T^\sharp]$:
\[ \big\Vert R\big\Vert_{L^{\infty}([0,t];H^s)} \ \leq \  C\ \max(\epsilon^2(\delta^2-\gamma)^2,\mu^2),\]
with $C=C(M^\sharp_{s+6},M_{\rm CH},|\lambda|,|\theta|)$ and $U^c$ is uniformly estimated as
\[ \big\Vert U^c \big\Vert_{L^\infty([0,t];H^{s})}+\big\Vert \partial_t U^c \big\Vert_{L^\infty([0,t];H^{s})} \leq C\ \max(\epsilon(\delta^2-\gamma),\mu) \min( t, 1). \]
\end{Proposition}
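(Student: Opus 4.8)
The plan is to run the formal derivation of Definition~\ref{def:CL} backwards: treat the decoupled profiles $U_{\text{CL}}^\p$ as an approximate solution of~\eqref{eq:Serre2mr} and construct a corrector $U^c$ that absorbs exactly the terms discarded during that derivation. First I would diagonalize the principal part of~\eqref{eq:Serre2mr}: setting $w_\pm \equiv \tfrac12\big(\zeta_a \pm \tfrac{v_a}{\gamma+\delta}\big)$, the system takes the form $\partial_t w_\pm \pm \partial_x w_\pm = \epsilon\,\mathcal N_\pm + \mu\,\mathcal D_\pm$, where $\mathcal N_\pm$ collects the quadratic and cubic nonlinear terms — whose leading quadratic coefficient is $f'(0)=(\delta^2-\gamma)/(\gamma+\delta)^2$ — and $\mathcal D_\pm$ the dispersive terms. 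The algebra underlying~\cite{Duchene13} (which I would revisit and render quantitative) fixes the constants in~\eqref{eqn:parameters} so that, after the near-identity substitution $v_\pm=(1\pm\mu\lambda\partial_x^2)^{-1}v_\pm^\lambda$ and the leading-order replacement of $\partial_t$ by $\mp\partial_x$ in the dispersive terms, the ansatz $U_{\text{CL}}^\p$ cancels every \emph{self-interaction} contribution of $\mathcal N_\pm$ (this is why cubic nonlinearities are retained: the first uncancelled self-term is then $\O(\epsilon^4)+\O(\mu\epsilon^2)=\O(\mu^2)$ in the Camassa--Holm regime $\epsilon\le M\sqrt\mu$) and every \emph{diagonal} dispersive contribution of $\mathcal D_\pm$ up to $\O(\mu^2)$. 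Consequently $U_{\text{CL}}^\p$ solves~\eqref{eq:Serre2mr} up to a residual $R_0=R_0^\flat+R_0^\sharp$, where $R_0^\flat$ already has the size claimed for $R$, namely $\O(\max(\epsilon^2(\delta^2-\gamma)^2,\mu^2))$, while $R_0^\sharp$ consists of the \emph{coupling} terms — products of the counter-propagating profiles $v_+(t,x-t)$ and $v_-(t,x+t)$ and their derivatives — carrying a prefactor $\O(\max(\epsilon(\delta^2-\gamma),\mu))$.

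The next step is to define $U^c$ as the solution, with zero initial data, of the linearization of~\eqref{eq:Serre2mr} about the rest state with forcing $-R_0^\sharp$; this is a linear system whose principal part is the transport pair $\partial_t\pm\partial_x$ (plus skew-symmetric dispersive corrections composed with $\mfT^{-1}$), so the $X^s$ energy estimates of Section~\ref{la} — or a direct Duhamel representation — give $\Vert U^c(t)\Vert_{X^s}\lesssim \int_0^t\Vert R_0^\sharp(s)\Vert_{H^s}\,ds$. The crude bound $\Vert R_0^\sharp(s)\Vert_{H^s}\lesssim \max(\epsilon(\delta^2-\gamma),\mu)$ yields the trivial growth linear in $t$; the improvement to $\sqrt t$ comes from the bilinear $L^2$-in-time orthogonality estimate: after the change of variables $(y,z)=(x-s,x+s)$ one has $\int_0^\infty\Vert g_+(\cdot-s)\,g_-(\cdot+s)\Vert_{L^2}^2\,ds\lesssim \Vert g_+\Vert_{L^2}^2\Vert g_-\Vert_{L^\infty}^2$ (and similarly with derivatives distributed, all controlled by the $H^{s+6}$-bound $M^\star_{s+6}$), so Cauchy--Schwarz in time gives $\int_0^t\Vert R_0^\sharp\Vert_{H^s}\lesssim\sqrt t$. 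Under the additional weighted hypothesis, the profiles $(1+x^2)^\alpha v_\pm$ are asymptotically disjointly concentrated as $s\to\infty$, so $\int_0^\infty\Vert R_0^\sharp(s)\Vert_{H^s}\,ds<\infty$, upgrading the bound on $U^c$ to $\min(t,1)$; here Proposition~\ref{prop:WPI} (persistence of spatial decay) is precisely what supplies the required weighted control of $U_{\text{CL}}^\p$.

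It then remains to substitute $U\equiv U_{\text{CL}}^\p+U^c$ into~\eqref{eq:Serre2mr} and collect the true residual $R$. By construction the terms linear in $U^c$ cancel $R_0^\sharp$; what is left is $R_0^\flat$ together with the interaction terms produced by $U^c$ — quadratic and cubic in $U^c$, and products of $U^c$ with $U_{\text{CL}}^\p$. Using the product and commutator estimates of Lemma~\ref{Moser} and the mapping properties of $\mfT^{-1}$ from Lemma~\ref{proprim}, together with the a priori bounds $M^\star_{s+6}$ on $U_{\text{CL}}^\p$ and the corrector estimate above, these interaction terms are bounded by $\max(\epsilon(\delta^2-\gamma),\mu)\,\Vert U^c\Vert_{X^s}+\Vert U^c\Vert_{X^s}^2\lesssim \max(\epsilon^2(\delta^2-\gamma)^2,\mu^2)\,(1+\sqrt t)$, which is the announced bound; in the weighted case the factor $(1+\sqrt t)$ is replaced by a constant after using $\Vert U^c\Vert\lesssim\max(\cdot)\min(t,1)$ and the boundedness of the time interval. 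The extra regularity $H^{s+6}$ on the data and the assumed $H^{s+5}$ control of $\partial_t U_{\text{CL}}^\p$ are consumed in forming $R_0$ (one differentiates~\eqref{eq:CL} a few times and applies $\mfT$) and in the corrector construction.

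The main obstacle is twofold. First, the quantitative bookkeeping: one must verify that the residual of $U_{\text{CL}}^\p$ in~\eqref{eq:Serre2mr} genuinely splits into a harmless part of size $\O(\max(\epsilon^2(\delta^2-\gamma)^2,\mu^2))$ and a coupling part of size $\O(\max(\epsilon(\delta^2-\gamma),\mu))$, which hinges on the exact values of $\alpha_i,\nu^{\theta,\lambda}_{x,t},\kappa^{\theta,\lambda}_i$ in~\eqref{eqn:parameters} and on tracking the $(\delta^2-\gamma)$ factor through all quadratic terms. Second, the time-decay mechanism: the passage from the trivial linear-in-$t$ growth of the corrector to the $\sqrt t$ (respectively $\O(1)$) growth rests on the $L^2$-in-time orthogonality of counter-propagating wave packets (respectively on their asymptotic spatial separation); this is the only genuinely non-routine analytic input, and it is what dictates the time dependence appearing in the estimates on both $R$ and $U^c$.
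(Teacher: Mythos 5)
Your proposal is correct in substance, but it reconstructs from scratch an argument that the paper does not actually write out: the printed proof of Proposition~\ref{prop:decompositionI} is a short reduction to \cite[Proposition~1.12]{Duchene13}, which establishes the analogous consistency of the Constantin--Lannes ansatz with the original Green--Naghdi system~\eqref{eqn:GreenNaghdiMean}, followed by the observations that surface tension only modifies the parameter $\nu_x^{\theta,\lambda}$ and that consistency is transferred to the new system~\eqref{eq:Serre2mr} by repeating the manipulations of Theorem~\ref{th:ConsSerreVar}. What you describe --- splitting the residual of the ansatz into self-interaction terms of size $\O(\max(\epsilon^2(\delta^2-\gamma)^2,\mu^2))$ and counter-propagating coupling terms of size $\O(\max(\epsilon(\delta^2-\gamma),\mu))$, absorbing the latter into a corrector via Duhamel, and obtaining the $\min(t,\sqrt t)$ (resp.\ $\min(t,1)$) growth from the $L^2$-in-time orthogonality (resp.\ asymptotic spatial separation, supplied by Proposition~\ref{prop:WPI}) of the two wave packets --- is precisely the secular-growth analysis carried out in the cited reference, and your identification of $f'(0)=(\delta^2-\gamma)/(\gamma+\delta)^2$ as the source of the $(\delta^2-\gamma)$ factors is the right bookkeeping; one also checks, as you implicitly do, that $\epsilon(\delta^2-\gamma)\max(\epsilon(\delta^2-\gamma),\mu)\leq\max(\epsilon^2(\delta^2-\gamma)^2,\mu^2)$ so that the $U^c\times U_{\text{CL}}^\p$ interaction terms land in the announced remainder. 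The only organizational difference is that you work directly on~\eqref{eq:Serre2mr} rather than routing through~\eqref{eqn:GreenNaghdiMean}, which costs nothing. In short, your route is the proof hidden behind the citation, while the paper's version buys brevity at the price of leaving the quantitative splitting and the interaction estimates to the reference.
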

\begin{proof}
A similar statement concerning the consistency of the Constantin-Lannes decoupled approximation towards the original Green-Naghdi system~\eqref{eqn:GreenNaghdiMean} (in the shallow water regime~\eqref{eqn:defRegimeSWmr} and neglecting the surface tension contributions) has been given in~\cite[Proposition~1.12]{Duchene13}. Taking into account the surface tension contribution only requires a slight modification in one of the parameters (one has $\nu_{x}^{\theta,\lambda} =\frac{1-\theta}6\frac{1+\gamma\delta}{\delta(\delta+\gamma)}-\lambda$ in~\cite{Duchene13}), as shows tedious but straightforward calculations. Finally, using the boundedness of $U_{\text{CL}}^\p$ assumed in the statement, and following the proof of Theorem~\ref{th:ConsSerreVarmr}, one easily checks that the corresponding result holds with regards to our system~\eqref{eq:Serre2mr}, in the more stringent Camassa-Holm regime~\eqref{eqn:defRegimeCHmr}.
\end{proof}

\begin{Remark}
 Let us note that Proposition~\ref{prop:WPI} ensures that the above result is not empty, but on the contrary is valid for long times (provided that $\nu_t^{\theta,\lambda}>0$ and the initial data sufficiently smooth). More precisely, if the initial data $(\zeta^0,v^0)=U^0\in H^{s+7},s\geq s_0+1,\ s_0>1/2$, then for any $\p\in\P_{\rm CH}$ and $\lambda,\theta$ such that $\nu_t^{\theta,\lambda}>\nu_0>0$, then there exists $C_1,C_2$
  independent of $\p$, such that for $M^\star_{s+6}\geq C_1$, the decoupled approximate solution satisfies the uniform bound of Proposition~\ref{prop:decompositionI}, with $T^\star\geq C_2/\epsilon$.

Moreover, upon additional condition on the decaying in space of the initial data, there exists
$C_1,C_2$ such that for any $M^\sharp_{s+6}\geq C_1$, one has
$ T^\sharp\geq C_2/\max(\epsilon,\mu)$.
\end{Remark}
\medskip

The above properties, thanks to the well-posedness and stability of our Green-Naghdi type system proved in this work, are sufficient to fully justify the solutions of the Constantin-Lannes decoupled approximation as approximate solutions of our model, {\em and therefore as approximate solutions of the full Euler system~\eqref{eqn:EulerCompletAdim}}.

\begin{Proposition}[Convergence of the decoupled model]\label{cor:convergence} Let $\p\equiv (\mu,\epsilon,\delta,\gamma,\bo)\in \P_{\rm CH}$, and set $\lambda,\theta\in\RR$ such that~\eqref{condparam} holds. Let $U^0\equiv(\zeta^0,\psi^0)^\top\in H^{s+N}$, $N$ sufficiently large, satisfy the hypotheses of Theorem 5 in~\cite{Lannes13}
as well as~\eqref{CondDepth},\eqref{CondEllipticity}. Denote by ${U}^\p\equiv(\zeta,\psi)^\top$ the solution of the full Euler system~\eqref{eqn:EulerCompletAdim} and by  $U_{\text{CL}}^\p$ the decoupled approximation defined in Definition~\ref{def:CL}, with initial data $U^0$. Then there exists $C,T>0$, independent of $\p\in \P_{\rm CH}$, such that for any $0\leq t\leq T$, one has
\[\big\Vert U_{\text{CL}}^\p-\overline{U}^\p\big\Vert_{L^\infty([0,t];H^s)} \ \leq \ C\ \big( \varepsilon_0 \ \min(t,t^{1/2}) (1 \ + \ \varepsilon_0 t ) \big),\]
with $\varepsilon_0 \ \equiv \ \max(\epsilon(\delta^2-\gamma),\mu)$, and $\overline{U}^\p\equiv(\zeta, \bar v[\zeta,\psi])^\top$ where $\bar v[\zeta,\psi]$ is defined as in~\eqref{eqn:defbarvmr}.

Moreover, if the initial data is sufficiently localized in space, as in the second part of  Proposition~\ref{prop:decompositionI}, then one has
\[ \big\Vert U_{\text{CL}}^\p-\overline{U}^\p\big\Vert_{L^\infty([0,t];H^s)} \ \leq \ C\ \big(\varepsilon_0 \ \min(t,1) (1 \ + \ \varepsilon_0 t ) \big).\]
\end{Proposition}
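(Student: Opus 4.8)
The plan is to obtain Proposition~\ref{cor:convergence} as a direct assembly of three ingredients already established: the consistency of the full Euler system with our model (Theorem~\ref{th:ConsSerreVarmr}), the consistency of the Constantin--Lannes decoupled approximation with the same model (Proposition~\ref{prop:decompositionI}), and the {\em a priori} stability estimate for~\eqref{eq:Serre2mr} (Proposition~\ref{prop:stability}). First I would fix a common time of existence: Theorem~5 of~\cite{Lannes13} yields the full Euler solution $U^\p=(\zeta,\psi)^\top$ on some $[0,T]$ with $T^{-1}$ bounded uniformly in $\p\in\P_{\rm CH}$; Proposition~\ref{prop:WPI}, together with the Remark following Proposition~\ref{prop:decompositionI}, yields $U_{\text{CL}}^\p$ on an interval at least this long; and Theorem~\ref{thbi1mr} yields the solution of~\eqref{eq:Serre2mr} itself on $[0,T'/\epsilon]$. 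I would take $T$ to be the minimum of these (still bounded below uniformly in $\p$), shrunk if necessary so that the reference states involved below satisfy~\eqref{CondDepth},\eqref{CondEllipticity} on $[0,T]$ uniformly in $\p$ --- which is automatic from the fact that they hold at $t=0$ and the uniform control of $\zeta$ and $\zeta_a$, exactly as in the proofs of Theorems~\ref{thbi1mr} and~\ref{th:convergence}.

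Next I would apply the two consistency results, taking $N$ large enough. Theorem~\ref{th:ConsSerreVarmr} gives that $\overline{U}^\p\equiv(\zeta,\bar v[\zeta,\psi])^\top$ solves~\eqref{eq:Serre2mr} up to a residual $R_{\rm E}$ with $\|R_{\rm E}\|_{L^\infty([0,T];H^s)}\lesssim \mu^2+\mu\epsilon^2\lesssim \varepsilon_0^2$ (using $\epsilon\le M\sqrt\mu$ in the Camassa--Holm regime and $\mu\le\varepsilon_0$), the first equation of~\eqref{eq:Serre2mr} holding exactly for $\overline{U}^\p$ by the definition of $\bar v$ in~\eqref{eqn:defbarvmr}. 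Applying Proposition~\ref{prop:decompositionI} at regularity level $s+1$ (hence the requirement $U^0\in H^{s+N}$ with $N$ suitably large), I get a corrector $U^c$ with $U^c|_{t=0}=0$, controlled by $\|U^c\|_{L^\infty([0,t];H^s)}+\|\partial_tU^c\|_{L^\infty([0,t];H^s)}\lesssim \varepsilon_0\min(t,\sqrt t)$, such that $U_{\text{CL}}^\p+U^c$ is bounded in $X^{s+1}$ uniformly in $\p$ and solves~\eqref{eq:Serre2mr} up to a residual $R_{\text{CL}}$ with $\|R_{\text{CL}}\|_{L^\infty([0,t];H^s)}\lesssim \varepsilon_0^2(1+\sqrt t)$. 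A key elementary point is that the two approximate solutions of~\eqref{eq:Serre2mr} share the same initial data: since $U^c|_{t=0}=0$ and, by Definition~\ref{def:CL}, $U_{\text{CL}}^\p|_{t=0}=(\zeta^0,\bar v[\zeta^0,\psi^0])^\top$, one has $(U_{\text{CL}}^\p+U^c)|_{t=0}=\overline{U}^\p|_{t=0}$.

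I would then invoke Proposition~\ref{prop:stability} with $U_1=\overline{U}^\p$, $U_2=U_{\text{CL}}^\p+U^c$, and right-hand sides $F_1,F_2$ obtained by applying $\mfT[\epsilon\zeta]^{-1}$ to the second component of the corresponding residual (as in the proof of Theorem~\ref{th:convergence}); by Lemma~\ref{proprim} one has $E^s(F_j)\lesssim\|R_j\|_{H^s}$, hence $E^s(F_1-F_2)\lesssim\varepsilon_0^2(1+\sqrt t)$. Since the initial data match and $t\le T$ keeps $e^{\epsilon\lambda_T t}$ bounded by a constant depending only on $T$ and $M_{\rm CH}$ ($\lambda_T$ being controlled via the uniform $X^{s+1}$-bound on $U_2$), the estimate of Proposition~\ref{prop:stability} gives
\[ \big\Vert \overline{U}^\p-(U_{\text{CL}}^\p+U^c)\big\Vert_{L^\infty([0,t];H^s)}\ \lesssim\ \int_0^t \varepsilon_0^2\,(1+\sqrt{t'})\,dt'\ \lesssim\ \varepsilon_0^2\,t\,(1+\sqrt t). \]
The triangle inequality with the bound on $U^c$ then yields
\[ \big\Vert U_{\text{CL}}^\p-\overline{U}^\p\big\Vert_{L^\infty([0,t];H^s)}\ \lesssim\ \varepsilon_0\min(t,\sqrt t)\ +\ \varepsilon_0^2\,t\,(1+\sqrt t), \]
and a routine reorganization --- using $\varepsilon_0\le C(M_{\rm CH})$ and $t\le T$, so that $\varepsilon_0^2 t\lesssim\varepsilon_0\min(t,\sqrt t)$ and $\varepsilon_0^2 t^{3/2}\lesssim\varepsilon_0\min(t,\sqrt t)(1+\varepsilon_0 t)$ --- absorbs the second term, giving the claimed $C\,\varepsilon_0\min(t,\sqrt t)(1+\varepsilon_0 t)$. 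The localized case is handled identically, replacing the bounds from Proposition~\ref{prop:decompositionI} by their localized counterparts: then $\|R_{\text{CL}}\|_{L^\infty([0,t];H^s)}\lesssim\varepsilon_0^2$ and $\|U^c\|_{L^\infty([0,t];H^s)}\lesssim\varepsilon_0\min(t,1)$, so that $\|U_{\text{CL}}^\p-\overline{U}^\p\|_{L^\infty([0,t];H^s)}\lesssim\varepsilon_0^2 t+\varepsilon_0\min(t,1)\lesssim\varepsilon_0\min(t,1)(1+\varepsilon_0 t)$.

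The argument is thus essentially bookkeeping. What I expect to require the most care --- and what I would view as the main (if modest) obstacle --- is the uniformity in $\p\in\P_{\rm CH}$: one must check that the constants $C_0,\lambda_T$ produced by Proposition~\ref{prop:stability} are uniform, which rests on the uniform-in-$\p$ control of $\overline{U}^\p$ in $X^s$ (from the uniform estimates of~\cite{Lannes13} and the bounds on $\bar v$, $\partial_t\bar v$ used in the proof of Theorem~\ref{th:ConsSerreVar}) and of $U_{\text{CL}}^\p+U^c$ in $X^{s+1}$ (from Propositions~\ref{prop:WPI} and~\ref{prop:decompositionI}), together with the uniform preservation of~\eqref{CondDepth},\eqref{CondEllipticity}; and on the compatibility of the initial-data conventions of Definition~\ref{def:CL} and of $\bar v[\cdot,\cdot]$. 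Both are of the type already dealt with in the proofs of Theorems~\ref{thbi1mr} and~\ref{th:convergence}, so no genuinely new difficulty should arise.
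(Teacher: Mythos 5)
Your argument is correct and follows the paper's own strategy: consistency of both the Constantin--Lannes approximation (Proposition~\ref{prop:decompositionI}) and the full Euler solution (Theorem~\ref{th:ConsSerreVarmr}) with system~\eqref{eq:Serre2mr}, combined with the stability estimate of Proposition~\ref{prop:stability} and a triangle inequality absorbing the corrector $U^c$. The only (harmless) structural difference is that the paper inserts the exact solution $U_{\text{GN}}^\p$ of~\eqref{eq:Serre2mr} as an intermediary --- estimating $\Vert \overline U^\p - U_{\text{GN}}^\p\Vert$ via Theorem~\ref{th:convergencemr} and $\Vert (U_{\text{CL}}^\p+U^c)-U_{\text{GN}}^\p\Vert$ by a second stability argument --- whereas you apply Proposition~\ref{prop:stability} once, directly to the two approximate solutions $\overline U^\p$ and $U_{\text{CL}}^\p+U^c$ with two nonzero right-hand sides, which is equally valid and dispenses with invoking Theorem~\ref{thbi1mr} at this stage.
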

\begin{proof}
By Proposition~\ref{prop:decompositionI}, we know that for any $U_{\text{CL}}^\p$, there exists $U^c=U^c[U_{\text{CL}}^\p]$ such that $U\equiv U_{\text{CL}}^\p+U^c$ satisfies the Green-Naghdi system~\eqref{eq:Serre2mr}, up to a small remainder $R$. Controlling the difference $\big\Vert U-{U_{\text{GN}}}^\p\big\Vert$, where ${U_{\text{GN}}}^\p$ is the solution of the Green-Naghdi system~\eqref{eq:Serre2mr} with corresponding initial data, is done exactly as in Theorem~\ref{th:convergencemr}, and we omit the proof. The result is then a straightforward consequence of the triangular inequality, as $\big\Vert U^\p-{U_{\text{GN}}}^\p\big\Vert$ is estimated in Theorem~\ref{th:convergencemr}, and $\big\Vert U^c[U_{\text{CL}}^\p] \big\Vert$ in Proposition~\ref{prop:decompositionI}.
\end{proof}

\appendix

\section{Product and commutator estimates in Sobolev spaces.}
Let us recall here some product as well as commutator estimates in Sobolev spaces, used throughout the present paper.

\begin{Lemma}[product estimates]\label{Moser}~\\
Let $s\geq 0$, one has $\forall f,g\in H^s(\RR)\bigcap L^\infty(\RR),$ one has
\[ \big\vert \ f \ g\ \big\vert_{H^s} \ \lesssim \ \big\vert \ f \  \big\vert_{L^\infty} \big\vert \  g\  \big\vert_{H^s}+\big\vert \ f \  \big\vert_{H^s} \big\vert \  g\ \big\vert_{L^\infty}.
\]
If $s\geq s_0>1/2$, one deduces thanks to continuous embedding of Sobolev spaces,
\[ \big\vert \ f \ g\ \big\vert_{H^s} \ \lesssim\ \big\vert \ f \  \big\vert_{H^s} \big\vert \  g\ \big\vert_{H^s} .\]
More generally, for $s\geq 0$ and $s_0>1/2$, one has $\forall f\in H^s(\RR)\bigcap H^{s_0}(\RR),g\in H^s(\RR)$, 
\[
\big\vert \ f \ g\ \big\vert_{H^s}\lesssim \big\vert \ f \  \big\vert_{H^{s_{0}}}\big\vert \ g \  \big\vert_{H^s}+\left\langle \big\vert \ f \  \big\vert_{H^s} \big\vert \ g \  \big\vert_{H^{s_0}}\right\rangle_{s>{s_{0}}},
\]

Let $F\in C^\infty(\RR)$ such that $F(0)=0$. If $g\in H^s(\RR)\bigcap L^\infty(\RR)$ with $s\geq 0$, one has $F(g)\in H^s(\RR)$ and
\[ \big\vert\ F(g) \ \big\vert_{H^s} \ \leq \ C(\big\vert g\big\vert_{L^\infty},\big\vert F\big\vert_{C^\infty})\big\vert  \ g \ \big\vert_{H^s}.\]

Finally, we will use
\[ \big\vert \ f \ g\ \big\vert_{H^{s+1}_\mu} \ \leq \ C\ \big\vert \ f \  \big\vert_{H^{s+1}_\mu} \big\vert \  g \big\vert_{H^{s+1}_\mu} .\]
\end{Lemma}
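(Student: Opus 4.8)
The plan is to obtain all four inequalities from the Littlewood--Paley characterisation of $H^s(\RR)$ together with Bony's paraproduct decomposition; these are classical tools (see for instance the appendices of~\cite{Lannes}), so I would only carry out the bookkeeping. First I would fix a dyadic resolution of the identity $(\Delta_j)_{j\ge-1}$ with partial sums $S_j=\sum_{k<j}\Delta_k$ and write $fg=T_fg+T_gf+R(f,g)$, where $T_fg=\sum_jS_{j-1}f\,\Delta_jg$ and $R(f,g)=\sum_{|j-k|\le1}\Delta_jf\,\Delta_kg$. The two paraproducts are the easy ones: $S_{j-1}f\,\Delta_jg$ is spectrally localised near $2^j$ and $|S_{j-1}f|_{L^\infty}\lesssim|f|_{L^\infty}$, so $|T_fg|_{H^s}\lesssim|f|_{L^\infty}|g|_{H^s}$ for every real $s$, and symmetrically $|T_gf|_{H^s}\lesssim|g|_{L^\infty}|f|_{H^s}$. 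For the resonant piece one uses that $\Delta_jf\,\Delta_kg$ with $|j-k|\le1$ has spectrum in a ball of radius $\lesssim2^j$, so that $\Delta_n\Lambda^sR(f,g)$ only receives contributions from indices $j\gtrsim n$; Bernstein's inequality, Cauchy--Schwarz in $j$, and the summation of the geometric series $\sum_{j\gtrsim n}2^{(n-j)s}$ --- which converges \emph{precisely} because $s\ge0$ --- give $|R(f,g)|_{H^s}\lesssim|f|_{L^\infty}|g|_{H^s}+|f|_{H^s}|g|_{L^\infty}$ (for $s=0$ the estimate $|fg|_{L^2}\le|f|_{L^\infty}|g|_{L^2}$ is trivial). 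Summing the three contributions yields the first displayed inequality.

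The other product-type bounds are then corollaries. In dimension $1$ one has $H^{s_0}(\RR)\hookrightarrow L^\infty(\RR)$ whenever $s_0>1/2$; inserting $|f|_{L^\infty}\lesssim|f|_{H^{s_0}}$, $|g|_{L^\infty}\lesssim|g|_{H^{s_0}}$ into the basic estimate and using $|\cdot|_{H^{s_0}}\le|\cdot|_{H^s}$ for $s\ge s_0$ gives the algebra property $|fg|_{H^s}\lesssim|f|_{H^s}|g|_{H^s}$. For the tame estimate I would re-run the paraproduct count keeping track of which factor carries the $H^s$ norm and which the $H^{s_0}$ one: $T_fg$ always costs $|f|_{H^{s_0}}|g|_{H^s}$, the resonant term as well (the series now converges because $s+s_0>0$), while $T_gf$ costs $|g|_{L^\infty}|f|_{H^s}\lesssim|g|_{H^{s_0}}|f|_{H^s}$, a term dominated by the first one when $s\le s_0$ and producing otherwise exactly the bracketed contribution $\langle|f|_{H^s}|g|_{H^{s_0}}\rangle_{s>s_0}$. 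Finally, for the $\mu$-weighted estimate I would expand $|fg|_{H^{s+1}_\mu}^2=|fg|_{H^s}^2+\mu|fg|_{H^{s+1}}^2$, apply the basic product estimate at orders $s$ and $s+1$, and regroup so that each $\mu|\,\cdot\,|_{H^{s+1}}^2$ of one factor is paired with $|\,\cdot\,|_{H^s}^2$ of the other to reconstitute $|\,\cdot\,|_{H^{s+1}_\mu}^2$; the leftover $L^\infty$ factors are absorbed through $|\cdot|_{L^\infty}\lesssim|\cdot|_{H^{s_0}}\le|\cdot|_{H^{s+1}_\mu}$ (using $s\ge s_0>1/2$), and the resulting constant is independent of $\mu$.

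For the composition estimate $F(g)\in H^s$, with $F\in C^\infty$, $F(0)=0$ and $g\in H^s\cap L^\infty$, I would use the standard Meyer telescoping $F(g)=F(S_0g)+\sum_{j\ge0}\bigl(F(S_{j+1}g)-F(S_jg)\bigr)$ together with the Taylor formula $F(S_{j+1}g)-F(S_jg)=\Delta_jg\int_0^1F'\!\bigl(S_jg+t\Delta_jg\bigr)\,dt$. Each argument of $F'$ occurring here is bounded in $L^\infty$ by $|g|_{L^\infty}$, so the integral factor is bounded by $\sup_{|y|\le|g|_{L^\infty}}|F'(y)|$; the square-summability of $\{2^{js}|\Delta_jg|_{L^2}\}_j$, together with the near-$2^j$ spectral localisation of each summand, then yields $|F(g)|_{H^s}\le C(|g|_{L^\infty},|F|_{C^\infty})|g|_{H^s}$, the term $F(S_0g)$ being controlled by $|g|_{L^\infty}\lesssim|g|_{H^s}$ thanks to $F(0)=0$. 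For integer $s$ one could instead simply iterate the Leibniz rule and interpolate via Gagliardo--Nirenberg inequalities.

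In the whole argument there is no serious difficulty, but the one point deserving care --- and the only place where the hypothesis $s\ge0$ (respectively $s+s_0>0$ in the tame version) is genuinely used --- is the resonant term $R(f,g)$: it is the positivity of that exponent that makes the dyadic sum over the indices $j\gtrsim n$ converge. Everything else reduces to Bernstein's and Young's inequalities for series.
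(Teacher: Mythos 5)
The paper itself does not prove these estimates: the product, tame product and composition bounds are quoted from \cite{KatoPonce88,AlinhacGerard,Lannes06}, and only the $H^{s+1}_\mu$ bound is derived, by a one-line Leibniz argument combined with the algebra property at order $s$. Your self-contained Littlewood--Paley proof is therefore a genuinely different (and more informative) route, and three of its four parts are correct as sketched: the basic Moser estimate (with the $s=0$ case handled by H\"older, since the geometric series for the resonant term requires $s>0$), the composition estimate via Meyer's telescoping, and the $\mu$-weighted product bound (your pairing of each $\mu|\cdot|_{H^{s+1}}^2$ with the $L^\infty$ norm of the other factor, absorbed through $|\cdot|_{L^\infty}\lesssim|\cdot|_{H^{s_0}}\leq|\cdot|_{H^{s+1}_\mu}$, is a valid variant of the paper's Leibniz computation).

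There is, however, a genuine gap in your tame estimate in the range $0\leq s\leq s_0$. You bound the paraproduct $T_gf$ by $|g|_{L^\infty}|f|_{H^s}\lesssim|g|_{H^{s_0}}|f|_{H^s}$ and assert that this is dominated by $|f|_{H^{s_0}}|g|_{H^s}$ when $s\leq s_0$. This fails on two counts. First, in this part of the statement $g$ is only assumed to lie in $H^s$ with possibly $s<1/2$, so neither $|g|_{L^\infty}$ nor $|g|_{H^{s_0}}$ is at your disposal. Second, even granting $g\in H^{s_0}$, the claimed domination goes the wrong way for $g$: for $s<s_0$ one has $|f|_{H^s}\leq|f|_{H^{s_0}}$ but $|g|_{H^{s_0}}\geq|g|_{H^s}$. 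The correct treatment of $T_gf=\sum_j S_{j-1}g\,\Delta_jf$ in this range applies Bernstein to the low-frequency factor, $|S_{j-1}g|_{L^\infty}\lesssim 2^{j(1/2-s)}|g|_{H^s}$ for $s<1/2$, and absorbs the loss into the decay supplied by $f\in H^{s_0}$ with $s_0>1/2$: indeed $2^{js}|S_{j-1}g\,\Delta_jf|_{L^2}\lesssim|g|_{H^s}\,2^{j(1/2-s_0)}\bigl(2^{js_0}|\Delta_jf|_{L^2}\bigr)$, which is square-summable since $1/2-s_0<0$, yielding $|T_gf|_{H^s}\lesssim|g|_{H^s}|f|_{H^{s_0}}$ as required. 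A similar Bernstein refinement is what makes the resonant sum converge down to $s=0$; in one dimension the relevant exponent is $s+s_0-1/2>0$ rather than $s+s_0>0$. With these corrections your argument is complete.
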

The first estimates are classical (see~\cite{KatoPonce88,AlinhacGerard,Lannes06}), and the last one follows straightforwardly from
\[\big\vert \ f \ g\ \big\vert_{H^{s+1}_\mu}^2 \ \lesssim \ \big\vert \ f \ g\ \big\vert_{H^{s}}^2+\mu  \big\vert \ \partial_x( f \ g) \ \big\vert_{H^{s}}^2 \lesssim \ \big\vert \ f \  \big\vert_{H^s}^2 \big\vert \  g\ \big\vert_{H^s}^2+\mu  \big\vert \ g\ \partial_x f  \ \big\vert_{H^{s}}^2+\mu \big\vert \ f\ \partial_x g \ \big\vert_{H^{s}}^2.\]

We know recall commutator estimate, mainly due to the Kato-Ponce~\cite{KatoPonce88}, and recently improved by Lannes~\cite{Lannes06} (see Theorems 3 and 6):
\begin{Lemma}[commutator estimates]\label{K-P}~\\
For any $s\geq0$, and $\partial_x f, g\in L^\infty(\RR)\bigcap H^{s-1}(\RR),$ one has
\[ \big\vert\ [\Lambda^s,f]g\ \big\vert_{L^2}\ \lesssim\ \big\vert \ \partial_x f\ \big\vert_{H^{s-1}}\big\vert \ g\ \big\vert_{L^\infty}+\big\vert \ \partial_x f\ \big\vert_{L^\infty}\big\vert \ g\ \big\vert_{H^{s-1}}  .
\]
Thanks to continuous embedding of Sobolev spaces, one has for $s\geq s_0+1, \ s_0>\frac{1}{2},$ 
\[ \big\vert\ [\Lambda^s,f]g\ \big\vert_{L^2}\ \lesssim\ \big\vert \ \partial_x f\ \big\vert_{H^{s-1}}\big\vert\ g\ \big\vert_{H^{s-1}} .
\]
More generally, for any $s\geq0$ and $s_0>1/2$, $\partial_x f, g\in H^{s_0}(\RR)\bigcap H^{s-1}(\RR),$ one has
\[ \big\vert\ [\Lambda^s,f]g\ \big\vert_{L^2}\ \lesssim \big\vert \ \partial_x f\ \big\vert_{H^{s_0}}\big\vert\ g\ \big\vert_{H^{s-1}}+\left\langle\big\vert \ \partial_x f\ \big\vert_{H^{s-1}}\big\vert\ g\ \big\vert_{H^{s_0}}\right\rangle_{s>{s_{0}+1}}.
\]

\end{Lemma}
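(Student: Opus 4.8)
The plan is to recognize this as the Kato--Ponce commutator estimate in the sharp form of Lannes~\cite[Theorems~3 and~6]{Lannes06} (see also~\cite{KatoPonce88,AlinhacGerard}), and to organize a self-contained argument around the Fourier/Littlewood--Paley representation of the commutator. First I would write $[\Lambda^s,f]g=\Lambda^s(fg)-f\Lambda^s g$ and pass to the Fourier side, so that its transform equals $\int_\RR\big(\langle\xi\rangle^s-\langle\eta\rangle^s\big)\widehat f(\xi-\eta)\widehat g(\eta)\,d\eta$ with $\langle\cdot\rangle=(1+|\cdot|^2)^{1/2}$. Everything then hinges on the pointwise symbol bound $|\langle\xi\rangle^s-\langle\eta\rangle^s|\lesssim|\xi-\eta|\big(\langle\eta\rangle^{s-1}+\langle\xi-\eta\rangle^{s-1}\big)$, which I would establish by splitting $\RR^2$ into the low--high regime $|\xi-\eta|\le\frac12|\eta|$ --- where the mean value theorem applied to $\tau\mapsto\langle\tau\rangle^s$ on the segment $[\eta,\xi]$ yields the factor $|\xi-\eta|\langle\eta\rangle^{s-1}$, since $\langle\tau\rangle\sim\langle\eta\rangle$ along that segment --- and its complement, where $\langle\xi\rangle,\langle\eta\rangle\lesssim\langle\xi-\eta\rangle$ so that $\langle\xi\rangle^s+\langle\eta\rangle^s\lesssim\langle\xi-\eta\rangle^s\lesssim|\xi-\eta|\langle\xi-\eta\rangle^{s-1}$ as soon as $|\xi-\eta|\gtrsim1$ (the bounded-frequency region being handled trivially).

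Next I would feed this bound back in, which decomposes $[\Lambda^s,f]g$ (up to a harmless low-frequency remainder) into two product-type contributions: a paraproduct of $\partial_x f$ against $\Lambda^{s-1}g$, whose symbol is dominated by $|\xi-\eta|\,|\widehat f(\xi-\eta)|\,\langle\eta\rangle^{s-1}|\widehat g(\eta)|$; and a paraproduct-plus-resonant product of $\Lambda^{s-1}\partial_x f$ against $g$, dominated by $|\xi-\eta|\,\langle\xi-\eta\rangle^{s-1}|\widehat f(\xi-\eta)|\,|\widehat g(\eta)|$. Using a Littlewood--Paley (Bony) decomposition I would then bound the first contribution by $\|\partial_x f\|_{L^\infty}\|\Lambda^{s-1}g\|_{L^2}$ and the second by $\|\Lambda^{s-1}\partial_x f\|_{L^2}\|g\|_{L^\infty}$, invoking the same paraproduct and remainder estimates that underlie Lemma~\ref{Moser} (in one space dimension the constraint $s\ge0$ is enough to close the resonant term). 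Summing gives the first estimate. The second then follows from the continuous embedding $H^{s-1}(\RR)\hookrightarrow L^\infty(\RR)$, valid for $s-1>1/2$; and the third by redoing the two product estimates above with the Sobolev-refined product bound of Lemma~\ref{Moser}, i.e. keeping the $H^{s_0}$ norm on one factor and the $H^{s-1}$/$L^2$ norm on the other, which is precisely what produces the $\langle\,\cdot\,\rangle_{s>s_0+1}$ term.

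I expect the main obstacle to be the bookkeeping in the symbol estimate across the different frequency regimes, together with the low-regularity constraints it imposes on the resonant contribution, and --- relatedly --- the fact that one must genuinely work with the Littlewood--Paley formalism rather than a naive Young-type inequality on Fourier transforms in order to recover honest $L^\infty$ norms on $\partial_x f$ (and not Wiener-algebra norms). For the fully detailed verification I would simply refer to~\cite[Theorems~3 and~6]{Lannes06}.
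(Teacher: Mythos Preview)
Your proposal is correct and, in fact, goes well beyond what the paper does: the paper does not prove this lemma at all but simply states it with attribution to Kato--Ponce~\cite{KatoPonce88} and Lannes~\cite{Lannes06} (Theorems~3 and~6), the same references you invoke. Your Fourier/Littlewood--Paley sketch is the standard route to these estimates and is consistent with how those references proceed, so there is no discrepancy to report.
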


We conclude this section with two corollaries of Lemma~\ref{Moser}, used in particular in the proof of Theorem~\ref{th:ConsSerreVar}.  

\begin{Lemma} \label{lem:f/h}
Let $f,\zeta\in L^\infty\bigcap H^{\bar s}$, with $\bar s\geq 0$ and $h_1=1-\epsilon\zeta$, with $h_1(\epsilon\zeta)\geq h>0$ for any $x\in\RR$. Then one has
\[  \big\vert \frac1{h_1} f \big\vert_{H^{\bar s}} \ \leq \ C(h^{-1},\epsilon\big\vert\zeta\big\vert_{L^{\infty}})\big( \big\vert f\big\vert_{H^{\bar s}}+ \epsilon \big\vert \zeta\big\vert_{H^{\bar s}}\big\vert f\big\vert_{L^\infty}\big).\]
\end{Lemma}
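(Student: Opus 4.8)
The statement to prove is Lemma~\ref{lem:f/h}: for $f,\zeta\in L^\infty\cap H^{\bar s}$ with $\bar s\ge 0$ and $h_1=1-\epsilon\zeta\ge h>0$, one has
\[
\Bigl\vert \tfrac1{h_1}f\Bigr\vert_{H^{\bar s}}\ \le\ C(h^{-1},\epsilon\vert\zeta\vert_{L^\infty})\bigl(\vert f\vert_{H^{\bar s}}+\epsilon\vert\zeta\vert_{H^{\bar s}}\vert f\vert_{L^\infty}\bigr).
\]

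My plan is as follows. First I would write $\tfrac1{h_1}=1+\tfrac{\epsilon\zeta}{h_1}=1+g(\epsilon\zeta)$, where $g(X)\equiv \tfrac{X}{1-X}$. The point of this decomposition is that $g$ is smooth near $0$ and, crucially, $g(0)=0$, so that the composition estimate from Lemma~\ref{Moser} (the $F(g)$ estimate with $F(0)=0$) applies: $g(\epsilon\zeta)\in H^{\bar s}$ with $\vert g(\epsilon\zeta)\vert_{H^{\bar s}}\le C(\vert\epsilon\zeta\vert_{L^\infty},\vert g\vert_{C^\infty})\,\epsilon\vert\zeta\vert_{H^{\bar s}}$. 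Here one must be slightly careful: $g$ is only smooth on $\{X<1\}$, but since $h_1\ge h>0$ means $\epsilon\zeta\le 1-h$, the relevant values of $\epsilon\zeta$ stay in a compact subset of $(-\infty,1)$ (bounded above by $1-h$ and below by $-\epsilon\vert\zeta\vert_{L^\infty}$), on which $g$ and all its derivatives are bounded by a constant $C(h^{-1},\epsilon\vert\zeta\vert_{L^\infty})$. So one replaces $g$ by a genuinely $C^\infty(\RR)$ function agreeing with it on that compact set before invoking the lemma.

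Then I would simply write $\tfrac1{h_1}f=f+g(\epsilon\zeta)f$ and estimate each piece: the first term is $\vert f\vert_{H^{\bar s}}$, and for the second I apply the product estimate of Lemma~\ref{Moser} in the form $\vert g(\epsilon\zeta)f\vert_{H^{\bar s}}\lesssim \vert g(\epsilon\zeta)\vert_{L^\infty}\vert f\vert_{H^{\bar s}}+\vert g(\epsilon\zeta)\vert_{H^{\bar s}}\vert f\vert_{L^\infty}$. Bounding $\vert g(\epsilon\zeta)\vert_{L^\infty}\le C(h^{-1},\epsilon\vert\zeta\vert_{L^\infty})$ (again using $h_1\ge h$) and $\vert g(\epsilon\zeta)\vert_{H^{\bar s}}\le C(h^{-1},\epsilon\vert\zeta\vert_{L^\infty})\epsilon\vert\zeta\vert_{H^{\bar s}}$ from the composition step, collecting terms yields exactly the claimed bound.

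The only mild obstacle is the one flagged above: justifying the use of the composition estimate for a function that is not globally smooth. This is handled by the non-vanishing depth condition, which confines $\epsilon\zeta$ to a compact interval strictly inside $(-\infty,1)$; on such an interval one can modify $g$ outside a neighborhood to obtain a $C^\infty(\RR)$ function with $C^\infty$-norm controlled by $h^{-1}$ and $\epsilon\vert\zeta\vert_{L^\infty}$, and the modification does not change $g(\epsilon\zeta)$. Everything else is a direct application of Lemma~\ref{Moser}. (The same argument, with $h_1$ replaced by a general affine function $a+b\epsilon\zeta$ bounded below, gives Lemma~\ref{lem:f/h1}.)
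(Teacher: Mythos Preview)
Your proof is correct and is essentially identical to the paper's: the same decomposition $\tfrac{1}{h_1}f=f+\tfrac{\epsilon\zeta}{1-\epsilon\zeta}f$, the same use of the product estimate from Lemma~\ref{Moser}, and the same trick of replacing $X\mapsto X/(1-X)$ by a globally $C^\infty$ function agreeing with it on $\{1-X\ge h\}$ before invoking the composition estimate. (Your closing parenthetical slightly misdescribes Lemma~\ref{lem:f/h1}, which is about the truncation errors $\tfrac{1}{h_1}f-f$ and $\tfrac{1}{h_1}f-(1+\epsilon\zeta)f$ rather than a general affine denominator, but this does not affect the proof of Lemma~\ref{lem:f/h}.)
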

\begin{proof}
We make use of the identity
\[ \frac1{h_1} f \ = \ \frac1{1-\epsilon\zeta}f \ = \ f \ + \ \frac{\epsilon\zeta}{1-\epsilon\zeta}f.\]
Moser's tame estimates (Lemma~\ref{Moser}) yields
 \begin{align*}\big\vert \frac1{h_1} f \big\vert_{H^{\bar s}} \ &\leq \ \big\vert f \big\vert_{H^{\bar s}} \ + \ \big\vert \frac{\epsilon\zeta}{1-\epsilon\zeta} f \big\vert_{H^{\bar s}}\\
 &\lesssim \ \big\vert f \big\vert_{H^{\bar s}} \ + \ \big\vert \frac{\epsilon\zeta}{1-\epsilon\zeta} \big\vert_{L^\infty}\big\vert f \big\vert_{H^{\bar s}} \ + \ \big\vert \frac{\epsilon\zeta}{1-\epsilon\zeta}\big\vert_{H^{\bar s}}\big\vert f \big\vert_{L^\infty}.
 \end{align*}
 The only non-trivial term to estimate is now $\big\vert \frac{\epsilon\zeta}{1-\epsilon\zeta}\big\vert_{H^{\bar s}}$. Using that $h_1=1-\epsilon\zeta \geq h>0$, we introduce a function $F\in C^\infty(\RR)$ such that 
 \[F(X) \ = \ \begin{cases}
 \frac{X}{1-X} & \text{ if } 1-X\geq h>0,\\ 0 & \text{ if } 1-X\leq 0.
 \end{cases}
\]
The function $F$ satisfies the hypotheses of Lemma~\ref{Moser}, and one has
\[ \big\vert \frac{\epsilon\zeta}{1-\epsilon\zeta}\big\vert_{H^{\bar s}} \ = \ \big\vert F(\epsilon\zeta)\big\vert_{H^{\bar s}} \ \leq \ C(\big\vert \epsilon\zeta\big\vert_{L^\infty},h^{-1})\big\vert \epsilon\zeta \big\vert_{H^{\bar s}}.\]
The Lemma is now straightforward.\end{proof}

Let us apply this Lemma to the rigorous estimate of specific expansions.
\begin{Lemma} \label{lem:f/h1}
Let $f,\zeta\in L^\infty([0,T);H^{\bar s})$, with $\bar s\geq s_0>1/2$, such that $h_1\equiv 1-\epsilon\zeta\geq h>0$ for any $(t,x)\in[0,T)\times\RR$. Then one has
\[  \big\vert \frac1{h_1} f - f\big\vert_{H^{\bar s}} \ \leq \ \epsilon\ C(h^{-1},\epsilon\big\vert \zeta\big\vert_{H^{\bar s}})\big\vert \zeta\big\vert_{H^{\bar s}}\big\vert f\big\vert_{H^{\bar s}}\]
and
\[  \big\vert \frac1{h_1} f -(1+\epsilon\zeta)f\big\vert_{H^{\bar s}} \ \leq \ \epsilon^2\ C(h^{-1},\epsilon\big\vert \zeta\big\vert_{H^{\bar s}})\big\vert \zeta\big\vert_{H^{\bar s}}^2\big\vert f\big\vert_{H^{\bar s}}.\]
\end{Lemma}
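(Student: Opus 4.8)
The plan is to reduce both estimates to Lemma~\ref{lem:f/h} applied to suitable algebraic identities, then use the tame product estimate of Lemma~\ref{Moser} together with the Sobolev embedding $H^{\bar s}\hookrightarrow L^\infty$ (valid since $\bar s\geq s_0>1/2$) to absorb all $L^\infty$ norms into $H^{\bar s}$ norms. Since $\bar s\geq s_0>1/2$, $H^{\bar s}$ is an algebra, which is what makes the final statements clean (no $L^\infty$ factors remain on the right-hand side).

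For the first estimate, I would start from the identity
\[ \frac1{h_1}f - f \ = \ \frac{1-h_1}{h_1}f \ = \ \frac{\epsilon\zeta}{1-\epsilon\zeta}f \ = \ \frac{\epsilon\zeta}{h_1}\, f .\]
Now apply Lemma~\ref{lem:f/h} with $f$ replaced by the product $\epsilon\zeta f$: this gives
\[ \big\vert \tfrac1{h_1}(\epsilon\zeta f)\big\vert_{H^{\bar s}} \ \leq \ C(h^{-1},\epsilon\vert\zeta\vert_{L^\infty})\big( \vert \epsilon\zeta f\vert_{H^{\bar s}} + \epsilon\vert\zeta\vert_{H^{\bar s}}\vert\epsilon\zeta f\vert_{L^\infty}\big),\]
and then bound $\vert\epsilon\zeta f\vert_{H^{\bar s}}\lesssim \epsilon\vert\zeta\vert_{H^{\bar s}}\vert f\vert_{H^{\bar s}}$ by Lemma~\ref{Moser}, $\vert\epsilon\zeta f\vert_{L^\infty}\lesssim \epsilon\vert\zeta\vert_{H^{\bar s}}\vert f\vert_{H^{\bar s}}$ by Cauchy-Schwarz/Sobolev embedding, and $\epsilon\vert\zeta\vert_{L^\infty}\leq C\epsilon\vert\zeta\vert_{H^{\bar s}}$, so that the constant takes the form $C(h^{-1},\epsilon\vert\zeta\vert_{H^{\bar s}})$. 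Collecting terms yields exactly the claimed bound with the single factor $\epsilon\vert\zeta\vert_{H^{\bar s}}\vert f\vert_{H^{\bar s}}$ on the right.

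For the second estimate, the key is the telescoping identity
\[ \frac1{h_1}f - (1+\epsilon\zeta)f \ = \ \frac{1-(1+\epsilon\zeta)h_1}{h_1}f \ = \ \frac{1-(1+\epsilon\zeta)(1-\epsilon\zeta)}{h_1}f \ = \ \frac{\epsilon^2\zeta^2}{h_1}\,f ,\]
since $(1+\epsilon\zeta)(1-\epsilon\zeta)=1-\epsilon^2\zeta^2$. Then one applies Lemma~\ref{lem:f/h} to the product $\epsilon^2\zeta^2 f$ and proceeds exactly as above: Lemma~\ref{Moser} gives $\vert\epsilon^2\zeta^2 f\vert_{H^{\bar s}}\lesssim \epsilon^2\vert\zeta\vert_{H^{\bar s}}^2\vert f\vert_{H^{\bar s}}$ (using the algebra property twice), and similarly for the $L^\infty$ factor via Sobolev embedding. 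This produces the factor $\epsilon^2\vert\zeta\vert_{H^{\bar s}}^2\vert f\vert_{H^{\bar s}}$ with constant $C(h^{-1},\epsilon\vert\zeta\vert_{H^{\bar s}})$, as desired.

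There is no real obstacle here; the only point requiring a small amount of care is that the constant in Lemma~\ref{lem:f/h} depends on $\epsilon\vert\zeta\vert_{L^\infty}$, which I replace by $\epsilon\vert\zeta\vert_{H^{\bar s}}$ using continuous Sobolev embedding and the fact that constants are nondecreasing in their arguments, so that the final constants are of the announced form $C(h^{-1},\epsilon\vert\zeta\vert_{H^{\bar s}})$. One should also note that the hypothesis $\bar s\geq s_0>1/2$ (stronger than the $\bar s\geq 0$ of Lemma~\ref{lem:f/h}) is exactly what is needed to turn the mixed $L^\infty$--$H^{\bar s}$ product bounds into pure $H^{\bar s}$ bounds, eliminating the $\vert f\vert_{L^\infty}$ term that appears in Lemma~\ref{lem:f/h}.
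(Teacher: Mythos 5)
Your proof is correct and follows essentially the same route as the paper: both rest on the identities $\frac1{h_1}f-f=\frac{\epsilon\zeta}{h_1}f$ and $\frac1{h_1}f-(1+\epsilon\zeta)f=\frac{\epsilon^2\zeta^2}{h_1}f$, combined with Lemma~\ref{lem:f/h} and the algebra property of Lemma~\ref{Moser}; the only (immaterial) difference is that the paper first factors out $\vert\epsilon\zeta\vert_{H^{\bar s}}$ by the product estimate and then applies Lemma~\ref{lem:f/h} to $\frac1{h_1}f$, whereas you apply Lemma~\ref{lem:f/h} to $\frac1{h_1}(\epsilon\zeta f)$ and then estimate the product.
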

\begin{proof}
Let us first remark that the formal expansions are straightforward: \[\frac1{1-X}=1+\O(X)\qquad \text{ and }\qquad\frac1{1-X}=1+X+\O(X^2)\qquad \text{ for }\qquad |X|<1.\]

The rigorous estimate is obtained thanks to Lemma~\ref{lem:f/h}, applied to the identities
\[   \frac1{h_1} f - f \ = \ \frac{\epsilon\zeta}{h_1}f \ , \quad \text{ and } \quad \frac1{h_1} f -(1+\epsilon\zeta)f \ = \ \frac{\epsilon^2\zeta^2}{h_1}f.\]

Indeed, one has for $s\geq s_0>1/2$,
\[ \big\vert \frac1{h_1} f - f\big\vert_{H^{\bar s}} \ = \ \big\vert \frac{\epsilon\zeta}{h_1}f \big\vert_{H^{\bar s}} \ \lesssim \ \big\vert \epsilon\zeta \big\vert_{H^{\bar s}} \big\vert\frac{1}{h_1}f \big\vert_{H^{\bar s}} \ \leq \  \epsilon C(h^{-1},\epsilon\big\vert  \zeta\big\vert_{H^{\bar s}})\big\vert f \big\vert_{H^{\bar s}} ,\]
where we used Lemmata~\ref{Moser} and~\ref{lem:f/h}.
The second estimate, concerning $\big\vert \frac1{h_1} f -(1+\epsilon\zeta)f\big\vert_{H^{\bar s}} $, is obtained identically.
\end{proof}

\bibliographystyle{abbrv}
\def\cprime{$'$}

\end{document}